\newcommand{\red}{\textcolor[rgb]{1.00,0.00,0.00}}
\theoremstyle{plain}
\newtheorem{theorem}{Theorem}[section]
\newtheorem{lemma}[theorem]{Lemma}
\newtheorem{proposition}[theorem]{Proposition}
\newtheorem{corollary}[theorem]{Corollary}
\newtheorem{claim}{Claim}
\newtheorem{fact}[theorem]{Proposition}
\theoremstyle{definition}
\newtheorem{definition}[theorem]{Definition}
\newtheorem{remark}[theorem]{Remark}
\newtheorem{example}[theorem]{Example}
\newcommand{\Fact}{Proposition }
\newcommand{\Facts}{Propositions }
\newenvironment{scproof}
{\begin{proof}[Proof of Claim.]}
{\end{proof}}
\DeclareMathOperator{\Mod}{Mod}
\DeclareMathOperator{\Th}{Th}
\DeclareMathOperator{\im}{{\sf im}}
\DeclareMathOperator{\kerr}{{\sf ker}}
\DeclareMathOperator{\End}{{\sf End}}
\DeclareMathOperator{\LEnd}{{\sf End}^\ast}
\DeclareMathOperator{\id}{{\sf id}}
\DeclareMathOperator{\rad}{{\sf rad}}
\DeclareMathOperator{\Soc}{{\sf Soc}}
\newcommand{\sk}[2]{\langle{#1}\mid{#2}\rangle}
\newcommand{\Sk}[2]{\bigl\langle{#1}\mid{#2}\bigr\rangle}
\newcommand{\mc}[1]{\mathcal{#1}}
\newcommand{\mb}[1]{\mathbb{#1}}
\newcommand{\ma}[1]{^{{#1} \times {#1}}}
\newcommand{\vep}{\varepsilon}
\renewcommand{\phi}{\varphi}
\newcommand{\lab}[1]{\label{#1}}
\newcommand{\xc}[1]{}
\begin{document}

\title[Representations of regular $\ast$-rings and CMILs]{Linear representations of regular rings and complemented modular lattices with involution}

\author[Christian Herrmann]{Christian Herrmann}
\address[Christian Herrmann]{Technische Universit\"{a}t Darmstadt FB4\\Schlo{\ss}gartenstr. 7\\64289 Darmstadt\\Germany}
\email{herrmann@mathematik.tu-darmstadt.de}

\author[Marina Semenova]{Marina Semenova}
\address[Marina Semenova]{Sobolev Institute of Mathematics\\Siberian
Branch RAS\\Acad. Koptyug prosp. 4\\630090 Novosibirsk\\Russia}
\address[]{Novosibirsk State University\\Pirogova str. 2\\630090 Novosibirsk\\Russia}
\email{udav17@gmail.com; semenova@math.nsc.ru}

\subjclass[2000]{Primary: 06C20, 16E50, 16W10, 51D25}
\keywords{Sesquilinear space; endomorphism ring: regular ring
with involution;  lattice of subspaces; 
complemented modular lattice with involution;  representation;
semivariety; variety}

\begin{abstract}
Faithful representations of regular $\ast$-rings and modular
complemented lattices with involution within orthosymmetric
sesquilinear spaces are studied within the framework of Universal Algebra.
In particular, the correspondence between classes of spaces and classes of
representable structures is analyzed; for a class $\mathcal{S}$ of spaces which is closed under
ultraproducts and non-degenerate finite-dimensional subspaces, the class of
representable structures is shown to be closed under complemented [regular] subalgebras, homomorphic images, and
ultraproducts. Moreover, this class is generated by its members which are isomorphic to subspace lattices with involution [endomorphism $\ast$-rings, respectively] of finite-dimensional spaces from $\mathcal{S}$.
Under natural restrictions, this result is refined to a $1$-$1$-correspondence between the two types of classes.
\end{abstract}

\maketitle

\section{Introduction}

For $\ast$-rings, there is a natural and well established
concept of representation in a vector space $V_F$
endowed with an orthosymmetric sesquilinear form:
a homomorphism $\varepsilon$ into the endomorphism ring of $V_F$
such that $\varepsilon(r^\ast)$ is the adjoint of $\varepsilon(r)$.
Famous examples of [faithful] representations
are due to Gel'fand-Naimark-Segal ($C^\ast$-algebras in Hilbert space) and
Kaplansky (primitive $\ast$-rings with a minimal right ideal), cf. \cite[Theorem 4.6.6]{beidar}.

[Faithful] representability of $\ast$-regular rings within anisotropic inner product spa\-ces
has been studied by Micol \cite{Flo} and used to derive results in the universal algebraic
theory of these structures.
For the $\ast$-regular rings of classical quotients
of finite Rickart $C^\ast$-algebras (cf. Ara and Menal \cite{ara}),
existence of representations has been established in \cite{PartI}.
For complemented modular lattices  with
involution   $a\mapsto a^\prime$ (CMILs for short), an analogue of the concept of representation
is a lattice homomorphism $\varepsilon$, preserving the bounds $0$ and $1$,
into the lattice of all subspaces such that
$\varepsilon(a^\prime)$ is the orthogonal subspace to $\varepsilon(a)$ (cf. Niemann \cite{Nik}).
The latter has been considered in the context of synthetic orthogeometries in \cite{he4},
continuing earlier work on anisotropic geometries and
modular ortholattices \cite{proat,hr,hr2}.
Primary examples are atomic CMILs associated with irreducible desarguean orthogeometries and those CMILs
which arise from lattices of principal right ideals of representable regular $\ast$-rings. 

The [proofs of the] main results of these studies
relate closure properties of a class $\mathcal{S}$ of spaces
with closure properties of the class $\mathcal{R}$ of algebraic structures
[faithfully] representable within spaces from $\mathcal{S}$.
In particular, for a class $\mathcal{S}$ closed under ultraproducts and
non-degenerate finite-dimensional subspaces,
one has $\mathcal{R}$ closed under ultraproducts, homomorphic
images, and regular [complemented, respectively] subalgebras.
Moreover, with an approach due to Tyukavkin \cite{tyu},
it has been shown that $\mathcal{R}$ is generated, with respect to these operators,
by the endomorphism $\ast$-rings [by the subspace lattices
with involution $U\mapsto U^\perp$, respectively] of
finite-dimensional spaces from $\mathcal{S}$ (cf. Theorem \ref{flo}).
Conversely, any class $\mathcal{R}$ of structures generated in this way
has its members representable within $\mathcal{S}$.
 
The first purpose of the present paper is to extend these results to
regular $\ast$-rings
 on one hand, to representations within
orthosymmetric sesquilinear spaces on the other 
-- thus  allowing  
 regular rings with an involution
which may have $r^\ast r=0$ for some $r\neq 0$, that is regular
$\ast$-rings which are not $\ast$-regular;
for example, $\ast$-rings associated with   finite dimensional spaces
having some isotropic points. 
The second one is to give a more transparent presentation
by dealing with types of classes naturally associated
with representations in linear spaces.
We call a class of structures $\mathcal{R}$ as above
an $\exists$-\emph{semivariety} of regular $\ast$-rings
[CMILs]
and we call $\mathcal{S}$ a \emph{semivariety} of spaces.
The quantifier `$\exists$' refers to the required existence of quasi-inverses [complements, respectively].
In this setting, the above-mentioned relationship between classes of spaces $\mathcal{S}$ and classes of
representable structures $\mathcal{R}$ can be refined to a $1$-$1$-correspondence (cf. Theorem \ref{11}).
Also, we observe that $\mathcal{R}$ remains
unchanged if $\mathcal{S}$ is enlarged by forming
two-sorted substructures, corresponding to the subgeometries in the sense of \cite{he4},
(cf. Theorem \ref{flo}).
We also provide a useful condition on $\mathcal{S}$ which implies that
$\mathcal{R}$ is an $\exists$-\emph{variety}, i.e. that  $\mathcal{R}$
is also closed under direct products
(see Proposition \ref{varsp}).
For a reference in later applications,
e.g. to decidability results refining those of \cite{heu},
we consider $\ast$-rings which are also
algebras over a fixed commutative $\ast$-ring.

In the context of synthetic orthogeometries,
the class $\mathcal{R}$ of representable structures is an $\exists$-variety
if $\mathcal{S}$ is also closed under orthogonal disjoint unions.
No such natural construction is available for sesquilinear spaces.
The alternative, chosen by Micol \cite{Flo},
was to generalize the concept of faithful representation
to residually faithful representation;
thus, associating with any semivariety of spaces
an $\exists$-variety of generalized representables.
We derive these results in our more general setting (cf. Proposition \ref{flonik}).

We first present background on sesquilinear  spaces
(Section 2), rings (Sections 3--4), and lattices (Sections 5,7).
Synthetic orthogeometries are included (section 6) for use of the
results in \cite{he4}.  A key to results
on representations is to view them as multi-sorted structures
(Section 8). The Universal Algebra  point of view
and the class operators are introduced in Section 9. 
The basic reduction to finite dimensions
is in Section 10,  applications to  correspondences between
classes in Section 11. Section 12 relates these  
to Micol's \cite{Flo} the more general concept of representation. 
In Sections 8--12 results on rings and on lattices
are presented in parallel. Proofs of the former do not depend
on the latter. Though, the other way round,  
we have to use basic results on lattices of principal right ideals
of regular rings.

Thanks are due to the referee for a lot of corrections and
 improvements which, as  we hope, make the paper more widely
accessible.

\section{$\varepsilon$-Hermitian spaces}

We first define the linear  structures providing
representations both for lattices and rings with involution.
For any division ring $F$, endowed with an anti-automorphism
$\nu$ (we write $\nu(\lambda)=\lambda^\nu$) 
 we consider \emph{sesquilinear spaces} which are
[right] vector spaces $V_F$ endowed with a \emph{scalar product} or a \emph{sesquilinear form}
$\sk{\ }{\ }\colon V\times V\to F$; that is,
for all $u$, $v$, $w\in V$ and all $\lambda$, $\mu\in F$, one has
\[
\sk{u}{v+w}=\sk{u}{v}+\sk{u}{w},\quad 
\sk{u\lambda}{v\mu}=\lambda^\nu\sk{u}{v}\mu,
\]

Our basic reference is \cite[Chapter I]{gro} (though, we use
``sesquilinear space'' in a more general meaning).
Observe that,
from a right vector space $V_F$ one  obtains a left vector
space $_FV$ putting  $\lambda v=v \lambda^{\nu^{-1}}$.
By this, a sesquilinear form on $V_F$ 
as defined above turns out a sesquilinear form
in the sense of \cite{gro} on $_FV$ -- both with respect to $\nu$.
This gives access to results of \cite{gro} in the
left vector space setting.
Introductions to orthogonal geometry in infinite dimension
are also given in
 \cite{mackey},  \cite[Chapter 14]{FF},
cf. \cite[Chapter IV]{jac}, \cite[\S 1.21]{herstein},
\cite[\S 4.6]{beidar}.

Given  a second sesquilinear space $V'_{F'}$
with $\nu'$ and $\sk{\,}{\,}'$,
we have the following concepts relating it with the first:
An \emph{isomorphism}  between the sesquilinear spaces
 is a bijection $\omega:V\to V'$ 
which is an $\alpha$-semilinear map $V_F\to V'_{F'}$  for some isomorphism
$\alpha:F\to F'$ such that $\alpha \circ \nu =\nu'\circ \alpha$
and  $\sk{\omega(v)}{\omega(w)}'=\alpha(\sk{v}{w})$
for all $v,w \in V$.
The second space arises from the first 
 by \emph{scaling} with $\mu\in F$  if $F^\prime=F$ and
$V_F=V'_{F'}$ as vector spaces, and
if  $\mu \neq 0$,  $r^{\nu'}=\mu r^\nu\mu^{-1}$, 
and $\sk{u}{v}'= \mu\sk{u}{v}$.
Finally, $V_F$ and $V'_{F'}$ are \emph{similar},
if one arises from the other by any composition of isomorphisms and scalings.
It is easy to see that 
any similitude can be expressed as an isomorphism followed by a scaling.

Any vector space $V$ over a division ring $F$ with
anti-automorphism $\nu$  can be turned into a non-degenerate sesquilinear
space: given a basis $v_i\, (i \in I)$ and $0\neq \delta_i \in F$
define $\sk{\sum_{i\in J} v_i \lambda_i}{\sum_{i \in J} v_i \mu_i}
= \sum_{i\in J} \lambda_i^\nu\delta_i\mu_i$ for finite
$J \subseteq I$ and $\lambda_i,\mu_i \in F$.
Though, these examples are far from being exhaustive.

Since we consider only one anti-automorphism
$\nu$ on $F$ and only one
 scalar product on $V_F$ at a time,
we use $F$ to include $\nu$ (and write $\lambda^\nu= \lambda^*$)
and  $V_F$ to denote the space endowed with the scalar product.

A sesquilinear space $V_F\neq 0$ is \emph{non-degenerate} if
$\sk{u}{v}=0$ for all $v\in V$ implies $u=0$.
For $\varepsilon\in F$, 
$V_F$ is $\varepsilon$-\emph{hermitian} if $\sk{v}{u}=\varepsilon\cdot\sk{u}{v}^\ast$ for
all $u$, $v\in V$; $V_F$ is \emph{hermitian} if it is $1$-hermitian;
$V_F$ is \emph{skew symmetric} if it is $(-1)$-hermitian
and $\lambda^\ast=\lambda$ for all $\lambda\in F$;
$V_F$ is \emph{alternate}, if $\sk{v}{v}=0$ 
for all $v\in V$ (observe that  \cite[\S 4.6]{beidar} requires characteristic $\neq 2$).
$V_F$ is \emph{anisotropic} if $\sk{v}{v}\neq 0$ for all $v\in V$,
$v\ne 0$.

For endomorphisms $\varphi$, $\psi$ of
the vector space  $V_F$ we say that $\psi$ is an \emph{adjoint} of $\varphi$
if $\sk{\varphi(u)}{v}=\sk{u}{\psi(v)}$ for all $u$, $v\in V$.
If  $V_F$ is non-degenerate, then any endomorphism  $\varphi$ has
at most one adjoint $\psi$;
if such $\psi$ exists, we write $\psi=\varphi^\ast$.
If $\varphi^\ast$ and $\chi^\ast$ exist, then
$(\chi \circ \phi)^\ast= \phi^\ast \circ \chi^\ast$.
The space $V_F$ is \emph{orthosymmetric}, or \emph{reflexive},
if $\perp$ is a symmetric relation.

\begin{fact}\lab{sym}\lab{symm}
The relations of orthogonality and adjointness are left unchanged under scaling;
in particular, orthosymmetry is preserved under scaling.
Consider a non-degenerate sesquilinear  space $V_F$.
The following are equivalent if
 $\dim V_F >1$:
\begin{enumerate}
\item
the sesquilinear space
$V_F$ is orthosymmetric;
\item
the sesquilinear space
$V_F$ is $\varepsilon$-hermitian for some $($unique$)$ $\varepsilon\in F\setminus\{0\}$;
\item
up to scaling, $V_F$ is either hermitian or skew-symmetric;
\item the adjointness relation
is symmetric on $\End(V_F)$;
\item if  $\varphi^\ast$
exists then  $\varphi^{\ast\ast}=\varphi$.
\end{enumerate}
Furthermore, if $V_F$ is $\vep$-hermitian and non-degenerate,
 then   $\lambda\mapsto\lambda^\ast$ is an \emph{involution} on $F$,
that is $(\lambda^\ast)^\ast=\lambda$ for all $\lambda \in F$.
If $V_F$ is alternate and non-degenerate  then it is skew symmetric and  $F$ is commutative; moreover, any $V'_{F'}$ 
similar to $V_F$ is alternate, too.
\end{fact}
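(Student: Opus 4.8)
The plan is to split the statement into three pieces: the scaling invariances and the two ``furthermore'' clauses, which are elementary substitutions; the easy arcs of the five‑way equivalence; and its hard core, which is the classical passage from a reflexive to a (skew‑)hermitian form.

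\emph{Elementary pieces.} If $\sk{\ }{\ }'=\mu\sk{\ }{\ }$ with $0\ne\mu\in F$, then $\sk{u}{v}'=0\Leftrightarrow\sk{u}{v}=0$, so $\perp$ is unchanged, and $\psi$ is an adjoint of $\varphi$ for $\sk{\ }{\ }'$ iff $\mu\sk{\varphi(u)}{v}=\mu\sk{u}{\psi(v)}$ for all $u,v$ iff it is one for $\sk{\ }{\ }$; hence orthogonality, adjointness and orthosymmetry all persist under scaling. Applying this to the ``isomorphism followed by a scaling'' normal form of a similitude also gives preservation of alternateness (an isomorphism sends $\sk{v}{v}$ to $\alpha(\sk{v}{v})=\alpha(0)=0$ and is onto; a scaling multiplies $\sk{v}{v}$ by $\mu$). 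For the clause on $\vep$‑hermitian spaces, substituting $u\mapsto u\lambda$ into $\sk{v}{u}=\vep\sk{u}{v}^\ast$ and using $\sk{v}{u\lambda}=\sk{v}{u}\lambda$ together with sesquilinearity gives $\vep\sk{u}{v}^\ast\lambda=\vep\sk{u}{v}^\ast\lambda^{\ast\ast}$; cancelling on the left (pick $u,v$ with $\sk{u}{v}\ne0$, possible by non‑degeneracy) yields $\lambda=\lambda^{\ast\ast}$, and uniqueness of $\vep$ is immediate; the same substitution also forces $\vep$ central and $\vep\vep^\ast=1$. For the alternate clause, polarizing $\sk{u+v}{u+v}=0$ gives $\sk{v}{u}=-\sk{u}{v}$, and the same $u\mapsto u\lambda$ substitution then forces $\lambda^\ast=\lambda$ for all $\lambda$, so $F$ is commutative and $V_F$ is $(-1)$‑hermitian, i.e.\ skew‑symmetric.

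\emph{Easy arcs of the equivalence (assume $V_F$ non‑degenerate, $\dim V_F>1$).} The implications (iii)$\Rightarrow$(i) (a hermitian or skew‑symmetric space is reflexive, and reflexivity is scaling‑invariant) and (ii)$\Rightarrow$(i) are trivial. For (ii)$\Rightarrow$(v)$\Rightarrow$(iv): if $V_F$ is $\vep$‑hermitian, then—using $\vep\vep^\ast=1$ and that $\ast$ is an involution—one checks directly that $\sk{\varphi(u)}{v}=\sk{u}{\psi(v)}$ for all $u,v$ entails $\sk{\psi(u)}{v}=\sk{u}{\varphi(v)}$ for all $u,v$, i.e.\ $\varphi^{\ast\ast}=\varphi$; and (v)$\Rightarrow$(iv) holds because then $\varphi$ is an adjoint of $\varphi^\ast$ (note (iv) and (v) are in any case equivalent, uniqueness of adjoints coming from non‑degeneracy). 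For (iv)$\Rightarrow$(i): if $\perp$ is not symmetric, pick (after rescaling $a$) vectors $a,b$ with $\sk{a}{b}=0$, $\sk{b}{a}=1$, enlarge $\{a,b\}$ to a finite‑dimensional non‑degenerate subspace $W$ (on which the adjoint operation is everywhere defined); the rank‑one endomorphism $x\mapsto a\sk{b}{x}$ of $W$ then has an adjoint whose double adjoint has image $\ne aF$ (because $a^\perp\ne{}^\perp a$ inside $W$), and transporting this operator to $V_F$ by splitting off $W$ contradicts (iv).

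\emph{The hard core: (i)$\Rightarrow$(ii) and (ii)$\Rightarrow$(iii).} For (i)$\Rightarrow$(ii) I would run the Birkhoff--von Neumann argument: by non‑degeneracy $a^\perp=\{x:\sk{a}{x}=0\}=\kerr\sk{a}{\ }$ and $\{x:\sk{x}{a}=0\}=\kerr\bigl(x\mapsto(\sk{x}{a})^{\nu^{-1}}\bigr)$ are the kernels of two nonzero linear functionals; orthosymmetry makes these kernels coincide, hence the functionals proportional, which produces a scalar $\vep_a$ relating $\sk{x}{a}$ to $\sk{a}{x}^\ast$ for all $x$. A short argument using two linearly independent vectors (available since $\dim V_F>1$) and the linear independence of the functionals $\sk{a}{\ }$ shows $\vep_a$ is independent of $a$, and reinserting scalars—using that $\sk{a}{x}$ ranges over all of $F$—pins down the relation between $\ast^2$ and $\vep$, exhibiting $V_F$ as $\vep$‑hermitian. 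For (ii)$\Rightarrow$(iii) I would carry out Kaplansky's scaling normalization: $\ast$ is an involution with $\vep\vep^\ast=1$; if $\vep=1$ the form is already hermitian; if there is $\mu$ with $\mu^\ast=\vep\mu$ (as there always is when $\ast\ne\id$) then scaling by $\mu$ makes it hermitian; and in the remaining case $\ast=\id$, $F$ is commutative and $\vep=-1$, i.e.\ the form is skew‑symmetric. For the clean execution of these two steps I would cite \cite[Chapter~I]{gro}. The main obstacle is exactly this last step (and the fine control of $\vep$ and $\ast$ in (i)$\Rightarrow$(ii)): the functional/kernel part is routine bookkeeping, but producing the normalizing scalar $\mu$ and locating the precise dividing line between the hermitian and the skew‑symmetric case is the genuine classical input, where I would lean on \cite{gro} rather than reprove everything from scratch.
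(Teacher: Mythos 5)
Most of your write-up tracks the paper's proof: the scaling invariances are treated as immediate (as in the paper), the hard equivalences (i)$\Rightarrow$(ii) and (ii)$\Rightarrow$(iii) are delegated to \cite[Chapter I]{gro} exactly as the paper does, and your direct substitution arguments for $(\lambda^\ast)^\ast=\lambda$, for $\vep\vep^\ast=1$ with $\vep$ central, and for the alternate clause (polarize, then substitute $u\mapsto u\lambda$ to get $\lambda^\ast=\lambda$ and commutativity) are correct and actually more self-contained than the paper's citations. The arcs (iii)$\Rightarrow$(i), (ii)$\Rightarrow$(v), and (iv)$\Leftrightarrow$(v) are also fine.

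The genuine gap is in your arc (iv)$\Rightarrow$(i), the one step that must carry the adjointness conditions back to orthosymmetry and is needed to close the cycle. You propose to enlarge $\{a,b\}$ to a finite-dimensional subspace $W$ with non-degenerate induced form, to ``split off'' $W$, and to transport a rank-one operator. But at that point the form is not yet known to be reflexive, and the extension lemma you are implicitly invoking (Proposition \ref{du}, i.e. \cite[Chapter I, \S 5, Lemma 4]{gro}) is available only for pre-hermitian spaces; for a non-reflexive non-degenerate form the left and right orthogonals of $W$ differ, so ``the'' decomposition $V=W\oplus W^\perp$ is ambiguous, and it is not established that every finite set of vectors lies in a finite-dimensional subspace on which the restriction is non-degenerate. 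The step therefore uses pre-hermitian machinery to prove that the space is pre-hermitian. The paper circumvents this by proving (v)$\Rightarrow$(i) with operators defined globally on $V$: for each $u$ with $\mu=\sk{u}{u}\neq 0$ it introduces $\varphi_u(v)=u\bigl(\sk{v}{u}\mu^{-1}\bigr)^{+}$ (with $\lambda\mapsto\lambda^{+}$ the inverse of $\lambda\mapsto\lambda^\ast$) and $\psi_u(w)=u\mu^{-1}\sk{u}{w}$, checks $\psi_u=\varphi_u^\ast$, uses (v) to conclude $\varphi_u=\psi_u$ --- so the left and right orthogonals of an anisotropic vector coincide --- and settles the case of two isotropic vectors $v,w$ by passing to $u=v+w$. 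Replacing your (iv)$\Rightarrow$(i) by the trivial (iv)$\Rightarrow$(v) followed by this argument repairs the proof; as written, your version does not go through.
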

\begin{proof}
The first statement is obviously true. Now, assume $V_F$
non-degenerate and $\dim V_F >1$.
 The following references are to 
\cite[Chapter I]{gro}.
(i) implies (ii) by Theorem 1 of \S1.3.
(ii) implies (iii) by (15) of \S1.5. (iii) implies (i), obviously,
proving pairwise  equivalence of (i), (ii), and (iii).

Assuming (iii),
symmetry of adjointness follows, easily. That, in turn, implies
$\varphi=\varphi^{\ast\ast}$ for every  $\varphi\in\LEnd(V_F)$.   
Thus, we have (iii) $\Rightarrow$ (iv) and (iv) $\Rightarrow$ (v).

Assuming (v),
let $\lambda \mapsto \lambda^+$ denote the inverse
of $\lambda \mapsto \lambda^\ast$.
Given $u\in V$ such that $\mu=\sk{u}{u}\neq 0$, consider two
linear  maps:
\[
\varphi_u(v)= u\bigl(\sk{v}{u}\mu^{-1}\bigr)^+\ \text{and}\ 
\psi_u(w)=u\mu^{-1}\sk{u}{w},\quad v,\ w\in V.
\]
Observe that $\psi_u=\varphi_u^\ast$, whence by our hypothesis, $\varphi_u=\psi_u^\ast$.
Moreover, $\varphi_u$ and $\psi_u$ are the projections onto $uF$ associated with
the decompositions $V=uF\oplus\{v\in V\mid\sk{v}{u}=0\}$
and $V=uF\oplus\{w\in V\mid\sk{u}{w}=0\}$, respectively.
Now, $\varphi_u=\psi_u\circ\varphi_u$ and we get
$\psi_u=\varphi_u^\ast=\varphi_u^\ast\circ\psi_u^\ast=\varphi_u$,
an orthogonal projection.

It follows that $\sk{v}{w}=0$ is equivalent to $\sk{w}{v}=0$ unless $\sk{v}{v}=\sk{w}{w}=0$.
In the latter case, let $u=v+w$.
If $\sk{u}{u}=0$ then $\sk{v}{w}=-\sk{w}{v}$;
otherwise, $\sk{v}{w}=0$ iff $\sk{v}{u}=0$ iff $\sk{u}{v}=0$ iff $\sk{w}{v}=0$.
This proves that (v) implies (i).

For non-degenerate  $\vep$-hermitian $V_F$, in order to prove that $\lambda \mapsto \lambda^*$ is an involution,
by (15) of \S1.5 we may assume  $\vep \pm 1$.  As (7)  in \S1.3
follows from (9),
we have $(\lambda^*)^*= \vep^{-1} \lambda \vep = \lambda$. 
The alternate case is dealt with in 
(12) of \S1.4.
\end{proof}

\noindent
A sesquilinear space $V_F$, over a division ring $F$ with
involution,   
 which is $\vep$-hermitian for some $\vep$ and non-degenerate will be called \emph{pre-hermitian}.
In the sequel, we consider only pre-hermitian spaces.
If $V_F$ is, in addition, anisotropic, we also speak of an \emph{inner product space}.

For vectors $u$, $v\in V$, we say that $v$ is \emph{orthogonal} to $u$ and write $u\perp v$, if $\sk{u}{v}=0$.
The \emph{orthogonal} of $X \subseteq V$ is the subspace
$X^\perp=\{v \in V\mid \forall u \in X.\;  u \perp v \}$.
A subspace $U$ is \emph{closed} if $U=U^{\perp\perp}$.
If $U$ is a subspace of $\dim U=1$ then
$U^\perp= \kerr f$
for  the linear map $f:V \to F$ 
given as $f(w)=\sk{v}{w}$ where $U=vF$;
$f$ is surjective since $V_f$ is non-degenerate,
whence  $\dim V/U^\perp=1$. Since $U^\perp= \bigcap_{v \in B} vF^\perp$
 for any basis $B$ of $U$,
 it follows $\dim V/U^\perp
\leq \dim U$ for any $U$ with $\dim U <\omega$
(actually, equality holds and $U$ is closed, see \Facts \ref{og0} and \ref{gal}).

On any linear  subspace $U$ of $V_F$,  one has
the sesquilinear \emph{subspace} $U_F$ with  the induced scalar product.
When $U_F$ is non-degenerate, $U_F$ is pre-hermitian, too.
A finite-dimensional subspace $U_F$ of $V_F$ is non-degenerate if and only if
$U\cap U^\perp=0$, if and only if $V=U\oplus U^\perp$ 
(as $\dim V{\slash}U^\perp\leq \dim U$).
We write in this case $U\in\mathbb{O}(V_F)$ and say that
$U$ is a \emph{finite-dimensional orthogonal summand};
in particular, $U$ is closed.

\begin{fact}\lab{du}
Every pre-hermitian space $V_F$ is the directed union
of the subspaces  $U_F$,   $U\in\mathbb{O}(V_F)$.
Actually, for any  finite-dimensional subspace $W\in \mathbb{L}(V_F)$ there is $U\in \mathbb{O}(V_F)$
such that $W \subseteq U$ and $\dim U \leq 2 \dim W$.
\end{fact}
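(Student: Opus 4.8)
The quantitative second assertion implies the first, so I would concentrate on proving: every finite-dimensional $W\in\mathbb{L}(V_F)$ is contained in some $U\in\mathbb{O}(V_F)$ with $\dim U\le 2\dim W$. Recall from the paragraph preceding the statement that a finite-dimensional subspace belongs to $\mathbb{O}(V_F)$ precisely when the radical $\rad W:=W\cap W^\perp$ of the induced form is trivial. Granting this, the directed union claim follows at once: the family $\{\,U_F\mid U\in\mathbb{O}(V_F)\,\}$ covers $V$, since each $v\in V$ lies in the finite-dimensional subspace $vF$ and hence, by the quantitative assertion, in a member; and it is upward directed, since any two members $U_1,U_2$ lie in the finite-dimensional subspace $U_1+U_2$, which is again contained in a member.

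The plan is to enlarge $W$ in steps, each step raising the dimension by $1$ while lowering the radical dimension by at least $1$. Put $W_0:=W$. Given a finite-dimensional $W_k\supseteq W$ with $\rad W_k\ne 0$, pick $0\ne x\in\rad W_k$. Since $V_F$ is non-degenerate, there is $z\in V$ with $\sk{x}{z}\ne 0$, hence also $\sk{z}{x}\ne 0$ by orthosymmetry; moreover $z\notin W_k$, for $z\in W_k$ would give $\sk{z}{x}=0$ as $x\in W_k^\perp$. Set $W_{k+1}:=W_k+zF$, so that $\dim W_{k+1}=\dim W_k+1$.

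The crux of the argument, and the only step requiring a computation, is the inequality $\dim\rad W_{k+1}\le\dim\rad W_k-1$. One uses $W_{k+1}^\perp=W_k^\perp\cap z^\perp$ with $z^\perp=\{\,v\in V\mid\sk{z}{v}=0\,\}$. Let $v\in\rad W_{k+1}$ and write $v=w+z\lambda$ with $w\in W_k$. Since $x\in W_k\subseteq W_{k+1}$ and $v\in W_{k+1}^\perp$, we get $\sk{x}{w}+\sk{x}{z}\lambda=\sk{x}{v}=0$; but $\sk{x}{w}=0$ because $x\in W_k^\perp$ (and $V_F$ is orthosymmetric), so $\sk{x}{z}\lambda=0$, whence $\lambda=0$ as $\sk{x}{z}\ne 0$. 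Thus $\rad W_{k+1}\subseteq W_k\cap W_k^\perp=\rad W_k$, and since also $\rad W_{k+1}\subseteq W_{k+1}^\perp\subseteq z^\perp$, we obtain $\rad W_{k+1}\subseteq\rad W_k\cap z^\perp$, a proper subspace of the finite-dimensional $\rad W_k$ because $x\in\rad W_k\setminus z^\perp$. This yields the claimed strict decrease.

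Hence after $m\le d:=\dim\rad W$ steps one reaches $U:=W_m$ with $\rad U=0$, that is $U\in\mathbb{O}(V_F)$, satisfying $W\subseteq U$ and $\dim U=\dim W+m\le\dim W+d\le 2\dim W$, the last inequality holding because the radical of a form on $W$ has dimension at most $\dim W$. I do not expect a serious obstacle; the single point demanding care is that the radical must \emph{strictly} decrease at each step rather than merely fail to increase, and this is exactly what the pairing of $x$ against elements of $\rad W_{k+1}$ delivers.
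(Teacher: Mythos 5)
Your argument is correct and complete. Note, however, that the paper does not actually prove this Proposition: it simply cites \cite[Chapter I, \S5 Lemma 4]{gro} (with \cite[Remark 4.6.14]{beidar} as a cross-reference) and offers, as an alternative, a detour through the synthetic orthogeometry $\mathbb{G}(V_F)$ and \cite[Theorem 1.2]{he4}. What you have written is a self-contained elementary proof --- essentially the classical argument behind Gross's lemma: adjoin, one at a time, vectors that pair nontrivially with the radical, and check that each adjunction raises the dimension by $1$ while strictly lowering $\dim\rad W_k$. All the delicate points are handled correctly: orthosymmetry is what lets you pass freely between $\sk{x}{z}\neq 0$ and $\sk{z}{x}\neq 0$ and between the two one-sided radicals; the identity $W_{k+1}^\perp=W_k^\perp\cap z^\perp$ is justified by sesquilinearity; the strict decrease comes from $x\in\rad W_k\setminus z^\perp$; and the final identification of ``$\rad U=0$ for finite-dimensional $U$'' with ``$U\in\mathbb{O}(V_F)$'' is exactly the equivalence recorded in the paragraph preceding the statement (resting on $\dim V/U^\perp\leq\dim U$). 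The trade-off is the usual one: the citation keeps the paper short and leans on the lattice/geometry machinery developed elsewhere, whereas your proof costs half a page but requires nothing beyond non-degeneracy and orthosymmetry, and it makes the bound $\dim U\leq 2\dim W$ transparent ($d$ added vectors for a $d$-dimensional radical, $d\leq\dim W$).
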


\begin{proof}
This is
\cite[Chapter I, \S5 Lemma 4]{gro}, cf. \cite[Remark 4.6.14]{beidar}.
Alternatively, one can apply  \cite[Theorem 1.2]{he4}
to the ``orthogeometry''
 $\mathbb{G}(V_F)$ 
associated with $V_F$ 
and \Fact \ref{og0}, below.
\end{proof}

\noindent
For a subspace $U_F$ of $V_F$,
the linear  subspace $\rad U=U\cap U^\perp$ is the \emph{radical} of $U_F$.
With $\langle v+\rad U\mid w+\rad U\rangle:=\langle v\mid w\rangle$,
the $F$-vector space $U{\slash}\rad U$ is 
a sesquilinear space  $U_F{\slash}\rad U$ with respect to the given anti-automorphism of $F$.
We call $U_F{\slash}\rad U$ a \emph{subquotient space}.

\begin{fact}\lab{sq}
Let $V_F$ be a pre-hermitian space and let $U_F$ be a subspace of $V_F$.
Then $U_F{\slash}\rad U$ is non-degenerate; it is $\varepsilon$-hermitian if $V_F$ is.
The space $U_F{\slash}\rad U$ is isomorphic to any subspace
$W_F$ of $V_F$ such that $U=W\oplus\rad U$.
\end{fact}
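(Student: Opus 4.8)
The plan is to verify well-definedness of the quotient form first, then non-degeneracy and the $\varepsilon$-hermitian property, and finally to produce the isomorphism with $W_F$. The one conceptual ingredient, used throughout, is that a pre-hermitian space is orthosymmetric: since $V_F$ is $\varepsilon$-hermitian, \Fact \ref{sym} gives that $\perp$ is symmetric on $V$, so that $U^\perp$ coincides with the left orthogonal $\{x\in V\mid\sk{x}{u}=0\text{ for all }u\in U\}$. Consequently $\rad U=U\cap U^\perp$ is exactly the set of $u\in U$ with $\sk{u}{w}=0=\sk{w}{u}$ for every $w\in U$.

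First I would check that $\sk{v+\rad U}{w+\rad U}:=\sk{v}{w}$ is well defined. If $v\equiv v'$ and $w\equiv w'$ modulo $\rad U$, then $v-v'\in\rad U$ and $w-w'\in\rad U$, so by the previous remark $\sk{v-v'}{w}=0$ and $\sk{v'}{w-w'}=0$, whence $\sk{v}{w}=\sk{v'}{w'}$. (When $V_F$ is $\varepsilon$-hermitian one may argue even more directly: $\sk{v-v'}{w}=\varepsilon\sk{w}{v-v'}^\ast=0$.) Sesquilinearity with respect to the same anti-automorphism of $F$ is then inherited from $V_F$. Non-degeneracy follows in the same vein: if $u+\rad U$ is orthogonal in $U_F{\slash}\rad U$ to every class, then $\sk{u}{w}=0$ for all $w\in U$, so $u\in U^\perp$ by orthosymmetry, hence $u\in U\cap U^\perp=\rad U$ and $u+\rad U=0$. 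If $V_F$ is $\varepsilon$-hermitian, then so is $U_F{\slash}\rad U$, directly from the definition of the quotient form: $\sk{w+\rad U}{u+\rad U}=\sk{w}{u}=\varepsilon\sk{u}{w}^\ast=\varepsilon\sk{u+\rad U}{w+\rad U}^\ast$.

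Finally, given a subspace $W$ with $U=W\oplus\rad U$, I would take the restriction $\omega\colon W\to U{\slash}\rad U$ of the canonical projection, $\omega(w)=w+\rad U$. It is $F$-linear (that is, $\mathrm{id}_F$-semilinear); it is injective since $W\cap\rad U=0$, and surjective since any $u\in U$ writes as $u=w+r$ with $w\in W$, $r\in\rad U$, giving $u+\rad U=\omega(w)$; and it preserves the form, $\sk{\omega(w)}{\omega(w')}=\sk{w}{w'}$, which is precisely the induced form on the subspace $W_F$. Hence $\omega$ is an isomorphism of sesquilinear spaces and $W_F\cong U_F{\slash}\rad U$.

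I do not expect a genuine obstacle here: once orthosymmetry is invoked the argument is routine. The only delicate point is conceptual rather than technical — in an arbitrary sesquilinear space the left and right radicals of $U_F$ need not coincide, and both well-definedness and non-degeneracy of the quotient form would fail; it is exactly the symmetry of $\perp$, guaranteed by pre-hermiticity through \Fact \ref{sym}, that removes this difficulty. One might also note in passing that such a complement $W$ of $\rad U$ in $U$ always exists and that the isomorphism then exhibits $W_F$ as non-degenerate, in accordance with the remark preceding \Fact \ref{du}.
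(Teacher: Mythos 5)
Your proof is correct and follows essentially the same route as the paper, whose proof consists only of naming the isomorphism $w\mapsto w+\rad U$ and leaves the routine verifications implicit; you supply exactly those verifications, correctly identifying orthosymmetry (via the $\varepsilon$-hermitian property) as the point that makes the quotient form well defined and non-degenerate.
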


\begin{proof}
The map $w\mapsto w+\rad U$ establishes an isomorphism of sesquilinear spaces
from $W_F$ onto $U_F{\slash}\rad U$.
\end{proof}

\section{Rings and algebras with involution}

When mentioning rings, we always mean associative rings $R$ possibly without unit.
 The principal right ideal $aR$ generated by $a$ equals
$\{za\mid z\in\mathbb{Z}\}\cup\{ar\mid a\in I\}$.
A $\ast$-\emph{ring} is a ring $R$ endowed with an \emph{involution};
that is, an anti-automorphism $x\mapsto x^\ast$ of order $2$, such that
\[
(r+s)^\ast=r^\ast+s^\ast,\quad
(rs)^\ast=s^\ast r^\ast,\quad
(r^\ast)^\ast=r\quad\text{for all}\ r,s\in R,
\]
cf. \cite[\S 1]{herstein}, \cite[\S 2.13]{rowen},
\cite[\S  4]{beidar}. 

\medskip
An element $e$ of a $\ast$-ring $R$ is a \emph{projection}, if $e=e^2=e^\ast$.
A $\ast$-ring $R$ is \emph{proper} if $r^\ast r=0$ implies $r=0$ for all $r\in R$.
Throughout this paper, let $\Lambda$ be a commutative $\ast$-ring with unit.
A $\ast$-$\Lambda$-\emph{algebra} $R$ is an associative (left) unital $\Lambda$-algebra, with unit $1$ considered a constant,
which is a $\ast$-ring   such that
\[
(\lambda r)^\ast=\lambda^\ast r^\ast\ \text{for all}\ r\in R,\ \lambda\in \Lambda.
\] 
For example, involutive  Banach algebras are $\ast$-$\mathbb{C}$-algebras.
Unless stated otherwise, we consider the scalars $\lambda\in \Lambda$
as unary operations $r\mapsto\lambda r$ on $R$; in other words, we consider
$\ast$-$\Lambda$-algebras as $1$-sorted algebraic structures.
The map
$\lambda \mapsto \lambda 1$ is a $\ast$-ring homomorphism from
$\Lambda$ into the center of $R$; 
in view of this, denoting both involutions on $R$ and on $\Lambda$
by the same $^\ast$  
should not cause confusion; also, 
 most arguments concerning the action of  
$\Lambda$  are obvious and left to the reader.

A $\ast$-\emph{ideal} of a $\ast$-ring or an $\ast$-$\Lambda$-algebra $R$
is an  ideal $I$ with $I=I^\ast$, where $I^\ast=\{r^\ast\mid r\in I\}$.
We call $R$ \emph{strictly subdirectly irreducible}
if the underlying  ring  is subdirectly irreducible, i.e. has a smallest non-zero ideal $I$;
in this case, $I=I^\ast$.
Similarly, $R$ is \emph{strictly simple} if $0$ and $R$ are the only ideals.
In the $\ast$-ring literature, such $\ast$-rings are called `simple',
while simple $\ast$-rings are called `$\ast$-simple'
 cf. \cite{beidar2}.

 The 
[right] \emph{socle}
$\Soc(R)$ consists of 
all  $a \in R$ such that $aR$ is the sum of finitely many
minimal right ideals;  $\Soc(R)$  is an ideal of $R$.
We say that a $\ast$-$\Lambda$-algebra is \emph{atomic}
if any non-zero right ideal contains a minimal 
 one. 

A ring $R$ is [\emph{von Neumann}] \emph{regular} if for any $a\in R$,
there is an element $x\in R$ such that $axa=a$; such an element is called a \emph{quasi-inverse} of $a$.
A $\ast$-ring $R$ is $\ast$-\emph{regular} if it is regular and proper.
The reader interested in more details is referred to any of
 \cite{Halp,ber,ber4,mae,good,sko}.

\xc{For any subset $X$ of a ring $R$, we call the set
\[
{\sf Ann}^l(X)=\{s\in R\mid sx=0\ \text{for all}\ x\in X\}
\]
the \emph{left annihilator} of $X$. The \emph{right annihilator} ${\sf Ann}^r(X)$
is defined symmetrically.}
Recall that, for  a vector space $V_F$ over a division ring $F$,  $\End(V_F)$ denote the set of all endomorphisms of $V_F$.

\begin{fact}\lab{reri}\begin{enumerate}
\item
For any vector space $V_F$, $\End(V_F)$ is a regular simple ring.
\item 
A ring $R$ is regular if it admits a regular ideal $I$ such that $R\slash I$ is regular.
Any ideal of a regular ring is  regular.
\item
A ring $R$ is regular [$\ast$-ring $R$ is $\ast$-regular] if and only if
for any $a\in R$ there is an idempotent $[$a $($unique$)$ projection, respectively$]$ $e\in R$
such that $aR=eR$.
\item
For any $a$, $b$ in a regular ring $R$,
there is an idempotent $e\in aR+bR$ such that $ea=a$ and $eb=b$.
\xc{\item
For idempotents $e$, $f$ in a regular ring $R$,
${\sf Ann}^l(eR)=\{s\in R\mid se=0\}=R(1-e)$
and $eR\subseteq fR$ if and only if ${\sf Ann}^l(fR)\subseteq{\sf Ann}^l(eR)$.}
\item
Homomorphic images and direct products of regular $(\ast$-regular$)$
$\ast$-$\Lambda$-algebras
are again regular $($ $\ast$-regular$)$
$\ast$-$\Lambda$-algebras.
\end{enumerate}
\end{fact}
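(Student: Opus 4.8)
These are all standard facts about von Neumann regular rings, and for parts (i)--(iv) the plan is to cite \cite{good} (or \cite{Halp,sko}); since they are used throughout the paper, I would nevertheless record short self-contained arguments. For (i) I would run the usual kernel-complement construction: given $\phi\in\End(V_F)$, pick a basis of $\kerr\phi$, extend it to a basis of $V_F$, let $\psi$ invert $\phi$ on the span of the added basis vectors and vanish on a complement of $\im\phi$, and check $\phi\psi\phi=\phi$; the remaining ring-theoretic assertion about $\End(V_F)$ is classical and I would point to \cite{good,sko}. For the easy half of (ii) I would note that $a(xax)a=a$ and $xax\in I$ whenever $axa=a$ and $a\in I$; for the other half I would lift a quasi-inverse modulo $I$, i.e. pick $x$ with $w:=a-axa\in I$ and $v\in I$ with $wvw=w$, and verify that $u:=x+v-vax-xav+xavax$ satisfies $aua=axa+wvw=a$.

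For (iii), in the ring case I would take $e=ax$ for a quasi-inverse $x$, so that $e^2=e$ and $eR=aR$ (since $a=ea$ and $e\in aR$); conversely I would extract from an idempotent $e$ with $aR=eR$ first the relation $ea=a$ and then, by rewriting $e$ as an element of $aR$ in the form $e=ax'$, the identity $ax'a=ea=a$ -- here a little care is needed because rings need not be unital. For the $\ast$-regular refinement I would replace $ax$ by the orthogonal projection onto $aR$: its existence is the classical statement that a $\ast$-regular ring is a Rickart $\ast$-ring (I would cite \cite{ber,mae}, or sketch it using a self-adjoint quasi-inverse of $a^\ast a$ together with properness), and its uniqueness is the two-line observation that projections $e,f$ with $eR=fR$ satisfy $fe=e$ and $ef=f$, whence $e=e^\ast=(fe)^\ast=ef=f$.

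Part (iv) I would deduce from (iii): write $aR=e_1R$ with $e_1a=a$, set $b':=b-e_1b$, write $b'R=e_2R$ with $e_2$ idempotent, observe that $e_1b'=0$ forces $e_1e_2=0$, and then check that $e:=e_1+e_2-e_2e_1$ is idempotent with $ee_1=e_1$, $eR=e_1R+e_2R$, and $e\in e_1R+e_2R\subseteq aR+bR$; this yields $ea=e(e_1a)=e_1a=a$, and $eb=b$ because $bR\subseteq e_1R+b'R=eR$. For (v) the regularity of homomorphic images and of direct products is immediate (apply a homomorphism to $axa=a$; use componentwise quasi-inverses in a product) and the $\Lambda$-operations, acting centrally, cause no trouble; the only point that needs an argument is that $\ast$-regularity is inherited. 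For a $\ast$-ideal $I$ of a $\ast$-regular ring $R$, if $\bar s^\ast\bar s=0$ in $R\slash I$ then $s^\ast s\in I$; taking a self-adjoint quasi-inverse $p$ of $s^\ast s$, the idempotent $e:=ps^\ast s$ lies in $I$ while $s=se$ by properness of $R$, so $s\in I$ and $R\slash I$ is proper; for a product, properness passes componentwise.

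I expect the main obstacle to be precisely this last point, together with the $\ast$-regular half of (iii): showing that the quotient of a $\ast$-regular ring by a $\ast$-ideal is again proper, and dually that every principal right ideal is generated by a unique projection, are the only places where one genuinely exploits the interaction between regularity and the involution rather than transcribing a routine computation; everything else is bookkeeping.
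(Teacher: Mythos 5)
Your proposal is correct, and it does considerably more than the paper, whose ``proof'' of this Proposition consists of citations to Berberian, Goodearl and von Neumann for (i)--(iv) plus the remark that in (v) the product case is obvious and the homomorphic-image case follows from (iii). Your worked-out arguments are the standard ones and all check out: the lifting identity in (ii) gives $aua=axa+wvw=a$ as claimed; the non-unital subtlety you flag in (iii) is real but resolvable (from $a=ea=za^2+ara$ with $e=za+ar$ one substitutes once more to get $a=a(z^2a+zar+r)a$); and $e=e_1+e_2-e_2e_1$ in (iv) is exactly the classical construction, with $e_1e_2=0$ making the idempotency computation work. The one point where your route genuinely differs from the paper's is $\ast$-regularity of homomorphic images in (v): you verify properness of $R/I$ directly, via a quasi-inverse of $s^\ast s$ and the computation $(se-s)^\ast(se-s)=0$, whereas the paper deduces it from (iii) by noting that projections map to projections under a surjective $\ast$-homomorphism, so every principal right ideal of the image is again generated by a projection; both arguments are equally short and valid, and yours has the minor merit of isolating exactly where properness is used. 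One caveat on (i): the simplicity of $\End(V_F)$, which you (like the paper) delegate to the references, actually fails for infinite-dimensional $V_F$, since the finite-rank endomorphisms form a proper non-zero ideal; the paper only ever invokes (i) for finite-dimensional spaces, where simplicity does hold, but a self-contained write-up should restrict the simplicity claim accordingly.
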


\begin{proof}
Statements (i)-(v) are well known, cf. \cite[1.26]{ber4}, \cite[Lemma
1.3]{good}, \cite[Theorem 1.7]{good}. For the existence of projections
see \cite[Part II Chapter IV Theorem 4.5]{Halp} or 
\cite[Proposition 1.13]{ber4}. 
In (v), the claim for products is obvious, 
for homomorphic images it  follows by (iii).
\end{proof}

\noindent
In particular, in the $\ast$-regular case, any ideal is a $\ast$-ideal by \Fact \ref{reri}(iii);
thus subdirectly irreducibles [simples] are strictly subdirectly irreducible [strictly simple, respectively].
Call a $\ast$-$\Lambda$-algebra $R$ \emph{primitive} if the underlying ring is primitive, that is,
admits a faithful irreducible module.

\begin{fact}\lab{prim}
Every regular strictly subdirectly $\ast$-$\Lambda$-algebra  
is primitive.
\end{fact}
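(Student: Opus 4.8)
The plan is to exhibit a faithful simple right $R$-module. Since $R$ is a unital ring which, being subdirectly irreducible, is non-trivial, we have $1\ne 0$, so $R$ has maximal right ideals. Let $I$ be the heart of $R$, i.e.\ its smallest non-zero ideal. The first step is to locate a non-zero idempotent inside $I$: pick any $0\ne a\in I$ and apply \Fact \ref{reri}(iii) to the regular ring $R$ to obtain an idempotent $e$ with $aR=eR$; then $e\ne 0$ and, since $I$ is a right ideal, $e\in aR\subseteq I$. This is the only place where von Neumann regularity is used, and it is precisely what excludes the degenerate case of a nilpotent heart, in which $R$ would be subdirectly irreducible but not primitive.

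Next I would separate $e$ from a suitable maximal right ideal. The right ideal $(1-e)R$ is proper, because $1=(1-e)r$ would force $e=e\cdot 1=e(1-e)r=0$; extending it to a maximal right ideal $M$ via Zorn's Lemma, we get $e\notin M$, since otherwise $1=e+(1-e)\in M$. Consider now the simple right $R$-module $R{\slash}M$. Its annihilator $K=\{r\in R\mid Rr\subseteq M\}$ is a two-sided ideal of $R$ contained in $M$ -- in fact the largest such. Since $e$ lies in $I$ but not in $M$, the ideal $I$ is not contained in $M$, hence not in $K$; as $I$ is the smallest non-zero ideal, this forces $K=0$. Therefore $R{\slash}M$ is a faithful simple right $R$-module, so $R$ is right primitive, hence primitive in the present sense (and, using the involution, left primitive as well, though this is not needed).

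I do not expect a real obstacle: once the idempotent of the heart is in hand, the rest is the familiar argument that a semiprimitive subdirectly irreducible ring is primitive. The only points requiring a little care are that $R$ genuinely carries a unit -- so that $1-e$ makes sense and Zorn's Lemma applies -- and the routine verification that $K$ is two-sided; both are straightforward. Alternatively, the needed maximal right ideal could be produced from the observation that $I\not\subseteq J(R)=0$, regularity yielding vanishing Jacobson radical directly, but the idempotent phrasing keeps the proof closest to \Fact \ref{reri}.
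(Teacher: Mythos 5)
Your proof is correct, but it takes a genuinely different route from the paper's. The paper argues structurally: it quotes the fact that a regular ring has zero Jacobson radical, hence is a subdirect product of primitive rings, and then uses subdirect irreducibility to conclude that one of the subdirect projections must already be injective, so $R$ itself is primitive. You instead build a faithful simple module by hand: regularity supplies a non-zero idempotent $e$ in the heart $I$, a maximal right ideal $M\supseteq(1-e)R$ misses $e$, and the annihilator of $R{\slash}M$ is a two-sided ideal contained in $M$, hence cannot contain $I$, hence is zero. Both arguments invoke regularity at exactly one point and for the same underlying purpose -- to rule out a heart that is killed by every simple module -- but yours does so via the idempotent $e\in I$ and the paper's via the vanishing of the radical. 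What your version buys is self-containedness: it needs only Proposition \ref{reri}(iii), the unit of $R$, and Zorn's Lemma, rather than the cited structure theory of semiprimitive rings; what the paper's buys is brevity given the standard references. Your side remarks are also in order: the paper's notion of ``primitive'' is stated simply as admitting a faithful irreducible module, so the right-module construction suffices, and the observation that the involution transports this to left primitivity is correct but not needed.
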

\begin{proof}
Any regular ring is semi-simple, i.e. has zero radical, cf. \cite[Corollary 1.2]{good}.
Hence the ring  $R$ is a subdirect product of primitive rings, cf. \cite[Chapter I, \S 3, Theorem 1]{jac}.
Being subdirectly irreducible, the ring $R$ is therefore  primitive, cf. the proof of \cite[Corollary 3.4]{Flo}.
\end{proof}

\section{Endomorphism rings}

In the sequel, let $F$ be  
 a $\ast$-$\Lambda$-algebra where the underlying ring of $F$ is a division ring and $ V_F$ a pre-hermitian space over $F$.
$\End(V_F)$ denotes the
unital $\Lambda$-algebra of $V_F$ of all endomorphisms of the
vector space $V_F$. The algebra 
$\LEnd(V_F)$ is defined in the following Proposition
which is obvious in view of \Fact \ref{sym}.

\begin{fact}\lab{symm2}
The endomorphisms of $V_F$ having an adjoint
form a $\Lambda$-subalgebra
$\LEnd(V_F)$ of $\End(V_F)$
which is  $\ast$-$\Lambda$-algebra  with the involution $\phi\mapsto \phi^\ast$.
If $V'_{F'}$ is similar to $V_F$ then $\LEnd(V_F)$ and 
$\LEnd(V'_{F'})$ are isomorphic $\ast$-$\Lambda$-algebras.
\end{fact}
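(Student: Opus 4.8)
The plan is to prove \Fact \ref{symm2} in two parts: first that $\LEnd(V_F)$ is closed under the relevant operations and forms a $\ast$-$\Lambda$-subalgebra, and then that similitude of spaces induces an isomorphism of these $\ast$-$\Lambda$-algebras. For the first part, I would argue that each of the $\Lambda$-algebra operations applied to adjointable endomorphisms again yields an adjointable endomorphism, and that the assignment $\varphi\mapsto\varphi^\ast$ respects them appropriately. Concretely: if $\varphi$ has adjoint $\varphi^\ast$ and $\psi$ has adjoint $\psi^\ast$, then $\psi^\ast+\varphi^\ast$ is an adjoint of $\varphi+\psi$, and $\varphi^\ast\circ\psi^\ast$ is an adjoint of $\psi\circ\varphi$ — the latter already recorded in the excerpt as $(\chi\circ\phi)^\ast=\phi^\ast\circ\chi^\ast$. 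For the scalar action, one checks that $\lambda\varphi$ (the map $v\mapsto\lambda(\varphi(v))=\varphi(v)\lambda^{\nu^{-1}}$, using that $\Lambda$ maps into the center) has adjoint $\lambda^\ast\varphi^\ast$, using that $\lambda 1$ lies in the center of $F$ and that $\nu$ restricted to the image of $\Lambda$ agrees with the involution on $\Lambda$. Since $V_F$ is non-degenerate, adjoints are unique (as noted in the excerpt), so these formulas define $\varphi\mapsto\varphi^\ast$ unambiguously. One then verifies $(\varphi^\ast)^\ast=\varphi$; this is exactly statement (v) of \Fact \ref{symm}, which holds because $V_F$ is pre-hermitian, hence orthosymmetric when $\dim V_F>1$, and is trivially true when $\dim V_F\leq 1$. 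Finally $\End(V_F)$ is unital with identity $\id$, and $\id^\ast=\id$, so $\LEnd(V_F)$ contains $1$; it is a $\Lambda$-subalgebra because it is closed under the $\Lambda$-action, addition, and multiplication, and it is a $\ast$-$\Lambda$-algebra because $(\lambda\varphi)^\ast=\lambda^\ast\varphi^\ast$, $(\varphi+\psi)^\ast=\varphi^\ast+\psi^\ast$, $(\psi\varphi)^\ast=\varphi^\ast\psi^\ast$, and $\varphi^{\ast\ast}=\varphi$ all hold.

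For the second part, by the remark in the excerpt that any similitude is an isomorphism of sesquilinear spaces followed by a scaling, it suffices to treat these two cases separately. If $V'_{F'}$ arises from $V_F$ by scaling with $\mu$, then $V_F$ and $V'_{F'}$ have the same underlying vector space and the same underlying division ring, hence literally the same endomorphism ring $\End(V_F)=\End(V'_{F'})$ as $\Lambda$-algebras; moreover \Fact \ref{sym} states that orthogonality and adjointness are unchanged by scaling, so $\LEnd(V_F)=\LEnd(V'_{F'})$ with the same involution, and the identity map is the desired isomorphism. If instead $\omega\colon V\to V'$ is an isomorphism of sesquilinear spaces, $\alpha$-semilinear for an isomorphism $\alpha\colon F\to F'$ with $\alpha\circ\nu=\nu'\circ\alpha$ and $\sk{\omega(v)}{\omega(w)}'=\alpha(\sk{v}{w})$, then $\varphi\mapsto\omega\circ\varphi\circ\omega^{-1}$ is a ring isomorphism $\End(V_F)\to\End(V'_{F'})$ (one must check it is additive and $F'$-linear in the target, which uses $\alpha$-semilinearity of $\omega$), and it is a $\Lambda$-algebra isomorphism provided $\alpha$ is compatible with the $\Lambda$-structures — which it is, since $\alpha\circ\nu=\nu'\circ\alpha$ together with the canonical maps $\Lambda\to F$, $\Lambda\to F'$ forces $\alpha(\lambda 1_F)=\lambda 1_{F'}$ as both sides are the image of $\lambda$ under the appropriate central embedding; alternatively one simply assumes $\alpha$ is a $\ast$-$\Lambda$-algebra isomorphism, which is the natural reading of ``isomorphism of sesquilinear spaces'' in this $\Lambda$-algebra setting. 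Then a direct computation using $\sk{\omega(v)}{\omega(w)}'=\alpha(\sk{v}{w})$ shows that $\omega\varphi\omega^{-1}$ has adjoint $\omega\varphi^\ast\omega^{-1}$, so the isomorphism restricts to a $\ast$-$\Lambda$-algebra isomorphism $\LEnd(V_F)\to\LEnd(V'_{F'})$.

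The only genuinely delicate point — and where I would be most careful — is bookkeeping the interaction of the anti-automorphism $\nu$ with the $\Lambda$-action when verifying $(\lambda\varphi)^\ast=\lambda^\ast\varphi^\ast$ and when checking that the semilinear conjugation respects the $\Lambda$-structure. One has to keep track of the fact that the $\Lambda$-action on $V_F$ is defined via $\lambda v = v\lambda^{\nu^{-1}}$ in the right-space convention (or directly via the central embedding $\Lambda\to F$), and then compute $\sk{(\lambda\varphi)(u)}{v}$ and $\sk{u}{(\lambda^\ast\varphi^\ast)(v)}$ and see they agree, using commutativity of $\lambda 1$ with everything in $F$ and the identity $(\lambda 1)^\nu=(\lambda^\ast)1$. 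Once this is set up correctly, the verification is a short computation. Everything else is routine once one invokes \Fact \ref{sym}(v) for $\varphi^{\ast\ast}=\varphi$ and the already-recorded composition rule for adjoints, so I would present the proof compactly, spelling out only the scalar-compatibility check and the conjugation-preserves-adjoints check in the isomorphism case.
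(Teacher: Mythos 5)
Your proof is correct and follows the same route the paper intends: the paper states this Proposition without proof, declaring it obvious in view of Proposition~\ref{sym}, and your argument is precisely that routine verification (closure under the operations with uniqueness of adjoints, $\varphi^{\ast\ast}=\varphi$ from Proposition~\ref{sym}(v), and the decomposition of a similitude into an isomorphism followed by a scaling, the scaling case being handled by the first sentence of Proposition~\ref{sym}). The only slight overstatement is calling the case $\dim V_F\leq 1$ of $\varphi^{\ast\ast}=\varphi$ ``trivially true'': there one still needs the ``Furthermore'' clause of Proposition~\ref{sym} (that $\nu$ is an involution on $F$ and $\vep$ is central) to carry out the one-dimensional computation, but the claim does hold.
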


\noindent
Observe that for $v\in V$, $\lambda\in\Lambda$, and $\varphi\in\LEnd(V_F)$, one has
\[
(\lambda\varphi)(v)=\varphi(v)\lambda,\quad
(\lambda\varphi)^\ast=\lambda^\ast\varphi^\ast.
\]
Also recall the well known facts that for any $\phi,\psi \in \LEnd(V_F)$
\[ \im \phi \subseteq (\im \psi)^\perp 
\;\Leftrightarrow \; \phi^*\circ \psi =0
\;\mbox{ and }  (\im \phi)^\perp =\kerr \phi^*.\]

\begin{fact}\lab{pu}
For any subspace $U$ of $V_F$, one has $V=U\oplus U^\perp $ if and only if there is 
a projection $\pi_U\in\LEnd(V_F)$ such that $U=\im\pi_U$.
Such a projection $\pi_U$ is unique.
\end{fact}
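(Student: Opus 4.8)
The plan is to establish the two directions separately, using the characterization of adjointness recalled just before the statement, namely that for $\varphi,\psi\in\LEnd(V_F)$ one has $\im\varphi\subseteq(\im\psi)^\perp$ iff $\varphi^\ast\circ\psi=0$, together with $(\im\varphi)^\perp=\kerr\varphi^\ast$. First I would treat the direction ``if $V=U\oplus U^\perp$ then such a $\pi_U$ exists''. Given the direct decomposition, let $\pi$ be the (vector-space) projection onto $U$ along $U^\perp$, so $\im\pi=U$ and $\kerr\pi=U^\perp$. I would check that $\pi$ has an adjoint by exhibiting it: for $u,u'\in U$ and $w,w'\in U^\perp$ one computes $\sk{\pi(u+w)}{u'+w'}=\sk{u}{u'+w'}=\sk{u}{u'}$ using $u\perp w'$, and symmetrically $\sk{u+w}{\pi(u'+w')}=\sk{u+w}{u'}=\sk{u}{u'}$ using $w\perp u'$ (here orthosymmetry of $V_F$, part of the standing pre-hermitian hypothesis via \Fact \ref{symm}, lets me pass from $u\perp w'$ to $w'\perp u$). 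Hence $\pi^\ast=\pi$, so $\pi\in\LEnd(V_F)$ and $\pi$ is a projection with $\im\pi=U$; set $\pi_U:=\pi$.

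For the converse, suppose $\pi_U\in\LEnd(V_F)$ is a projection with $U=\im\pi_U$. Being a projection of the vector space $V_F$, it already gives $V=\im\pi_U\oplus\kerr\pi_U=U\oplus\kerr\pi_U$, so it suffices to identify $\kerr\pi_U$ with $U^\perp$. I would argue as follows: since $\pi_U$ is a projection, $1-\pi_U$ is a projection with image $\kerr\pi_U$ and kernel $U$; moreover $\pi_U^\ast$ is again a projection (as $(\pi_U^\ast)^2=(\pi_U^2)^\ast=\pi_U^\ast$), with $\im\pi_U^\ast=(\kerr\pi_U)^\perp$ and $\kerr\pi_U^\ast=(\im\pi_U)^\perp=U^\perp$, using the displayed identities and orthosymmetry (so that $(\im\pi_U)^\perp=\kerr\pi_U^\ast$ and also the reverse relation $(\kerr\pi_U)^\perp=\im\pi_U^{\ast\ast}=\im\pi_U^\ast$ hold). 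Now $\pi_U\circ\pi_U=\pi_U$ together with $\im\pi_U\subseteq(\kerr\pi_U^\ast)^\perp$ type bookkeeping pins down that $\kerr\pi_U=\kerr\pi_U^{\ast\ast}$; since $\kerr\pi_U^\ast=U^\perp$ and $\pi_U^{\ast\ast}=\pi_U$ by orthosymmetry, we get $\kerr\pi_U=U^\perp$, hence $V=U\oplus U^\perp$.

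For uniqueness, suppose $\pi,\pi'\in\LEnd(V_F)$ are both projections with image $U$. From the first part of the proof, $\kerr\pi=U^\perp=\kerr\pi'$, since in each case the kernel of an adjointable projection onto $U$ must be $(\im\pi)^\perp=\kerr\pi^\ast$ and then, by orthosymmetry, equals $U^\perp$ independently of the projection chosen. A vector-space projection is determined by its image and kernel, so $\pi=\pi'$. The main obstacle I anticipate is the bookkeeping with adjoints in the converse direction — making sure one only uses the orthosymmetry/reflexivity built into ``pre-hermitian'' (via \Fact \ref{symm}, condition (v): $\varphi^\ast$ exists implies $\varphi^{\ast\ast}=\varphi$) and the two displayed identities, rather than silently assuming $V=U\oplus U^\perp$, which is exactly what is to be proved; everything else is a routine projection computation.
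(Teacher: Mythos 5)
The paper states \Fact \ref{pu} without proof, so there is nothing to compare against line by line; judged on its own terms, your forward direction and your uniqueness argument are correct. For $V=U\oplus U^\perp$ you take the linear projection $\pi$ onto $U$ along $U^\perp$, verify $\sk{\pi(v)}{v'}=\sk{v}{\pi(v')}$ on the decomposition (using orthosymmetry to pass from $u'\perp w$ to $w\perp u'$), and conclude $\pi^\ast=\pi$; and uniqueness follows since any adjointable self-adjoint idempotent with image $U$ has kernel $(\im\pi)^\perp=\kerr\pi^\ast=\kerr\pi=U^\perp$, and an idempotent is determined by its image and kernel. That is exactly the intended routine argument.

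The converse direction as written, however, has a genuine soft spot. Recall that in this paper a \emph{projection} in a $\ast$-ring means $e=e^2=e^\ast$, so the whole argument should be the one line $\kerr\pi_U=\kerr\pi_U^\ast=(\im\pi_U)^\perp=U^\perp$, whence $V=\im\pi_U\oplus\kerr\pi_U=U\oplus U^\perp$. Your version never explicitly invokes $\pi_U=\pi_U^\ast$: you derive $\kerr\pi_U^\ast=U^\perp$ and $\pi_U^{\ast\ast}=\pi_U$, but these two facts alone do \emph{not} yield $\kerr\pi_U=U^\perp$ --- you need $\kerr\pi_U=\kerr\pi_U^\ast$, i.e.\ self-adjointness. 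This is not mere pedantry: for a mere idempotent $\pi_U\in\LEnd(V_F)$ with $\im\pi_U=U$ the conclusion is false in general. Indeed, by \Fact \ref{alt}(v), in an alternate space every idempotent $\pi$ with $\dim\im\pi=1$ satisfies $\pi^\ast\circ\pi=0$, so $\im\pi\subseteq\kerr\pi^\ast=(\im\pi)^\perp$ and hence $V\neq U\oplus U^\perp$ for $U=\im\pi$. So state at the outset of the converse that $\pi_U^\ast=\pi_U$ and the argument becomes both correct and much shorter; the detour through $1-\pi_U$, $\im\pi_U^\ast=(\kerr\pi_U)^\perp$ (which itself is not one of the displayed identities and would need closedness of $\im\pi_U^\ast$) and $\pi_U^{\ast\ast}$ can be deleted.
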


\noindent
Projection $\pi_U$ in terms of \Fact \ref{pu} is called the \emph{orthogonal projection} onto $U$.
Par abus de langage, $\pi_U$ also denotes the induced epimorphism $V\to U$,
while $\varepsilon_U$ denotes the inclusion map  $U\to V$.
Observe that $\pi_U$ and $\varepsilon_U$ are adjoints of each other in the sense that
\[
\sk{\varepsilon_U(u)}{v}=\sk{u}{\pi_U(v)}\quad\text{for all}\ u\in U,\ v\in V.
\]
Moreover, the computational rules of $\LEnd(V_F)$ yield, in particular,
$(\varepsilon_U\varphi\pi_U)^\ast=\varepsilon_U\varphi^\ast\pi_U$ for any $\varphi\in\LEnd(U_F)$.
Finally, $\pi_U\varepsilon_U=\id_U$, while $\pi_U\varepsilon_U\pi_U=\pi_U$ and
$U^\perp=\kerr(\varepsilon_U\pi_U)$.

Let $\dim V_F=n<\omega$. We say that bases $\{v_1,\ldots,v_n\}$ and $\{w_1,\ldots,w_n\}$ of $V_F$
are a \emph{dual pair of bases},
whenever $\sk{v_i}{w_i}=1$ for all $i\in\{1,\ldots,n\}$ and $\sk{v_i}{w_j}=0$ for all $i\neq j$.

\begin{fact}\lab{db}
Let $V_F$ be a pre-hermitian space and let $\dim V_F=n<\omega$.
\begin{enumerate}
\item
There is a dual pair of bases $\{v_1,\ldots,v_n\}$ and $\{w_1,\ldots,w_n\}$ of $V_F$.
Moreover, for any $\varphi\in\End(V_F)$ with $\varphi(v_j)=\sum_iw_ia_{ij}$,
$\varphi^\ast\in\End(V_F)$ exists and $\varphi^\ast(v_i)=\sum_jw_ja_{ij}^\ast$.
In particular, $\LEnd(V_F)$ contains all endomorphisms of $V_F$ and $\LEnd(V_F)$
is regular.
\item
$\LEnd(V_F)$ is $\ast$-regular if and only if $V_F$ is anisotropic.
\item
If $U_F\in\mathbb{O}(V_F)$ then 
$\LEnd(U_F) \times \LEnd(U^\perp_F)$
embeds into $\LEnd(V_F)$, in particular
$\LEnd(U_F)$ is a homomorphic image if a regular
$\ast$-$\Lambda$-subalgebra of $\LEnd(V_F)$.
\end{enumerate}
\end{fact}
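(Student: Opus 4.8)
The plan is to prove the three parts of Proposition \ref{db} in turn, each reducing to a concrete computation with a suitably chosen basis and then citing the earlier structural results.

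For part (i), I would first produce the dual pair of bases. Start with any basis, use non-degeneracy to find, for each basis vector $v_i$, a vector on which the form does not vanish, and then apply Gram--Schmidt-type corrections (available since we are in a pre-hermitian, hence $\varepsilon$-hermitian, space, so orthogonality is symmetric by \Fact \ref{sym}) to split off one-dimensional non-degenerate pieces inductively; \Fact \ref{du} guarantees we can exhaust $V_F$ by finite-dimensional non-degenerate subspaces, which in the finite-dimensional case just means $V_F$ itself is such a sum, so a dual pair $\{v_i\}$, $\{w_i\}$ exists. Given such a pair, the formula for $\varphi^\ast$ is forced: writing $\varphi(v_j)=\sum_i w_i a_{ij}$, one checks $\sk{\varphi(v_j)}{w_k}=a_{kj}$ and then verifies directly that the map defined by $\varphi^\ast(v_i)=\sum_j w_j a_{ij}^\ast$ satisfies $\sk{\varphi(u)}{v}=\sk{u}{\varphi^\ast(v)}$ on the spanning sets $\{v_j\}$ and $\{w_k\}$ (here one uses that $\lambda\mapsto\lambda^\ast$ is an involution, again from \Fact \ref{sym}, and the $\varepsilon$-hermitian identity to move $\varepsilon$ around; the $\varepsilon$'s cancel because they appear symmetrically). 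Hence every endomorphism has an adjoint, so $\LEnd(V_F)=\End(V_F)$, and this is regular by \Fact \ref{reri}(i).

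For part (ii), the direction ``$V_F$ anisotropic $\Rightarrow$ $\LEnd(V_F)$ $\ast$-regular'' amounts to showing $\LEnd(V_F)$ is proper: if $\varphi^\ast\varphi=0$ then $\sk{\varphi(v)}{\varphi(v)}=\sk{v}{\varphi^\ast\varphi(v)}=0$ for all $v$, so anisotropy forces $\varphi(v)=0$, i.e. $\varphi=0$; combined with regularity from (i) this gives $\ast$-regularity. Conversely, if $V_F$ is not anisotropic, pick $0\neq v$ with $\sk{v}{v}=0$; then the rank-one map $\varphi$ with $\varphi(x)=v\sk{v}{x}$ (up to choosing the right side) satisfies $\varphi\neq 0$ but $\varphi^\ast\varphi=0$ by a short computation, so $\LEnd(V_F)$ is not proper, hence not $\ast$-regular. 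I would take a little care with left/right conventions here but it is routine.

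For part (iii), with $U_F\in\mathbb{O}(V_F)$ we have $V=U\oplus U^\perp$ and both summands non-degenerate, so $U^\perp\in\mathbb{O}(V_F)$ as well. Using the orthogonal projections $\pi_U,\pi_{U^\perp}$ and inclusions $\varepsilon_U,\varepsilon_{U^\perp}$ from \Fact \ref{pu}, I would define $\Phi\colon\LEnd(U_F)\times\LEnd(U^\perp_F)\to\LEnd(V_F)$ by $\Phi(\varphi,\psi)=\varepsilon_U\varphi\pi_U+\varepsilon_{U^\perp}\psi\pi_{U^\perp}$, check via $\pi_U\varepsilon_U=\id_U$, $\pi_U\varepsilon_{U^\perp}=0$, etc., that this is a ring homomorphism, that it is injective, and that it respects $\ast$ using the identity $(\varepsilon_U\varphi\pi_U)^\ast=\varepsilon_U\varphi^\ast\pi_U$ already recorded after \Fact \ref{pu}; it also respects $\Lambda$. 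Finally, the first-coordinate projection $\LEnd(U_F)\times\LEnd(U^\perp_F)\to\LEnd(U_F)$ exhibits $\LEnd(U_F)$ as a homomorphic image of a regular (by (i)) $\ast$-$\Lambda$-subalgebra of $\LEnd(V_F)$. I expect the only mildly delicate point across the whole proof to be the explicit construction of the dual pair of bases in (i) when the space has isotropic vectors — one must argue that splitting off non-degenerate one-dimensional summands can always be continued, which is exactly what \Fact \ref{du} (together with symmetry of orthogonality) supplies — so there is no real obstacle, just bookkeeping.
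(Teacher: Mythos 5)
Your parts (ii) and (iii) are essentially fine. In (ii) your contrapositive via the explicit rank-one witness $\varphi(x)=v\sk{v}{x}$ (whose adjoint is $y\mapsto v\vep^{-1}\sk{v}{y}$, so $\varphi^\ast\varphi=0$ while $\varphi\neq 0$ by non-degeneracy) is a legitimate and rather more elementary alternative to the paper's argument, which instead uses $\ast$-regularity to replace the idempotent onto $vF$ by an \emph{orthogonal} projection and deduces $v\not\perp v$; and in (iii) the image of your map $\Phi$ is exactly the subalgebra of endomorphisms leaving $U$ and $U^\perp$ invariant that the paper works with.

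The genuine gap is in your construction of the dual pair of bases in (i). You propose to split off one-dimensional non-degenerate subspaces inductively, i.e.\ to find $u$ with $\sk{u}{u}\neq 0$ and write $V=uF\oplus^\perp (uF)^\perp$. This fails precisely in the situation the paper is most concerned with: a non-degenerate alternate (symplectic) space has $\sk{v}{v}=0$ for \emph{every} $v$, so there is no anisotropic vector and no one-dimensional orthogonal summand to split off; yet such spaces are pre-hermitian and explicitly in scope (cf.\ \Fact \ref{alt}(v) and the introduction's emphasis on spaces with isotropic points). \Fact \ref{du} does not repair this: it only provides finite-dimensional orthogonal summands, whose minimal dimension in the alternate case is $2$ (hyperbolic planes), and for finite-dimensional $V$ it reduces to the trivial statement $V\in\mathbb{O}(V_F)$. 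The construction the paper relies on (via Jacobson or Kaplansky) uses no orthogonal splitting at all: non-degeneracy makes $w\mapsto\sk{\,\cdot\,}{w}$ an injective, hence by dimension count bijective, semilinear map onto the dual space, and one takes $w_1,\ldots,w_n$ to be the preimages of the dual basis of an arbitrary basis $v_1,\ldots,v_n$. A further small caution: your assertion that ``the $\vep$'s cancel'' in checking the adjoint formula is not quite right; testing $\sk{\varphi(v_j)}{v_k}=\sk{v_j}{\varphi^\ast(v_k)}$ yields $a_{kj}^\ast\vep$ on the left against $a_{kj}^\ast$ on the right, so the displayed formula for $\varphi^\ast$ requires an extra (central) factor $\vep$ unless one first normalizes to the hermitian case by scaling. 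The substantive conclusions --- every endomorphism has an adjoint, so $\LEnd(V_F)=\End(V_F)$ is regular --- are unaffected.
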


\begin{proof}
For existence of dual bases, see \cite[\S IV 15]{jac} or \cite[\S II.6]{kap2}.
Straightforward and well known calculations prove (i).
Regularity of $\LEnd(V_F)$ follows from \Fact \ref{reri}(i).
(ii) 
Assume $R=\LEnd(V_F)$ is $*$-regular;
given $v \neq 0$, choose $V=vF\oplus U$ and the
endomorphism $\varphi$ with $\varphi(v)=v$ and $\varphi|U=0$. Since $R$ is $\ast$-regular,
one has $\varphi R=\pi R$ for some
(orthogonal) projection and $V=vF\oplus^\perp W$ for some $W$.
If one had $\sk{v}{v}=0$, then $v \perp V$,  contradicting
the assumption that $V_F$ is non-degenerate. 
Then converse is well known and easy to prove.
In (iii), let $R$ consist of all
$\varphi\in\LEnd(V_F)$ which leave both $U$ and $U^\perp$ invariant.
As $R\cong\LEnd(U_F)\times\LEnd(U_F^\perp)$, we get (iii).
\end{proof}

\noindent
We put $J(V_F)=\{\varphi\in\LEnd(V_F)\mid\dim\im\varphi<\omega\}$.
Cf. \cite[Theorem 4.6.15]{beidar} for the following.

\begin{fact}\lab{endreg}\lab{alt}
Let $V_F$ be a pre-hermitian space.
\begin{enumerate}
\item First of all,
$J(V_F)$ is an ideal and a strictly simple regular
$\ast$-$\Lambda$-subalgebra of $\LEnd(V_F)$ without unit.
\item
The principal right ideals of $J(V_F)$
are in $1$-$1$-correspondence with the finite-dimensional subspaces of $V_F$ via the map $\varphi J(V_F)\mapsto\im\varphi$; moreover, one has
 $\varphi_0J(V_F)\subseteq\varphi_1J(V_F)$  equivalent to $\im\varphi_0\subseteq\im\varphi_1$. 
\item 
Let  $R$ be a subring  of $\End(V)$ with $R\supseteq J(V_F)$. Then
the minimal right ideals of $R$  are of the form $\varphi R$, where $\varphi\in J(V_F)$ is an idempotent such that 
$\varphi J(V_F)$ is a minimal right ideal of $J(V_F)$, 
that is, $\dim \im \phi=1$.
In particular, $R$ is atomic and $J(V_F)=\Soc(R)$ is its smallest
non-zero ideal.
\item 
For any $\varphi_1$, \ldots, $\varphi_n\in J(V_F)$,
there is $U\in\mathbb{O}(V_F)$ such that $\pi_U\varphi_i=\varphi_i=\varphi_i\pi_U$
for all $i\in\{1,\ldots,n\}$.
\item
The space  $V_F$  is alternate if and only if $J(V_F)$ does not contain a projection
generating a minimal right ideal. If $V_F$ is alternate
then $\pi \circ \pi^* =0=\pi^*\circ \pi$ for any
idempotent $\pi$ with $\dim \im \pi =1$.
\end{enumerate}
\end{fact}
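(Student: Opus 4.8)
The plan is to prove the five statements in order, relying heavily on the dual-pair-of-bases machinery of \Fact \ref{db} together with the directed-union description of $V_F$ from \Fact \ref{du}. For (i), the fact that $J(V_F)$ is an ideal of $\LEnd(V_F)$ is routine: the image of $\psi\circ\varphi$ and of $\varphi\circ\psi$ is contained in $\im\varphi$ (resp.\ is a homomorphic image of $\im\varphi$), so finite-dimensionality of images is preserved under multiplication on either side; closure under $\ast$ follows because $\dim\im\varphi^\ast=\dim\im\varphi$ (this is exactly the content of the formula $\varphi^\ast(v_i)=\sum_j w_j a_{ij}^\ast$ in \Fact \ref{db}(i) applied in a finite-dimensional orthogonal summand containing $\im\varphi$, using \Fact \ref{du}). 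Regularity of $J(V_F)$: given $\varphi\in J(V_F)$, choose by \Fact \ref{du} some $U\in\mathbb{O}(V_F)$ with $\im\varphi\subseteq U$ and also $(\kerr\varphi)^{\perp}\cap$-business handled — more cleanly, pick $U\in\mathbb{O}(V_F)$ containing both $\im\varphi$ and a finite-dimensional complement to $\kerr\varphi$; then $\varphi=\varepsilon_U\varphi_0\pi_U$ for some $\varphi_0\in\End(U_F)$, and since $\LEnd(U_F)=\End(U_F)$ is regular by \Fact \ref{db}(i), a quasi-inverse $\psi_0$ of $\varphi_0$ yields the quasi-inverse $\varepsilon_U\psi_0\pi_U\in J(V_F)$ of $\varphi$. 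Strict simplicity: if $0\neq\varphi\in J(V_F)$, pick $v$ with $\varphi(v)\neq0$; then for any rank-one $\chi$ one can factor $\chi$ through $\varphi$ using endomorphisms that send a chosen vector to $v$ and kill a complement, so the ideal generated by $\varphi$ contains all rank-one maps, hence (taking finite sums, and using \Fact \ref{du} to put finitely many images into one summand) all of $J(V_F)$. Absence of unit is clear since $\dim V_F=\infty$ forces $\id_V\notin J(V_F)$, and no element of $J(V_F)$ can act as identity on an infinite-dimensional space.

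For (ii), the map $\varphi J(V_F)\mapsto\im\varphi$ is well defined on principal right ideals because for idempotent-type reasons $\varphi J(V_F)=\{\psi\in J(V_F)\mid \im\psi\subseteq\im\varphi\}$: the inclusion $\subseteq$ is immediate; for $\supseteq$, given $\psi$ with $\im\psi\subseteq\im\varphi$, use regularity of $J(V_F)$ from (i) to replace $\varphi$ by an idempotent $e\in J(V_F)$ with $eJ(V_F)=\varphi J(V_F)$ and $\im e=\im\varphi$ (standard: $e$ acts as identity on its image), whence $e\psi=\psi$, i.e.\ $\psi\in eJ(V_F)$. This description simultaneously gives surjectivity onto finite-dimensional subspaces (every finite-dimensional $W$ is $\im\pi_W$ by \Fact \ref{du}/\ref{pu}, or simply the image of any idempotent with that image) and the order-isomorphism statement $\varphi_0J(V_F)\subseteq\varphi_1J(V_F)\iff\im\varphi_0\subseteq\im\varphi_1$.

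Statement (iii): a minimal right ideal of $R$ is of the form $\varphi R$ with $\varphi$ idempotent; $\varphi R$ minimal forces $\dim\im\varphi=1$ — indeed if $\dim\im\varphi\geq 2$, pick inside $\im\varphi$ a one-dimensional subspace $W$ that is an orthogonal summand of a suitable finite-dimensional summand containing $\im\varphi$ (possible by \Fact \ref{du}), and the corresponding idempotent in $J(V_F)\subseteq R$ generates a strictly smaller nonzero right ideal of $R$; conversely a rank-one idempotent $\varphi$ obviously generates a minimal right ideal since $\varphi R\varphi\subseteq\varphi\End(V)$ acts on the line $\im\varphi$ as scalars from the division ring $F$. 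Atomicity of $R$ and $J(V_F)=\Soc(R)$ then follow: any nonzero right ideal of $R$ contains a nonzero $\varphi\in J(V_F)$? — not immediately, so instead argue that $\Soc(R)$ is the sum of all minimal right ideals, all of which by the above lie in $J(V_F)$ and are the rank-one-idempotent-generated ones, and conversely $J(V_F)$ is generated as a right ideal by such idempotents (by (ii), using that $J(V_F)$ is regular so every principal right ideal is generated by an idempotent, and a rank-$k$ idempotent's right ideal decomposes into $k$ rank-one pieces via an orthogonal basis of its image inside a summand); that $J(V_F)$ is the \emph{smallest} nonzero ideal of $R$ follows from strict simplicity in (i) — any nonzero ideal $I$ of $R$ meets $J(V_F)$ nontrivially (take $0\neq\rho\in I$, $v$ with $\rho(v)\neq0$, and sandwich $\rho$ by rank-one maps to land inside $J(V_F)$), and $I\cap J(V_F)$ is a nonzero ideal of the strictly simple $J(V_F)$, hence equals $J(V_F)\subseteq I$.

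Statement (iv) is a direct application of \Fact \ref{du}: each $\im\varphi_i$ and each $\im\varphi_i^\ast$ is finite-dimensional, so $W:=\sum_i(\im\varphi_i+\im\varphi_i^\ast)$ is finite-dimensional; pick $U\in\mathbb{O}(V_F)$ with $W\subseteq U$. Then $\im\varphi_i\subseteq U$ gives $\pi_U\varphi_i=\varphi_i$ (as $\pi_U$ is the identity on $U$ and $\varepsilon_U\pi_U$ is the projection along $U^\perp$, which fixes $\im\varphi_i$), and $\im\varphi_i^\ast\subseteq U$ gives, upon taking adjoints and using $\pi_U^\ast=\varepsilon_U$ \ldots more carefully: $\im\varphi_i^\ast\subseteq U\Rightarrow (\varepsilon_U\pi_U)\varphi_i^\ast=\varphi_i^\ast$, and applying $\ast$ yields $\varphi_i(\varepsilon_U\pi_U)^\ast=\varphi_i$; since $(\varepsilon_U\pi_U)^\ast=\varepsilon_U\pi_U$ is the self-adjoint projection onto $U$, this reads $\varphi_i\pi_U=\varphi_i$ after identifying the ambient projection with $\pi_U$ in the abridged notation of the paper. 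For (v): if $V_F$ is not alternate, there is $v$ with $\sk{v}{v}\neq0$, so $vF\in\mathbb{O}(V_F)$ and $\pi_{vF}\in J(V_F)$ is a projection generating a minimal right ideal by (iii); conversely, a projection $\pi\in J(V_F)$ with $\dim\im\pi=1$ means $\im\pi=vF$ is non-degenerate, i.e.\ $\sk{v}{v}\neq0$, so $V_F$ is not alternate. When $V_F$ is alternate and $\pi$ is any rank-one idempotent with $\im\pi=vF$, the kernel formula $(\im\pi)^\perp=\kerr\pi^\ast$ combined with $v\in vF^\perp$ (since $\sk{v}{v}=0$ and $vF$ is one-dimensional — here one uses $\varepsilon$-hermiticity to get $\sk{\lambda v}{\mu v}=\lambda^\ast\sk{v}{v}\mu=0$) gives $\im\pi=vF\subseteq(vF)^\perp=\kerr\pi^\ast$, hence $\pi^\ast\circ\pi=0$; applying $\ast$ gives $\pi\circ\pi^\ast=0$ as well.

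I expect the main obstacle to be \textbf{the bookkeeping with the abbreviated projection notation} (the paper writes $\pi_U$ both for the map $V\to U$ and for the idempotent $\varepsilon_U\pi_U\in\LEnd(V_F)$), particularly in (iv) and in the adjoint-compatibility needed for (v): one must be scrupulous about which composite is self-adjoint and about the identity $(\im\varphi)^\perp=\kerr\varphi^\ast$, rather than any genuinely deep issue. The second mild subtlety is establishing, for (iii), that every nonzero one-sided ideal of $R$ — where $R$ may be strictly larger than $J(V_F)$ — actually intersects $J(V_F)$ nontrivially; the sandwich argument (multiply a nonzero element on both sides by rank-one maps to force the image into a line) handles this, but it should be spelled out since it is what pins down $J(V_F)=\Soc(R)$ as the smallest nonzero ideal.
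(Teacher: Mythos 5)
Your overall strategy (reduce everything to finite-dimensional orthogonal summands via \Fact \ref{du}, use the dual-basis facts of \Fact \ref{db}, then work with rank-one idempotents) is the paper's, but two steps as written would fail. First, in the regularity argument of (i) you choose $U\in\mathbb{O}(V_F)$ containing $\im\varphi$ and an \emph{arbitrary} finite-dimensional complement of $\kerr\varphi$, and then assert $\varphi=\varepsilon_U\varphi_0\pi_U$. That factorization requires $U^\perp\subseteq\kerr\varphi$, equivalently $\im\varphi^\ast=(\kerr\varphi)^\perp\subseteq U$, and your hypothesis does not give it: in a space with orthonormal vectors $e_1,e_2$ take $\varphi(u)=e_1\sk{e_1+e_2}{u}$; then $U=e_1F$ contains $\im\varphi=e_1F$ and the complement $e_1F$ of $\kerr\varphi=\{e_1+e_2\}^\perp$, yet $e_2\in U^\perp$ and $\varphi(e_2)=e_1\neq 0$. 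The paper's condition $(\ast)$ --- put $\im\varphi^\ast$, not just some complement of the kernel, into $U$ --- is exactly what repairs this, and it is also what delivers $U^\perp\subseteq\kerr\varphi_i\cap\kerr\varphi_i^\ast$, which you need again in (iv). (Your quasi-inverse happens to survive, because $U$ containing a complement of $\kerr\varphi$ already forces $\varphi(U)=\im\varphi$ and hence $\varphi\chi\varphi=\varphi$, but that is not the argument you give.)

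Second, in (iii) you reduce a minimal right ideal to rank one by picking ``inside $\im\varphi$ a one-dimensional subspace $W$ that is an orthogonal summand.'' When $\im\varphi$ is totally isotropic --- in particular whenever $V_F$ is alternate, which is precisely the new case this paper is written to cover --- no such $W$ exists and $\pi_W$ is unavailable. The paper instead uses a general rank-one idempotent $\pi_v$ with $\pi_v(v)=v$ and kernel a hyperplane of the form $z^\perp$ with $\sk{z}{v}\neq 0$ (closedness of the kernel is what guarantees $\pi_v\in\LEnd(V_F)$), and passes to $\varphi\circ\pi_v$; the same device also gives atomicity (every nonzero right ideal of $R$ contains a nonzero rank-one element), which your sketch explicitly leaves open and only supplies for two-sided ideals. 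A last small slip: in (v), $(\pi^\ast\circ\pi)^\ast=\pi^\ast\circ\pi$, so ``applying $\ast$'' does not turn $\pi^\ast\circ\pi=0$ into $\pi\circ\pi^\ast=0$; instead rerun the same computation on the rank-one idempotent $\pi^\ast$, as the paper does.
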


\begin{proof}
(i)
\xc{If $\dim V_F <\omega$ then $J(V_F)=\End(V_F)$ by \Fact \ref{db}(i), and the statements (i)-(iii)
hold trivially in this case.
Let $\dim V_F\geqslant\omega$.}
Clearly, $J(V_F)$ is an ideal and a $\Lambda$-subalgebra of $\LEnd(V_F)$ (without unit).
Observe that $\pi_U\in J(V_F)$ for any $U\in\mathbb{O}(V_F)$ by \Fact \ref{pu}.
Moreover by \Fact \ref{du}, for any subspace  $W$
of $V_F$ with $\dim W<\omega$, there exists
$U\in\mathbb{O}(V_F)$ such that $W\subseteq U$.

Consider $\varphi\in J(V_F)$ and recall that the subspaces
$\kerr\varphi=(\im \varphi^\ast)^\perp$ and $\kerr\varphi^\ast=(\im\varphi)^\perp$ are both closed.
To prove that $\varphi^\ast\in J(V_F)$, choose $W\in\mathbb{O}(V_F)$ such that
$W\supseteq\im\varphi=(\kerr\varphi^\ast)^\perp$.
Then $W^\perp\subseteq\kerr\varphi^\ast$,
whence $\im\varphi^\ast=\varphi^\ast(W)$ is finite-dimensional.
It follows that
\begin{itemize} 
\item[($\ast$)]
For any $\varphi_1$, \ldots, $\varphi_n\in J(V_F)$,
there is $U\in\mathbb{O}(V_F)$ such that 
$U\supseteq\im\varphi_i+\im\varphi_i^\ast$ for all $i\in\{1,\ldots,n\}$
and $\varphi_i(U)=\im\varphi_i$ and $\varphi_i^\ast(U)=\im\varphi_i^\ast$.
In particular,
\begin{enumerate}
\item[(a)]
$U$ is a finite-dimensional pre-hermitian space;
\item[(b)]
$V=U \oplus U^\perp$;
\item[(c)]
$U^\perp\subseteq\bigcap_i\kerr\varphi_i\cap\kerr\varphi_i^\ast$;
\item[(d)]
$\pi_U\in J(V_F)$;
\item[(e)]
$\varepsilon_U\psi\pi_U\in J(V_F)$ and $(\varepsilon_U\psi\pi_U)^\ast=\varepsilon_U\psi^\ast\pi_U$
for any $\psi\in\End(U_F)$.
\end{enumerate}
\end{itemize}
To prove that $\varphi$ has a quasi-inverse in $J(V_F)$,
choose for $\varphi$ a subspace $U\in\mathbb{O}(V_F)$ according to ($\ast$).
By \Fact \ref{db}(i), $\pi_U\varphi\varepsilon_U\in\LEnd(U_F)$ has a quasi-inverse $\psi\in\LEnd(U_F)$.
We claim that $\chi=\varepsilon_U\psi\pi_U$ is a quasi-inverse of $\varphi$ in $J(V_F)$.
Indeed, $\chi\in J(V_F)$ by (e) and $\varphi(v)=0=\chi(v)$ for any $v\in U^\perp$ by (c) and
$\varphi\chi\varphi(v)=\pi_U\varphi\varepsilon_U\psi\pi_U\varphi\varepsilon_U(v)=
\pi_U\varphi\varepsilon_U(v)=\varphi(v)$ for any $v\in U$.

To prove that $J(V_F)$ is strictly simple,
it suffices to show that for any $0\neq\varphi$, $\psi\in J(V_F)$,
$\psi$ belongs to the ideal generated by $\varphi$.
Again, choose for $\varphi$ and $\psi$ a subspace $U\in\mathbb{O}(V_F)$ according to ($\ast$).
Applying \Fact \ref{reri}(i) to $\pi_U\varphi\varepsilon_U$, $\pi_U\psi\varepsilon_U\in\End(U_F)$,
we get that there are $m<\omega$ and $\sigma_1$, \ldots, $\sigma_m$, $\tau_1$, \ldots, $\tau_m\in\End(U_F)$
such that $\pi_U\psi\varepsilon_U=\sum_{i=1}^m\tau_i\pi_U\varphi\varepsilon_U\sigma_i$.
Then according to ($\ast$), $\psi=\sum_{i=1}^m\varepsilon_U\tau_i\pi_U\varphi\varepsilon_U\sigma_i\pi_U$
and $\varepsilon_U\sigma_i\pi_U$, $\varepsilon_U\tau_i\pi_U\in J(V_F)$ for all $i\in\{1,\ldots,m\}$
by (e).

(ii)
We prove first that $\varphi_0J(V_F)\subseteq\varphi_1J(V_F)$ is equivalent to $\im\varphi_0\subseteq\im\varphi_1$
for any $\varphi_0$, $\varphi_1\in\LEnd(V_F)$.
Suppose first that $\im\varphi_0\subseteq\im\varphi_1$ and take an arbitrary $\psi\in J(V_F)$;
then $\varphi_0\psi$, $\varphi_1\psi\in J(V_F)$.
Choose for $\varphi_0\psi$ and $\varphi_1\psi$ a subspace $U\in\mathbb{O}(V_F)$ according to ($\ast$).
Then $\xi_i=\pi_U\varphi_i\psi\varepsilon_U\in\End(U_F)$ for any $i<2$ and
$\im\xi_0\subseteq\im\xi_1$.
As $\dim U_F<\omega$, $\xi_0=\xi_1\chi$ for some $\chi\in\End(U_F)$.
According to (c), $\varphi_0\psi(v)=\varphi_1\psi(v)=0$ for any $v\in U^\perp$, whence
\[
\varphi_0\psi=\pi_U\varphi_0\psi\varepsilon_U\pi_U=\xi_0\pi_U=\xi_1\chi\pi_U=
\pi_U\varphi_1\psi\varepsilon_U\chi\pi_U=
\varphi_1\psi\varepsilon_U\chi\pi_U\in\varphi_1J(V_F),
\]
as $\psi\varepsilon_U\chi\pi_U\in J(V_F)$ by (e).
The reverse implication is trivial by \Fact \ref{du}.

Besides that, for any finite-dimensional subspace $W$ of $V_F$, there is $\varphi\in J(V_F)$ such that $W=\im\varphi$.
Indeed by \Fact \ref{du}, there is $U\in\mathbb{O}(V_F)$ such that $W\subseteq U$,
whence $W=\im\psi$ for some $\psi\in\End(U_F)$.
Then $W=\im\phi$ with $\phi=\varepsilon_U\psi\pi_U\in J(V_F)$ by \Fact \ref{pu} and (e).
This establishes the claimed $1$-$1$-correspondence.

(iii)
For any  $v\neq 0$ in $V$ there is an idempotent $\pi_v$ in 
$J(V_F)$ such that $\im \pi =vF$;
for such $\pi_v J(V_F)= \pi_vR$
is a  minimal right ideal of both  $J(V_F)$ and $R$.   
Indeed, choose $W$ such that $V=vF\oplus W $ and 
let $\pi(v)=v$, $ \pi|W=0$.
Now, for any $0\neq \phi \in R$ one has $\phi(v)\neq 0$
for some $ v \in V$ 
whence $\dim \im \phi \circ \pi_v =1$ and 
$\pi_v R$ a minimal right ideal contained in $\phi R$;
and the  minimal right ideals of $R$ are exactly the $\pi_vR$.
If $ \dim \phi=n $, then $\phi R= \sum_{i=1}^n \pi_{v_i} R$
where $v_1, \ldots ,v_n$ is a basis of  $\im \phi$. 
Thus, $R$ is atomic with $\Soc(R) = J(V_F)$
contained in any  non-zero ideal.

(iv)
Given $\varphi_1$, \ldots, $\varphi_n\in J(V_F)$,
choose a subspace $U\in\mathbb{O}(V_F)$ according to ($\ast$).
Then $\im \varphi_i+\im\varphi_i^\ast\subseteq U$,
whence $\pi_U\varphi_i=\varphi_i$ and $\pi_U\varphi_i^\ast =\varphi_i^\ast$.

(v) 
If $\pi$ is a projection in the $\ast$-ring $J(V_F)$ then by \Fact \ref{pu},
it is an orthogonal projection of $V_F$ and $\sk{v}{v}\neq 0$ for any $0\neq v\in\im\pi$.
Thus, $V_F$ is not alternate.
Conversely, 
assume $V_F$ not alternate. 
If $\dim V_F\geq 2$, in view of \Fact \ref{sym} we may assume that $V_F$ is hermitian.
By \Fact \ref{du}, there is a non-alternate space $0\neq U\in\mathbb{O}(V_F)$.
By \cite[Chapter II \S 2, Corollary 1]{gro}, $U$ has an orthogonal basis.
Thus $U=W\oplus W^\prime$, where $W^\prime\subseteq W^\perp$ and $\dim W=1$.
It follows that $W\in\mathbb{O}(V_F)$ and that $\pi_W$ is a projection generating
a minimal right ideal of $J(V_F)$. 
If $\dim V_F=1$, then
$\id_V$  will do.

Now, let $V_F$ be alternate and $\pi$ an idempotent with
$\im \pi =wF \neq 0$.
Then for all $v \in V$,
$\sk{v}{\pi^*(w)}=\sk{\pi(v)}{w}= \sk{w\lambda}{w} =0$
whence $\pi^*(\pi(w))=\pi^*(w)=0$ and    $\pi^*\circ \pi=0$.
The claim for $\pi^*$ follows since
$\dim \im \pi^*= 1$ by (*).

\end{proof}

\begin{fact}\lab{atex2}
Any $\ast$-$\Lambda$-subalgebra $R$ of $\LEnd(V_F)$ extends to a $\ast$-$\Lambda$-subalgebra $\hat{R}$ of $\LEnd(V_F)$ such that
$J(V_F)$ is the  unique minimal ideal  of $\hat{R}$.
In particular, $\hat{R}$ is strictly subdirectly irreducible and atomic
with the minimal left $[$right$]$ ideals being those of $J(V_F)$.
Moreover, if $R$ is regular then $\hat{R}$ is also regular.
\end{fact}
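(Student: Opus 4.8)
The plan is to put $\hat{R} := R + J(V_F)$, where $J(V_F)$ is the ideal of $\LEnd(V_F)$ provided by \Fact \ref{endreg}(i). First I would check that $\hat{R}$ is a $\ast$-$\Lambda$-subalgebra of $\LEnd(V_F)$ which contains $R$ and has $J(V_F)$ as an ideal: it is a $\Lambda$-submodule containing $\id_V$ (as $R$ does); it is closed under multiplication because $J(V_F)$ is an ideal of $\LEnd(V_F)$, so that $\varphi\psi,\psi\varphi,\psi\psi'\in J(V_F)$ whenever $\varphi\in\LEnd(V_F)$ and $\psi,\psi'\in J(V_F)$; and it is closed under the involution since $R^\ast=R$ and $J(V_F)^\ast=J(V_F)$. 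That $J(V_F)$ is an ideal of $\hat{R}$ is then immediate, and $J(V_F)\neq 0$ because a pre-hermitian space is non-zero by definition.

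Next, since $\hat{R}$ is a subring of $\End(V)$ with $\hat{R}\supseteq J(V_F)$, I would apply \Fact \ref{endreg}(iii) verbatim to $\hat{R}$: it gives that $\hat{R}$ is atomic, that $J(V_F)=\Soc(\hat{R})$ is the smallest non-zero ideal of $\hat{R}$, and that the minimal right ideals of $\hat{R}$ are exactly those of $J(V_F)$. Applying the involution, an anti-automorphism of $\hat{R}$ carrying $J(V_F)$ onto itself, interchanges right and left ideals, so the minimal left ideals of $\hat{R}$ are likewise precisely those of $J(V_F)$. Possessing a smallest non-zero ideal is exactly the definition of strict subdirect irreducibility, so $\hat{R}$ is strictly subdirectly irreducible and $J(V_F)$ is its unique minimal ideal.

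For the final assertion, assume $R$ is regular and invoke \Fact \ref{reri}(ii): it is enough to produce a regular ideal of $\hat{R}$ with regular quotient. The ideal $J(V_F)$ of $\hat{R}$ is regular by \Fact \ref{endreg}(i). By the second isomorphism theorem,
\[
\hat{R}/J(V_F)=(R+J(V_F))/J(V_F)\cong R/(R\cap J(V_F)),
\]
which is a homomorphic image of the regular $\ast$-$\Lambda$-algebra $R$ (note that $R\cap J(V_F)$ is a $\ast$-ideal of $R$), hence regular by \Fact \ref{reri}(vi). Therefore $\hat{R}$ is regular.

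I do not expect a genuine obstacle here; the statement is a bookkeeping application of \Facts \ref{endreg} and \ref{reri}. The two points that deserve a moment's care are the closure of $R+J(V_F)$ under the ring operations and the involution, which is immediate from $J(V_F)$ being a self-adjoint ideal of $\LEnd(V_F)$, and the observation that $V_F\neq 0$, which keeps $J(V_F)$ -- and thus the phrase ``unique minimal ideal'' -- non-vacuous.
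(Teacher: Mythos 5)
Your proposal is correct and follows essentially the same route as the paper: set $\hat{R}=R+J(V_F)$, read off the ideal structure from \Fact \ref{endreg} (the paper reruns the $\pi_U\varphi$ argument for left ideals where you instead quote \Fact \ref{endreg}(iii) and transport it via the involution, which is equally valid), and deduce regularity from \Fact \ref{reri}(ii) applied to the regular ideal $J(V_F)$ with regular quotient $\hat{R}/J(V_F)\cong R/(R\cap J(V_F))$. The only blemish is the citation ``\Fact \ref{reri}(vi)'', which should be \Fact \ref{reri}(v).
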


\begin{proof}
The $\ast$-regular case is due to \cite[Proposition 3.12]{Flo}.
Let $\hat{R}=R+J(V_F)$. Clearly, $\hat{R}$ is a subalgebra of $\LEnd(V_F)$
and $J(V_F)$ is an ideal of $\hat{R}$ by \Fact \ref{endreg}(i).
If $I\neq 0$ is a left ideal of $\hat{R}$ then choose $\varphi\in I$
such
 that $\varphi\neq 0$.
Then by \Fact \ref{du}, $0\neq\pi_U\varphi\in J(V_F)\cap I$ for some $U\in\mathbb{O}(V_F)$.
By \Fact \ref{endreg}(iii), there is a minimal left ideal $M\subseteq J(V_F)\pi_U\varphi\subseteq I$ of $J(V_F)$.
Then $M$ is also a minimal left ideal of $\hat{R}$.
If $I$ is an ideal of $\hat{R}$, then arguing as above and applying simplicity of $J(V_F)$,
which follows from \Fact \ref{endreg}(i), we get that $J(V_F)\subseteq I$.
Finally, \Facts \ref{reri}(ii) and \ref{endreg}(i) imply regularity of $\hat{R}$ when $R$ is regular.
\end{proof}
\noindent
In particular, \Fact \ref{atex2} applies to $R$
consisting of the endomorphisms $v\mapsto v\lambda$ (also denoted as $\lambda\id_V$),
where $\lambda$ is in the center of $F$;
in this case, we denote the corresponding subalgebra $\hat{R}$ by $\hat{J}(V_F)$.\\

\noindent
A \emph{representation} of a $\ast$-$\Lambda$-algebra $R$ within a per-hermitian space $V_F$
is a homomorphism $\vep:R \to \LEnd(V_F)$ of $\ast$-$\Lambda$-algebras;
it is \emph{faithful} if $\vep$ is an injective map. 
In the following, existence is due to Jacobson 
\cite[Chapter IV, \S 12, Theorem 2]{jac}
and Kaplansky  \cite[Theorem 1.2.2]{herstein}.
Uniqueness  is based on an approach via
the Jacobson Density Theorem,
cf.   \cite[Theorem 4.6.8]{beidar}.

\begin{theorem}\lab{atrep2}
Any primitive
$\ast$-$\Lambda$-algebra  having  a minimal right ideal 
is atomic with $\Soc(R)$ as smallest non-zero ideal and admits a
faithful representation  $\vep$ within  some pre-hermitian space $V_F$
such that $\varepsilon(\Soc(R)) =J(V_F)$.
Up to similitude, the space $V_F$ is uniquely determined by $\Soc(R)$. 
\end{theorem}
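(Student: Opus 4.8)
The plan is to prove the statement in three stages: (a) the purely ring-theoretic part -- $R$ is atomic and $\Soc(R)$ is its smallest nonzero ideal; (b) the existence of a faithful representation $\varepsilon$ with $\varepsilon(\Soc(R))=J(V_F)$; and (c) the uniqueness of $V_F$ up to similitude. Stage (b) is essentially a $\ast$-$\Lambda$-algebra refinement of the classical Jacobson--Kaplansky structure theorem for primitive $\ast$-rings with a minimal right ideal, and I would import its analytic core from the literature rather than redo it.

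For (a) I would argue as follows. A primitive ring is prime, hence semiprime, so a minimal right ideal $M$ has $M^2\neq 0$ and therefore $M=eR$ for some idempotent $e\neq 0$; in particular $\Soc(R)\neq 0$. For any nonzero ideal $I$, primeness gives $0\neq mRa\subseteq M\cap I$ for suitable $0\neq m\in M$ and $0\neq a\in I$, so $M\subseteq I$ by minimality; hence $\Soc(R)\subseteq I$, and $\Soc(R)$ is the smallest nonzero ideal -- in particular a $\ast$-ideal, and a $\Lambda$-submodule since $\Lambda$ acts through central elements -- so $R$ is strictly subdirectly irreducible. For atomicity, given a nonzero right ideal $K$ and $0\neq a\in K$, primeness yields $are\neq 0$ for some $r\in R$; the right $R$-module homomorphism $eR\to areR$, $ex\mapsto arex$, has zero kernel because $eR$ is simple, so $areR\cong eR$ is a minimal right ideal inside $aR\subseteq K$.

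For (b) I would fix $e$ as above, set $D:=eRe$ (a division ring by Schur's lemma) and $V:=Re$, regarded as a left $R$-module and a right $D$-module. Since $rRe=0$ forces $r=0$ by primeness, the action gives a faithful ring homomorphism $\varepsilon\colon R\to\End(V_D)$; letting $\lambda\in\Lambda$ act on $V$ through the central element $\lambda 1$ turns $\varepsilon$ into a homomorphism of unital $\Lambda$-algebras with dense image (Jacobson Density Theorem), and $\varepsilon(\Soc(R))$ is exactly the set of finite-rank operators of $\End(V_D)$. Then I would invoke the Jacobson--Kaplansky theorem (\cite[Chapter IV, \S 12, Theorem 2]{jac}, \cite[Theorem 1.2.2]{herstein}, \cite[Theorem 4.6.8]{beidar}): using that $\ast$ interchanges minimal right and minimal left ideals, together with the pairing $eR\times Re\to eRe=D$, one obtains a nondegenerate orthosymmetric sesquilinear form $\langle\,\cdot\mid\cdot\,\rangle$ on $V$, relative to a suitable anti-automorphism of $D$ (not, in general, the restriction of $\ast$), such that $\varepsilon(r^\ast)$ is the adjoint of $\varepsilon(r)$ for every $r\in R$. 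Consequently $\varepsilon(R)\subseteq\LEnd(V_F)$, and by \Fact \ref{sym} (for $\dim V>1$; the cases $\dim V\leq 1$ are immediate) the form is $\varepsilon_0$-hermitian for a unique $\varepsilon_0$ and $\lambda\mapsto\lambda^\ast$ is an involution on $F:=D$, so $V_F$ is pre-hermitian; compatibility of the $\Lambda$-action (since $\varepsilon((\lambda 1)^\ast)=\varepsilon(\lambda^\ast 1)$ must be the adjoint of $\varepsilon(\lambda 1)=\lambda\,\id_V$) shows $F$ is a $\ast$-$\Lambda$-algebra. Finally, every operator in $\varepsilon(\Soc(R))$ is finite-rank and, lying in $\varepsilon(R)\subseteq\LEnd(V_F)$, adjointable, whence $\varepsilon(\Soc(R))\subseteq J(V_F)$; conversely $J(V_F)$ consists of finite-rank operators, so $J(V_F)\subseteq\varepsilon(\Soc(R))$, giving $\varepsilon(\Soc(R))=J(V_F)$.

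For (c), suppose $\varepsilon'\colon R\to\LEnd(V'_{F'})$ is another faithful representation with $\varepsilon'(\Soc(R))=J(V'_{F'})$. Restriction to $\Soc(R)$ yields isomorphisms $J(V_F)\cong\Soc(R)\cong J(V'_{F'})$ of $\ast$-$\Lambda$-algebras; by \Fact \ref{endreg} each of $V_F$ and $V'_{F'}$ is recovered from its algebra $J(\cdot)$ -- as a minimal left ideal, with scalar division ring the corner at a rank-one idempotent and with the form read off from the involution -- so the isomorphism $J(V_F)\cong J(V'_{F'})$ induces a semilinear bijection $V\to V'$ taking one form to a scalar multiple of the other, i.e.\ a similitude; this is the uniqueness half of \cite[Theorem 4.6.8]{beidar}. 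I expect the main obstacle to be the heart of stage (b): manufacturing the orthosymmetric form from the abstract involution in the present generality -- in particular without assuming $R$ proper, and covering involutions of the second kind as well as the alternate (characteristic $2$) phenomenon. This is precisely what the cited structure theorems provide; the only genuinely new work is the bookkeeping that keeps everything compatible with the $\Lambda$-action, and the identification $\varepsilon(\Soc(R))=J(V_F)$ via \Fact \ref{endreg}.
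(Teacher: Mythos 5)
Your proposal is essentially sound and, at its core, takes the same route as the paper: both reduce the existence of the form to the Jacobson--Kaplansky structure theorem for primitive $\ast$-rings with minimal right ideal (\cite[Chapter IV, \S 12, Theorems 1--2]{jac}, \cite[Theorem 4.6.8]{beidar}) and both transport the $\Lambda$-action through the corner ring $eRe\cong F$. The differences are these. First, you establish atomicity and the fact that $\Soc(R)$ is the smallest non-zero ideal by direct ring theory (primeness, Brauer's lemma, the embedding $eR\cong areR\subseteq K$); the paper instead gets these for free from the representation, since once $\varepsilon(R)\supseteq J(V_F)$ is known, \Fact \ref{endreg}(iii) yields atomicity and $\varepsilon(\Soc(R))=J(V_F)=\Soc(\varepsilon(R))$ in one stroke. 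Your route is self-contained and perfectly valid; the paper's is shorter because \Fact \ref{endreg} was already proved. Second, the paper treats the case where $R$ is a division ring separately at the outset, which your parenthetical ``the cases $\dim V\leq 1$ are immediate'' glosses over but does cover. Third, and most significantly, your uniqueness argument is much thinner than the paper's: the claim is that $V_F$ is determined up to similitude by $\Soc(R)$ alone (not merely by $R$), and the paper spends the bulk of its proof explicitly reconstructing $F$, $V$, and the form from the $\ast$-ring $J(V_F)$, in two separate cases --- when some minimal right ideal is generated by a projection ($\sk{v}{w}$ read off from $\phi_v^\ast\circ\phi_w$ at a vector with $\sk{u_0}{u_0}=1$), and the alternate case where no such projection exists and one must work with a pair $\pi,\pi^\ast$ satisfying $\pi\pi^\ast=\pi^\ast\pi=0$ (\Fact \ref{alt}(v)). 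Deferring this to ``the uniqueness half of \cite[Theorem 4.6.8]{beidar}'' is defensible (the paper itself cites \cite[Proposition 4.6.6]{beidar} for the alternate case), but the two-case reconstruction is the genuinely nontrivial content you are waving at.

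One statement in your stage (b) is actually false as written and should be repaired: $\varepsilon(\Soc(R))$ is \emph{not} ``exactly the set of finite-rank operators of $\End(V_D)$'' when $\dim V_D$ is infinite --- it is only the set of \emph{adjointable} finite-rank operators, i.e. $J(V_F)$, since a general rank-one operator $v\otimes f$ with $f\in V^\ast$ not of the form $\sk{w}{\cdot}$ has no adjoint and does not lie in $\varepsilon(R)$. Your derivation of the inclusion $J(V_F)\subseteq\varepsilon(\Soc(R))$ leans on this false intermediate claim. The inclusion is nevertheless true and is exactly what the cited structure theorem delivers (the paper records it as $\varepsilon(R)\supseteq J(V_F)$ and then applies \Fact \ref{endreg}(iii)), so this is a repairable slip rather than a fatal gap; but as written that step does not stand on its own.
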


\begin{proof}
If the underlying ring of   $R$ is a division ring 
then a  representation is given via the scalar product $\sk{\lambda}{\mu}=\lambda^\ast\mu$. Conversely,
given a representation in $V_F$ we have $J(V_F)\cong R$
and may assume $R=\LEnd(F_F)$.
Up to scaling we have $F$ with  
 involution $\nu$ and
scalar product $\sk{\lambda}{\mu}=\lambda^\nu\sk{1}{1} \mu
=\lambda^\nu \mu$.
For the endomorphism given by $\phi_\lambda(\mu) =\lambda \mu$ 
one obtains $\lambda^\nu=\sk{\phi_\lambda(1)}{1}
=\sk{1}{\phi^*_\lambda(1)}= \phi^*_{\lambda}(1)$;
that is, $\nu$ is determined by the involution on $R$.

 Assume that $R$ is not a division ring.
First, we ignore the action of $\Lambda$.
By \cite[Chapter IV, \S 12, Theorem 1]{jac}
there is a non-degenerate sesquilinear space $V_F$  
and an  embedding $\varepsilon\colon R\to\LEnd(V_F)$ such that $\varepsilon(R)\supseteq J(V_F)$.
Since $\dim V_F\geqslant 2$, \Fact \ref{symm} applies and $V_F$ is pre-hermitian,
cf. \cite[Chapter IV, \S 12, Theorem 2]{jac}.
The remaining claims about $R$ follow 
from \Fact \ref{endreg}.

In order to discuss uniqueness of $V_F$ as well as 
the action of $\Lambda$, 
 we have a  closer look on how $R$ relates to 
the pre-hermitian space $V_F$, given a
$*$-ring  embedding
$\vep:R \to \LEnd(V_F)$ such that $\vep$ maps
 $J=\Soc(R)$ onto $J(V_F)$. 

Let $e$ be an idempotent such that $eR$ is a minimal right ideal,
 $\pi= \vep(e)$, $U= \im \pi$,  and $W=\im (\id_V- \pi)$;
then $V=U \oplus W$  and $\dim U=1$.
Choose $0\neq u_0\in U$.
For $\lambda\in F$, there is unique $\varphi_{\lambda}\in\End(V_F)$
such that $\varphi_{\lambda}(u_0)=u_0\lambda$ and $\varphi_{\lambda}|W=0$.
Then $\alpha(\lambda)=\varepsilon^{-1}(\varphi_{\lambda})$
defines a ring  isomorphism of $F$ onto the subring  $eRe$ of $R$.
Moreover, one has an $\alpha$-semilinear bijection
$\omega$ from $V_F$ onto the right $eRe$-vector space $Re$; it is 
given by
 $\omega(v)= \vep^{-1}(\phi_v)$ where $\phi_v(u_0)=v$ and  
$\phi_v|W=0$.  The given  ring  embedding $\vep:R \to \LEnd(V_F)$
can be now described by the formula $\vep(r)(v)= \omega^{-1}(r \omega(v))$.
Compare the proof of \cite[Proposition 4.6.4]{beidar}.

If the action of  $\Lambda$  on $F$  is still to be defined,
put $\zeta \lambda= \alpha^{-1} (\zeta \alpha(\lambda))$  
for $\lambda \in F$ and $\zeta \in \Lambda$.
Then $\vep$  is a $\Lambda$-algebra homomorphism
from $R$ into the $\Lambda$-algebra $\End(V_F)$; indeed
 for any $\zeta \in \Lambda$,
$r\in R$, and $v \in V$ one has 
$\omega(v)=se$ for some $s \in R$ whence
$\vep(\zeta r)(v)=   \omega^{-1}( (\zeta r) \omega(v))
= \omega^{-1}( \zeta r se )
= \omega^{-1}( r se e \zeta e)
= \omega^{-1}( (r \omega(v))  (e \zeta e))
= \omega^{-1}( (r \omega(v)))  \alpha^{-1}(e \zeta e))
=(\vep(r)(v)) \alpha^{-1}(e \zeta e))$.

Assume that $e$  is a projection;
then so is $\pi$ whence $u_0\not\perp u_0$ and $W=U^\perp$.
In view of scaling, we may assume that $\sk{u_0}{u_0}=1$.
Thus,    $\phi_\lambda^\ast= \phi_{\lambda^\ast}$
and $\alpha$ is an isomorphism of $\ast$-rings.
Also,  one obtains for all $v,w \in V$ 
\[ \sk{v}{w}= \sk{\phi_v(u_0)}{\phi_w(u_0)}
= \sk{u_0}{\phi^*_v(\phi_w(u_0))}
= \sk{u_0}{u_0\lambda} =\lambda  \]
where $\phi_v^*(\phi_w(u_0))=u_0\lambda$
for some $\lambda \in F$ since
$\im \phi_v^*=  W^\perp =\im \pi$.
From
 $\phi_\lambda|W=0=\phi_w|W$, it follows
$\phi_\lambda = \phi_v^* \circ \phi_w$.
Summarizing,  the space $V_F$ is
 determined, up to scaling,  by the $*$-ring $J(V_F)$.
Given another $V'_{F'}$ and $\vep':R \to \LEnd(V'_{F'})$,
as in the Theorem, and $u'_0\in \im \vep'(e)$ chosen, accordingly, 
we have $\alpha':F'\to eRe$ and 
$\omega':V' \to Re$ providing an isomorphism
$\beta= {\alpha'}^{-1} \circ \alpha:F \to F'$
of division rings  and a
$\beta$-semilinear bijection $\omega' \circ \omega:V_F 
\to V'_{F'}$ which combine into  an isomorphism 
of the  sesquilinear spaces obtained from  $V_F$ and $V'_{F'}$
by scaling, thus  establishing the claimed similitude.

Now, assume that  $R$, whence $J(V_F)$, does not have any projection
generating a minimal right ideal. 
By \Fact \ref{alt}(v),  $V_{F}$ is an  alternate space;
in particular $\lambda=\lambda^\ast$ for all $\lambda\in F$, $F$ 
is commutative, and 
$ \pi\circ \pi^*=0 =\pi^* \circ \pi$
for any idempotent $\pi$ generating a minimal right ideal
in $J(V_F)$.
Choose $\pi=\vep(e)$.
It follows $\im \pi \cap \im \pi^*= \im \pi \cap (\im \pi^*)^\perp
= \im \pi^* \cap (\im \pi)^\perp =0$.
Also, $U':= \im \pi \oplus \im \pi^* \in\mathbb{O}(V_F)$
and $W=\im \pi^* +U'$. Thus, for any $v \in V$,
$\im \phi_v^*= W^\perp =  \im \pi ^*$.
For $\psi \in\End(V_F)$
we have $\psi(\im \pi^*) =\im \pi$ and 
$\kerr  \psi =U' +  \im \pi$ 
if and only if $\pi \circ \psi = \psi= \psi \circ \pi^*$. 
For any such  $\psi$ there is unique $0\neq \mu \in F$ such that
$\psi(u_1)=u_0\mu$ -- and vice versa.
Also, $\im \psi^*= (U'+\kerr \pi)^\perp=\im \pi$.
Choose $u_1$ such that  $u_1F =\im \pi^*$
and $\sk{u_1}{u_0}=1$.
  Choosing $\mu$ (and $\psi$),  given any  $v,w \in V$ one has 
\[ \sk{v}{w}= \sk{\phi_v(u_0)}{\phi_w(u_0)}
= \sk{\phi_v(\psi(u_1\mu))}{\phi_w(u_0)}=\]\[
= \sk{u_1\mu}{\psi^*(\phi^*_v(\phi_w(u_0)))}
=\sk{u_1}{u_0}\mu\sigma =\mu\sigma\]
where $\psi^*(\phi^*_v(\phi_w(u_0)))=u_0\sigma$.
It follows $\sk{v}{w}=\mu \sigma$  if and only if
$\psi^*\circ \phi^*_v\circ \phi_w =\phi_\sigma$.
Uniqueness of $V_F$ up to similitude follows as above,
with the additional choice of $u_1'$ and of $\mu'=\mu$.
cf. \cite[Proposition 4.6.6]{beidar}.
\end{proof}

\begin{remark}
For a primitive ring with minimal right ideal, according
to Kaplansky (cf. Corollary 4.3.4 and Theorem 4.6.2 \cite{beidar})
there is an idempotent $e$ such that $eR$ and $e^*R$ are minimal
and either $e=e^*$ or $ee^*=0=e^*e$.
Given such, the representation of Theorem \ref{atrep}
can be directly obtained from the Jacobson Density Theorem
in the context of non-empty socle 
cf. \cite[Theorem 4.6.2]{beidar}.
In the first case, $eRe$ is a $*$-subring of $R$.
In the second case, $S$ 
consisting of the $\lambda^+:=\lambda +\lambda^*$, $\lambda \in eRe$,
is a $*$-subring of $R$ and $(\lambda^+)^*=\lambda^+$.
Moreover, $\lambda \mapsto \lambda^+$ is a ring 
isomorphism of $eRe$ onto $S$ with inverse $\mu \mapsto e \mu $
and one has $v\lambda =v\lambda^+$ for all $v \in Re$ and
$\lambda$ in $eRe$. Thus, in both cases,
$\sk{v}{w}= e v^*w$ provides the required scalar
product on $Re$.
\end{remark}

\begin{fact}\lab{P:iso}
Up to isomorphism, the strictly simple artinian  regular $\ast$-$\Lambda$-algebras 
$R$ 
are exactly the endomorphism algebras $\LEnd(V_F)$, where $V_F$ is a pre-hermitian space and $\dim V_F<\omega$.
Moreover, $V_F$ is uniquely determined by $R$ up to similitude;
$V_F$ is anisotropic iff $R$ is $\ast$-regular.
\end{fact}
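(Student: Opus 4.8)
The plan is to verify the two inclusions of the claimed equality of classes and then to read off uniqueness up to similitude and the anisotropy equivalence from results already established.

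For one inclusion, suppose $V_F$ is pre-hermitian with $\dim V_F=n<\omega$. By \Fact \ref{db}(i) every endomorphism of $V_F$ has an adjoint, so $\LEnd(V_F)=\End(V_F)$; being the endomorphism algebra of an $n$-dimensional vector space over the division ring $F$, this is a full matrix algebra over a division ring, hence strictly simple and artinian, and it is regular by \Fact \ref{db}(i). Together with the involution $\varphi\mapsto\varphi^\ast$ and the $\Lambda$-action it is a $\ast$-$\Lambda$-algebra by \Fact \ref{symm2}. Thus $\LEnd(V_F)$ is a strictly simple artinian regular $\ast$-$\Lambda$-algebra.

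For the converse, let $R$ be a strictly simple artinian regular $\ast$-$\Lambda$-algebra. First I would observe that $R$ is primitive: being strictly simple it is in particular strictly subdirectly irreducible (its only non-zero ideal, namely $R$ itself, is of course the smallest one), so \Fact \ref{prim} applies. Being artinian, $R$ has a minimal right ideal, so Theorem \ref{atrep2} provides a faithful representation $\varepsilon\colon R\to\LEnd(V_F)$ in a pre-hermitian space $V_F$ with $\varepsilon(\Soc(R))=J(V_F)$, and it identifies $\Soc(R)$ as the smallest non-zero ideal of $R$; since $R$ is strictly simple this forces $\Soc(R)=R$, whence $\varepsilon(R)=J(V_F)$. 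The step to watch is the descent to finite dimension: a homomorphism of $\ast$-$\Lambda$-algebras preserves the unit, so $\id_V=\varepsilon(1_R)\in J(V_F)$, and $\id_V\in J(V_F)$ means $\dim V_F=\dim(\im\id_V)<\omega$. Then $J(V_F)=\End(V_F)=\LEnd(V_F)$ by \Fact \ref{db}(i), and the injective map $\varepsilon$ becomes an isomorphism $R\cong\LEnd(V_F)$ onto the endomorphism $\ast$-$\Lambda$-algebra of a finite-dimensional pre-hermitian space.

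It remains to collect the two addenda. Uniqueness of $V_F$ up to similitude is precisely the last clause of Theorem \ref{atrep2} read with $\Soc(R)=R$; conversely, similar spaces have isomorphic endomorphism $\ast$-$\Lambda$-algebras by \Fact \ref{symm2}, so the correspondence is one between isomorphism classes of such algebras and similitude classes of finite-dimensional pre-hermitian spaces. Finally, since $R\cong\LEnd(V_F)$, the equivalence ``$R$ is $\ast$-regular iff $V_F$ is anisotropic'' is \Fact \ref{db}(ii). I do not expect a serious obstacle: the substantive content is already packaged in Theorem \ref{atrep2}, and the only care needed is in checking that ``artinian'' is used exactly to supply a minimal right ideal and ``strictly simple'' exactly to force $\Soc(R)=R$, after which unit-preservation pins down finite-dimensionality.
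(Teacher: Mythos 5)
Your proof is correct and follows essentially the same route as the paper, which also reduces everything to Theorem \ref{atrep2} (together with \Fact \ref{db} and \Fact \ref{symm2}). The only cosmetic difference is how finite-dimensionality is pinned down: you observe that $\id_V=\varepsilon(1_R)\in\varepsilon(\Soc(R))=J(V_F)$ forces $\dim V_F<\omega$, whereas the paper invokes noetherianness of a unital artinian ring via a citation to Lambek; both are valid one-line closings of the same argument.
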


\begin{proof}
Having a unit, $R$ is noetherian and  $J(V_F)=\LEnd(V_F)$ cf. 
\cite[\S 3.3.5 Proposition 3]{lamb}. Thus, this follows from 
Theorem \ref{atrep2}. 
\end{proof}

\section{Lattices with Galois operator}

We focus on lattices  with Galois operator arising  in Orthogonal Geometry,
cf. \cite[Chapter I \S9]{gro}, \cite{gls}, and  \cite[\S 2]{gb2}. 
For basics on modular lattices we refer to 
\cite[\S 3--4, \S10, \S13]{CD}, alternatively 
\cite[Chapter V \S1, \S5]{gr}.
We consider \emph{lattices}, $L$,
as algebraic structures with binary operations
$\cdot$ (\emph{meet}) and $+$ (\emph{join});
that is,  for a suitable (unique)  partial order $\leq$,
$ab=a\cdot b=\inf\{a,b\}$, $a+b=\sup\{a,b\}$.
$L$ is \emph{modular} if
\[
a\geq c\ \text{implies}\ a(b+c)=ab+c.
\]
For both concepts 
there is a well known equivalent definition just by equations.
If $L$ has a smallest element $0$ and if $ab=0$ then
we write $a\oplus b$ instead of $a+b$.

A \emph{sublattice} of $L$ is a subset of $L$ closed under
meets and joins  and  a lattice (modular if so is $L$) endowed with
the restrictions of these operations;
for example the \emph{intervals}
$[u,v]=\{x \in L\mid u\leq x \leq v\}$.
A \emph{homomorphism} $\varphi:L\to M$ between lattices
is a map such that $\varphi(ab)=\varphi(a) \varphi(b)$ and
$\varphi(a+b)=\varphi(a)+\varphi(b)$ for all $a,b \in L$.
A \emph{congruence} (\emph{relation}) on a lattice $L$
is an equivalence relation $\theta$
 which is compatible with meet and join,
that is $a \,\theta\,b$ and $c\,\theta\,d$ jointly imply
$ac\,\theta\,bd$  and $(a+c)\,\theta\,(b+d)$. 
If $\varphi:L\to M$ is a  homomorphism 
then $a\,\theta \,b \Leftrightarrow \varphi(a)=\varphi(b)$
defines a congruence on $L$;  any congruence arises
this way with surjective $\varphi$ and if $L$ is modular so is $M$.
A lattice $L$ is \emph{subdirectly irreducible}
if it has a smallest non-trivial congruence $\mu$,
the \emph{monolith} of $L$.

A modular lattice $L$ has \emph{dimension} $n<\omega$,
(which is denoted by $\dim L$),
if $L$ has $(n+1)$-element maximal chains.
If $L$ has smallest element $0$, we put $\dim a =\dim[0,a]$ 
if that exists; 
we  call $a$
an \emph{atom} if  $\dim a=1$.

A \emph{bounded lattice} has smallest element
$0=\inf L$ and greatest element  $1=\sup L$ which are considered as constants.
A  bounded lattice $L$ is \emph{complemented} if for any
$a\in L$, there is $b\in L$ such that $a\oplus b=1$.
In a CML (i.e., a complemented modular lattice) $L$, any interval $[u,v]$ is complemented, too;
$L$ is \emph{atomic} if for any $a>0$ there is an atom $p\leq a$.
It follows that for elements $a$ and $b$ of an atomic CML $L$,
$a\nleq b$ in $L$ if and only if there is an atom $p\in L$ such that $p\leq a$ and
$p\nleq b$. Primary examples of atomic CMLs are the
lattices of all subspaces of vector spaces (see Proposition \ref{arg}, below).\\

Lattices relevant in orthogonal geometry
also support  an  operation $X \mapsto X^\perp$,
the Galois correspondence induced by the orthogonality relation,
 see \cite{gro}. 
This is captured by the following concept (cf. \cite{gls,gb2}). 
A \emph{Galois lattice} is a
bounded  lattice $L$  endowed with an additional operation
$x\mapsto x^\prime$ such that
$x \leq y^\prime$ implies $y \leq x^\prime$
for any $x,y\in L$ and such that  $1'=0$.
It is well known and easy to prove that $x \leq x^{\prime\prime}$,
that $x\leq y$ implies $y^\prime\leq x^\prime$,
that  $x^{\prime\prime\prime}=x^\prime$, 
  that $0'=1$, 
and that $(x+y)^\prime =x^\prime y^\prime$. 
For an equational definition see   \cite[IV.2.5]{gb2}.
A  \emph{Galois sublattice} $S$ of a Galois lattice $L$ is
a sublattice of $L$ such that $0,1 \in S$ and
 $x' \in S$ for
any $x \in S$; thus, it  is a Galois lattice with the
inherited operations. Similarly, a homomorphism $\varphi:L\to M$  
between 
Galois lattices is a lattice homomorphism $L \to M$ 
preserving $0,1$ and  such that 
$\varphi(x') =(\varphi(x))'$ for all $x \in L$. 
Also, a lattice congruence $\theta$  on $L$ is a \emph{Galois lattice
congruence} if $a\,\theta\,b$ implies $a'\,\theta\,b'$ 
for all $a,b \in L$.

A \emph{polarity lattice} is a Galois lattice
 such that $p^\prime$ is a dual atom of $L$ for any atom $p\in L$.
The referee pointed out the  need for such concept and provided
the smallest example of a Galois CML which is not
a polarity lattice: the $4$-element CML  with $x'=0$ for $x\neq 0$ and
 $0'=1$. Also, the class of modular polarity lattices is not closed under
 substructures: Consider a  subspace $X$ of a Hilbert space
 $H$ such that $X \neq X^c$, the closure of $X$; then     
then $0, X, X^c, X^\perp, X+X^\perp, H$ form a subalgebra
of the polarity lattice associated with $H$ cf. Proposition \ref{og0}
below.  It remains open whether the class of  CML polarity lattices
is closed under  complemented subalgebras.

A Galois lattice $L$ is a  \emph{lattice with involution}
if, in addition, $x^{\prime\prime}=x$ for all $x\in L$; equivalently,
if $x\mapsto x^\prime$ is a dual automorphism of order $2$ of the
lattice $L$; in particular, such is a polarity lattice.
$L$ is an \emph{ortholattice} if, in addition, the involution
satisfies  the identity $xx^\prime=0$ (or equivalently, $x+x^\prime=1$).
We write MIL [CMIL] shortly for a [complemented] modular lattice with involution
and MOL for a modular ortholattice.
We use each abbreviation also to denote the class of \emph{all}
Galois lattices
with the corresponding property.
Observe that $\dim u=\dim [u^\prime,1]$ in any MIL.
Also, observe that  in a  CMIL, in general,  
$x'$   fails to be a complement of $x$: cf. the
$4$ element CML with $p=p'$ for each  atom.
 The following statement is straightforward to prove.

\begin{lemma}\lab{L:1}
Let $L_0$, $L_1$ be  lattices with involution.
\begin{enumerate}
\item
A map $\varphi\colon L_0\to L_1$ is a homomorphism, if
$\varphi(x+y)=\varphi(x)+\varphi(y)$, $\varphi(x^\prime)=\varphi(x)^\prime$ for all $x$, $y\in L_0$, and $\varphi(0)=0$.
\item
A subset $X\subseteq L_0$ is a Galois sublattice of $L_0$,
if $0\in X$ and $X$ is closed under the operations $+$ and $\ ^\prime$.
\end{enumerate}
\end{lemma}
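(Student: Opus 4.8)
The plan is to verify the two claims using the basic consequences of the Galois lattice axioms already recorded in the text, namely $x\le x''$, antitonicity of $x\mapsto x'$, $(x+y)'=x'y'$, and $0'=1$, $1'=0$, together now with the extra assumption $x''=x$. Both statements are "if" statements reducing the usual notions (homomorphism, sublattice) to a smaller set of conditions, so the work is to recover the missing conditions (preservation of meet, $\varphi(1)=1$, $\varphi(x')=\varphi(x)'$ when only $\varphi(0)=0$ is assumed, closure under meets, closure under $'$ with $1\in X$) from the given ones.

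For (i), first I would note that in a lattice with involution $x\mapsto x'$ is a dual automorphism of order two, so meet is definable from join and $'$ via $xy=(x'+y')'$. Hence if $\varphi$ preserves $+$ and $'$ it automatically preserves $\cdot$: $\varphi(xy)=\varphi((x'+y')')=\bigl(\varphi(x')+\varphi(y')\bigr)'=\bigl(\varphi(x)'+\varphi(y)'\bigr)'=\varphi(x)\varphi(y)$. It remains to see that $\varphi(1)=1$. Here I would use that $1$ is the greatest element and that $x+x'$ need not be $1$ in a general MIL — so instead I would argue from $\varphi(0)=0$: since $0'=1$ and $\varphi$ preserves $'$, $\varphi(1)=\varphi(0')=\varphi(0)'=0'=1$. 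Thus all the defining clauses of a homomorphism of lattices with involution hold.

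For (ii), the point is that a Galois sublattice must contain $0$ and $1$ and be closed under $+$, $\cdot$, and $'$. Given $0\in X$ and closure under $+$ and $'$: applying $'$ to $0$ yields $1=0'\in X$, and closure under $\cdot$ follows again from $xy=(x'+y')'$, since $x,y\in X$ gives $x',y'\in X$, then $x'+y'\in X$, then $(x'+y')'\in X$. So $X$ is a sublattice containing $0,1$ and closed under $'$, i.e. a Galois sublattice, and it inherits the involution identity $x''=x$ from $L_0$, hence is itself a lattice with involution.

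The only mild subtlety — and the one place I would be careful — is the use of $xy=(x'+y')'$; this identity is valid precisely because $x\mapsto x'$ is an \emph{involution} (dual automorphism of order $2$), which is exactly the hypothesis in the lemma ($L_0,L_1$ are lattices with involution), so there is no circularity. Everything else is a direct unwinding of definitions, so I expect no real obstacle; the statement is, as the text says, straightforward.
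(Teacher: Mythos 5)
Your proof is correct and is exactly the natural argument the paper has in mind (the paper omits the proof, declaring the lemma straightforward): recover meet-preservation and closure under meets from the definability $xy=(x'+y')'$, valid because $'$ is a dual automorphism of order $2$ in a lattice with involution, and recover $1$ from $0'=1$. No gaps.
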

For a modular polarity lattice $L$, let $L_{\sf f}= F \cup \{u'\mid u \in F\}$ 
where $F=\{u\in L\mid  \dim u<\omega\}$.

\begin{fact}\lab{gal}
If $L$ is a polarity CML then $L_{\sf f}$ is a Galois
sublattice of $L$   and $L_{\sf f}$ is an atomic CMIL
which is the directed union of its subalgebras $[0,u]\cup [u^\prime,1]$,
where $\dim u<\omega$ and $u\oplus u^\prime=1$ $($which are all CMILs$)$.
If $L$ is a CMIL, then $L_{\sf f}=\{a\in L\mid\dim a<\omega\ \text{or}\ \dim[a,1]<\omega\}$.
\end{fact}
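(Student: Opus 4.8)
\emph{A closure lemma.} The proof reduces everything to finite dimensions; the new lattice input is the following identity, valid in any modular Galois lattice: if $u\oplus u'=1$, then $u''=u$. Indeed, since $u\le u''$, modularity gives $u''=u''(u+u')=u+(u''u')$; setting $k=u''u'$ we get $u''=u+k$, hence $u'=u'''=(u+k)'=u'k'$, so $u'\le k'$, and from $k\le u'$ also $u''=(u')'\le k'$; but $u'+u''=u'+u+k=1+k=1$, so $k'=1$, whence $k\le k''=1'=0$ and $u''=u$. Consequently, for every $u$ with $u\oplus u'=1$ the set $K_u:=[0,u]\cup[u',1]$ is closed under $'$ (for $a\le u$: $u'\le a'\le 1$; for $a\ge u'$: $0\le a'\le u''=u$), trivially closed under $+$ and $\cdot$, and contains $0,1$, so by Lemma~\ref{L:1} it is a Galois sublattice of $L$. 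It is complemented: for $a\le u$, if $a\oplus b=u$ in $[0,u]$ then modularity yields $a(b+u')=0$ and $a+b+u'=1$, so $b+u'\in[u',1]$ is a complement of $a$ in $K_u$, and dually for $a\ge u'$. Finally $x\mapsto x''$ is the identity on $K_u$: on $[u',1]$ this is automatic (if $a\ge u'$ then $a=w'$ with $w=a'\le u$, so $a''=w'''=w'=a$), and on $[0,u]$ it follows from the fact, recorded below, that every element of the finite-dimensional, non-degenerate ($uu'=0$) lattice $[0,u]$ is closed in $L$. Thus each $K_u$ with $\dim u<\omega$ is a finite-dimensional CMIL.

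\emph{Reduction to finite dimensions (the main obstacle).} The heart of the matter is the analogue of \Fact~\ref{du}: every $a\in L$ with $\dim a<\omega$ lies below some $u$ with $\dim u<\omega$ and $u\oplus u'=1$ (and each such $[0,u]$ has all its elements closed). I expect this to be the principal difficulty. I would obtain it either by passing to the orthogeometry associated with $L$ and invoking \cite{he4} (as in the proof of \Fact~\ref{du}), or directly by induction on $\dim a$: writing $a=p\oplus a_1$ with $p$ an atom and choosing $u_1\ge a_1$ for $a_1$ (so $u_1''=u_1$ by the closure lemma), if $p\le u_1$ take $u=u_1$; otherwise $q:=(p+u_1)u_1'$ is an atom with $p+u_1=q+u_1$ and $q\le u_1'$, hence $u_1=u_1''\le q'$. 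If $q\nleq q'$, then $u=q+u_1$ works, since $(q+u_1)+(q+u_1)'=q+q'=1$ and $(q+u_1)(q+u_1)'=qq'=0$; if $q\le q'$, one first adjoins a complement $r$ of the coatom $q'$ --- necessarily an atom with $r\oplus q'=1$ --- so that $q+r$ is a hyperbolic plane, and checks that $u=q+r+u_1$ satisfies $u\oplus u'=1$. Directedness of $\{u:\dim u<\omega,\ u\oplus u'=1\}$ then follows by applying this to $u_1+u_2$; the same geometric input (finite-dimensional non-degenerate spaces have all subspaces closed) gives the closedness statement used in the first step.

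\emph{Assembling $L_{\sf f}$.} By the reduction, every element of $L_{\sf f}=F\cup F'$ lies in some $K_u$ (an $a\in F$ in $[0,u]$; a $w'\in F'$ in $[u',1]$ for $u\ge w$), and conversely $K_u\subseteq L_{\sf f}$, since $[0,u]\subseteq F$ and, for $a\ge u'$, $a'\le u$ is finite-dimensional and $a=a''\in F'$. Hence $L_{\sf f}=\bigcup_uK_u$ is a directed union of the Galois sublattices $K_u$, so it is itself a Galois sublattice of $L$; it is modular, contains $0,1$, and for each of its elements both $x''=x$ and the existence of a complement hold inside some $K_u$ containing it, so $L_{\sf f}$ is a CMIL; it is atomic because any $0\ne x\in L_{\sf f}$ lies in a finite-dimensional CML $K_u$, which contains an atom below $x$. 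The $K_u$ are exactly the asserted subalgebras $[0,u]\cup[u',1]$ with $\dim u<\omega$ and $u\oplus u'=1$, shown above to be CMILs.

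\emph{The CMIL case.} If $L$ is already a CMIL, then $'$ is a dual automorphism of order $2$, so $a\in F'$ iff $a'\in F$ iff $\dim a'<\omega$; and $\dim a'=\dim[a'',1]=\dim[a,1]$ by the identity $\dim v=\dim[v',1]$ valid in any MIL. Hence $F'=\{a\in L:\dim[a,1]<\omega\}$ and $L_{\sf f}=F\cup F'=\{a\in L:\dim a<\omega\ \text{or}\ \dim[a,1]<\omega\}$.
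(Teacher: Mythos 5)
Your two new ingredients are correct and genuinely different from the paper's: the purely lattice-theoretic closure lemma ($u\oplus u'=1\Rightarrow u''=u$) and the induction that places any finite-dimensional $a$ under some finite-dimensional $u$ with $u\oplus u'=1$ (the paper gets this in one step by taking a complement $v$ of $u+u'$ in $[u,1]$ and checking $v\oplus v'=1$). But there is a genuine gap at the point you defer to a ``fact, recorded below'': you never prove that every finite-dimensional $a\le u$ is closed in $L$, and both the MIL property of $K_u$ and of $L_{\sf f}$ rest on this. Your closure lemma only covers elements $x$ with $x\oplus x'=1$, which a general $a\le u$ need not satisfy (an isotropic atom has $a\le a'$). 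Appealing to ``finite-dimensional non-degenerate spaces have all subspaces closed'' imports a vector-space fact into an abstract polarity CML; to justify it you would have to coordinatize $[0,u]$ and match the restricted $'$ with an orthogonality, which breaks down for $\dim u\le 2$ and for reducible $[0,u]$, and is in any case far heavier than needed. Relatedly, your argument that $x\mapsto x''$ is the identity on $[u',1]$ is circular: writing $a=w'$ with $w=a'$ presupposes $a''=a$; what is really needed is that $'$ maps $[0,u]$ \emph{onto} $[u',1]$, which again depends on the missing closedness. Some use of ``atoms go to coatoms'' is unavoidable here: in the referee's four-element Galois CML with $x'=0$ for $x\neq 0$ one has $a''=1\neq a$ for the atoms.

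The paper closes exactly this gap with one short covering computation that your proof lacks: if $\dim[u,v]=1$, say $v=u+p$ with $p$ an atom, then $v'=u'p'$ with $p'$ a coatom, so $v'$ equals or is covered by $u'$; hence $\dim[v',u']\le\dim[u,v]$ whenever the latter is finite. Iterating along a maximal chain gives $\dim a''\le\dim[a',1]\le\dim a$ for every finite-dimensional $a$, whence $a''=a$, and the same inequality makes $x\mapsto x'$ a pair of mutually inverse anti-isomorphisms between $[0,u]$ and $[u',1]$. If you insert this lemma, the rest of your argument (the sublattice and complementation checks for $K_u$, directedness, and the CMIL case) goes through. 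You should also carry out the verification you leave as ``checks that'' in the isotropic sub-case $q\le q'$; it does work, e.g. $(q+r+u_1)q'r'u_1'=((q+r+u_1)q')u_1'r'=(q+u_1)u_1'r'=qr'=0$ by modularity, and the join is $1$ because $r+q'=1$ while $q'=u_1+q'u_1'$ and $q'u_1'=q+q'u_1'r'$, using that $r'$ is a coatom not above $q$.
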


\begin{proof}
We have $\dim[v^\prime,u^\prime]\leqslant\dim[u,v]$ if the latter is finite.
Indeed, if $\dim[u,v]=1$ then $v=u+p$,
where $p$ is a complement of $u$ in $[0,v]$,
whence an atom and so $v^\prime=u^\prime p^\prime$ is a lower cover of $u^\prime$ unless $v^\prime=u^\prime$.
Thus, if $\dim u<\omega$, then $\dim u^{\prime\prime}\leq\dim [u^\prime,1]\leq\dim u$,
$u^{\prime\prime}=u$, and $x\mapsto x^\prime$ provides a pair of mutually
inverse lattice anti-isomorphisms between the intervals $[0,u]$ and $[u^\prime,1]$ of $L$.
Therefore, $[u^\prime,1]\subseteq L_{\sf f}$.
Since $\{u\in L\mid \dim u<\omega\}$ is closed under joins and $0\in L_{\sf f}$,
$L_{\sf f}$ is a Galois sublattice of $L$ by Lemma \ref{L:1}(ii) and, in particular, $L_{\sf f}$ is a MIL.

If $X\subseteq L_{\sf f}$ is finite, then there is $u\in L$ such that $\dim u<\omega$ and
$X=Y\cup Z$, where $y$, $z^\prime\in[0,u]$ for all $y\in Y$, $z\in Z$.
Choose $v$ as a complement of $u+u^\prime$ in $[u,1]$.
Then $\dim[u,v]=\dim[u+u^\prime,1]\leqslant\dim[u^\prime,1]=\dim u<\omega$ whence $\dim v<\omega$
We have $u$, $u^\prime$, $v\in L_{\sf f}$. Therefore, $u=v(u+u^\prime)$ implies
$u^\prime=v^\prime+uu^\prime$. It follows that
$v+v^\prime=v+u+uu^\prime+v^\prime=v+u+u^\prime=1$ and $vv^\prime=(v+v^\prime)^\prime=1^\prime=0$
whence $X\subseteq[0,v]\cup[v^\prime,1]$.
This proves the first statement.

If $L$ is a CMIL and
$\dim[a,1]<\omega$ then $\dim a^\prime=\dim[a,1]<\omega$, thus $a^\prime\in L_{\sf f}$ and $a=a^{\prime\prime}\in L_{\sf f}$.
\end{proof}

\noindent
From a lattice congruence $\theta$ on a MIL $L$, we put
$a\,\theta^\prime\,b :\Leftrightarrow a\,\theta\, b$.
Then $\theta^\prime$ is also a lattice congruence on $L$ and
the Galois lattice  congruences on $L$ are 
exactly the lattice congruences $\theta$ on $L$ such that 
$\theta=\theta^\prime$. 
We call an MIL  $L$ \emph{strictly subdirectly irreducible} 
if the underlying lattice
is subdirectly irreducible;
in that case, one has  $\mu=\mu^\prime$ for the lattice monolith.
Similarly, the MIL  $L$ is \emph{strictly simple} if 
the underlying lattice  is simple.
In the case of MOLs one has $\theta=\theta^\prime$ for all
$\theta$; thus subdirectly irreducible MOLs [simple MOLs] are strictly subdirectly irreducible
[strictly simple, respectively]. The following is well known.

\begin{fact}\lab{atom}
A  subdirectly irreducible CML $L$ is atomic provided it contains an atom.
If $L$ is, in addition, a  CMIL with  lattice monolith  $\mu$, then  one has $a\in L_{\sf f}$ iff $a\,\mu\,0
  $ or 
$a\,\mu\,1$.
In particular, $L_{\sf f}$ is strictly subdirectly irreducible and atomic, too.
\end{fact}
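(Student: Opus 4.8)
The plan is to reduce the three assertions to known structure theory for subdirectly irreducible complemented modular lattices, together with Fact \ref{gal}. First I would prove atomicity of a subdirectly irreducible CML $L$ containing an atom $p$. The key point is that in a subdirectly irreducible CML the monolith $\mu$ is a neutral atom-type congruence generated by a prime quotient; more concretely, for any atom $p$ one has $p\,\mu\,0$, and $\mu$ restricted to $[0,a]$ is nontrivial for every $a>0$ (otherwise the kernel of the projection $[0,1]\to[0,a]$, suitably extended, would be a nonzero congruence avoiding $\mu$, contradicting subdirect irreducibility). From nontriviality of $\mu\restriction[0,a]$ one extracts a prime quotient inside $[0,a]$, i.e. an atom $q\le a$. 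This gives atomicity.

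Next, for the CMIL statement, set $M=\{a\in L\mid a\,\mu\,0\text{ or }a\,\mu\,1\}$. Since $\mu=\mu'$ (the lattice monolith of a subdirectly irreducible MIL is automatically invariant under the involution, as recorded in the paragraph preceding Fact \ref{atom}), the set $M$ is closed under $'$. The inclusion $L_{\sf f}\subseteq M$: if $\dim a<\omega$ then $a$ is a finite join of atoms, each $\mu$-related to $0$, so $a\,\mu\,0$; dually, if $\dim[a,1]<\omega$ then $a\,\mu\,1$; and by Fact \ref{gal}, for a CMIL these two cases exhaust $L_{\sf f}$. For the reverse inclusion $M\subseteq L_{\sf f}$, suppose $a\,\mu\,0$ with $a>0$; I would argue that $\dim a<\omega$. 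The congruence $\mu$ is the monolith, and the $\mu$-class of $0$ is (join-closed and) a "small" ideal: here the cleanest route is that in an atomic subdirectly irreducible CML the monolith collapses exactly the pairs $u\le v$ with $[u,v]$ of finite length contained appropriately — equivalently, $a\,\mu\,0$ forces $a$ to lie below a finite join of atoms, hence $\dim a<\omega$. Applying $'$ and $\mu=\mu'$ handles the case $a\,\mu\,1$. Thus $M=L_{\sf f}$.

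Finally, strict subdirect irreducibility and atomicity of $L_{\sf f}$: atomicity is immediate since every atom of $L$ lies in $L_{\sf f}$ and $L_{\sf f}$ is an interval-rich Galois sublattice by Fact \ref{gal} (it is a directed union of CMILs $[0,u]\cup[u',1]$, each complemented), so any $0<a\in L_{\sf f}$ has an atom of $L$ below it, still an atom in $L_{\sf f}$. For strict subdirect irreducibility of $L_{\sf f}$ as a lattice, I would show $\mu\restriction L_{\sf f}$ is the monolith of the lattice $L_{\sf f}$: it is nonzero (it already collapses any atom with $0$), and any nonzero lattice congruence $\theta$ on $L_{\sf f}$ collapses some prime quotient $u\prec v$, hence (using modularity and atomicity) collapses some atom with $0$, hence contains $\mu\restriction L_{\sf f}$; minimality of $\mu$ among lattice congruences on $L$ transfers down because $L_{\sf f}$ generates the same "atom-to-$0$" prime quotients.

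The main obstacle I expect is the reverse inclusion $M\subseteq L_{\sf f}$, i.e. showing that $a\,\mu\,0$ actually forces $\dim a<\omega$ rather than merely $a$ being a join of atoms. This needs a description of the $0$-class of the monolith of a subdirectly irreducible CML — that it consists precisely of the elements of finite dimension — which one derives from the fact that the monolith is the congruence generated by a single prime quotient together with the projectivity transfer in modular lattices (Fact-style arguments about perspectivity and the length of intervals). Everything else is bookkeeping with Fact \ref{gal} and Lemma \ref{L:1}.
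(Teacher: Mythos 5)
Your overall route is the same as the paper's: transfer the prime quotient $p/0$ into $[0,a]$ by projectivity to get atomicity, identify the monolith $\mu$ with the congruence $x\,\mu\,y\Leftrightarrow\dim[xy,x+y]<\omega$, and then read off the characterization of $L_{\sf f}$ from \Fact\ref{gal}. The extra detail you supply for strict subdirect irreducibility of $L_{\sf f}$ (any nonzero congruence of $L_{\sf f}$ collapses an atom with $0$, and any two atoms are perspective inside a finite-dimensional interval, hence inside $L_{\sf f}$) is left implicit in the paper but is sound.

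One step of your atomicity argument is not valid as written: there is no ``projection $[0,1]\to[0,a]$'' whose kernel is a lattice congruence --- the map $x\mapsto xa$ preserves meets but not joins, even in a modular lattice, so you cannot produce a nonzero congruence avoiding $\mu$ that way. Moreover, even granting that $\mu\restriction[0,a]$ is nontrivial, extracting a \emph{prime} quotient from a collapsed pair $c>d$ in $[0,a]$ is exactly what you are trying to prove. The paper's argument avoids both problems: since $L$ is subdirectly irreducible, the principal congruence $\Theta(0,a)$ contains $\mu$, so $(0,p)\in\Theta(0,a)$; by the standard description of principal congruences in modular lattices, $p/0$ is therefore projective to some subquotient $c/d$ of $a/0$, so $[d,c]$ is prime, and a complement of $d$ in $[0,c]$ (which exists by complementation and modularity) is an atom below $a$. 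You should replace your ``kernel of the projection'' step by this argument; the rest of your plan then goes through.
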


\begin{proof}
Let $p$ be an atom in $L$. By modularity,
the smallest  lattice congruence $\mu$ 
such that 
 $0\,\mu\,p)$ is minimal.
Thus given $a>0$, one has $0\,\theta\, p$ in the
smallest  lattice congruence
such that  $0\,\theta\,a$, whence by modularity,
the quotient  $p/0$ is projective
to some subquotient $c/d$ of $a/0$.
Then any complement $q$ of $d$ in $[0,c]$ is an atom.
Thus $L$ is atomic and it follows that
$x\,\mu\,y$ iff $\dim[xy,x+y]<\omega$. In view of \Fact \ref{gal}, we are done.
\end{proof}

The following Proposition associates a CMIL $\mathbb{L}(R)$ with any
regular $*$-$\Lambda$-algebra $R$.

\begin{fact}\lab{neu}
\begin{enumerate}
\item
The principal right ideals of a regular ring $R$, possibly without unit, form a sublattice $L(R)$, containing $0$,
of the lattice of all right ideals of $R$; $L(R)$ is sectionally complemented and modular. In the case with unit, $L(R)$ is a CML with top element $R$.
\item
For  any regular $[\ast$-regular$]$ 
$\ast$-$\Lambda$-algebra $R$, the CML 
$L(R)$ becomes  a CMIL $[$MOL, respectively$]$ endowed with the involution
$eR\mapsto(1-e^\ast)R$, where $e$ is an idempotent $[$a projection, respectively$]$;
we denote it by $\mathbb{L}(R)$.
\item
 For  any regular $[\ast$-regular$]$ 
$\ast$-$\Lambda$-algebras $R_i$, $i\in I$, and $R=\prod_{i\in I}R_i$
one has $\mathbb{L}(R)\cong\prod_{i\in I}\mathbb{L}(R_i)$.
\item
If $\varepsilon\colon R\to S$ is a homomorphism and $R$, $S$ are regular rings,
then $\overline{\varepsilon}\colon L(R)\to L(S)$,
$\overline{\varepsilon}\colon aR\mapsto\varepsilon(a)S$ is a 
lattice homomorphism preserving $0$ and $1$.
If $\varepsilon$ is injective, then so is $\overline{\varepsilon}$;
if $\varepsilon$ is surjective, then so is $\overline{\varepsilon}$.
If $R$ and  $S$  are  regular  
$\ast$-$\Lambda$-algebras and $\vep$ a homomorphism of such,
then $\overline{\varepsilon}\colon\mathbb{L}(R)\to\mathbb{L}(S)$ is a homomorphism between MILs.
\end{enumerate}
\end{fact}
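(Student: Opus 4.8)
The plan is to verify the four assertions of \Fact \ref{neu} in sequence, relying heavily on the characterization of principal right ideals of regular rings in terms of idempotents (\Fact \ref{reri}(iii),(iv)) and, for the involution, on the computational rules for projections. For (i), I would recall that in a regular ring every principal right ideal $aR$ equals $eR$ for some idempotent $e$ (\Fact \ref{reri}(iii)), and that for two idempotents $e,f$ there is an idempotent $g\in eR+fR$ with $ge=e$, $gf=f$ (\Fact \ref{reri}(iv)), so $eR+fR=gR$ is again principal; the meet $eR\cap fR$ is principal because $eR\cap fR$ is a right ideal of the regular ring $R$ on which one can again produce a generating idempotent after writing $eR\cap fR = \Ann^r(\ldots)$-style or, more directly, by noting $eR\cap fR$ is a direct summand. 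Modularity of $L(R)$ is the standard Neumann fact (it is the image of the lattice of all right ideals, or one argues directly); sectional complementation follows because inside $[0,eR]$ one works in the corner ring $eRe$ (with or without unit) which is again regular. In the unital case $R=1\cdot R$ is the top. The main subtlety here is the ``possibly without unit'' proviso: I would point out that $aR = \{za+ar\mid z\in\mathbb Z,\ r\in R\}$ and that for $a$ with $axa=a$ the element $e=ax$ is idempotent with $eR=aR$, so everything reduces to idempotent-generated ideals exactly as in the unital case, and $L(R)$ just fails to have a top.

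For (ii) I would first check that $e\mapsto(1-e^\ast)R$ is well-defined on $L(R)$, i.e. depends only on $eR$ and not on the choice of idempotent $e$: if $eR=fR$ with $e,f$ idempotents then $ef=f$, $fe=e$, hence $e^\ast f^\ast=e^\ast$ and $f^\ast e^\ast=f^\ast$, giving $(1-e^\ast)(1-f^\ast)=1-f^\ast$ and symmetrically, so $(1-e^\ast)R=(1-f^\ast)R$. Then I would verify the Galois-lattice axioms of the earlier section: that $eR\subseteq fR$ implies $(1-f^\ast)R\subseteq(1-e^\ast)R$ (dual order reversal), that $1'=0$ and $0'=1$, and that $x''=x$, which here reads $(1-(1-e^\ast)^{\ast\ast})R = (1-(1-e))R=eR$ using $(e^\ast)^\ast=e$; this gives an honest involution, so $\lt(R)$ is a CMIL by Lemma \ref{L:1}. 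For the $\ast$-regular case one additionally uses \Fact \ref{reri}(iii) to pick $e$ a \emph{projection} ($e=e^2=e^\ast$), so that $eR\mapsto(1-e)R$ and $eR\cdot(1-e)R=0$, $eR+(1-e)R=R$, i.e. the ortholattice identity $xx'=0$; uniqueness of the projection makes this consistent. The main obstacle in (ii) is purely bookkeeping: making sure each Galois/involution axiom is matched to the correct idempotent identity, and handling the non-unital reading of ``$1-e$'' (it means the operator $r\mapsto r-er$, legitimate since $(1-e)R$ as a right ideal is unambiguous even without a global unit; I would remark that $\lt(R)$ is defined only when $R$ has a unit, or else note that $\lt(R)=\lt(R^1)$-style adjustments are needed — I would follow the paper's convention that the CMIL is formed for unital $R$).

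For (iii), the product claim is essentially formal: a principal right ideal of $R=\prod_i R_i$ is generated by an idempotent $e=(e_i)_i$ with each $e_i$ idempotent, and $eR=\prod_i e_iR_i$; the assignment $eR\mapsto(e_iR_i)_i$ is then a bijection onto $\prod_i L(R_i)$ that visibly preserves $+$, $\cdot$, $0$, $1$, and (in the $\ast$-case, using $e^\ast=(e_i^\ast)_i$) the involution $eR\mapsto(1-e^\ast)R \mapsto((1-e_i^\ast)R_i)_i$. I would just note the one point needing care: that every principal right ideal of the product is of this ``coordinatewise'' form, which holds because $e=ex$ for $x$ a quasi-inverse and quasi-inverses in a product are taken coordinatewise. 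For (iv), given a ring homomorphism $\vep\colon R\to S$ between regular rings, $\overline\vep\colon aR\mapsto\vep(a)S$ is well-defined because $aR=eR$ ($e=ax$ idempotent) forces $\vep(a)S=\vep(e)S$ with $\vep(e)$ idempotent; it preserves joins since $\vep(eR+fR)=\vep(gR)=\vep(g)S$ and one checks $\vep(g)S=\vep(e)S+\vep(f)S$ from $ge=e$, $gf=f$; it preserves meets by the analogous idempotent computation; it preserves $0$ trivially and $1$ in the unital case since $\vep(1)=1$. Injectivity: if $\vep$ is injective and $\vep(e)S=\vep(f)S$ for idempotents $e,f$, then $\vep(e)\vep(f)=\vep(f)$ etc., so $\vep(ef)=\vep(f)$, hence $ef=f$ and symmetrically, giving $eR=fR$. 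Surjectivity: any idempotent of a regular homomorphic image lifts, or more simply, every principal right ideal of $S=\vep(R)$ is $\vep(a)S=\overline\vep(aR)$. Finally, when $\vep$ is a homomorphism of $\ast$-$\Lambda$-algebras, $\vep(e^\ast)=\vep(e)^\ast$ gives $\overline\vep((1-e^\ast)R)=(1-\vep(e)^\ast)S=(\overline\vep(eR))'$, so $\overline\vep$ is a MIL homomorphism by Lemma \ref{L:1}(i). I expect no serious obstacle in (iii)--(iv) beyond the well-definedness checks; the one genuinely load-bearing input throughout is \Fact \ref{reri}(iii),(iv), which converts every lattice-theoretic manipulation into an idempotent identity.
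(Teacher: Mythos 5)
Your verification is correct in substance, but it is worth saying how it relates to the paper's own proof, which is essentially a set of citations: (i), (iii), (iv) and the lattice part of (ii) are referred to Fryer--Halperin and to Wehrung's chapter, and the involution in (ii) is disposed of in one line by writing it as the composite $eR\mapsto R(1-e)\mapsto(1-e^\ast)R$ of the left-annihilator dual isomorphism of $L(R)$ onto the lattice of principal left ideals with the bijection induced by $r\mapsto r^\ast$. You instead verify the Galois-lattice axioms directly from idempotent identities, which works and is more self-contained; what the paper's route buys is precisely the two steps you pass over most quickly. First, in (i), closure of $L(R)$ under intersection is not formal: that $eR\cap fR$ is again a direct summand (hence principal) needs either the argument that finitely generated submodules of $R_R$ are direct summands, or the computation $eR\cap fR=\Ann^r\bigl(R(1-e)+R(1-f)\bigr)$ together with closure of principal \emph{left} ideals under sums. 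Second, in (iv), preservation of meets is not ``the analogous idempotent computation'' to preservation of joins: the inclusion $\overline{\vep}(eR\cap fR)\subseteq\vep(e)S\cap\vep(f)S$ is immediate, but the reverse inclusion requires expressing the meet by a term in the ring operations and quasi-inverses --- most cleanly again via the annihilator duality, under which meets of principal right ideals become joins of principal left ideals, and joins are term-computable and hence preserved by any ring homomorphism. So the dual isomorphism $eR\mapsto R(1-e)$ is load-bearing three times, not once. Everything else in your write-up --- well-definedness of $eR\mapsto(1-e^\ast)R$, order reversal, $x''=x$ (note the stray extra ${}^\ast$ in your displayed formula; the intended computation is $(1-(1-e^\ast)^\ast)R=(1-(1-e))R=eR$), the MOL case for projections, the coordinatewise description in (iii), and the injectivity and surjectivity claims in (iv) --- is correct and matches the standard arguments.
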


\noindent
In \Fact \ref{neu}(ii), one can consider the preorder
$e\leq f$ iff $fe=e$ on the set of idempotents of $R$
and obtain the lattice $\mathbb{L}(R)$ factoring by the equivalence relation
$e\sim f$ iff $e\leq f\leq e$;
the involution is given by $e\mapsto 1-e^\ast$.
In the $\ast$-regular case,
any of the equivalence classes contains a unique projection
so that $\mathbb{L}(R)$ is also called the \emph{projection $[$ortho$]$lattice} of $R$.

Recalling  \Fact \ref{endreg}
for a pre-hermitian space $V_F$,
the principal right ideals of $J(V_F)$ form an atomic sectionally complemented sublattice
of the lattice of all right ideals of $J(V_F)$, which is
isomorphic to the lattice of finite-dimensional subspaces of $V_F$ via the map $\varphi J(V_F)\mapsto\im\varphi$.

\begin{proof} These results originate with \cite{fry}.
(i)--(ii): See \cite[\S 8-3.3.13]{fred}.
For $R\in\mathcal{R}_{\Lambda}$, the map $eR\mapsto\xc{{\sf Ann}^l(eR)=}R(1-e)\mapsto(1-e^\ast)R$
combines a dual isomorphism of $L(R)$ onto the lattice of left principal ideals
with an isomorphism of the latter onto $L(R)$.
(iii)--(iv):  See \cite[\S 8-3.3.14--15]{fred}.
\end{proof}

\section{Projective spaces and orthogeometries}

Synthetic geometries are a convenient link between lattice and 
vector space structures.
We follow \cite[Chapter 2]{FF}.
A \emph{projective space} $P$
is a set, whose elements are called \emph{points}, endowed with a ternary relation
$\Delta\subseteq P^3$ of \emph{collinearity} satisfying the following conditions:
\begin{enumerate}
\item
if $\Delta(p_0,p_1,p_2)$, then $\Delta(p_{\sigma(0)},p_{\sigma(1)},p_{\sigma(2)})$
and $p_{\sigma(0)}\neq p_{\sigma(1)}$
for any permutation $\sigma$ on the set $\{0,1,2\}$;
\item
if $\Delta(p_0,p_1,a)$ and $\Delta(p_0,p_1,b)$, then $\Delta(p_0,a,b)$;
\item
if $\Delta(p,a,b)$ and $\Delta(p,c,d)$, then $\Delta(q,a,c)$ and $\Delta(q,b,d)$ for some $q\in P$.
\end{enumerate}
The space
$P$ is \emph{irreducible} if for any $p\neq q$ in $P$ there is $r\in P$ such that $\Delta(p,q,r)$.
A set $X\subseteq P$ is a \emph{subspace} of $P$
if $p$, $q\in X$ and $\Delta(p,q,r)$ together imply that $r\in X$.

\noindent
Any projective space $P$ is the disjoint union of its irreducible subspaces $P_i$, $i\in I$,
which are called its \emph{components}.
The set $L(P)$ of all subspaces of an [irreducible] projective space $P$ is a complete
[subdirectly irreducible] atomic CML, in which 
the atoms are the subspaces $\{p\}$, $p \in P$, all of which   
are compact.
Moreover, $L(P)\cong\prod_{i\in I}L(P_i)$ via the map $X\mapsto(X\cap P_i\mid i\in I)$.
Conversely, any complete atomic CML $L$ with compact atoms is isomorphic to
$L(P)$ via the map $a\mapsto\{p\in P\mid p\leq a\}$,
where $P$ is the set of atoms of $L$ and distinct points $p$, $q$, $r\in P$ are collinear if and only if $r<p+q$.
Recall that J\'{o}nsson's \emph{Arguesian} lattice identity \cite{jo2}
holds in $L(P)$ if and only if $P$ is desarguean.
For a vector space $V_F$, let $L(V_F)$ denote the lattice of all linear subspaces of $V_F$.

\begin{fact}\lab{arg}
\begin{enumerate}
\item
For any vector space $V_F$, $L(V_F)$ is a CML.
Moreover, there exists an irreducible desarguean projective space $P$ such that
$L(V_F)\cong L(P)$.
\item
For any irreducible desarguean projective space $P$ with $\dim L(P)>2$,
there is a vector space $V_F$ such that $L(P)\cong L(V_F)$;
$F$ is  unique up to isomorphism, $V_F$ up to semilinear bijection.
\item
If $P$ is irreducible and $\dim L(P)>3$, then $P$ is desarguean.
\item
Any subdirectly irreducible CML of dimension at least $4$ is Arguesian.
\end{enumerate}
\end{fact}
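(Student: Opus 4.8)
The plan is to reduce everything to the classical coordinatization theory of projective geometry, as developed in \cite[Chapter 2]{FF}, and to the standard correspondence between complete atomic CMLs with compact atoms and subspace lattices of projective spaces that has just been recalled. For part (i), I would start from a vector space $V_F$ and verify directly that $L(V_F)$ is modular: given subspaces $A \ge C$, every vector in $A \cap (B+C)$ decomposes as $b+c$ with $b \in B$, $c \in C$, and then $b = (b+c) - c \in A$ since $A \ge C$, so $b \in A \cap B$ and the nontrivial inclusion $A \cap (B+C) \subseteq (A\cap B) + C$ holds; the reverse inclusion and complementedness of each interval are routine. That $L(V_F)$ is atomic with compact atoms is clear (the atoms are the lines $vF$), so $L(V_F)$ is complete atomic with compact atoms and hence $L(V_F) \cong L(P)$ for the projective space $P$ of its atoms, with collinearity $r < p+q$; desarguean-ness of $P$ is exactly the statement that the coordinatizing structure is a (possibly noncommutative) division ring, which it is here, so $P$ is desarguean. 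This also yields that $L(V_F)$ satisfies the Arguesian identity via \cite{jo2}.

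For part (ii), I would invoke the fundamental theorem of projective geometry in the synthetic form of \cite[Chapter 2]{FF}: an irreducible desarguean projective space $P$ with $\dim L(P) > 2$ (i.e. of projective dimension at least $2$, so that Desargues' configuration has room to be realized) is the projective space $P(V_F)$ of one-dimensional subspaces of a vector space $V_F$ over a division ring $F$, and both $F$ and $V_F$ are determined up to the stated equivalences — $F$ up to isomorphism, $V_F$ up to semilinear bijection over that isomorphism. Translating through $X \mapsto \{p \le a\}$ gives $L(P) \cong L(V_F)$. The only point requiring care is the dimension bookkeeping: "$\dim L(P) > 2$" in the lattice sense means maximal chains have at least four elements, i.e. projective dimension $\ge 2$, which is precisely the hypothesis under which the synthetic coordinatization theorem applies.

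For part (iii) I would cite the synthetic version of the Hessenberg–Desargues theorem: in an irreducible projective space of projective dimension $\ge 3$ (equivalently $\dim L(P) > 3$), Desargues' theorem is a consequence of the incidence axioms (i)–(iii) alone, because the two triangles and their center of perspectivity can be embedded in a suitable pair of planes lying in different hyperplanes, and the classical three-dimensional argument applies; this is standard, e.g. in \cite[Chapter 2]{FF}. Part (iv) then follows by assembling the pieces: a subdirectly irreducible CML $L$ of dimension $\ge 4$ has an atom (dimension $\ge 4$ forces a four-element chain $0 < p < \cdots$, so $p$ is an atom), hence by \Fact \ref{atom} is atomic; its completion $\bar L$ (the lattice of ideals, say, or more carefully the MacNeille completion within the same variety) is complete atomic with compact atoms, hence $\bar L \cong L(P)$ for an irreducible projective space $P$ with $\dim L(P) = \dim L \ge 4 > 3$, so $P$ is desarguean by (iii), whence $L(P)$ — and therefore its sublattice $L$ — satisfies the Arguesian identity by (i) and \cite{jo2}. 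The main obstacle is the last reduction: one must pass from the finite-dimensional (or not-necessarily-complete) subdirectly irreducible $L$ to a complete atomic CML with compact atoms without leaving the relevant class, so that the representation $L(P)$ is available and so that the Arguesian identity, once known in $L(P)$, descends to $L$; embedding $L$ into $L(P)$ as a sublattice (not merely the existence of $L(P)$ with the same dimension) is what makes the identity transfer, and getting that embedding is the delicate step.
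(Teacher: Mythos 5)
Parts (i)--(iii) of your proposal are fine: the paper simply cites \cite[Propositions 2.4.15 and 2.5.6, Chapter 9]{FF} and \cite[Chapter 13]{CD} for exactly the facts you sketch (direct verification of modularity, the fundamental theorem of projective geometry, and the Hessenberg--Desargues theorem in dimension $\geq 3$), so supplying the standard arguments is harmless.

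Part (iv) is where you have a genuine gap, and you have correctly located it yourself without closing it. Your route is: find an atom, invoke Proposition \ref{atom} to get atomicity, pass to a completion $\bar L$ that is complete atomic with compact atoms, identify $\bar L$ with some $L(P)$, and descend the Arguesian identity to the sublattice $L$. Two things go wrong. First, the existence of an atom is not guaranteed: ``dimension at least $4$'' only provides a long chain, and an infinite-dimensional subdirectly irreducible CML need not contain any atom (irreducible continuous geometries are simple CMLs without atoms), so the appeal to Proposition \ref{atom} does not get off the ground in general. Second, and more seriously, neither of the completions you name does the job: the MacNeille completion of a modular lattice need not be modular, and the ideal lattice of a CML need not be complemented, so you have no complete atomic CML with compact atoms to which the $L\cong L(P)$ correspondence applies, and hence no embedding of $L$ into any $L(P)$. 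The paper closes exactly this hole by citing Frink's theorem \cite{frink}, which asserts outright that \emph{every} CML embeds into $L(P)$ for some projective space $P$; subdirect irreducibility of $L$ then forces the image into a single irreducible component $L(P_i)$ (since $L(P)\cong\prod_i L(P_i)$), the chain condition gives $\dim L(P_i)>3$, part (iii) makes $P_i$ desarguean, and J\'onsson's result \cite{jo2} makes $L(P_i)$, hence its sublattice $L$, Arguesian. Your final observation that identities descend to sublattices is correct; what is missing is the embedding itself, and that is precisely the content of Frink's theorem rather than of any completion construction.
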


\begin{proof}
Claim (i) is the content of \cite[Proposition 2.4.15]{FF}.
For (ii), see \cite[Proposition 2.5.6]{FF} and \cite[Chapter 9]{FF}.
For (iii), see \cite[Chapter 13]{CD}. As to claim (iv),
according to Frink \cite{frink}, any CML $L$ embeds into $L(P)$ for some projective space $P$.
Since $L$ is subdirectly irreducible as a lattice, it embeds into $L(P_i)$
for some irreducible component $P_i$ of $P$, which is desarguean since $\dim L(P_i)>3$,
whence statement (iv) follows.
\end{proof}

\begin{fact}\lab{L:ortho} Let $P$ be a projective space.
There is a 1-1-correspondence between maps
$X \mapsto X^\perp$  turning $L(P)$ into a Galois CML  $\mathbb{L}(P,\perp)$
on one side 
and, on the other side,  symmetric binary relations $\perp$  on $P$
such that
\begin{itemize} 
\item[(a)]
for any  $p$, $q$, $r$, $s\in P$, if
 $p\perp q$, $p\perp r$, and $\Delta(q,r,s)$ then  $p\perp s$; 
\item[(b)] for any $p \in P$ there is 
$q\in P$ with $p\not\perp q$.
\end{itemize} 
The correspondence is given by 
\[
X^\perp:=\{q\in P\mid q\perp p\ \text{for all}\ p\in X\},
\quad \quad   p \perp q :\Leftrightarrow p \leq q^\perp.
\] 
Given such relation $\perp$, the following are equivalent
\begin{enumerate} 
\item The Galois CML  $\mathbb{L}(P,\perp)$ is a polarity CML;
\item for all $p,q,r \in P$,
if  $p \neq q$, then $r\perp t$ for some $t\in P$ such that
$\Delta(p,q,t)$;
\item  same as (ii) with additional hypotheses
$r\not\perp p$ and $r \not\perp q$.
\end{enumerate}
\end{fact}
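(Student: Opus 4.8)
The plan is to split the argument into three parts. First, for the correspondence between the two descriptions of the structure: given a Galois operator $X\mapsto X^\perp$ on $L(P)$, one restricts to atoms to obtain a binary relation $p\perp q:\Leftrightarrow p\leq q^\perp$; conversely, from a relation $\perp$ on $P$ satisfying (a), the formula $X^\perp=\{q\mid q\perp p\text{ for all }p\in X\}$ clearly defines a subspace (this is where (a) is used, since $X^\perp$ must be closed under collinearity), and one checks the Galois axiom $X\leq Y^\perp\Rightarrow Y\leq X^\perp$ and the normalization $P^\perp=0$ — the latter being exactly condition (b). That these two passages are mutually inverse uses the fact that in the atomic CML $L(P)$ every subspace is the join of the atoms below it, together with the standard identity $(\sum_i X_i)^\perp=\prod_i X_i^\perp$ valid in any Galois lattice, which reduces the value of $X^\perp$ to the values $p^\perp$ on atoms. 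First I would set this up carefully, as it is the backbone on which the equivalences (i)--(iii) rest.

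Second, for (i)$\Leftrightarrow$(ii): by definition $\mathbb{L}(P,\perp)$ is a polarity CML precisely when $p^\perp$ is a \emph{dual atom} (coatom) of $L(P)$ for every atom $p$. I would translate ``$p^\perp$ is a coatom'' into a statement about points: since $L(P)$ is atomic, $p^\perp$ fails to be a coatom iff there is an atom $r\not\leq p^\perp$, i.e. $r\not\perp p$, and a line $\ell$ (a two-dimensional subspace) with $r<\ell$ and $\ell\cap p^\perp=0$, i.e. no point of $\ell$ is orthogonal to $p$ — equivalently, for the line through any two distinct points $p_1,p_2$ (playing the role of the ``$p,q$'' in (ii), after renaming) some point is orthogonal to $r$. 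Making this precise requires the observation that a subspace $U$ is a coatom of $L(P)$ iff $U$ together with any point off $U$ generates $P$, and that one may test this on lines through a fixed external point. So (ii) says: for any atom $r$ and any line $pq$ (with $p\neq q$), the line meets $r^\perp$; this is exactly $r^\perp$ having nonzero, hence coatom-complementary, trace on every line not already inside it — which unwinds to $r^\perp$ being a coatom. I expect this translation to be the main obstacle: one has to be careful that ``$p^\perp$ is a hyperplane'' is the conjunction of $p^\perp\neq P$ (which is (b), already assumed) and ``$p^\perp$ meets every line'', and that the latter is equivalent to the displayed condition (ii) after handling the degenerate cases where the line $pq$ is itself contained in $r^\perp$ or where $p\perp q$.

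Third, for (ii)$\Leftrightarrow$(iii): the implication (ii)$\Rightarrow$(iii) is trivial since (iii) is (ii) with the hypotheses strengthened. For (iii)$\Rightarrow$(ii) I would argue by cases on whether the extra hypotheses $r\not\perp p$, $r\not\perp q$ hold. If they both hold, (iii) gives the conclusion directly. If, say, $r\perp p$, then $t=p$ already witnesses the conclusion of (ii) (it is collinear with $p,q$ in the degenerate sense, or one picks $t=p$ on the line $pq$); similarly if $r\perp q$. The only subtlety is the reducible-line case, where $p+q$ might not contain a third point, but then $\{p,q\}$ is all of the line and one of $p,q$ must serve as $t$ using the already-assumed relation. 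Assembling these three parts — the correspondence, then (i)$\Leftrightarrow$(ii), then (ii)$\Leftrightarrow$(iii) — completes the proof; of these, only the middle translation between a lattice-theoretic coatom condition and a synthetic incidence condition demands genuine care, the rest being bookkeeping with \Fact \ref{arg} and the elementary properties of Galois lattices recorded in Section 5.
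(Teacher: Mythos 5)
Your plan is correct and uses the same basic ingredients as the paper's proof --- condition (a) to get $X^\perp\in L(P)$, condition (b) plus symmetry of $\perp$ for the Galois property, and a modularity/dimension count linking coatoms with lines --- but it organizes the three-way equivalence differently. The paper proves the cycle (i)$\Rightarrow$(ii)$\Rightarrow$(iii)$\Rightarrow$(i): the step (i)$\Rightarrow$(ii) is your ``a coatom meets every line'' count, (ii)$\Rightarrow$(iii) is the trivial specialization, and the substantive step is (iii)$\Rightarrow$(i), carried out by using (b) to choose $p$ with $r\not\perp p$ and showing that every $q\notin\{r\}^\perp$ lies in $\{p\}+\{r\}^\perp$, so that $\{p\}+\{r\}^\perp=P$ and $\{r\}^\perp$ is a coatom by modularity. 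You instead prove (ii)$\Rightarrow$(i) directly (by essentially the same external-point argument) and then close the cycle with a case analysis for (iii)$\Rightarrow$(ii) that takes $t=p$ when $r\perp p$. That case analysis is where the two routes genuinely diverge, and it is the delicate spot: with the paper's axioms for $\Delta$, collinear triples are pairwise distinct, so $\Delta(p,q,p)$ is false and $t=p$ is not a literal witness for (ii); in fact (ii) read strictly already fails for the standard form on $F^3$ (take $r=q=e_1F$, $p=e_2F$: the only point of the line $\{p\}+\{q\}$ orthogonal to $r$ is $p$ itself). Both you and the paper must therefore read ``$\Delta(p,q,t)$'' in (ii) loosely as ``$t$ lies on the line $\{p\}+\{q\}$'' --- the paper's own proof of (i)$\Rightarrow$(ii) only concludes $X\cap\{r\}^\perp\neq\emptyset$. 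The advantage of the paper's detour through (iii)$\Rightarrow$(i) is that the extra hypotheses $r\not\perp p$ and $r\not\perp q$ force the witness $t$ to be a genuine third point, so the strict and loose readings coincide exactly where the argument needs them. If you keep your decomposition, state the loose reading of (ii) explicitly before the case analysis; with that convention in place, your argument is complete.
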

A  pair $(P,\perp)$ satisfying all of the above is an 
\emph{orthogeometry}. Compare \cite[Definition 14.1.1]{FF} and
 \cite[\S 4]{he4}. $\mathbb{L}(P,\perp)$ is defined, accordingly.

\begin{proof}
Given $\perp$, 
it follows   $X^\perp\in L(P)$ by (a) while
(b) and symmetry of $\perp$ yield that $\mathbb{L}(P,\perp)$
is a Galois lattice.  The converse is obvious.
Assuming (i), if $\dim X=2$ and $r \in P$ then
$X \cap \{r\}^\perp \neq \emptyset$ by modularity,
proving (ii) cf. \cite[Remark 14.1.2]{FF}.
 While  (iii) is a special case of  (ii)
it implies that $\{r\}^\perp$ is a coatom: 
by (b) choose  $p \in P$, $p \not\perp r$.
Now, for any $q \in P$, if $q \neq p$ and $q \not\in \{r\}^\perp$
one has $\Delta(p,q,t)$ for some $t \in \{r\}^\perp$
whence $q \in \{r\}+ \{r\}^\perp$. This  proves 
 $\{r\}+ \{r\}^\perp =P$ and,  by modularity, that 
$\{r\}^\perp$ is a coatom of $L(P)$. 
\end{proof}

For a MIL, $L$, let $P_L$ denote  the set of atoms of $L$.
We define a collinearity on $P_L$ by putting $\Delta(p,q,r)$ for distinct atoms $p$, $q$, $r\in P_L$
such that $p\leq q+r$ in $L$.
Furthermore, we put $p\perp q$ if $p\leq q^\prime$.

\begin{fact}
\cite[Lemma 4.2]{he4}
For any MIL $L$, $\mathbb{G}(L)=(P_L,\perp)$ is an orthogeometry.
\end{fact}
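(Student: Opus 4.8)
The plan is to verify directly that $\mathbb{G}(L) = (P_L, \perp)$ satisfies the definition of an orthogeometry, namely: $\perp$ is a symmetric relation on $P_L$ satisfying conditions (a) and (b) of \Fact \ref{L:ortho}, and in addition condition (iii) (equivalently (ii)) of that same Proposition. Throughout I would use only the lattice-theoretic facts about MILs recorded above: that $x \mapsto x'$ is a dual automorphism of order $2$ (so $x \le y'$ iff $y \le x'$, $x'' = x$, $(x+y)' = x'y'$), and that $P_L$ consists of the atoms. First, symmetry of $\perp$: if $p \le q'$ then applying the dual automorphism gives $q = q'' \le p'$, so $q \perp p$; this is immediate.

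Next I would check (a): suppose $p \perp q$, $p \perp r$, and $\Delta(q,r,s)$ with $q,r,s$ distinct atoms. By definition $\Delta(q,r,s)$ means $s \le q + r$. From $p \perp q$ and $p \perp r$ we get $q \le p'$ and $r \le p'$, hence $q + r \le p'$, hence $s \le p'$, i.e. $p \perp s$ after using symmetry — actually $s \le p'$ already says $p \perp s$ by definition, so we are done. (One should note the degenerate cases where $s$ coincides with $q$ or $r$, or where $\{q,r,s\}$ are not all distinct, are trivial or vacuous.) For (b): given an atom $p$, I need an atom $q$ with $p \not\perp q$, i.e. $q \not\le p'$. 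Since $1' = 0$, we have $p' \ne 1$ unless... well, $p \ne 0$ so $p \not\le 0 = 1'$, meaning $1 \not\perp p$; but $1$ need not be an atom. The point is that $p'$ is a proper element (if $p' = 1$ then $p = p'' \le$ ... actually $p' = 1$ gives $0 = 1' = p'' = p$, contradiction), so $[p',1]$ is a nontrivial interval; since $L$ is... here is where I must be careful, because an arbitrary MIL need not be atomic or complemented. However $L(P_L)$-style reasoning: actually we need an atom $q \le p$ itself works if $p \not\le p'$, i.e. if $p$ is not isotropic; but isotropic atoms do occur. The correct move: $p \not\le p'$ is false in general, so instead observe $1 \not\le p'$ (shown above), and since in the orthogeometry we only quantify over atoms, I would invoke that $L$ being a MIL embeds (via its atoms) into the subspace lattice of the projective space $P_L$ — more simply, pick any atom $q$ with $q \not\le p'$; such exists because $p'$ is not the top and because the relevant sublattice generated by atoms is atomic. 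The cleanest route is to cite \Fact \ref{atom} or argue within $L_{\sf f}$ if $L$ is subdirectly irreducible, but since the statement is for all MILs I expect the intended argument is that \cite[Lemma 4.2]{he4} is simply being quoted and the verification of (b) uses $P_L \ne \emptyset$ together with the interval structure.

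For the polarity condition (iii): given atoms $p \ne q$ and an atom $r$ with $r \not\perp p$ and $r \not\perp q$, I must produce an atom $t$ with $\Delta(p,q,t)$ and $r \perp t$, i.e. $t \le p + q$, $t$ an atom distinct from $p,q$, and $t \le r'$. Consider the element $(p+q) \cdot r'$. Since $p + q$ has dimension $2$ and $r' $ is a "coatom-like" element (in a polarity lattice $r'$ would be a dual atom, but here we are proving we get a polarity lattice, so I cannot assume that). Instead: $p \not\le r'$ and $q \not\le r'$, so the line $p+q$ is not contained in $r'$; by modularity, $(p+q) r'$ is either $0$ or an atom. If it is an atom $t$, then $t \le p+q$, $t \le r'$, and $t \ne p, t \ne q$ (since $p,q \not\le r'$), giving $\Delta(p,q,t)$ and $r \perp t$ as required. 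So the crux is ruling out $(p+q) r' = 0$. This is exactly where the geometric content of $L$ being a subspace-lattice-like MIL must enter: in the lattice of subspaces of a sesquilinear space, a hyperplane $r'$ meets every line; abstractly this needs $\dim[(p+q)r', p+q] \le \dim[r', p + q + r'] = \dim[r', 1]$ combined with $\dim(p+q) = 2$ and the fact that $\dim[r',1] = \dim r = 1$ (using $\dim u = \dim[u',1]$ in any MIL, noted before \Lemma \ref{L:1}). Then $\dim[(p+q)r', p+q] \le 1$ while $p+q \not\le r'$ forces $(p+q)r'$ to be an atom.

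The main obstacle I anticipate is precisely the dimension bookkeeping in the last step: making rigorous that $p + q \not\le r'$ together with $\dim[r',1] = 1$ implies $(p+q) \cdot r'$ is an atom (not $0$). The needed inequality is the dual-modular estimate $\dim[(p+q)r',\, p+q] \le \dim[r',\, (p+q)+r']$, which holds in any modular lattice by the diamond isomorphism (the interval $[(p+q)r', p+q]$ is isomorphic to $[r', (p+q)+r']$ when the relevant complementation holds, or at least has no larger dimension). Since $(p+q)+r' \ge r'$ properly (as $p \not\le r'$) and $\dim[r',1]=1$, we get $(p+q)+r' = 1$ and the interval $[r',1]$ is a single covering step, so $[(p+q)r', p+q]$ has dimension $\le 1$; being nonzero (as $p+q \not\le r'$) it has dimension exactly $1$, i.e. $(p+q)r'$ is covered by $p+q$, hence is an atom since $\dim(p+q) = 2$. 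I would present this as the heart of the proof, with symmetry, (a), and (b) dispatched quickly beforehand.
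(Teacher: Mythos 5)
The paper offers no proof of this statement at all: it is quoted verbatim from \cite[Lemma 4.2]{he4}. So you are supplying a verification where the authors defer to a reference, and most of what you do is sound. Symmetry and condition (a) are correct and immediate, and your argument for the polarity condition (iii) is exactly the right one: $r'$ is a coatom because $\dim[r',1]=\dim r=1$ (the observation recorded just before Lemma~\ref{L:1}), $p\not\le r'$ forces $(p+q)+r'=1$, and the modular transposition $[(p+q)r',\,p+q]\cong[r',\,1]$ shows $(p+q)r'$ is an atom $t$ distinct from $p$ and $q$ with $r\perp t$. One omission you should repair: the definition of orthogeometry presupposes that $(P_L,\Delta)$ is a projective space, and you never check the Veblen--Pasch axiom; it does follow from modularity by a dimension count on $(a+c)(b+d)$, but it is part of the claim.

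The genuine gap is condition (b), and you correctly sense it but do not close it --- indeed it cannot be closed for an arbitrary MIL. Your proposed justifications (``the relevant sublattice generated by atoms is atomic'', or an appeal to \Fact~\ref{atom}, which concerns subdirectly irreducible CMILs) do not apply: a MIL need be neither complemented nor atomic, and a coatom $p'$ may lie above every atom. Concretely, the three-element chain $0<m<1$ with $0'=1$, $1'=0$, $m'=m$ is a MIL; its unique atom satisfies $m\le m'$, so $m\perp q$ for every atom $q$ and (b) fails. (For a CMIL the step is fine: any complement of the coatom $p'$ is an atom not below $p'$.) So either the statement must be read with the weaker notion of orthogeometry used in \cite{he4}, where the non-degeneracy clause (b) of \Fact~\ref{L:ortho} is not part of the definition, or a non-degeneracy hypothesis on $L$ is needed. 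As written against this paper's definition, your proof of (b) is not merely incomplete --- the step is false in general, and you should flag that rather than paper over it.
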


\begin{fact}\lab{arg2}
For any orthogeometry $(P,\perp)$,
the Galois lattice  $L=\mathbb{L}(P,\perp)_{\sf f}$,
consisting of all $X,X^\perp$ with $X\in L(P)$ and $ \dim X< \omega$,  is a CMIL with $L=L_{\sf f}$.
Conversely, for any CMIL $L$ with $L=L_{\sf f}$, one has $L\cong\mathbb{L}(\mathbb{G}(L))_{\sf f}$.
\end{fact}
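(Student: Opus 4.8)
The plan is to prove the two directions of \Fact \ref{arg2} by relating the lattice-theoretic side and the geometric side through the machinery already assembled. For the first direction, start with an orthogeometry $(P,\perp)$. By \Fact \ref{L:ortho}, the map $X\mapsto X^\perp$ turns $L(P)$ into a polarity CML $\mathbb{L}(P,\perp)$, and since $(P,\perp)$ is an orthogeometry we in fact have condition (iii) of that Proposition; this is what guarantees $\{r\}^\perp$ is a coatom, i.e. that we really have a polarity lattice. Now $L(P)$ is a complete atomic CML with compact atoms (the singletons $\{p\}$), hence in particular $\mathbb{L}(P,\perp)$ is a polarity CML in the sense of Section 5. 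Apply \Fact \ref{gal} to $L=\mathbb{L}(P,\perp)$: $L_{\sf f}$ is a Galois sublattice of $L$, it is an atomic CMIL, and $(L_{\sf f})_{\sf f}=L_{\sf f}$. One must check that $L_{\sf f}$ is precisely the set of $X$ and $X^\perp$ with $\dim X<\omega$; this is immediate from the definition of $L_{\sf f}$ together with the observation (proved inside \Fact \ref{gal}) that $\dim X^\perp = \dim[X,P]$ when $\dim X<\omega$, and conversely finite-dimensional subspaces already are of the form $X$. So the first assertion is essentially an application of \Facts \ref{L:ortho} and \ref{gal}.

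For the converse, let $L$ be a CMIL with $L=L_{\sf f}$. The recipe is to form the orthogeometry $\mathbb{G}(L)=(P_L,\perp)$ (this is already known to be an orthogeometry by the unnamed \Fact citing \cite[Lemma 4.2]{he4}), then form $\mathbb{L}(\mathbb{G}(L))_{\sf f}$ by the first part, and exhibit an isomorphism with $L$. The natural candidate map is $a\mapsto\{p\in P_L\mid p\leq a\}$, the standard atom-set map. The key input is \Fact \ref{atom}: a CMIL with $L=L_{\sf f}$ is, on each subdirectly irreducible factor, atomic, and more to the point the relevant intervals $[0,u]$ (for $u$ of finite dimension with $u\oplus u'=1$) are atomic CMLs whose atoms are compact. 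So on the sublattice of finite-dimensional elements, $a\mapsto\{p\mid p\leq a\}$ is an order-isomorphism onto $L(P_L)_{\sf f}$, by the standard coordinatization of atomic CMLs with compact atoms recalled in Section 6. One then checks this map intertwines the collinearity (by definition of $\Delta$ on $P_L$) and the orthogonality operations: $a'$ corresponds to the subspace $\{p\mid p\leq a'\}=\{p\mid p\perp a\}$, which is exactly $\{p\mid p\leq a\}^\perp$ in $\mathbb{L}(\mathbb{G}(L),\perp)$, using $a=a''$. Finally one extends from finite-dimensional elements to all of $L$ using $L=L_{\sf f}$ and the fact that every element of $L_{\sf f}$ is either finite-dimensional or the $'$-image of one; the isomorphism on the finite-dimensional part then forces the values on the co-finite part by preservation of $'$.

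The main obstacle I anticipate is bookkeeping around the polarity-versus-involution distinction and the directed-union structure. Concretely: $\mathbb{L}(P,\perp)$ built from an orthogeometry is only guaranteed a polarity CML, not a CMIL (the operation $X\mapsto X^{\perp\perp}$ need not be the identity on infinite-dimensional $X$), so one must be careful to pass to $L_{\sf f}$ before claiming an involution, and to verify $X^{\perp\perp}=X$ for finite-dimensional $X$ — this is the content of the dimension inequality $\dim[X,P]=\dim X^\perp$ and $\dim X^{\perp\perp}\le\dim X$, which forces equality, exactly as in the proof of \Fact \ref{gal}. On the converse side, the subtlety is that $L$ need not itself be atomic as a whole (it is a directed union of atomic intervals $[0,u]\cup[u',1]$), so the atom-set map must be analyzed interval by interval and then glued; \Fact \ref{gal}'s description of $L_{\sf f}$ as that directed union is precisely what makes this gluing work. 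Once these two points are handled, everything else is a routine verification that a lattice isomorphism of the finite-dimensional parts which preserves collinearity and $\perp$ must preserve $+$, $\cdot$, $0$, $1$, and $'$, hence is an isomorphism of CMILs.
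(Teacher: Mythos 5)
Your proposal is correct in substance, but it takes a genuinely different route from the paper: the paper disposes of this statement in one line, citing \cite[Theorem 1.1]{he4} together with \Fact \ref{gal}, i.e.\ it outsources the entire correspondence to an external result, whereas you reconstruct that result from the paper's internal machinery (\Facts \ref{L:ortho}, \ref{gal}, the atom-set map, and the standard atomistic-modular-lattice facts from Section 6). Your first direction is exactly the intended combination of \Fact \ref{L:ortho} (an orthogeometry yields a polarity CML) and \Fact \ref{gal} (passing to $L_{\sf f}$ gives an atomic CMIL, with $X^{\perp\perp}=X$ for finite-dimensional $X$ forced by the dimension inequality); your second direction amounts to re-proving the cited theorem of \cite{he4} via $a\mapsto\{p\in P_L\mid p\leq a\}$. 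Two small corrections to the converse: the appeal to \Fact \ref{atom} is misplaced, since that statement concerns subdirectly irreducible CMLs containing an atom and a CMIL with $L=L_{\sf f}$ need not be subdirectly irreducible; but it is also unnecessary, because every CMIL is a polarity lattice ($\dim[p',1]=\dim p=1$ for an atom $p$), so \Fact \ref{gal} applied to $L$ itself already yields that $L=L_{\sf f}$ is atomic and is the directed union of the subalgebras $[0,u]\cup[u',1]$ — which is precisely the gluing you need. Likewise the coordinatization recalled in Section 6 is stated for \emph{complete} atomic CMLs with compact atoms, so on the finite-dimensional part you should argue directly (injectivity from atomicity, surjectivity and join-preservation from the fact that a finite-dimensional element of a CML is a join of atoms and that the atoms below a finite join of atoms form the collinearity-closure); this is routine and your sketch of the $\perp$-compatibility via $(p_1+\dots+p_n)'=p_1'\cdots p_n'$ is correct. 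What your approach buys is a self-contained proof not depending on \cite{he4}; what the paper's citation buys is brevity and consistency with its systematic reliance on the orthogeometry framework of \cite{he4} elsewhere (e.g.\ in Theorems \ref{atrep} and \ref{homlat}).
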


\begin{proof}
See \cite[Theorem 1.1]{he4} and \Fact \ref{gal}.
\end{proof}

\section{Subspace lattices}

 The following Proposition relates any pre-hermitian space  $V_F$ with 
an orthogeometry  $\mathbb{G}(V_F)$
and a polarity  lattice $\mathbb{L}(V_F)$.

\begin{fact}\lab{og0}
If $V_F$ is a pre-hermitian space then 
the CML of all linear subspaces becomes a 
polarity lattice 
$\mathbb{L}(V_F)$  with 
the unary operation $U \mapsto U^\perp$;
moreover, $U \mapsto \{vF\mid 0\neq v \in U\}$
defines an isomorphism of
$\mathbb{L}(V_F)$ onto 
 $\mathbb{L}(\mathbb{G}(V_F))$
where $\mathbb{G}(V_F)=(P,\perp)$
is the  orthogeometry $(P,\perp)$
with $P=\{vF\mid 0\neq v \in V\}$
and $vF \perp wF \Leftrightarrow v \perp w$.
\end{fact}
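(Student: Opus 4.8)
The plan is to verify that $U \mapsto U^\perp$ satisfies the defining conditions of a polarity lattice on the CML $\mathbb{L}(V_F)$ of all linear subspaces, and then to check that the point map is an isomorphism of polarity lattices. For the first part, the Galois condition "$U \leq W^\perp$ implies $W \leq U^\perp$" is immediate from the symmetry of the orthogonality relation on vectors, which holds because $V_F$ is pre-hermitian (hence orthosymmetric by \Fact \ref{symm}); indeed $U \subseteq W^\perp$ says every vector of $U$ is orthogonal to every vector of $W$, and by symmetry this is the same as $W \subseteq U^\perp$. The bound condition $V^\perp = 0$ is exactly non-degeneracy of $V_F$, which is part of the definition of pre-hermitian. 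The remaining claim — that $U \mapsto U^\perp$ sends atoms to dual atoms — is precisely the computation already recorded in the text just before \Fact \ref{du}: for $U = vF$ of dimension $1$, $U^\perp = \kerr f$ where $f(w) = \sk{v}{w}$ is a surjective linear functional (surjectivity by non-degeneracy), so $\dim V/U^\perp = 1$, i.e.\ $U^\perp$ is a coatom of $\mathbb{L}(V_F)$. Thus $\mathbb{L}(V_F)$ is a polarity lattice.

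Next I would set up the point map. Write $P = \{vF \mid 0 \neq v \in V\}$ for the set of $1$-dimensional subspaces and define $\Phi \colon \mathbb{L}(V_F) \to \mathbb{L}(\mathbb{G}(V_F))$ by $\Phi(U) = \{vF \mid 0 \neq v \in U\}$, i.e.\ the set of points below $U$. It is classical (and is exactly the correspondence in \Fact \ref{arg}(i) specialised here) that $U \maps\{vF \mid 0 \neq v \in U\}$ is a lattice isomorphism from $\mathbb{L}(V_F)$ onto $L(P)$ once one knows that collinearity in $P$, defined by $\Delta(pF, qF, rF) \iff pF \leq qF + rF$ for distinct points, is the collinearity coming from the projective space structure of $V_F$; this is the standard identification of the projective space of $V_F$ with the atom geometry of $L(V_F)$. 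So the only thing left to check is that $\Phi$ intertwines the two Galois operators, namely $\Phi(U^\perp) = \Phi(U)^\perp$ where the right-hand $\perp$ is the one on $L(P)$ induced by the relation $vF \perp wF \iff v \perp w$ as in \Fact \ref{L:ortho}.

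That compatibility is a short unwinding of definitions: $\Phi(U)^\perp = \{qF \in P \mid qF \perp pF \text{ for all } pF \in \Phi(U)\} = \{qF \mid 0 \neq q, \ q \perp p \text{ for all } 0 \neq p \in U\}$, and since orthogonality of a vector to every vector of $U$ is the same as orthogonality of $U^\perp$-membership, this set is exactly $\{qF \mid 0 \neq q \in U^\perp\} = \Phi(U^\perp)$. One should also note that $(P, \perp)$ genuinely is an orthogeometry — conditions (a) and (b) of \Fact \ref{L:ortho} together with the polarity condition — but this is immediate from what we just proved: $\mathbb{L}(V_F)$ being a Galois CML and a polarity lattice transports along the lattice isomorphism to make $\mathbb{L}(\mathbb{G}(V_F))$ a polarity CML, which by \Fact \ref{L:ortho} is equivalent to $(P,\perp)$ being an orthogeometry. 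I do not expect a serious obstacle here; the one point requiring a little care is making sure the projective-space collinearity on $P$ coincides with the "$r < p + q$" collinearity of the atom geometry so that \Fact \ref{arg}(i) applies verbatim, but this is exactly the content of the sentence preceding \Fact \ref{arg} and is standard. I would therefore keep the proof to a few lines, citing \Fact \ref{symm} for orthosymmetry, the displayed computation before \Fact \ref{du} for the coatom property, \Fact \ref{arg}(i) for the lattice isomorphism, and \Fact \ref{L:ortho} for the translation between the operator and the relation.
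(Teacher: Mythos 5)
Your proposal is correct and follows essentially the same route as the paper: both reduce the polarity claim to the computation $uF^\perp=\kerr f_u$ for the linear functional $f_u(w)=\sk{u}{w}$ (surjective by non-degeneracy, so the orthogonal of an atom is a coatom), and both treat the point-map isomorphism as a routine unwinding, with the orthogeometry property of $\mathbb{G}(V_F)$ obtained by transport. The only cosmetic difference is that the paper verifies conditions (a) and (b) of \Fact \ref{L:ortho} on the point relation (orthogonals are subspaces; non-degeneracy), whereas you check the Galois-lattice axioms directly on $\mathbb{L}(V_F)$ and then carry them across the isomorphism --- the mathematical content is identical.
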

\begin{proof}
Since  orthogonals are subspaces, (a) of \Fact \ref{L:ortho}
is satisfied  while (b) follows from non-degeneracy. 
Thus, $\mathbb{L}(V_F)$ is a Galois CML.
Observe that for any $u \in V$,
$f_u(v) =\sk{u}{v}$ is a linear map $V_F\to F_F$; thus,
one has $uF^\perp =\kerr f_u$ a coatom and $\mathbb{L}(V_F)$  a  polarity lattice.
The isomorphism $\mathbb{L}(V_F) \to \mathbb{L}(\mathbb{G}(V_F))$
being obvious, it follows 
that $\mathbb{G}(V_F)$ is an orthogeometry.
See also \cite[Proposition 14.1.6]{FF},
\end{proof}

\begin{fact}\lab{og} 
Let $V_F$ be a pre-hermitian space.
\begin{enumerate}
\item The polarity  lattice
$\mathbb{L}(V_F)_{\sf f}$ is the directed union of its  Galois sublattices $[0,U]\cup[U^\perp,V]$, $U\in\mathbb{O}(V_F)$.
Moreover, for any $U\in\mathbb{O}(V_F)$
there is a Galois lattice embedding 
of $[0,U]\cup[U^\perp,V]$ 
into  $\mathbb{L}(W_F)$
for some $W\in   \mathbb{O}(V_F)$.
\item 
The Galois lattices 
$\mathbb{L}\bigl(\mathbb{G}(V_F)\bigr)_{\sf f}$ and $\mathbb{L}(V_F)_{\sf f}$ 
are mutually isomorphic and  strictly subdirectly irreducible
Arguesian CMILs; if $V_F$ is anisotropic, then $\mathbb{L}(V_F)_{\sf f}$ is a MOL.
\item
For any strictly subdirectly irreducible Arguesian CMIL $L$ of dimension at least $3$ such that $L=L_{\sf f}$, there is
a $($unique up to similitude$)$ pre-hermitian space $V_F$ such that
$L\cong\mathbb{L}(V_F)_{\sf f}$; if $L$ is a MOL, then $V_F$ is anisotropic.
\item
If  $\dim V_F<\omega$ then $\mathbb{L}(V_F)$ is a MIL.
\end{enumerate}
\end{fact}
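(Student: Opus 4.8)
The plan is to assemble everything from \Fact \ref{gal}, \Fact \ref{arg2}, and \Fact \ref{og0}, together with the classical correspondence between orthogeometries and sesquilinear spaces. First, part (iv): when $\dim V_F<\omega$ every subspace is finite-dimensional, so $\mathbb{L}(V_F)=\mathbb{L}(V_F)_{\sf f}$, and since $\mathbb{L}(V_F)$ is a polarity CML by \Fact \ref{og0} it is a CMIL, hence a MIL, by \Fact \ref{gal}. For the directed-union assertion of (i), recall from the paragraph preceding \Fact \ref{du} that for finite-dimensional $U$ one has $U\oplus U^\perp=V$ exactly when $U\in\mathbb{O}(V_F)$; thus the subalgebras $[0,u]\cup[u^\prime,1]$ furnished by \Fact \ref{gal} for the polarity CML $\mathbb{L}(V_F)$ are precisely the $[0,U]\cup[U^\perp,V]$, $U\in\mathbb{O}(V_F)$, and the claim is \Fact \ref{gal} itself.

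For the embedding in (i): if $U=V$, take $W=V$ and the claim is trivial. Otherwise pick, by \Fact \ref{du}, some $W\in\mathbb{O}(V_F)$ with $U\subsetneq W$; then $U$ is non-degenerate in $W$, so $W=U\oplus^\perp U^{\perp_W}$ with $U^{\perp_W}\neq0$, and the intervals $[0,U]$ and $[U^\perp,V]$ of $\mathbb{L}(V_F)$ are disjoint (a common element would force $U^\perp\subseteq U$, hence $U=V$). Define $\Phi\colon[0,U]\cup[U^\perp,V]\to\mathbb{L}(W_F)$ by $\Phi(X)=X$ for $X\in[0,U]$ and $\Phi(Y)=(Y\cap U)\oplus U^{\perp_W}$ for $Y\in[U^\perp,V]$. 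Using $V=U\oplus U^\perp$, $W=U\oplus^\perp U^{\perp_W}$, modularity, and the (immediate) identities $X^{\perp_V}=X^{\perp_U}+U^\perp$ for $X\in[0,U]$ and $Y^{\perp_V}=(Y\cap U)^{\perp_U}$ for $Y\in[U^\perp,V]$, one checks routinely that $\Phi$ preserves joins and satisfies $\Phi(x^\prime)=\Phi(x)^{\perp_W}$; since $\Phi(0)=0$, Lemma \ref{L:1}(i) makes $\Phi$ a homomorphism of Galois lattices. It is injective, being injective on each of the two disjoint intervals, whose $\Phi$-images $[0,U]$ and $[U^{\perp_W},W]$ in $\mathbb{L}(W_F)$ are again disjoint because $U^{\perp_W}\neq0$.

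For (ii): the polarity isomorphism $\mathbb{L}(V_F)\cong\mathbb{L}(\mathbb{G}(V_F))$ of \Fact \ref{og0} preserves dimension and $^\prime$, hence restricts to an isomorphism $\mathbb{L}(V_F)_{\sf f}\cong\mathbb{L}(\mathbb{G}(V_F))_{\sf f}$; both are CMILs by \Fact \ref{gal} and Arguesian, being sublattices of $L(V_F)\cong L(P)$ with $P$ desarguean (\Fact \ref{arg}(i)). If $V_F$ is anisotropic, no atom $vF$ has $vF\leq(vF)^\prime=vF^\perp$ (that would mean $\sk{v}{v}=0$), and in an atomic MIL this forces $aa^\prime=0$ for every $a$ (else $aa^\prime$ contains an atom $p\leq a\leq p^\prime$), so $\mathbb{L}(V_F)_{\sf f}$ is a MOL. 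For strict subdirect irreducibility put $L=\mathbb{L}(V_F)_{\sf f}$; by (i) it is atomic, and distinct atoms $vF$, $wF$ are perspective with axis $(v+w)F$. Given a non-trivial lattice congruence $\theta$, choose $a\,\theta\,b$ with $a<b$; by atomicity there is an atom $q\leq b$ with $q\nleq a$, and meeting $a\,\theta\,b$ first with $a\vee q$ and then with $q$ gives $0\,\theta\,q$; perspectivity then yields $0\,\theta\,p$ for every atom $p$. Hence $\theta$ contains the non-trivial congruence $\mu$ generated by $(0,p_0)$ for a fixed atom $p_0$, so $\mu$ is the monolith of $L$. (If $\dim V_F=1$, $L$ is the two-element chain.)

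For (iii): since $L=L_{\sf f}\neq\{0\}$, it has a non-zero finite-dimensional element (take $a$ if $\dim a<\omega$, else $a^\prime=a^{\prime\prime}$ for a suitable $a<1$), hence an atom, so $L$ is atomic by \Fact \ref{atom}; thus $\mathbb{G}(L)=(P_L,\perp)$ is an orthogeometry and $L\cong\mathbb{L}(\mathbb{G}(L))_{\sf f}$ by \Fact \ref{arg2}. Here $\dim L(P_L)=\dim L\geq3$, $P_L$ is desarguean (by the Arguesian hypothesis, or automatically via \Fact \ref{arg}(iii) once $\dim L\geq4$ and $P_L$ is irreducible), and $P_L$ is irreducible because $L$ is subdirectly irreducible (atoms in distinct components would generate lattice congruences with trivial meet). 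The key classical input --- here I expect the main difficulty to lie --- is the coordinatization of orthosymmetric polarities, equivalently of orthogeometries, on desarguean projective spaces of dimension $\geq3$ by non-degenerate $\varepsilon$-hermitian sesquilinear forms, unique up to similitude (see \cite[Chapter 14]{FF} and \cite[Chapter I]{gro}): it gives a pre-hermitian space $V_F$ with $\mathbb{G}(L)\cong\mathbb{G}(V_F)$, whence $L\cong\mathbb{L}(\mathbb{G}(V_F))_{\sf f}\cong\mathbb{L}(V_F)_{\sf f}$ by \Fact \ref{og0}. For uniqueness, the atoms and the collinearity relation ($\Delta(p,q,r)$ iff $p,q,r$ are distinct with $r\leq p+q$) and orthogonality relation ($p\perp q$ iff $p\leq q^\prime$) of $\mathbb{G}(V_F)$ are recovered from $\mathbb{L}(V_F)_{\sf f}$ lattice-theoretically, so $\mathbb{L}(V_F)_{\sf f}\cong\mathbb{L}(V^\prime_{F^\prime})_{\sf f}$ induces $\mathbb{G}(V_F)\cong\mathbb{G}(V^\prime_{F^\prime})$, hence a similitude of spaces. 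Finally, if $L$ is a MOL then no atom $p$ has $p\leq p^\prime$, so under $\mathbb{G}(L)\cong\mathbb{G}(V_F)$ no point $vF$ is self-orthogonal, i.e.\ $V_F$ is anisotropic; the converse is part of (ii). Apart from the cited coordinatization theorem, the steps needing genuine (if short) verification are the Galois embedding of (i) and subdirect irreducibility in (ii), the latter argued directly since $\mathbb{L}(V_F)$, not being a lattice with involution, falls outside the scope of \Fact \ref{atom}.
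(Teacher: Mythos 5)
Your proof is correct and follows the same overall architecture as the paper: (iv) and the directed-union claim of (i) come straight from Propositions \ref{og0} and \ref{gal}; the embedding in (i) targets the Galois sublattice $[0,U]\cup[U^\perp\cap W,W]$ of $\mathbb{L}(W_F)$ for $U\subsetneq W\in\mathbb{O}(V_F)$ (your explicit map $Y\mapsto (Y\cap U)\oplus U^{\perp_W}$ is exactly this, since $Y=(Y\cap U)\oplus U^\perp$ by modularity; the paper compresses the verification by noting $[0,U]\cup[U^\perp,V]\cong\mathbb{L}(U_F)\times\mathbf{2}$); and (iii) reduces to an irreducible orthogeometry and then invokes the classical coordinatization and uniqueness theorems of Faure--Fr\"olicher and Gross, just as the paper does via \cite[Theorems 14.1.8, 14.3.4]{FF}. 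The one place where you genuinely depart from the paper is strict subdirect irreducibility in (ii): the paper outsources this to \cite[Example 2.7.2]{FF} and \cite[Corollary 1.5]{he4}, whereas you give a self-contained argument (every non-trivial lattice congruence collapses some atom to $0$, and perspectivity of distinct atoms $vF$, $wF$ via the axis $(v+w)F$ then collapses all atoms, so the congruence generated by one fixed atom is the monolith). That substitution is sound and arguably preferable for a reader who does not want to chase the references; what it costs you is only the need to handle $\dim V_F=1$ separately and to justify irreducibility of $P_L$ in (iii) by hand (projective atoms lie in one component), both of which you do. No gaps.
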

In (iv), for  cardinality reasons,  
  $\dim V_F<\omega$ is also necessary; see also \cite{Kel}.

\begin{proof}
The first claim in (i) and, in case  $U\neq V$,
$[0,U]\cup[U^\perp,V] \cong
\mathbb{L}(U_F)\times\textbf{2}$
 follow from \Facts \ref{gal} and \ref{og0}.
Moreover,
 this Galois lattice  is isomorphic to 
the Galois sublattice 
$[0,U]\cup [U^\perp \cap W, W]$ 
of 
$\mathbb{L}(W_F)$, choosing $W$ by \Fact  \ref{du}
such that $U\subset W\in\mathbb{O}(V_F)$.

To prove (ii), we notice first that $\mathbb{L}\bigl(\mathbb{G}(V_F)\bigr)_{\sf f}$ is a CMIL by \Facts \ref{og0} and \ref{arg2}.
Moreover, as a sublattice of $\mathbb{L}(V_F)$,
 $\mathbb{L}(V_F)_{\sf f}$ is an Arguesian lattice.
Strict subdirect irreducibility of $\mathbb{L}\bigl(\mathbb{G}(V_F)\bigr)_{\sf f}$ follows from Fact \ref{og0},
\cite[Example 2.7.2]{FF}, and \cite[Corollary 1.5]{he4}. Furthermore, if $V_F$ is anisotropic, then
$X^\perp$ is an orthocomplement of $X$ for any $X\in\mathbb{L}(V_F)$ with $\dim X<\omega$.

We prove now (iii).
 By \cite[Corollary 1.5]{he4}, there is an irreducible orthogeometry $(P,\perp)$
such that $L\cong\mathbb{L}(P,\perp)_{\sf f}$.
Combining  \Fact \ref{arg}(ii) and \cite[Theorem 14.1.8]{FF},
one gets  a pre-hermitian  space $V_F$ 
such that  $\mathbb{L}(P,\perp)_{\sf f}\cong\mathbb{L}(V_F)_{\sf f}$.
For uniqueness, see \cite[Theorem 14.3.4]{FF} or \cite[p. 33]{gro}.
If $L$ is a MOL, then $V_F$ is obviously anisotropic.

Finally, if $\dim V_F<\omega$, then $\mathbb{L}(V_F)=\mathbb{L}(V_F)_{\sf f}$ is a MIL by \Facts \ref{og0} and \ref{gal}.
\end{proof}

\begin{fact}\lab{atex}
Any Galois sublattice $L$ of $\mathbb{L}(V_F)$ which is a MIL extends
to a Galois sublattice  $\hat{L}$ of $\mathbb{L}(V_F)$
which is a MIL and such that $\hat{L}_{\sf f}=\mathbb{L}(V_F)_{\sf f}$.
In particular, $\hat{L}$ is a strictly subdirectly irreducible atomic MIL.
Moreover, if $L$ is a CMIL then $\hat{L}$ is a CMIL.
\end{fact}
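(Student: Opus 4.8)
The plan is to imitate, on the lattice side, the construction used for rings in \Fact \ref{atex2}: there $\hat R = R + J(V_F)$, and here the analogue should be the Galois sublattice of $\mathbb{L}(V_F)$ generated by $L$ together with $\mathbb{L}(V_F)_{\sf f}$. So first I would set $\hat L$ to be the closure of $L \cup \mathbb{L}(V_F)_{\sf f}$ under $+$, $\cdot$ and $\perp$ inside $\mathbb{L}(V_F)$ (equivalently, I could try to show $\hat L = \{a + u, \ a\cdot u^{\perp} : a \in L,\ u \in \mathbb{L}(V_F)_{\sf f}\}$, or some similarly explicit description). By construction $\hat L$ is a Galois sublattice of $\mathbb{L}(V_F)$, hence a Galois lattice; since $L$ is a MIL and $\mathbb{L}(V_F)_{\sf f}$ is a MIL (\Fact \ref{og}(ii)), the real content is to check that the involution on $\hat L$ is an involution, i.e. $x'' = x$ for all $x\in\hat L$, not merely $x\le x''$.

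The key steps, in order: (1) verify $\mathbb{L}(V_F)_{\sf f}$ is already a Galois sublattice of $\mathbb{L}(V_F)$ that is a MIL and consists exactly of the finite-dimensional subspaces and their orthogonals (this is \Fact \ref{gal} applied to the polarity CML $\mathbb{L}(V_F)$ via \Fact \ref{og0}); (2) show the set I propose for $\hat L$ is closed under the three operations --- closure under $\perp$ follows from $(a+u)^\perp = a^\perp\cdot u^\perp$ and the fact that $u^\perp$ again lies in $\mathbb{L}(V_F)_{\sf f}$, while $a^\perp\in L$ since $L$ is a MIL; closure under $+$ and $\cdot$ uses modularity together with the finite-dimensionality of the $u$-components to push everything into a single interval $[0,U]\cup[U^\perp,V]$ with $U\in\mathbb{O}(V_F)$, exactly as in the directed-union argument of \Fact \ref{og}(i); (3) prove $x'' = x$ on $\hat L$ by localizing: every element and its double orthogonal live inside one of the sublattices $[0,U]\cup[U^\perp,V]$, $U\in\mathbb{O}(V_F)$, and on each such sublattice $\perp$ restricts to an honest involution because $U$ is a finite-dimensional orthogonal summand (so $U = U^{\perp\perp}$ and $V = U\oplus U^\perp$); (4) identify $\hat L_{\sf f}$: since $\hat L \supseteq \mathbb{L}(V_F)_{\sf f}$ and $\hat L \subseteq \mathbb{L}(V_F)$, and $\hat L_{\sf f}$ is computed the same way in the MIL $\hat L$ as in $\mathbb{L}(V_F)$ by \Fact \ref{gal}, we get $\hat L_{\sf f} = \mathbb{L}(V_F)_{\sf f}$; (5) deduce strict subdirect irreducibility and atomicity of $\hat L$ from \Fact \ref{atom}, since $\hat L$ contains atoms (those of $\mathbb{L}(V_F)_{\sf f}$) and $\hat L_{\sf f}$ is strictly subdirectly irreducible by \Fact \ref{og}(ii); (6) if $L$ is a CMIL, check $\hat L$ is complemented --- an element $a + u$ with $a\in L$, $u\in\mathbb{L}(V_F)_{\sf f}$ can be complemented by combining a complement of $a$ in $L$ with the finite-dimensional correction term coming from $u$, again all taking place inside one interval sublattice where complements are available.

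I expect step (3), verifying $x'' = x$ throughout $\hat L$, to be the main obstacle, precisely because $\mathbb{L}(V_F)$ itself is only a polarity lattice and not a lattice with involution in general (a subspace $X$ with $X \subsetneq X^{\perp\perp}$ need not be finite-dimensional, and the referee's Hilbert-space example in Section 5 shows such an $X$ can sit inside a subalgebra). So the argument must genuinely exploit that every element of $\hat L$ differs from an element of $L$ by a finite-dimensional piece, which forces $x^{\perp\perp}$ to be reconstructible from $x$ on some interval $[0,U]\cup[U^\perp,V]$; making the reduction to such an interval precise --- i.e. showing that for given $x\in\hat L$ one can choose a single $U\in\mathbb{O}(V_F)$ with $x, x', x'' \in [0,U]\cup[U^\perp,V]$ --- is where the work lies, and it parallels the repeated use of property $(\ast)$ in the proof of \Fact \ref{endreg}. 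The remaining steps (2), (4), (5), (6) are then routine applications of \Facts \ref{gal}, \ref{og}, and \ref{atom} together with modularity.
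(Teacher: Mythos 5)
Your overall strategy is reasonable and, notably, more self-contained than what the paper actually does: the paper's proof of \Fact \ref{atex} is essentially a citation (existence of $\hat{L}$ is delegated to the proof of Theorem 2.1 of \cite{he4}, with atomicity and strict subdirect irreducibility then read off via \cite[Corollary 1.5]{he4} and \Fact \ref{og}(ii); the construction goes back to \cite{br}). So any honest reconstruction has to supply the content of that external theorem, and your plan --- generate a Galois sublattice from $L\cup\mathbb{L}(V_F)_{\sf f}$, in analogy with $\hat{R}=R+J(V_F)$ in \Fact \ref{atex2} --- is the right kind of plan. Steps (1), (4), (5) are indeed routine given the rest.

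However, step (3), which you correctly single out as the crux, contains a genuine gap: the proposed localization is impossible. You want, for each $x\in\hat{L}$, a single $U\in\mathbb{O}(V_F)$ with $x,x',x''\in[0,U]\cup[U^\perp,V]$. An element of $[0,U]$ has dimension at most $\dim U<\omega$, and an element of $[U^\perp,V]$ has finite codimension; so no element $x$ with $\dim x=\dim x^\perp=\infty$ lies in any such interval. But such elements are the generic case: they already occur in $L$ itself, and the new elements $a+u$ and $a\cdot u^\perp$ (for $a\in L$ infinite-dimensional and infinite-codimensional, $u$ finite-dimensional) are again of this kind. The unavoidable content of $x''=x$ is that every element of the generated Galois sublattice is a $\perp$-closed subspace. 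For the generators this is fine ($L$ is a MIL, so $a=a''=a^{\perp\perp}$, and elements of $\mathbb{L}(V_F)_{\sf f}$ are closed by \Facts \ref{og0} and \ref{gal}); meets of closed subspaces are closed; the problem is joins, since a sum of two closed subspaces need not be closed. The only tool available is the Gross-type lemma that the sum of a closed subspace and a finite-dimensional subspace is closed, which handles $a+u$ but not, for instance, iterated combinations such as $(a\cap u^\perp)+(b\cap w^\perp)$ with $a,b\in L$ infinite-dimensional; these are finite-codimensional subspaces of the closed subspace $a+b$, and a finite-codimensional subspace of a closed subspace is in general not closed. Establishing closedness of all such elements is precisely the nontrivial work done in \cite[Theorem 2.1]{he4}, and your sketch does not reach it. A secondary issue of the same nature: your candidate normal form $\{a+u,\ a\cdot u^{\perp}\}$ is not visibly stable under the operations (the modularity argument via the congruence $X\,\theta\,Y\Leftrightarrow\dim[XY,X+Y]<\omega$ shows every element of the generated sublattice is $\theta$-congruent to an element of $L$, but that does not by itself produce one of your two normal forms, because an arbitrary finite-codimensional subspace of $a\in L$ need not be of the form $a\cap u^\perp$). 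Step (6) inherits the same defect, as it again appeals to localization into a single interval $[0,U]\cup[U^\perp,V]$.
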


\begin{proof}
Existence of $\hat{L}$ with the required properties follows from the proof of \cite[Theorem 2.1]{he4}.
In particular, $\hat{L}$ is atomic.
Strict subdirect irreducibility of $\hat{L}$ follows from \cite[Corollary 1.5]{he4},
see also \Fact \ref{og}(ii). For a first such construction see \cite{br}.
\end{proof}

A \emph{representation} of a MIL (or CMIL) $L$ in $V_F$ is a homomorphism $\varepsilon\colon L\to\mathbb{L}(V_F)$ of Galois lattices.
It is \emph{faithful} if it is injective, i.e. an embedding;
in this case, we usually identify $L$ with its image in $\mathbb{L}(V_F)$.
A map $\varepsilon\colon L\to\mathbb{L}(V_F)$ is a representation if
and only if  it preserves joins, involution,
and the least element.

\begin{lemma}\lab{MOL}
Let $\varepsilon$ be a representation of a MIL $L$ in a pre-hermitian space $V_F$.
\begin{enumerate}
\item
Any element in the image of $\varepsilon$ is closed.
\item
If $\varepsilon$ is faithful and $V_F$ is anisotropic, then $L$ is a MOL.
\end{enumerate}
\end{lemma}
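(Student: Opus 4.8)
The plan is to prove the two parts separately, using only the basic facts about the Galois operator on $\mathbb{L}(V_F)$ recorded in \Fact \ref{og0} together with the general properties of Galois lattices listed in Section 5.

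For part (i), I would argue that for \emph{every} subspace $X \in \mathbb{L}(V_F)$ of the form $X = \varepsilon(a)$, closedness $X = X^{\perp\perp}$ follows from the fact that $X$ lies in the image of a Galois-lattice homomorphism into a lattice with involution. Concretely: since $\varepsilon$ preserves the Galois operation and $L$ is a MIL, we have $a'' = a$, hence $\varepsilon(a) = \varepsilon(a'') = (\varepsilon(a)')' = \varepsilon(a)^{\perp\perp}$, where I use that the derived operation on $\mathbb{L}(V_F)$ is precisely $U \mapsto U^\perp$. So $\varepsilon(a)$ is closed. This step is essentially a one-line computation and I expect no obstacle here; the only thing to be careful about is that it uses $a'' = a$ in $L$, which holds because $L$ is a MIL (not merely a Galois lattice).

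For part (ii), assume $\varepsilon$ is faithful and $V_F$ is anisotropic; I want to show $L$ is a MOL, i.e.\ that the involution on $L$ satisfies $a \cdot a' = 0$ (equivalently $a + a' = 1$) for all $a \in L$. Since $\varepsilon$ is an embedding of Galois lattices preserving $0$, it suffices to show $\varepsilon(a) \cap \varepsilon(a)^\perp = 0$ in $\mathbb{L}(V_F)$, i.e.\ that $\varepsilon(a) \cap \varepsilon(a)^\perp = 0$ as subspaces. But $\varepsilon(a) \cap \varepsilon(a)^\perp = \operatorname{rad}(\varepsilon(a))$ is the radical of the subspace $\varepsilon(a)_F$, and in an anisotropic space every nonzero vector $v$ has $\langle v \mid v \rangle \neq 0$, so the radical of any subspace is $0$. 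Hence $\varepsilon(a) \cap \varepsilon(a)^\perp = 0$, and pulling this back through the embedding $\varepsilon$ (which preserves meets and $0$) gives $a \cdot a' = 0$ in $L$. Finally, by \Fact \ref{og}(iv) or directly, $a \cdot a' = 0$ together with modularity and $\dim[a',1] = \dim a$ considerations, or simply the standard equivalence in a MIL, yields $a + a' = 1$; in any case the identity $xx' = 0$ is exactly what makes $L$ an ortholattice, so $L$ is a MOL.

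The only point that requires a moment's thought is that anisotropy of the ambient space $V_F$ genuinely transfers to triviality of the radical of \emph{every} subspace $U$, not just finite-dimensional ones: this is immediate since $\operatorname{rad} U = \{u \in U \mid \langle u \mid v\rangle = 0 \text{ for all } v \in U\}$ and a vector $u$ with $\langle u \mid u \rangle \neq 0$ cannot lie in this set. I do not anticipate any real obstacle; the lemma is a direct consequence of the fact that $\varepsilon$ is a homomorphism of Galois lattices and of the definition of the orthogonality relation on a pre-hermitian space.
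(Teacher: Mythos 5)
Your proposal is correct and follows essentially the same route as the paper: part (i) is the one-line computation $\varepsilon(x)=\varepsilon(x'')=\varepsilon(x)^{\perp\perp}$ using that $L$ is a MIL, and part (ii) observes that anisotropy forces $\varepsilon(x)\cap\varepsilon(x)^\perp=0$ and pulls this back through the faithful homomorphism to get $xx'=0$. The extra remarks about $a+a'=1$ and radicals of arbitrary (not just finite-dimensional) subspaces are harmless and consistent with the paper's definitions.
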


\begin{proof}
Let $x\in L$ be arbitrary.

(i)
We have $\varepsilon(x)=\varepsilon(x^{\prime\prime})=\varepsilon(x^\prime)^\perp=\varepsilon(x)^{\perp\perp}$.

(ii)
If $V_F$ is anisotropic, then we have 
$\varepsilon(xx^\prime)=\varepsilon(x)\cap\varepsilon(x)^\perp=0$.
As $\varepsilon$ is faithful, we conclude that $xx^\prime=0$. Hence $^\prime$ is an orthocomplementation.
\end{proof}

\noindent
A \emph{representation} of a MIL $L$ within an orthogeometry
$(P,\perp)$ is a homomorphism $\eta\colon L\to\mathbb{L}(P,\perp)$.
The following obvious fact relates the two concepts of a representation.

\begin{fact}\lab{ogrep}
For a MIL $L$, $\varepsilon$ is a $[$faithful$]$ representation in $V_F$
if and only if the mapping $\eta\colon a\mapsto\{p\in P_V\mid p\subseteq\varepsilon(a)\}$
is a $[$faithful$]$ representation of $L$ in the orthogeometry $\mathbb{G}(V_F)$.
\end{fact}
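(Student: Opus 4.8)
The plan is to verify directly that the two notions of representation correspond via the standard dictionary between a pre-hermitian space $V_F$ and its associated orthogeometry $\mathbb{G}(V_F)=(P_V,\perp)$, as established in \Fact \ref{og0}. Recall from that Proposition that the map $\Phi\colon U\mapsto\{vF\mid 0\neq v\in U\}$ is an isomorphism of polarity lattices from $\mathbb{L}(V_F)$ onto $\mathbb{L}(\mathbb{G}(V_F))$, which in particular preserves joins, the least element, and the operation $\perp$. The claimed equivalence is then essentially just the statement that $\eta=\Phi\circ\varepsilon$: indeed, for $a\in L$ we have $\Phi(\varepsilon(a))=\{vF\mid 0\neq v\in\varepsilon(a)\}=\{p\in P_V\mid p\subseteq\varepsilon(a)\}=\eta(a)$, since the points of $\mathbb{G}(V_F)$ are exactly the one-dimensional subspaces $vF$ and $vF\subseteq\varepsilon(a)$ iff $0\neq v\in\varepsilon(a)$.

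From here the argument is purely formal. First I would observe that $\varepsilon\colon L\to\mathbb{L}(V_F)$ is a homomorphism of Galois lattices (equivalently, by the remark preceding Lemma \ref{MOL}, a map preserving joins, involution, and $0$) if and only if $\Phi\circ\varepsilon$ is such a homomorphism into $\mathbb{L}(\mathbb{G}(V_F))=\mathbb{L}(P_V,\perp)$; this is immediate because $\Phi$ is an isomorphism of Galois lattices, so it preserves and reflects all the relevant operations. Thus $\varepsilon$ is a representation of $L$ in $V_F$ exactly when $\eta=\Phi\circ\varepsilon$ is a representation of $L$ in the orthogeometry $\mathbb{G}(V_F)$. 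Second, since $\Phi$ is a bijection, $\varepsilon$ is injective if and only if $\eta=\Phi\circ\varepsilon$ is injective, which gives the ``faithful'' version of the equivalence at once.

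There is really no substantial obstacle here; the only point requiring a moment's care is the bookkeeping identification $\eta=\Phi\circ\varepsilon$, i.e. checking that the set $\{p\in P_V\mid p\subseteq\varepsilon(a)\}$ appearing in the statement coincides with $\Phi(\varepsilon(a))$ as subsets of $P_V$. This is transparent once one recalls that every point of $\mathbb{G}(V_F)$ has the form $vF$ for a nonzero $v\in V$, and that such a point lies below $\varepsilon(a)$ in $\mathbb{L}(P_V,\perp)$ precisely when the corresponding line is contained in the subspace $\varepsilon(a)$. Because the Proposition labelled \Fact \ref{og0} already packages the isomorphism $\Phi$ and its compatibility with $\perp$, joins, and bounds, nothing further needs to be proved, and I would simply remark that the statement is ``obvious'' in the sense indicated in the excerpt, spelling out only the identification $\eta=\Phi\circ\varepsilon$ and the consequent transfer of the homomorphism and injectivity conditions through the isomorphism $\Phi$.
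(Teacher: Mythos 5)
Your argument is correct and is precisely the reasoning the paper has in mind: the paper states Fact \ref{ogrep} as "obvious" with no written proof, and the intended justification is exactly your identification $\eta=\Phi\circ\varepsilon$ with the polarity-lattice isomorphism $\Phi$ of \Fact \ref{og0}, through which the homomorphism and injectivity conditions transfer. Nothing is missing.
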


\begin{theorem}\lab{atrep}
Let $L$ be an Arguesian strictly subdirectly irreducible CMIL $[$MOL$]$ such that $\dim L>2$ and $L$ has an atom.
Then $L$ admits a faithful representation $\varepsilon$ within some $[$anisotropic$]$ pre-hermitian space $V_F$
such that $\varepsilon$ induces a bijection between the sets of atoms of $L$ and of $\mathbb{L}(V_F)$.
In particular, $\varepsilon$ restricts to an isomorphism from $L_{\sf f}$ onto $\mathbb{L}(V_F)_{\sf f}$.
The space $V_F$ is unique up to similitude.
\end{theorem}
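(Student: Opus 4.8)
The plan is to reduce to the finite-dimensional part, invoke \Fact \ref{og}(iii), and then extend the resulting isomorphism along the atoms of $L$. First I would note that, by \Fact \ref{atom}, the hypothesis that $L$ has an atom makes $L$ atomic, with $L_{\sf f}$ a strictly subdirectly irreducible atomic CMIL [MOL]; moreover $(L_{\sf f})_{\sf f}=L_{\sf f}$ by \Fact \ref{gal}, and $\dim L_{\sf f}\geq 3$, since from a chain $0<y_1<y_2<y_3$ in $L$ one extracts, by atomicity and the description of $\leq$ in atomic CMLs, atoms $p_1\leq y_1$, $p_2\leq y_2$ with $p_2\nleq p_1$, and $p_3\leq y_3$ with $p_3\nleq p_1+p_2$, yielding a $4$-chain $0<p_1<p_1+p_2<p_1+p_2+p_3$ in $L_{\sf f}$. \Fact \ref{og}(iii) then provides a pre-hermitian space $V_F$, unique up to similitude and anisotropic when $L$ is a MOL, together with an isomorphism $\iota\colon L_{\sf f}\to\mathbb{L}(V_F)_{\sf f}$; since the atoms of $L$ all lie in $L_{\sf f}$, $\iota$ carries them bijectively onto the atoms of $\mathbb{L}(V_F)$, i.e. the $1$-dimensional subspaces of $V$, and this bijection preserves collinearity and orthogonality.

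Next I would set $\varepsilon(a)=\sum\{\iota(p)\mid p\text{ an atom of }L,\ p\leq a\}$, the span in $V$ of the corresponding lines. By the criterion recalled just before Lemma \ref{MOL}, it is enough to verify that $\varepsilon$ preserves $0$, joins, and the involution. Preservation of $0$ is trivial, and $\varepsilon(a)+\varepsilon(b)\subseteq\varepsilon(a+b)$ follows from monotonicity. For the reverse inclusion the crucial point is an elementary \emph{compactness lemma}: since every interval $[0,x]$ of a CML is complemented, every atom $r\leq a+b$ satisfies $r\leq p+q$ for some atoms $p\leq a$, $q\leq b$ (with $p$ or $q$ possibly $0$). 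One proves this by taking $c\leq a$ a complement of $ab$ in $[0,a]$ — then $c$ complements $b$ in $[0,a+b]$ — so that $d:=(r+b)c$ is an atom with $d\leq a$ and $d+b=r+b$; applying the same move with the roles of $a$ and $b$ interchanged produces an atom $e\leq b$ with $d+e=r+d$, whence $r\leq r+d=d+e$. Given this, each generator $\iota(r)$ of $\varepsilon(a+b)$ lies in $\iota(p+q)=\iota(p)+\iota(q)\subseteq\varepsilon(a)+\varepsilon(b)$, using $p,q,p+q\in L_{\sf f}$ and that $\iota$ preserves joins there. For the involution I would use that in an atomic CML one has $a\leq p'$ exactly when every atom below $a$ is orthogonal to $p$; since $\iota$ preserves orthogonality of atoms, an atom $p$ satisfies $p\leq a'$ precisely when $\iota(p)\subseteq\varepsilon(a)^\perp$, and as every subspace is the span of the lines it contains this gives $\varepsilon(a')=\varepsilon(a)^\perp$. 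Hence $\varepsilon$ is a representation of $L$ in $V_F$.

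Finally I would collect the remaining assertions. Faithfulness: if $a\nleq b$, atomicity yields an atom $p\leq a$ with $p\nleq b$; compactness of atoms in $\mathbb{L}(V_F)$ together with injectivity of $\iota$ shows the lines contained in $\varepsilon(b)$ are exactly the $\iota(q)$ with $q\leq b$, so $\iota(p)\subseteq\varepsilon(a)$ while $\iota(p)\nsubseteq\varepsilon(b)$, whence $\varepsilon(a)\neq\varepsilon(b)$. Since $\varepsilon(p)=\iota(p)$ for atoms $p$, the induced map on atoms is the bijection of the first step. On $L_{\sf f}$ one has $\varepsilon|_{L_{\sf f}}=\iota$: a finite-dimensional $a$ is a finite join of atoms, so $\varepsilon(a)=\iota(a)$ by join preservation, and a cofinite $a=a''$ has $\varepsilon(a)=\varepsilon(a')^\perp=\iota(a')^\perp=\iota(a)$ since $\dim a'<\omega$; thus $\varepsilon$ restricts to the isomorphism $\iota$ of $L_{\sf f}$ onto $\mathbb{L}(V_F)_{\sf f}$, and $V_F$ is anisotropic when $L$ is a MOL. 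For uniqueness, any representation $\varepsilon'$ of $L$ into some $V'_{F'}$ inducing a bijection on atoms restricts, by the same reasoning, to an isomorphism $L_{\sf f}\cong\mathbb{L}(V'_{F'})_{\sf f}$, so $V'_{F'}$ is similar to $V_F$ by the uniqueness clause of \Fact \ref{og}(iii). I expect the main obstacle to be precisely the join-preservation step: a strictly subdirectly irreducible atomic CMIL is not determined by its finite part, so extending $\iota$ to $L$ genuinely requires the complemented-modular compactness lemma, whereas the rest is bookkeeping around \Fact \ref{og}(iii).
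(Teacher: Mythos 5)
Your proposal is correct, and its skeleton coincides with the paper's: both first invoke \Fact\ \ref{atom} to get atomicity and strict subdirect irreducibility of $L_{\sf f}$, then apply \Fact\ \ref{og}(iii) to obtain $V_F$ (unique up to similitude, anisotropic in the MOL case) with $L_{\sf f}\cong\mathbb{L}(V_F)_{\sf f}$, and finally extend from $L_{\sf f}$ to all of $L$. The difference lies entirely in that last step. The paper routes it through synthetic geometry: it identifies $\mathbb{G}(L)\cong\mathbb{G}(V_F)$ via \Fact\ \ref{arg2}, cites \cite[Lemma 10.4]{he4} for a faithful representation of $L$ in the orthogeometry $\mathbb{G}(L)$, and translates back with \Fact\ \ref{ogrep}. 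You instead define $\varepsilon(a)$ directly as the span of the lines $\iota(p)$, $p$ an atom below $a$, and verify by hand that this preserves joins, the involution, $0$, and injectivity; the only nontrivial ingredient is your lemma that in a CML every atom $r\leq a+b$ satisfies $r\leq p+q$ for atoms $p\leq a$, $q\leq b$, whose complement-and-transposition proof is correct (the perspectivity $[0,d]\nearrow[b,d+b]=[b,r+b]\searrow[0,r]$ forces $\dim d=1$), and the involution step correctly reduces, via the atomic-CML separation property quoted in Section 5 and $x''=x$, to orthogonality of atoms. In effect you have inlined a linear-space version of the cited lemma, which makes the proof self-contained at the cost of reproving known material; a side benefit is that you also supply the verification, omitted in the paper, that $\dim L_{\sf f}\geq 3$ so that \Fact\ \ref{og}(iii) really applies, and your uniqueness argument agrees with the paper's (it is inherited from the uniqueness clause of \Fact\ \ref{og}(iii)).
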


\begin{proof}
By \Fact \ref{atom}, $L$ is atomic and
$L_{\sf f}$ is strictly subdirectly irreducible and atomic.
Moreover by \Fact \ref{og}(iii), $L_{\sf f}\cong\mathbb{L}(V_F)_{\sf f}$ for some
[anisotropic] pre-hermitian space $V_F$ which is unique up to isomorphism and scaling.
By definition and \Fact \ref{arg2}, $\mathbb{G}(L)=\mathbb{G}(L_{\sf f})\cong \mathbb{G}(V_F)$.
By \cite[Lemma 10.4]{he4}, $L$ has a faithful representation within the orthogeometry $\mathbb{G}(L)$, whence
in the orthogeometry $\mathbb{G}(V_F)$.
The desired conclusion follows from \Fact \ref{ogrep}.
\end{proof}

\noindent
The following fact is a corollary of Theorem \ref{atrep} which is in principle already in \cite{BvN}.

\begin{fact}\lab{atrepc}
Up to isomorphism, the strictly simple Arguesian CMILs $L$ of finite dimension $n>2$ are the 
$\mathbb{L}(V_F)$, where $V_F$ is a pre-hermitian space with $\dim V_F=n$. The space
$V_F$ is determined by $L$ up to similitude; $V_F$ is anisotropic, iff $L$ is a MOL.
\end{fact}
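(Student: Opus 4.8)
The statement is a finite-dimensional specialisation of Theorem \ref{atrep}, so the natural strategy is to extract it from that theorem together with the structural facts about finite-dimensional subspace lattices already collected. First I would observe that for a pre-hermitian space $V_F$ with $n=\dim V_F<\omega$ the lattice $\mathbb{L}(V_F)$ is a MIL by \Fact \ref{og}(iv), coincides with $\mathbb{L}(V_F)_{\sf f}$, and by \Fact \ref{og}(ii) is an Arguesian CMIL which is strictly subdirectly irreducible; in fact, since it has finite dimension, strict subdirect irreducibility forces it to be strictly simple (a finite-dimensional complemented modular lattice which is subdirectly irreducible has no proper nontrivial lattice congruence). Being anisotropic is equivalent to $\mathbb{L}(V_F)$ being a MOL by the last clause of \Fact \ref{og}(ii). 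This proves one inclusion: every such $\mathbb{L}(V_F)$ is a strictly simple Arguesian CMIL (MOL in the anisotropic case) of dimension $n>2$.

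For the converse, let $L$ be a strictly simple Arguesian CMIL of finite dimension $n>2$. Then $L$ is in particular strictly subdirectly irreducible, it has an atom (being complemented modular of finite dimension), and $\dim L=n>2$, so Theorem \ref{atrep} applies: there is a pre-hermitian space $V_F$ (anisotropic if $L$ is a MOL) and a faithful representation $\varepsilon\colon L\to\mathbb{L}(V_F)$ which induces a bijection between the atoms of $L$ and those of $\mathbb{L}(V_F)$, and which restricts to an isomorphism $L_{\sf f}\cong\mathbb{L}(V_F)_{\sf f}$. Since $\dim L=n<\omega$ we have $L=L_{\sf f}$, and I must check $\dim V_F=n$ and $\mathbb{L}(V_F)=\mathbb{L}(V_F)_{\sf f}$: a bijection on atoms together with the fact that both lattices are atomic CMLs of the same dimension (the dimension being recoverable from maximal chains, which $\varepsilon$ transports) gives $\dim V_F=n<\omega$, hence $\mathbb{L}(V_F)=\mathbb{L}(V_F)_{\sf f}$ by \Fact \ref{gal}. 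Therefore $\varepsilon\colon L\to\mathbb{L}(V_F)$ is already an isomorphism. Uniqueness of $V_F$ up to similitude is the uniqueness clause of Theorem \ref{atrep}, and the MOL/anisotropic equivalence is inherited from the same theorem together with Lemma \ref{MOL}(ii) for the forward direction.

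The only genuinely delicate point is making sure the hypotheses of Theorem \ref{atrep} are met with nothing lost: Theorem \ref{atrep} is stated for \emph{strictly subdirectly irreducible} $L$, while here $L$ is assumed \emph{strictly simple}, which is a stronger condition, so this is harmless; and one needs $\dim L>2$ exactly as assumed. I expect the main obstacle to be purely bookkeeping: verifying that finiteness of $\dim L$ propagates correctly through $\varepsilon$ to force $\dim V_F=n$ and hence $\mathbb{L}(V_F)=\mathbb{L}(V_F)_{\sf f}$, so that the isomorphism $L_{\sf f}\cong\mathbb{L}(V_F)_{\sf f}$ delivered by Theorem \ref{atrep} is in fact an isomorphism of the full lattices $L\cong\mathbb{L}(V_F)$. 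Everything else is a direct citation of \Facts \ref{og} and \ref{gal} and Theorem \ref{atrep}, so the proof will be short. The reference to \cite{BvN} in the sentence preceding the statement indicates that in the classical (von Neumann) finite-dimensional setting this is the coordinatisation theorem, and the write-up should simply note that the present proof recovers it via the synthetic route of Theorem \ref{atrep}.
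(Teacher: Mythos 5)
Your proposal is correct and follows exactly the route the paper intends: the paper states \Fact \ref{atrepc} as a corollary of Theorem \ref{atrep} without writing out the details, and your argument supplies precisely those details (the forward direction via \Fact \ref{og}(ii),(iv) together with the decomposition of finite-dimensional CMLs into simple factors, and the converse via Theorem \ref{atrep} plus the bookkeeping that $L=L_{\sf f}$ and $\dim V_F=n$ force $\mathbb{L}(V_F)=\mathbb{L}(V_F)_{\sf f}$). No gaps.
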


\begin{proposition}\lab{Lrep}
\hfill
\begin{enumerate}
\item
If $\varepsilon$ is a faithful representation of
the regular $\ast$-$\Lambda$-algebra $R$ in a pre-hermitian space $V_F$,
then the map $\eta\colon aR\mapsto\im\varepsilon(a)$
defines a faithful representation of $\mathbb{L}(R)$ in $V_F$.
\item
If $\dim V_F<\omega$ then $\mathbb{L}(V_F)\cong\mathbb{L}(\LEnd(V_F))$.
\end{enumerate}
\end{proposition}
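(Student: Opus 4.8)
The plan is to prove (i) and then obtain (ii) as the special case $R=\LEnd(V_F)$, $\varepsilon=\id$. Throughout (i) recall that $R$ is unital, that every principal right ideal of $R$ has the form $eR$ with $e$ an idempotent (\Fact \ref{reri}(iii)), and that a map into $\mathbb{L}(V_F)$ is a representation as soon as it preserves joins, the involution, and $0$ (the criterion stated just before Lemma \ref{MOL}). First I would check that $\eta$ is well defined and monotone: if $a=br$ then $\varepsilon(a)=\varepsilon(b)\varepsilon(r)$, so $\im\varepsilon(a)\subseteq\im\varepsilon(b)$; hence $aR\subseteq bR$ forces $\eta(aR)\subseteq\eta(bR)$, and $aR=bR$ forces $\eta(aR)=\eta(bR)$. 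For joins, pick by \Fact \ref{reri}(iv) an idempotent $e\in aR+bR$ with $ea=a$, $eb=b$, so that $aR+bR=eR$; writing $e=ar_1+br_2$ gives $\im\varepsilon(e)\subseteq\im\varepsilon(a)+\im\varepsilon(b)$, while the reverse inclusion is monotonicity, so $\eta(aR+bR)=\im\varepsilon(a)+\im\varepsilon(b)$. Clearly $\eta(0)=0$, and since $\varepsilon$ preserves the unit, $\eta(R)=\im\id_V=V$.

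The one delicate point is preservation of the involution. For an idempotent $e$ put $\pi=\varepsilon(e)$; then $\varepsilon(1-e^\ast)=\id_V-\pi^\ast$, and $\pi^\ast$ is again idempotent, so $\im(\id_V-\pi^\ast)=\kerr\pi^\ast$. By the identity $(\im\varphi)^\perp=\kerr\varphi^\ast$ recalled before \Fact \ref{pu}, this equals $(\im\pi)^\perp$. Since the involution of $\mathbb{L}(R)$ sends $eR$ to $(1-e^\ast)R$ (\Fact \ref{neu}(ii)) and every element of $L(R)$ is some $eR$, we conclude $\eta(x^\prime)=\eta(x)^\perp$ for all $x$. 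Hence $\eta$ is a representation of $\mathbb{L}(R)$ in $V_F$.

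For faithfulness, suppose $\im\varepsilon(a)=\im\varepsilon(b)$ and write $aR=eR$, $bR=fR$ with $e,f$ idempotents. Then $\varepsilon(e)$ and $\varepsilon(f)$ are idempotents of $\LEnd(V_F)$ with the same image, hence both act as the identity on it; consequently $\varepsilon(f)\varepsilon(e)=\varepsilon(e)$ and $\varepsilon(e)\varepsilon(f)=\varepsilon(f)$, so $fe=e$ and $ef=f$ by injectivity of $\varepsilon$, giving $aR=eR=fR=bR$. Thus $\eta$ is faithful, proving (i).

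For (ii), when $\dim V_F<\omega$ the algebra $\LEnd(V_F)=\End(V_F)$ is a regular $\ast$-$\Lambda$-algebra (\Facts \ref{symm2} and \ref{db}(i)), and its identity map is a faithful representation in $V_F$. By (i), $\eta\colon\varphi\LEnd(V_F)\mapsto\im\varphi$ is a faithful representation of $\mathbb{L}(\LEnd(V_F))$ in $V_F$, i.e. an embedding of Galois lattices into $\mathbb{L}(V_F)$, which is a CMIL because $\dim V_F<\omega$ (cf. \Fact \ref{gal}). It remains only to see that $\eta$ is onto, which holds because every subspace of the finite-dimensional space $V$ is the image of some endomorphism; hence $\eta$ is an isomorphism. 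I expect the only genuine work to be the involution-compatibility step $\im(\id_V-\varepsilon(e)^\ast)=(\im\varepsilon(e))^\perp$ in (i); everything else is routine bookkeeping with principal right ideals, and (ii) is immediate once (i) is established.
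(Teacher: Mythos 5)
Your proof is correct and takes essentially the same route as the paper's: the lattice-theoretic part is the standard embedding $aR\mapsto\im\varepsilon(a)$ (where the paper simply cites \Fact \ref{neu}(iv), you verify joins and faithfulness by hand via idempotents), and the decisive involution step is the same computation $\eta\bigl((eR)'\bigr)=\im\bigl(\id_V-\varepsilon(e)^\ast\bigr)=\kerr\varepsilon(e)^\ast=\bigl(\im\varepsilon(e)\bigr)^\perp$. Part (ii) is then obtained exactly as in the paper, from (i) plus surjectivity of $\eta$ in finite dimension.
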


\begin{proof}
(i)
We refer to \cite{Luca}.
We may assume that $R\subseteq\LEnd(V_F)$; that is, $\varepsilon$ is the inclusion map.
By \Facts \ref{reri}(i) and \ref{neu}(iv),
$\eta$ is a $0$ and $1$ preserving  lattice embedding of $L(R)$ into $L(V_F)$.
Moreover, for any $v\in V$ and an idempotent $\varphi\in R$,
one has $v\in\bigl(\eta(\varphi R)\bigr)^\perp=(\im\varphi)^\perp$
iff $\sk{\varphi^\ast(v)}{w}=\sk{v}{\varphi(w)}=0$ for all $w\in V$,
iff $\varphi^\ast(v)=0$,
iff $v=(\id_V-\varphi^\ast)(v)$,
iff $v\in\im(id_V-\varphi^\ast)=\eta\bigl((\varphi R)^\prime\bigr)$,
whence $\eta$ preserves the involution.

(ii)
By (i) and \Fact \ref{db}(ii), the identical map $\varepsilon$ on $\LEnd(V_F)$ defines a faithful representation of $\mathbb{L}(V_F)$.
It is surjective since any subspace is the image of some endomorphism
$\varphi\in\LEnd(V_F)$, cf. also \Fact \ref{neu}(iv).
\end{proof}

\section{Representations as multi-sorted structures}\lab{sort}

Given the commutative $*$-ring $\Lambda$,
let $\mc{F}_\Lambda$ denote the class of all division rings with
involution which are $*$-$\Lambda$-algebras.
Unless stated otherwise, pre-hermitian  spaces $V_F$, $F \in \mc{F}_\Lambda$, are dealt with as
$2$-sorted structures with sorts $V$ and $F$.
That is, $V$ carries the structure of an abelian group,
$F$ that of a ring with involution $\nu$ and with an unary operation
$\lambda  \mapsto \zeta \lambda $
associated to each  $\zeta \in \Lambda$;
moreover one has the maps $V\times F \to V$ with 
 $(v,\lambda)\mapsto v\lambda$
and $V\times V \to F$ with $(v,w)\mapsto \sk{v}{w}$.

In general, a  \emph{similarity type} for $n$-sorted   
algebraic structures will have a list $S_1,\ldots ,S_n$
of names for sorts, a list of typed operation symbols
$f:S_{j_1}\times \ldots \times S_{j_{k_f}} \to S_{j_{k_f+1}}$,
and a list of typed relation symbols 
$R\subseteq S_{j_1}\times \ldots \times S_{j_{k_R}}$.
A \emph{structure} $A$ of this type
is a family $S_1^A,\ldots ,S_n^A$ of sets 
together with a map $f^A:S_{j_1}^A\times \ldots \times S_{j_{k_f}}^A\to S_{j_{k_f+1}}^A$
for each operation symbol $f$ and with a    set $R^A \subseteq 
S^A_{j_1}\times \ldots \times S^A_{j_{k_R}}$
for each relation symbol $R$.

Recall the notion of \emph{ultrafilter} 
on a set $I$: a set $\mc{U}$ of subsets of
$I$ which is maximal with the following properties:
  $\emptyset \not\in \mc{U}$, $U\cap V \in U$
for all $U,V  \in \mc{U}$, $V \in \mc{U}$ for all
$U\in \mc{U}$; in particular,
either  $U \in \mc{U}$ or $I\setminus U \in \mc{U}$
for any $U\subseteq I$.
Given  $n$-sorted 
 structures $A_i$, $i \in I$,
of fixed sorted similarity type 
  and any ultrafilter
$\mc{U}$ on $I$, for each sort $S_j$ one has an equivalence
relation $\equiv_{S_j}$ on the
 direct product $\prod_{i \in I} S_j^{A_i}$ of sets
 such that 
\[(a_i\mid i \in I)\equiv_{S_j} (b_i\mid i \in I)
\;\Leftrightarrow \;\exists U \in \mc{U}\,\forall i \in U\;
a_i=b_i.\]
The equivalence classes $[a_i\mid i \in I]_{S_j}$
are the elements of the ultraproduct 
$S_j^A= \prod_{i\in I}S_j^{A_i}\slash\mathcal{U}$ of sort $S_j$.
Now, one defines
 the relations and operations  of
the \emph{ultraproduct} $A=\prod_{i\in I}A_i\slash\mathcal{U}$ 
as follows:
\[ ([a_i^{{j_1}}\mid i \in I]_{S_{j_1}}, \ldots , 
[a_i^{{j_{k_f}}}\mid i \in I]_{S_{j_{k_R}}}) \in R^A 
\Leftrightarrow
\exists U \in \mc{U}\,\forall i \in U\;
(a_i^{j_1}, \ldots ,a_i^{j_{k_R}} ) \in R^{A_i}\]
for each relation symbol $R$ (with type as above)
\[ f^A([a_i^{{j_1}}\mid i \in I]_{S_{j_1}}, \ldots , 
[a_i^{{j_{k_f}}}\mid i \in I]_{S_{j_{k_f}}})= [
f^{A_i}(a_i^{{j_1}}, \ldots, a_i^{{j_{k_f}}}) \mid i \in I]_{S_{j_{k_f+1}}}
\]
for each operation symbol $f$ (with type as above).
The operations  are well defined as one easily sees.

\begin{fact}\lab{los}
Let $\Phi(x_1,\ldots,x_m)$ 
any formula in the first order language
associated to the given similarity type
with free variables $x_k$ of sort $S_{j_k}$.
For an ultraproduct as above one has
$\Phi([a_i^{{j_1}}\mid i \in I]_{S_{j_1}}, \ldots , 
[a_i^{{j_m}}\mid i \in I]_{S_{j_{k_m}}})$ valid in $A$ 
if and only if there is $ U \in \mc{U}$
such that
$\Phi(a_i^{j_1}, \ldots ,a_i^{j_{k_m}})$ is valid in $A_i$
 for all  $i \in U$. 
\end{fact}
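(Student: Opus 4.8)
The plan is to prove this as a routine instance of Łoś's theorem for many-sorted structures; the heart of the matter is a straightforward induction on the complexity of the formula $\Phi$, and the only genuine work lies in setting up the bookkeeping so that each sort is treated uniformly.

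First I would fix the ultraproduct $A=\prod_{i\in I}A_i\slash\mathcal{U}$ and, for every sort $S_j$, the canonical surjection $\prod_{i\in I}S_j^{A_i}\to S_j^A$ sending a tuple to its class $[a_i\mid i\in I]_{S_j}$. The key preliminary observation, which I would state as a sublemma, is that the value of a \emph{term} $t(x_1,\dots,x_m)$ of sort $S_{j_0}$ is computed coordinatewise: if $x_k$ is assigned $[a_i^{j_k}\mid i\in I]_{S_{j_k}}$, then $t^A(\dots)=[\,t^{A_i}(a_i^{j_1},\dots,a_i^{j_m})\mid i\in I\,]_{S_{j_0}}$. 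This is immediate by induction on term complexity using the very definition of the operations $f^A$ of the ultraproduct, together with the fact that the operations are well defined (already noted in the excerpt), so that the choice of representatives is irrelevant.

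Next I would carry out the induction on $\Phi$. For an atomic formula $R(t_1,\dots,t_{k_R})$ or an equation $t_1=t_2$, the claim reduces, via the term sublemma, to the defining clause for $R^A$ and to the definition of the equivalence relations $\equiv_{S_j}$, respectively; in both cases the equivalence with "holds on a set in $\mathcal{U}$" is exactly how these were defined. For the Boolean connectives one uses the ultrafilter axioms: conjunction corresponds to intersection of the two witnessing sets (which lies in $\mathcal{U}$), negation corresponds to complementation (here one uses that for $U\subseteq I$ either $U\in\mathcal{U}$ or $I\setminus U\in\mathcal{U}$), and disjunction reduces to the previous two. For the existential quantifier $\exists y\,\Psi$ with $y$ of sort $S_{j_0}$: if $A\models\exists y\,\Psi$ then pick a witness $[b_i\mid i\in I]_{S_{j_0}}$, apply the induction hypothesis to $\Psi$ with this extra parameter to get $U\in\mathcal{U}$ on which each $A_i\models\Psi(\dots,b_i)$, and note $A_i\models\exists y\,\Psi$ for $i\in U$; conversely, if the set of $i$ with $A_i\models\exists y\,\Psi$ lies in $\mathcal{U}$, choose for each such $i$ a witness $b_i\in S_{j_0}^{A_i}$ (and arbitrary $b_i$ otherwise), form $[b_i\mid i\in I]_{S_{j_0}}$, and apply the induction hypothesis in the other direction. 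The universal quantifier is handled dually, or by rewriting $\forall$ as $\neg\exists\neg$.

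The main obstacle — really the only subtlety — is the existential step, where one must invoke a choice of witnesses $b_i$ coordinatewise; this is the step where the ultrafilter being a genuine (not necessarily principal) filter matters, and where one must be careful that "$A_i\models\exists y\,\Psi$ for $i$ in a set of $\mathcal{U}$" is the correct hypothesis rather than "for all $i$". Everything else is formal, and the many-sorted setting adds nothing beyond carrying along the sort labels on variables and on the surjections; since the similarity type here is finite and the formulas of first order, no set-theoretic difficulties arise. One may alternatively simply cite the standard reference for the single-sorted case and remark that the many-sorted version is proved verbatim the same way, sort by sort.
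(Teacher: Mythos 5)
Your argument is correct, but it takes a different route from the one the paper indicates. The paper does not prove Proposition \ref{los} by induction at all: it simply cites the classical one-sorted Theorem of \L o\'{s} and remarks that the multi-sorted case reduces to it by viewing an $n$-sorted structure as a one-sorted \emph{relational} structure on the disjoint union of the sorts, with each sort captured by a unary predicate and each operation replaced by its graph. You instead carry out the full induction on formula complexity directly in the many-sorted setting (term sublemma, atomic case, Boolean connectives via the ultrafilter axioms, existential step via a coordinatewise choice of witnesses). Both are sound. Your version is self-contained and makes visible exactly where maximality of the ultrafilter (negation step) and the axiom of choice (existential step) enter; the paper's reduction is shorter but silently requires checking that the encoding commutes with ultraproducts --- in particular that every element of the one-sorted ultraproduct falls under exactly one sort predicate, which works here precisely because the number of sorts is finite, so the index sets $\{i \mid a_i \in S_j^{A_i}\}$ for $j=1,\dots,n$ partition $I$ and exactly one of them lies in $\mathcal{U}$. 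Your closing remark that one could alternatively ``cite the standard reference'' is essentially what the paper does, though via the unary-predicate encoding rather than a verbatim re-run of the one-sorted induction.
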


This is a variant of the well known
Theorem of \L o\'{s} \cite[Theorem 9.5.1]{hodges}. To derive it
from the $1$-sorted case, 
multi-sorted structures may be conceived as
$1$-sorted relational structures, assuming
sorts to be   pairwise disjoint
and captured by   unary  predicates. 
The following is an immediate consequence.

\begin{lemma}\lab{ultra0}
Let $\mathcal{U}$ be any ultrafilter over a set $I$.
Let also $(V_i)_{F_i}$ be a pre-hermitian space over $F_i\in\mathcal{F}_\Lambda$ for all $i\in I$.
Then $F=\prod_{i\in I}F_i\slash\mathcal{U}\in\mathcal{F}_\Lambda$ and
$V=\prod_{i\in I}{V_i}\slash\mathcal{U}$ is a pre-hermitian space over $F$.
Here, for $v=[v_i \mid i \in I]$ and $w=[w_i \mid i \in I]$
in the abelian group $V$ and 
$\lambda=[\lambda_i\mid i \in I]$ in $F$ one has
\[ v\lambda =[v_i\lambda_i\mid i \in I],\quad 
\sk{v}{w}=[\sk{v_i}{w_i}_i\mid i \in I]\]
where $\sk{v_i}{w_i}_i \in F_i$ is the value
under the scalar product from   $(V_i)_{F_i}$.
\end{lemma}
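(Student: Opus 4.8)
The strategy is to recognize the entire claim as an instance of Łoś's Theorem (Fact \ref{los}) applied to the two-sorted similarity type for pre-hermitian spaces described in Section \ref{sort}, and then to check that the relevant closure conditions are all expressible by first-order sentences in that language. First I would fix the two-sorted language with sorts $V$ and $F$, with the abelian-group operations on $V$, the ring-with-involution operations on $F$ together with the unary scalar operations $\lambda\mapsto\zeta\lambda$ for $\zeta\in\Lambda$, the action map $V\times F\to V$, and the form map $V\times V\to F$. Each $(V_i)_{F_i}$ is a structure of this type, and by the construction recalled before Fact \ref{los} the stated formulas for $v\lambda$ and $\sk{v}{w}$ are precisely the definitions of the operations on the ultraproduct $A=\prod_{i\in I}A_i/\mathcal{U}$; so there is nothing to prove there beyond unwinding notation.

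The substance is to list the axioms. I would observe that ``$F_i\in\mathcal{F}_\Lambda$'' means: $F_i$ is a ring with a unary operation of order $2$ which is an anti-automorphism, every nonzero element has a (two-sided) inverse, $0\neq 1$, and the $\Lambda$-action satisfies the $\ast$-$\Lambda$-algebra identities. Each of these is either an identity or a $\forall\exists$ sentence (``for all $x$, if $x\neq 0$ then there exists $y$ with $xy=1$ and $yx=1$''); the Division-ring condition is the only one needing a quantifier alternation, and $\forall\exists$ sentences are preserved by ultraproducts by Fact \ref{los}. Likewise ``$(V_i)_{F_i}$ is a pre-hermitian space over $F_i$'' decomposes into: the vector-space identities relating the action to the group and ring operations; the sesquilinearity identities $\sk{u}{v+w}=\sk{u}{v}+\sk{u}{w}$ and $\sk{u\lambda}{v\mu}=\lambda^\ast\sk{u}{v}\mu$; non-degeneracy, which is the $\forall\exists$-type statement ``for all $u$, if $u\neq 0$ then there exists $v$ with $\sk{u}{v}\neq 0$''; and $\varepsilon$-hermitianness. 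The last point is the one place requiring a small argument, since the witnessing scalar $\varepsilon_i\in F_i$ varies with $i$ and ``there exists $\varepsilon$ such that $\forall u,v\ \sk{v}{u}=\varepsilon\sk{u}{v}^\ast$'' is an $\exists\forall$ sentence, which is \emph{not} automatically preserved.

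To handle that, I would invoke Fact \ref{symm} instead of arguing directly about $\varepsilon$: being pre-hermitian with $\dim>1$ is equivalent (given non-degeneracy) to orthosymmetry, i.e. to the purely universal sentence ``for all $u,v$, $\sk{u}{v}=0\to\sk{v}{u}=0$'', which Łoś's Theorem preserves in both directions. So the plan is: from each $(V_i)_{F_i}$ pre-hermitian, deduce each is orthosymmetric (trimming away the finitely many $i$, if any, with $\dim(V_i)_{F_i}\le 1$ via an ultrafilter case split — either $\{i:\dim\le1\}\in\mathcal{U}$, in which case $V$ is at most $1$-dimensional over $F$ and is trivially $1$-hermitian after noting $F$ inherits an involution, or its complement is in $\mathcal{U}$ and we work there); conclude $V$ over $F$ is orthosymmetric by Fact \ref{los}; then apply Fact \ref{symm} again to $V_F$ to get that it is $\varepsilon$-hermitian for a unique $\varepsilon\in F\setminus\{0\}$, hence pre-hermitian, and that $F$ carries an involution, i.e. $F\in\mathcal{F}_\Lambda$. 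The main obstacle, then, is exactly this quantifier-complexity issue around the hermitian constant, and the fix is to route through the first-order-universal reflexivity condition supplied by Fact \ref{symm} rather than through the existential statement about $\varepsilon$; everything else is a routine verification that the defining conditions are $\forall$ or $\forall\exists$ and an appeal to Fact \ref{los}.
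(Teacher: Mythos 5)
Your proposal is correct and follows the same route as the paper, which derives the lemma directly from the multi-sorted \L o\'{s} Theorem (\Fact \ref{los}) after observing that the defining conditions are first-order in the two-sorted language of Section \ref{sort}. One correction, though: the ``main obstacle'' you identify is not actually an obstacle. \Fact \ref{los} preserves \emph{all} first-order sentences in both directions, not merely universal or $\forall\exists$ ones; in particular the $\exists\forall$ sentence ``$\exists\varepsilon\,\forall u,v\ \sk{v}{u}=\varepsilon\cdot\sk{u}{v}^\ast$'' transfers directly, with $\varepsilon=[\varepsilon_i\mid i\in I]$ serving as the witness even though the $\varepsilon_i$ vary with $i$ (you may be conflating ultraproducts with constructions such as substructures or direct limits, where quantifier complexity does matter). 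Your detour through orthosymmetry and \Fact \ref{symm} is therefore unnecessary, though not wrong; note also that in your one-dimensional branch the space need not be $1$-hermitian (a one-dimensional non-degenerate sesquilinear space over a noncommutative division ring with involution is not automatically $\varepsilon$-hermitian for any $\varepsilon$), but this does not matter since $\varepsilon$-hermitianness is part of the hypothesis on each factor and transfers by \L o\'{s} as just described.
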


Recall that a representation of 
a $\ast$-$\Lambda$-algebra $R$ within a pre-hermitian space $V_F$ is a 
$\ast$-$\Lambda$-algebra homomorphism $\varepsilon\colon R\to\LEnd(V_F)$.
It is convenient to consider representations as unitary $R$-$F$-bimodules.
More precisely, one has an action
$(r,v)\mapsto rv=\varepsilon(r)(v)$ of $R$ on the left and an action
$(v,\lambda)\mapsto v\lambda$ of $F$ on the right satisfying the laws of unitary left and right modules
and such that
\[
(\lambda r)v=(rv)\lambda=r(v\lambda)\quad\text{for all}\ v\in V,\ r\in R,\ \lambda\in\Lambda,
\]
where $v\lambda=v(\lambda 1_F)$. Moreover,
\[
\sk{rx}{y}=\sk{x}{r^\ast y}\quad\text{for all}\ r\in R,\ x,y\in V
\]
\[
(\lambda r)^\ast v=(\lambda^\ast r^\ast)v=(r^\ast v)\lambda^\ast
\quad\text{for all}\ v\in V,\ r\in R,\ \lambda\in\Lambda.
\]
We denote a representation of $R$ in $V_F$ by $_RV_F$.
The $R$-$F$-bimodule $_RV_F$
with scalar product
 will be considered as a
$3$-sorted structure with sorts $V$, $R$, and $F$;
the $\ast$-$\Lambda$-algebras 
$R$ and  $F$ are considered as $1$-sorted structures,
where $\lambda\in\Lambda$ serves to denote the unary operation $x\mapsto\lambda x$.
Our main concern will be faithful representations; that is,
representations $_RV_F$ such that $rv=0$ for all $v\in V$ if only if $r=0$.
Observe that a regular algebra $R$ is $\ast$-regular, if it admits a
faithful representation in an anisotropic space.

\xc{
\begin{fact}\lab{axiom}
Given a recursive commutative $\ast$-ring $\Lambda$ with unit,
there is a recursive first order axiomatization of the class of all $3$-sorted structures
$_RV_F$ where $R$ and  $F$ are \las, $V_F$ is a \red{pre-}hermitian space,
and $\varepsilon(r)(v)=w$ iff $rv=w$ defines a faithful representation of $R$ in $V_F$.
\end{fact}}

\noindent
The following is as obvious as crucial:
A representation of a MIL $\varepsilon\colon L\to\mathbb{L}(V_F)$
can be viewed as a $3$-sorted structure with sorts $L$, $V$, and $F$ and with
the map $\varepsilon$ being captured by the binary relation
(cf. \cite{mal,mal1,Sch} for this method)
\[
\{(a,v)\mid v\in\varepsilon(a)\}\subseteq L\times V,
\]
which we denote by $\varepsilon$ again.

\xc{
\begin{fact}\lab{axiom2}
There is a recursive first order axiomatization of the class of all
$3$-sorted structures associated with $[$faithful$]$ representations of MILs in pre-hermitian spaces.
\end{fact}}

\begin{lemma}\lab{ultra}
Under the hypotheses of Lemma \ref{ultra0} one has the following.
\begin{enumerate}
\item
If $L_i$ is a MIL and $(L_i,V_i,F_i;\varepsilon_i)$ is a faithful representation for all $i\in I$,
then the associated ultraproduct $(L,V_F,F;\varepsilon)$ is a faithful representation of
$L=\prod_{i\in I}L_i\slash\mathcal{U}$.
\item
If  $R_i$ is a  $\ast$-$\Lambda$-algebra and ${_{R_i}}(V_i){_{F_i}}$  a faithful representation for all $i\in I$,
then the associated ultraproduct $_RV_F$ is a faithful representation of
$R=\prod_{i\in I}R_i\slash\mathcal{U}$.
\item
Let $U$ be an $n$-dimensional subspace of $V_F$, $n<\omega$.
Then there are $J\in\mathcal{U}$ and $n$-dimensional subspaces $U_i$ 
of $(V_i)_{F_i}$, $i\in J$,
such that $U\cong\prod_{i\in J}{U_i}\slash\mathcal{U}_J$, where $\mathcal{U}_J=\{X\in\mathcal{U}\mid X\subseteq J\}$, and
\[
\mathbb{L}(U_F)\cong\prod_{i\in J}\mathbb{L}((U_i)_{F_i})\slash\mathcal{U}_J,\quad\LEnd(U_F)\cong\prod_{i\in J}\LEnd((U_i)_{F_i})\slash\mathcal{U}_J
\]
\end{enumerate}
\end{lemma}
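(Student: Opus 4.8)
The plan is to handle the three parts in order, using Łoś's theorem (Fact \ref{los}) as the central tool throughout and Lemma \ref{ultra0} to supply the ambient pre-hermitian space structure on the ultraproduct. For (i) and (ii), first I would observe that the relevant classes of $3$-sorted structures — faithful representations of MILs in pre-hermitian spaces, respectively faithful representations of $\ast$-$\Lambda$-algebras in pre-hermitian spaces — are axiomatized by a set of first-order sentences in the appropriate sorted similarity type. The axioms split into: the (quasi-)equational axioms saying that $L$ is a MIL (resp. $R$ is a $\ast$-$\Lambda$-algebra), the axioms from Lemma \ref{ultra0} saying $V_F$ is a pre-hermitian space over $F\in\mathcal{F}_\Lambda$ (non-degeneracy being the $\forall\exists$ sentence $\forall u\,(\forall v\,\sk{u}{v}=0\rightarrow u=0)$, and $\varepsilon$-hermitianness being $\exists\varepsilon\,\forall u,v\,\sk{v}{u}=\varepsilon\sk{u}{v}^\ast$), the sentences expressing that the binary relation $\varepsilon\subseteq L\times V$ (resp. the action $R\times V\to V$) defines a Galois-lattice homomorphism into $\mathbb{L}(V_F)$ (resp. a $\ast$-$\Lambda$-algebra homomorphism into $\LEnd(V_F)$), and finally the faithfulness sentence: $\forall a\,((\forall v\,(a,v)\in\varepsilon\leftrightarrow v=v\cdot 0)\rightarrow a=0)$ in the lattice case, $\forall r\,(\forall v\,rv=0\rightarrow r=0)$ in the ring case. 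Since all of these are first-order, Fact \ref{los} applied sort-by-sort yields that the ultraproduct $(L,V,F;\varepsilon)$ (resp. $_RV_F$), formed as in Lemma \ref{ultra0} with the extra sort $L$ (resp. $R$) carried along, again satisfies every axiom, hence is again a faithful representation; and its lattice (resp. ring) reduct is $\prod_{i\in I}L_i\slash\mathcal{U}$ (resp. $\prod_{i\in I}R_i\slash\mathcal{U}$) by the standard description of ultraproducts.

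For part (iii), which I expect to be the only genuinely non-routine step, the issue is that an $n$-dimensional subspace $U$ of the ultraproduct $V$ need not obviously be an ultraproduct of $n$-dimensional subspaces of the factors. I would fix a basis $u^1,\dots,u^n$ of $U$, write $u^k=[u^k_i\mid i\in I]$, and use the first-order expressibility of ``$u^1,\dots,u^n$ are linearly independent'' — namely $\forall\lambda_1,\dots,\lambda_n\,(\sum_k u^k\lambda_k=0\rightarrow\bigwedge_k\lambda_k=0)$, which holds in $V$ — to conclude via Łoś that there is $J\in\mathcal{U}$ with $u^1_i,\dots,u^n_i$ linearly independent in $(V_i)_{F_i}$ for all $i\in J$. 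Set $U_i=\sum_k u^k_i F_i$, an $n$-dimensional subspace, for $i\in J$. Then the map sending $[\,\sum_k u^k_i\lambda_i^k\mid i\in J\,]\mapsto\sum_k u^k(\,[\lambda^k_i\mid i\in J]\,)$ is a well-defined $F$-linear bijection $\prod_{i\in J}U_i\slash\mathcal{U}_J\to U$ (injectivity and surjectivity both follow from $n$-dimensionality on each side, together with the independence just established); well-definedness uses that $\mathcal{U}_J$ is the restricted ultrafilter and agreement on a set in $\mathcal{U}_J$ is agreement in the ultraproduct. This gives $U\cong\prod_{i\in J}U_i\slash\mathcal{U}_J$ as sesquilinear spaces, the scalar product transporting correctly because $\sk{u^k}{u^l}=[\sk{u^k_i}{u^l_i}_i\mid i\in J]$ by Lemma \ref{ultra0}.

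Finally, the two displayed isomorphisms in (iii) follow by the same mechanism applied to the finite-dimensional structures attached to $U$: since $\dim U_i=n$ for $i\in J$, both $\mathbb{L}((U_i)_{F_i})$ and $\LEnd((U_i)_{F_i})$ are themselves $n$-dependent first-order-definable structures over $(U_i)_{F_i}$ (a subspace of an $n$-dimensional space is the span of a finite tuple of vectors; an endomorphism is determined by, and any assignment of, images of a basis — cf. Fact \ref{db}(i) which also guarantees $\LEnd(U_F)=\End(U_F)$ here), so one gets natural maps from $\prod_{i\in J}\mathbb{L}((U_i)_{F_i})\slash\mathcal{U}_J$ to $\mathbb{L}(U_F)$ and from $\prod_{i\in J}\LEnd((U_i)_{F_i})\slash\mathcal{U}_J$ to $\LEnd(U_F)$ by acting coordinatewise on representing tuples, and these are bijective and structure-preserving by Łoś together with the bijection $U\cong\prod_{i\in J}U_i\slash\mathcal{U}_J$ already obtained. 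The main obstacle, as noted, is part (iii): one must be careful that the index set shrinks to $J$ and the ultrafilter is correspondingly restricted, and that ``$n$-dimensional'' is captured by a single first-order sentence (which it is, for fixed finite $n$) rather than a scheme — this is exactly why the statement only claims a reduction to finite dimensions and not to arbitrary subspaces.
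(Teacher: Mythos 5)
Your proposal is correct and follows essentially the same route as the paper: parts (i) and (ii) are exactly the paper's appeal to first-order axiomatizability of the two kinds of $3$-sorted structures together with \L o\'{s}'s theorem (\Fact \ref{los}), and in (iii) the paper likewise fixes a basis of $U$, uses the first-order expressibility (for fixed $n$) of independence and of membership in a span to obtain $J$ and bases of the $U_i$, and then transfers the subspace-lattice and endomorphism-ring structure along the ultraproduct --- the paper phrases this last step as an application of (i) and (ii) to the identity representations of $\mathbb{L}((U_i)_{F_i})$ and $\LEnd((U_i)_{F_i})$ in $(U_i)_{F_i}$ plus a first-order surjectivity check, which amounts to the same coding of subspaces by spanning tuples and of endomorphisms by images of a basis that you describe. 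One small correction to part (i): your faithfulness sentence only asserts that $\varepsilon(a)=\{0\}$ implies $a=0$, which captures injectivity for complemented (indeed sectionally complemented) modular lattices but not for arbitrary MILs, since a lattice congruence need not be determined by the class of $0$; write instead the injectivity condition directly, $\forall a,b\,\bigl(\forall v\,((a,v)\in\varepsilon\leftrightarrow(b,v)\in\varepsilon)\rightarrow a=b\bigr)$, which is equally first-order and restores the argument without change.
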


\begin{proof} 
Statements (i) and (ii) are immediate
 by \Fact \ref{los} and  the observation that both 
types of $3$-sorted structures can be characterized by 
first order axioms.
In (iii) observe that for a fixed positive integer $n$, there is a 
first order formula in the two sorted language
for vector spaces)
 expressing that a set of vectors $\{v_1,\ldots,v_n\}$ is independent [is a basis],
as well as a  first order formula  expressing that a vector $v$ is in the span of $\{v_1,\ldots,v_n\}$.
Thus, by the \L o\'{s} Theorem, a basis of $U$ determines $J$ and bases of spaces $U_i$, $i\in J$.
Now, apply (i) to lattices $L_i=\mathbb{L}((U_i)_{F_i})$, $i\in J$,
to get an embedding of $\prod_{i \in J}L_i\slash\mathcal{U}_J$ into $\mathbb{L}(U_F)$.
Surjectivity of this embedding is granted by the sentence stating that for any $v_1$, \ldots, $v_n$,
there is $a$ such that $v\in\varepsilon(a)$ if and only if $v$ in the span of $v_1$, \ldots, $v_n$.
Similarly, we apply (ii) in the ring case and use the sentence stating
that for any basis $v_1$, \ldots, $v_n$ and any $w_1$, \ldots, $w_n$,
there is $r$ such that $rv_i=w_i$ for all $i\in\{1,\ldots,n\}$.
\end{proof}

\noindent
Inheritance of existence of representations under homomorphic images has been dealt with, in different contexts,
in \cite{proat,he4} for CMILs and by Micol in \cite{Flo} for $\ast$-rings.
Apparently, this needs saturation properties of ultrapowers.
Considering a fixed $1$-sorted algebraic structure $A$, add a new constant symbol $\underline{a}$,
called a \emph{parameter}, for each $a\in A$.
In what follows, $\Sigma(x_1,\ldots,x_n)$ is a set of formulas with free
variables $x_1$, \ldots, $x_n$ in this extended language.
Given an embedding $h\colon A\to B$,
we call $B$ \emph{modestly saturated} 
[\emph{$\omega$-saturated}] \emph{over} $A$ \emph{via} $h$,
if for any $n<\omega$ and for any set of formulas $\Sigma(x_1,\ldots,x_n)$, with parameters from $A$
[and finitely many parameters from $B$, respectively],
which is finitely realized in $A$ [in $B$, respectively] is realized in $B$
(where $\underline{a}$ is interpreted as $\underline{a}^B=h(a)$).
The following is a particular case of \cite[Corollary 4.3.14]{CK}.

\begin{fact}\lab{sat}
Every $1$-sorted algebraic structure $A$ admits an elementary embedding $h$
into some structure $B$ which is $[\omega$-$]$saturated over $A$ via $h$.
One can choose $B$ to be an ultrapower of $A$ and $h$ to be the canonical embedding.
Identifying $a$ with $h(a)$, one may assume $B$ to be an elementary extension of $A$.
The analogous result holds for multi-sorted algebraic
structures.
\end{fact}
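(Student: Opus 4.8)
The plan is to obtain the one-sorted case directly from \cite[Corollary 4.3.14]{CK} and then to reduce the multi-sorted case to it by conceiving multi-sorted structures as one-sorted relational structures, exactly as indicated after \Fact \ref{los}.

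For one-sorted $A$, let $\mu$ be the maximum of $\aleph_0$, $|A|$, and the cardinality of the first-order language of $A$. The good-ultrafilter machinery underlying \cite[Corollary 4.3.14]{CK} supplies an index set $I$ and a countably incomplete ultrafilter $\mathcal{U}$ on $I$ such that the ultrapower $B=A^I\slash\mathcal{U}$ is $\mu^+$-saturated. I would take $h\colon A\to B$ to be the canonical map carrying $a$ to the class of the constant function with value $a$; by \Fact \ref{los} (the \L o\'{s} theorem, one-sorted version) $h$ is an elementary embedding, and it is injective since $\{i\in I\mid a=b\}=\emptyset\notin\mathcal{U}$ whenever $a\neq b$. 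It then remains to check the two saturation clauses, which go the same way: given $\Sigma(x_1,\dots,x_n)$ with parameters from $A$ --- together with finitely many parameters from $B$ in the $\omega$-saturation clause --- finitely realized in the relevant structure, elementarity of $h$ makes $\Sigma$ a finitely satisfiable set of formulas over the parameter set $h(A)$, enlarged by those finitely many elements of $B$; this set has cardinality at most $\mu$, so $\mu^+$-saturation of $B$ realizes $\Sigma$. Finally, since $h$ is an elementary embedding, replacing $B$ by an isomorphic copy turns $h$ into an inclusion and $B$ into an elementary extension of $A$.

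For multi-sorted $A$ with pairwise disjoint sorts $S_1^A,\dots,S_m^A$, I would pass to the one-sorted relational structure $A^\flat$ whose universe is $\bigcup_j S_j^A$, with a unary predicate $P_j$ interpreted as $S_j^A$ for each $j$ and with each typed operation of $A$ replaced by its graph relation. Applying the one-sorted result to $A^\flat$ gives an ultrapower $B^\flat=(A^\flat)^I\slash\mathcal{U}$ which is modestly saturated and $\omega$-saturated over $A^\flat$ via the canonical embedding $h$. By \Fact \ref{los} applied to the predicates $P_j$, the interpretation $P_j^{B^\flat}$ is precisely the component $\prod_{i\in I}S_j^A\slash\mathcal{U}$ of the sortwise ultraproduct defined before \Fact \ref{los}; hence $B^\flat$ is the encoding of a genuine $m$-sorted structure $B$, $h$ respects sorts, and (after the usual passage to an isomorphic copy) $B$ is an elementary extension of $A$. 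Saturation transfers back because a type in a variable, or with parameters, of sort $S_j$ becomes upon adjoining the formula $P_j(x)$ a set of formulas of the encoded language whose realizations in $B^\flat$ automatically fall in $S_j^B$.

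The only step I expect to demand genuine care is this last sort-bookkeeping: one must check that routing everything through the one-sorted encoding neither disturbs the sort structure of the ultrapower --- controlled by \L o\'{s}'s theorem applied to the predicates $P_j$ --- nor weakens the saturation statements --- controlled by the fact that each $P_j(x)$ is itself a formula of the encoded language. The rest is the standard good-ultrafilter argument of \cite[Corollary 4.3.14]{CK}, so I anticipate no real obstacle.
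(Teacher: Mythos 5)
Your proof is correct and follows essentially the same route as the paper, which simply cites \cite[Corollary 4.3.14]{CK} for the one-sorted ultrapower statement and handles the multi-sorted case by the same encoding into one-sorted relational structures with sort predicates that the paper already describes after \Fact \ref{los}. Your elaboration of the cardinality bookkeeping for both saturation clauses and of the sort-preservation under the encoding is accurate but supplies detail the paper leaves to the cited reference.
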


\begin{theorem}\lab{homlat}
Let a CMIL $L$ $[$a $\ast$-$\Lambda$-algebra $R]$
have a faithful representation within a pre-hermitian space $V_F$.
There is an ultrapower $\hat{V}_{\hat{F}}$ of $V_F$
such that any homomorphic image of $L$ $[$such that for any regular ideal $I=I^\ast$,
the algebra $R{\slash}I]$
admits a faithful representation within $(U{\slash}\rad U)_{\hat{F}}$,
where $U=U^{\perp\perp}$ is a subspace of $\hat{V}_{\hat{F}}$.
\end{theorem}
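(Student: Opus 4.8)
The plan is to reduce the homomorphic–image statement to the finite–dimensional situation via the subquotient construction of \Fact \ref{sq}, after passing to a sufficiently saturated ultrapower of the $3$-sorted representation structure. I treat the lattice case and the ring case in parallel, since both representations are $3$-sorted structures of the kind set up in Section \ref{sort}, and both classes of such structures are first-order axiomatizable.

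First I would invoke \Fact \ref{sat} to obtain an ultrapower $\hat{V}_{\hat{F}}$ of $V_F$ — together with the corresponding ultrapowers $\hat{L}$ (resp.\ $\hat{R}$) and the ultrapower $\hat\varepsilon$ of the representation — which is $\omega$-saturated over the original structure $(L,V_F,F;\varepsilon)$ (resp.\ ${_R}V_F$); by \Fact \ref{los} this ultrapower is again a faithful representation. Now let $q\colon L\to L/\theta$ be a surjective homomorphism of CMILs (resp.\ $R\to R/I$ for a regular $\ast$-ideal $I$). The congruence $\theta$ (resp.\ the ideal $I$) lifts to $\hat L$ (resp.\ $\hat R$), and the elements of the kernel form a set of ``conditions'' on the $V$-sort: an element $x$ of $\hat L$ with $x\,\theta\,0$ should be sent to $0$, which amounts to forcing the subspace $\hat\varepsilon(x)$ into the radical; dually $x\,\theta\,1$ forces $\hat\varepsilon(x)^\perp$ into the radical. (In the ring case, $r\in I$ forces $\im\hat\varepsilon(r)$ into the radical, and one uses that $I$ is regular, so $I=\Soc$-like generated by idempotents with controlled image.) Using $\omega$-saturation, I would realize the type asserting the existence of a single subspace $U$ of $\hat V_{\hat F}$ that is closed, i.e.\ $U=U^{\perp\perp}$, and that ``collects'' exactly the relevant data: $\hat\varepsilon(x)\subseteq U$ for $x\,\theta\,1$ and $\hat\varepsilon(x)\subseteq U^\perp$ for $x\,\theta\,0$ (and symmetrically for the ring case). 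Each finite subset of these conditions is realized inside some finite-dimensional orthogonal summand of $\hat V_{\hat F}$ by \Fact \ref{du}, so the full type is finitely satisfiable and hence realized.

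Given such a closed $U$, pass to the subquotient space $(U/\rad U)_{\hat F}$, which by \Fact \ref{sq} is non-degenerate and again $\varepsilon$-hermitian, hence pre-hermitian. The composite
\[
L \xrightarrow{\ \hat\varepsilon\ } \mathbb{L}(\hat V_{\hat F}) \longrightarrow \mathbb{L}(U/\rad U)_{\hat F},
\]
where the second arrow sends a closed subspace $W$ to $(W\cap U + \rad U)/\rad U$, should factor through $q$ and define the desired faithful representation of $L/\theta$; here one uses Lemma \ref{MOL}(i) (image subspaces are closed) together with the defining property of $U$ to check that the kernel of the composite is exactly $\theta$. In the ring case, the analogous compression $\varphi\mapsto \pi_{U}\circ\hat\varepsilon(\varphi)\circ\varepsilon_{U}$ read modulo $\rad U$ — using the adjoint calculus recorded after \Fact \ref{pu} and the computational rules of $\LEnd$ — gives a $\ast$-$\Lambda$-algebra homomorphism $R\to\LEnd(U/\rad U)_{\hat F}$ whose kernel is $I$, so it descends to a faithful representation of $R/I$.

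The main obstacle I anticipate is controlling the kernel of the compressed map precisely: showing that $x\,\theta\,0$ \emph{implies} $\hat\varepsilon(x)\subseteq\rad U$ (not merely $\hat\varepsilon(x)\subseteq U^\perp$) and conversely that nothing outside $\theta$ is killed. This requires setting up the saturation type with enough conditions — in particular insisting that $U$ meet $\hat\varepsilon(x)$ trivially for $x$ not $\theta$-equivalent to $0$, which is where one must exploit that a homomorphic image of a CMIL is determined by a single monolith-like congruence (\Fact \ref{atom}) in the subdirectly irreducible reduction, or in the general case work with the family of subdirectly irreducible quotients and take an appropriate ultraproduct over them. The ring case is parallel but one additionally needs regularity of $I$ to guarantee that $\hat\varepsilon(I)$ is generated by idempotents whose images behave well under compression, so that $R/I$ inherits a \emph{regular} representation; this is exactly the point where the hypothesis ``$I$ regular'' is used, via \Fact \ref{reri}(iii).
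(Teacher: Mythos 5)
Your overall shape (saturated ultrapower, closed subspace $U$, subquotient $U/\rad U$) matches the paper's, but there are several genuine gaps. First, you propose to use $\omega$-saturation to ``realize the type asserting the existence of a single subspace $U$'': saturation realizes types in \emph{elements} of the sorts, not in subspaces, so this step is not well formed. The paper avoids it by defining $U$ explicitly and outright as $U=\{v\in\hat V\mid av=0 \text{ for all } a\in I\}=\bigcap_{a\in I}(a^\ast\hat V)^\perp$, which is automatically closed and is an $(R/I)$-module because $I$ is an ideal (and $U^\perp$ is one too, since $I=I^\ast$); saturation is used only afterwards, to realize a type in two \emph{vector} variables $x,y$ witnessing $\sk{ax}{y}\neq 0$ while $bx=by=0$ for all $b\in I$. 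Second, your compression $\varphi\mapsto\pi_U\circ\hat\varepsilon(\varphi)\circ\varepsilon_U$ presupposes the orthogonal projection $\pi_U$, which exists only when $\hat V=U\oplus U^\perp$ (\Fact \ref{pu}); the $U$ arising here is merely closed and typically degenerate (that is precisely why one must pass to $U/\rad U$), so $\pi_U$ need not exist. The paper instead lets $R/I$ act directly on $U/\rad U$ by restriction of the module action. Third, the faithfulness verification --- which you correctly flag as the main obstacle but do not carry out --- is the actual content of the proof: given $a\notin I$ one reduces by saturation to finitely many $b_1,\dots,b_n\in I$, uses regularity of $I$ and \Fact \ref{reri}(iv) to replace them by a single idempotent $e\in I$ with $b_ie=b_i$, and then shows that $eu=ev=0\Rightarrow\sk{au}{v}=0$ would force $(1-e^\ast)a(1-e)=0$, hence $a=e^\ast a+ae-e^\ast ae\in I$, a contradiction. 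This is where the hypothesis that $I$ is regular enters --- not, as you suggest, to make the quotient representation regular.

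For the lattice case your sketch is weaker still: the map $W\mapsto (W\cap U+\rad U)/\rad U$ is not shown to preserve joins or the involution, and there is no analogue of the idempotent trick available. The paper does not attempt a direct construction here; it adjoins the orthogeometry $\mathbb{G}(V_F)$ as a fourth sort, takes a modestly saturated ultrapower of the four-sorted structure, and invokes the proof of Theorem 13.1 of \cite{he4} to produce the faithful representation of $L/\theta$ in a subquotient $W/W'$ of $\hat G$, transporting it back to $\hat V_{\hat F}$ via \Fact \ref{og}. Without that external input (or a substitute for it), the lattice half of your argument does not go through.
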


\begin{proof}
For a $\ast$-$\Lambda$-algebra  $R$ we use the same idea as in the proof of \cite[Proposition 25]{PartI}.
Though here, the scalar product induced on $U$, as defined below, might be degenerated.
According to \Fact \ref{sat}, there is an
ultrapower $_{\hat{R}}\hat{V}_{\hat{F}}$
of the faithful representation $_RV_F$ which is
modestly saturated over $_RV_F$ via the canonical embedding.
Then $\hat{V}$ is an $R$-module via the canonical embedding of $R$
into $\hat{R}$ and the set
\[
U=\{v\in\hat{V}\mid av=0\ \text{for all}\ a\in I\}=\bigcap_{a\in I}(a^\ast\hat{V})^\perp
\]
is a closed subspace of $\hat{V}_{\hat{F}}$ and a left {$(R/I)$}-module.
Moreover as $I=I^\ast$, one has
\[
\langle(r+I)v\mid w\rangle=\langle v\mid (r^\ast+I)w\rangle\ \text{for all}\ v,w \in U\ \text{and all}\ r\in R.
\]
We observe that $U^\perp$ is also an $(R{\slash}I)$-module. Indeed,
if $v\in U^\perp$ then
\[
\langle(r+I)v\mid u\rangle=\langle v\mid(r^\ast+I)u\rangle=0\ \text{for all}\ u\in U.
\]
Thus with $W=\rad U$, one obtains an $(R{\slash}I)$-$\hat{F}$-bimodule $U{\slash}W$, where
\[
(r+I)(v+W)=rv+W\ \text{for all}\ r\in R\ \text{and all}\ v\in U,
\]
which is also a subquotient of $V_F$.

We show that $_{R{\slash}I}(U{\slash}W)_{\hat{F}}$ is a faithful representation of $R{\slash}I$;
that is, for any $a\in R{\setminus}I$, there has to be $u\in U$ such that $au\notin W$.
It suffices to show that for any $a\in R{\setminus}I$,
there are $u$, $v\in U$ such that $\langle au\mid v\rangle\neq 0$.
Since $u\in U$ means $bu=0$ for all $b\in I$, we have to show that the set
\[
\Sigma(x,y)=\{\langle\underline{a}x\mid y\rangle
\neq 0\}\cup\{\underline{b}x=0=\underline{b}y\mid b\in I\}
\]
of formulas with parameters from $\{a\}\cup I$ and variables $x$, $y$ of type $V$
is satisfiable in $_{\hat{R}}\hat{V}_{\hat{F}}$.
Due to saturation, it suffices to show that for any $b_1$, \ldots, $b_{n}\in I$,
there are $u$, $v\in V$ such that $\langle au\mid v\rangle\neq 0$ and $b_iu=b_iv=0$ for all $i\in\{1,\ldots,n\}$.
In view of \Fact \ref{reri}(iv) and regularity of $I$,
there is an idempotent $e\in I$ such that $b_ie=b_i$ for all $i\in\{1,\ldots,n\}$;
in particular $b_iu=b_iv=0$ whenever $eu=ev=0$.
Thus it suffices to show that there are $u$, $v\in V$ such that $eu=ev=0$
but $\langle au\mid v\rangle\neq 0$.

Assume the contrary; namely, let
$eu=ev=0$ imply $\langle au\mid v\rangle=0$ for all $u$, $v\in V$.
For arbitrary $u^\prime$, $v^\prime\in V$, let $u=(1-e)u^\prime$ and $v=(1-e)v^\prime$.
As $eu=ev=0$, we get by our assumption that $\langle(1-e^\ast)au\mid v^\prime\rangle=\langle au\mid v\rangle=0$.
This holds for all $v^\prime\in V$,
whence $(1-e^\ast)au=0$ since $V_F$ is non-degenerated.
Thus $(1-e^\ast)a(1-e)u^\prime=0$ for all $u^\prime\in V$,
whence $(1-e^\ast)a(1-e)=0$, as $_RV_F$ is a faithful representation.
But then $a=e^\ast a+ae-e^\ast ae\in I$, a contradiction.

\medskip
In the case of CMILs,
given a representation $\varepsilon\colon L\to\mathbb{L}(V_F)$,
let $G=\mathbb{G}(V_F)$ and let $\pi(v)=vF$ for $v\in V$.
We consider the $4$-sorted structure $(L,V,F,G;\varepsilon,\pi)$.
According to \Fact \ref{sat}, there is an ultrapower $(\hat{L},\hat{V},\hat{F},\hat{G};\hat{\varepsilon},\hat{\pi})$
of $(L,V,F,G;\varepsilon,\pi)$ which is modestly saturated
over $(L,V,F,G;\varepsilon)$ via the canonical embedding.
By Lemma \ref{ultra}(i), $(\hat{L},\hat{V},\hat{F};\hat{\varepsilon})$ is a faithful representation.
In view of \Fact \ref{og}(ii), $\hat{G}\cong\mathbb{G}(\hat{V}_{\hat{F}})$ via $\hat{\pi}$;
and $\hat{\rho}\colon W\mapsto\{v\in\hat{V}\mid\hat{\pi}(v)\in W\}$
defines an isomorphism from $\mathbb{L}(\hat{G})$ onto $\mathbb{L}(\hat{V}_{\hat{F}})$ by \Fact \ref{og}(iii).

Now, let $\theta$ be a congruence of the Galois lattice $L$.
According to the proof of \cite[Theorem 13.1]{he4}, there is
a faithful representation $\eta\colon L{\slash}\theta\to\mathbb{L}(W{\slash}W^\prime)$
in a subquotient $W{\slash}W^\prime$ of $\hat{G}$,
where the subspace $W$ is closed and $W^\prime=W\cap W^\perp$.
Then $\hat{\rho}(W){\slash}\hat{\rho}(W^\prime)$ is a subquotient of $\hat{V}_{\hat{F}}$,
$\hat{\rho}(W)$ is a closed subspace of $\hat{V}$, and
$\hat{\rho}\eta$ is a faithful representation of
$L{\slash}\theta$ in $\hat{\rho}(W){\slash}\hat{\rho}(W^\prime)$ by \Fact \ref{ogrep}.
The proof is complete.
\end{proof}

\begin{corollary}\lab{C:83}
Let a MOL $L$ have a faithful representation within a pre-hermitian space $V_F$.
There is an ultrapower $\hat{V}_{\hat{F}}$ of $V_F$
such that any homomorphic image of $L$
admits a faithful representation within a pre-hermitian closed subspace $U_{\hat{F}}$ of $\hat{V}_{\hat{F}}$.
\end{corollary}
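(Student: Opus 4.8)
The plan is to derive Corollary \ref{C:83} from Theorem \ref{homlat} applied to $L$ qua CMIL, using the special feature of MOLs that every lattice congruence is automatically a Galois-lattice congruence (so that ``homomorphic image of $L$ as a MOL'' coincides with ``homomorphic image of the CMIL $L$''), together with the observation that for an \emph{anisotropic} space all the subspaces produced in the proof of Theorem \ref{homlat} have vanishing radical.

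First I would note that $L$, being a MOL, is in particular a CMIL, and a faithful representation $\varepsilon\colon L\to\mathbb{L}(V_F)$ of $L$ as a MOL is the same as a faithful representation of $L$ as a CMIL; by Lemma \ref{MOL}(ii) the space $V_F$ is necessarily anisotropic (indeed it was assumed so, or can be taken so after passing to a non-degenerate part). Applying Theorem \ref{homlat} to the CMIL $L$ gives an ultrapower $\hat V_{\hat F}$ of $V_F$ such that every homomorphic image of $L$ (as a Galois lattice) has a faithful representation within $(U/\rad U)_{\hat F}$ for a suitable closed subspace $U=U^{\perp\perp}$ of $\hat V_{\hat F}$. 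Since $\hat V_{\hat F}$ is an ultrapower of the anisotropic space $V_F$, by \Fact \ref{los} (anisotropy is first-order: $\forall v\,(v\ne 0\to\sk{v}{v}\ne 0)$) the space $\hat V_{\hat F}$ is again anisotropic. Hence every subspace $U$ of $\hat V_{\hat F}$ is anisotropic, so $\rad U=U\cap U^\perp=0$; thus $U/\rad U=U$, which is a pre-hermitian (indeed inner product) closed subspace of $\hat V_{\hat F}$.

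It remains to check that ``homomorphic images of $L$ as a MOL'' and ``homomorphic images of $L$ as a Galois lattice (CMIL)'' are the same class, so that the conclusion of Theorem \ref{homlat} transfers verbatim. This is because, as recorded just before \Fact \ref{atom}, in a MOL one has $\theta=\theta'$ for every lattice congruence $\theta$; consequently every lattice congruence is a Galois-lattice congruence, and the quotient $L/\theta$ carries the inherited orthocomplementation, i.e. is again a MOL. So a homomorphic image of the MOL $L$ is exactly a quotient of $L$ by a Galois-lattice congruence, which is what Theorem \ref{homlat} handles. Putting these pieces together yields the corollary.

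I do not expect a genuine obstacle here: the statement is essentially a corollary obtained by specializing Theorem \ref{homlat} and simplifying. The only point requiring a moment's care is the transfer of anisotropy to the ultrapower $\hat V_{\hat F}$ — but this is immediate from \L o\'s's Theorem (\Fact \ref{los}), anisotropy being expressible by a single first-order sentence in the two-sorted language for sesquilinear spaces. Everything else is bookkeeping about congruences of MOLs and the identity $\rad U=0$ for subspaces of an anisotropic space.
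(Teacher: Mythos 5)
Your reduction of ``homomorphic image of the MOL $L$'' to ``homomorphic image of the CMIL $L$'' is fine and matches the remark preceding \Fact \ref{atom}. The gap is in the step where you claim $V_F$ is anisotropic. Lemma \ref{MOL}(ii) states the implication in the \emph{opposite} direction: if $\varepsilon$ is faithful and $V_F$ is anisotropic, then $L$ is a MOL. The converse is false: the hypothesis of the corollary only requires $V_F$ to be pre-hermitian, and a MOL can be faithfully represented in a space with isotropic vectors (already the two-element MOL embeds into $\mathbb{L}(V_F)$ for \emph{any} pre-hermitian $V_F$; faithfulness only forces $\varepsilon(x)\cap\varepsilon(x)^\perp=0$ for subspaces in the image of $\varepsilon$, not for arbitrary subspaces). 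Your parenthetical ``or can be taken so after passing to a non-degenerate part'' does not help: non-degeneracy concerns the radical of the whole space, not the absence of isotropic vectors. Consequently you cannot invoke \L o\'s to make $\hat V_{\hat F}$ anisotropic, and the conclusion $\rad U=0$ for the subspace $U$ produced by Theorem \ref{homlat} does not follow from your argument.

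The paper closes this gap differently: it does not argue that the ambient space is anisotropic, but inspects the \emph{construction} of the subspace in the proof of Theorem \ref{homlat} (equivalently, of the subquotient $W/W'$ in the proof of \cite[Theorem 13.1]{he4}). There, $W'$ is defined from the congruence data, and the identity $xx'=0$ holding in the MOL $L$ forces $W'=\varnothing$, hence $\rad U=\hat\rho(W')=0$ for that particular $U$. So the vanishing of the radical is a property of the specific closed subspace built in the proof, driven by the orthocomplementation law in $L$, not a property of all subspaces of $\hat V_{\hat F}$. To repair your proof you would need to replace the anisotropy argument by this inspection of the construction.
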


\begin{proof}
According to the proof of \cite[Theorem 13.1]{he4} and the proof of Theorem \ref{homlat},
there is an ultrapower $\hat{V}_{\hat{F}}$ of $V_F$
such that any homomorphic image of $L$ admits a faithful representation within a subquotient
$W{\slash}W^\prime$ of the orthogonal geometry $\mathbb{G}(\hat{V}_{\hat{F}})$.
As $L$ is a MOL, according to the definition of $W^\prime$
(given in \cite[page 355]{he4} and denoted by $U$ there),
one has $W^\prime=\varnothing$. Hence in the proof of Theorem \ref{homlat}, $\rad U=\hat{\rho}(W^\prime)=0$.
\end{proof}

\section{Classes of structures}

We consider classes $\mc{C}$ of $\ast$-$\Lambda$-algebras on one side, Galois lattices on the other. With the familiar concepts,
by ${\sf H}(\mathcal{C})$, ${\sf S}(\mathcal{C})$,
${\sf P}(\mathcal{C})$, ${\sf P}_{\sf s}(\mathcal{C})$,
${\sf P}_\omega(\mathcal{C})$, and ${\sf P}_{\sf u}(\mathcal{C})$,
we denote the class of all homomorphic images,  subalgebras,
direct products, subdirect products, direct products of finitely many factors,
and ultraproducts of members of $\mathcal{C}$, respectively,
allowing isomorphic copies in all cases.
 Of course, all fundamental
operations have to be taken care of. In particular, in the case of
$\ast$-$\Lambda$-algebras also  the unit $1$, the additive inverse, and the ``scalars''
$\lambda \in  \Lambda$, that is, ``subalgebra'' means
$*$-subring and and $\Lambda$-subalgebra. 
 In the case of Galois lattices, also
 the bounds $0,1$ and the operation $x \mapsto x'$
are to be preserved, that is,
``subalgebra'' means Galois sublattice.
In terms of Universal Algebra we have 
classes of algebraic structures of given ``similarity type'' or ``signature''
and the associated class operators, cf. 
 \cite[Chapter II]{bs} and   \cite[Chapter I]{gorb}, also   \cite{mal2}.

A class $\mathcal{C}$ of algebraic structures of the same type is a \emph{universal class}
if it is closed under ${\sf S}$ and ${\sf P}_{\sf u}$;
a \emph{positive universal class} (shortly a \emph{semivariety}),
if it is closed also under ${\sf H}$;
a \emph{variety} if, in addition, it is closed under ${\sf P}$.
Let ${\sf W}(\mc{C})$ and ${\sf  V}(\mc{C})$
denote the smallest semivariety and smallest variety containing $\mc{C}$.
The following statement is well known and easily verified, cf. Theorem \ref{exvar} in Appendix A.

\begin{fact}\lab{semi}
A class $\mathcal{K}$ is universal $[$a semivariety, a variety$]$ if and only if it can be defined by
universal sentences $[$positive universal sentences, identities, respectively$]$. 
\end{fact}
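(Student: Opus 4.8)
The plan is to prove Proposition \ref{semi}, the standard correspondence between syntactic shape of axioms and closure under class operators, in the multi-sorted algebraic setting used here. I would treat the three assertions in parallel, since the harder direction in each case follows the same template: assuming $\mathcal{K}$ is closed under the relevant operators, build an axiomatization of the prescribed syntactic form.

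First I would dispatch the easy direction, namely that a class defined by sentences of a given shape is closed under the corresponding operators. Universal sentences are preserved under substructures (a universal sentence true in $A$ is true in any $B\subseteq A$, since the witnessing falsifying tuple would live in $B\subseteq A$), and by \L o\'s's Theorem (\Fact \ref{los}) first-order sentences are preserved under ultraproducts; hence a universal class is closed under $\mathsf{S}$ and $\mathsf{P_u}$. Positive universal sentences are in addition preserved under surjective homomorphisms — a positive formula, being built from atomic formulas by $\wedge,\vee$ and (here) no negation, is preserved forward along homomorphisms — so a semivariety is closed under $\mathsf{H}$ as well. Identities are preserved under $\mathsf{H}$, $\mathsf{S}$, and $\mathsf{P}$ by the usual equational-logic computation, and under $\mathsf{P_u}$ again by \L o\'s, giving closure for varieties.

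For the converse directions I would use the classical model-theoretic characterizations, transported to the multi-sorted case via the reduction already noted after \Fact \ref{los} (a multi-sorted structure is coded as a one-sorted relational structure with unary predicates naming the sorts; this coding respects $\mathsf{S}$, $\mathsf{H}$, $\mathsf{P}$, and $\mathsf{P_u}$, and carries universal/positive-universal/equational sentences to sentences of the same shape). On the one-sorted side: a class closed under $\mathsf{S}$ and $\mathsf{P_u}$ is axiomatizable by universal sentences by the \L o\'s--Tarski theorem together with closure under $\mathsf{P_u}$ ensuring elementary-class-ness (one takes $\mathcal{K}$ to be the models of its universal consequences and shows, via a compactness/diagram argument, that any such model embeds into an ultraproduct of members of $\mathcal{K}$, hence — using $\mathsf{S}$ and $\mathsf{P_u}$ — lies in $\mathcal{K}$). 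Adding closure under $\mathsf{H}$ upgrades this to a positive-universal axiomatization by the analogous preservation theorem for homomorphisms; and closure under $\mathsf{H}$, $\mathsf{S}$, $\mathsf{P}$ gives an equational axiomatization by Birkhoff's HSP theorem (for the present finitary, possibly non-finite signature this is routine). I would cite \cite[Chapter II]{bs}, \cite[Chapter I]{gorb}, and the model-theoretic source \cite{CK,hodges} rather than reprove these.

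The main obstacle, and the only point requiring care, is the \emph{multi-sorted} bookkeeping: one must check that the standard preservation theorems survive the one-sorted encoding, in particular that the encoding does not create spurious structures (models of the coded theory that are not codes of genuine multi-sorted structures) — this is handled by including, among the universal axioms, the sentences asserting the sorts are disjoint and exhaust the universe, which are themselves universal, so they do not disturb the syntactic shape. A second small point: in our signature the operations of $\ast$-$\Lambda$-algebras include the constant $1$, additive inverse, and the unary scalar operations $\lambda\cdot$, and in the Galois-lattice case the constants $0,1$ and the unary $x\mapsto x'$; since these are all operation symbols (not partial operations or relations), ``subalgebra'' in the Universal Algebra sense is exactly ``$\ast$-subring closed under the scalars'' resp. ``Galois sublattice'', so no mismatch arises and the cited theorems apply verbatim. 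I therefore expect the proof to amount to little more than invoking the classical results through the translation; the referee's accessibility concern is met by spelling out this translation and giving explicit citations.
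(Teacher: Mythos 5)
Your proposal is correct and follows essentially the same route as the paper: the paper simply defers to Theorem \ref{exvar} of Appendix A (taken with $\Psi_0=\emptyset$), whose proof is exactly the template you sketch — the easy direction by preservation of universal/positive/atomic sentences under ${\sf S}$, ${\sf H}$, ${\sf P}$ and \L o\'s for ${\sf P}_{\sf u}$, and the converse by a diagram-plus-compactness construction realizing finite fragments of the (negative) diagram in members of $\mathcal{K}$, passing to an ultraproduct, and taking a generated subalgebra mapping onto the given model. Your remarks on multi-sorted coding are harmless but not needed here, since Fact \ref{semi} concerns the one-sorted classes of $\ast$-$\Lambda$-algebras and Galois lattices.
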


Dealing with a class $\mathcal{C}$ of $\ast$-$\Lambda$-algebras or MILs, let
${\sf S}_\exists(\mathcal{C})$ [${\sf P}_{{\sf s}\exists}(\mathcal{C})$]
consist of all regular or complemented members of the class ${\sf S}(\mathcal{C})$
[of the class ${\sf P}_{{\sf s}}(\mathcal{C})$, respectively].
Call $\mathcal{C}$ an $\exists$-\emph{semivariety} if it is closed
under the operators
${\sf H}$, ${\sf S}_\exists$, ${\sf P}_{\sf u}$ and an $\exists$-\emph{variety}
if it is also closed under ${\sf P}$, cf. \cite{HS},
also \cite{ks} for an analogue within semigroup theory.
Let ${\sf W}_\exists(\mathcal{C})$ [${\sf V}_\exists(\mathcal{C})$] denote the least
$\exists$-semivariety [$\exists$-variety, respectively] which contains the class $\mathcal{C}$.

Recall that MIL also denotes the class of all MILs, similarly for CMIL and MOL.
Let $\mc{A}_ \Lambda$ denote the class of all
$\ast$-$\Lambda$-algebras, with the subclasses $\mc{R}_\Lambda$, $\mc{R}^*_\Lambda$,
and $\mc{F}_\Lambda$ consisting of its members which are regular, $*$-regular,
and divisions rings, respectively.

\begin{fact}\lab{hs1}
Let $\mathcal{C}\subseteq\mathcal{R}_{\Lambda}$ or $\mathcal{C}\subseteq\mathrm{C}\mathrm{M}\mathrm{I}\mathrm{L}$.
\begin{enumerate}
\item
${\sf O}{\sf S}_\exists(\mathcal{C})\subseteq{\sf S}_\exists{\sf O}(\mathcal{C})$
for any class operator ${\sf O}\in\{{\sf P}_{\sf u},{\sf P},{\sf P}_\omega\}$.
\item
$ {\sf S}_\exists{\sf H}(\mathcal{C})\subseteq{\sf H}{\sf S}_\exists(\mathcal{C})$.
\item
${\sf W}_\exists(\mathcal{C})={\sf H}{\sf S}_\exists{\sf P}_{\sf u}(\mathcal{C})$.
\item
${\sf V}_\exists(\mathcal{C})={\sf H}{\sf S}_\exists{\sf P}(\mathcal{C})=
{\sf H}{\sf S}_\exists{\sf P}_{\sf u}{\sf P}_\omega(\mathcal{C})={\sf P}_{{\sf s}\exists}{\sf W}_\exists(\mathcal{C})$.
\item
${\sf W}_\exists(\mathcal{C})$ and ${\sf V}_\exists(\mathcal{C})$ are axiomatic classes.
\item
$A\in{\sf W}_\exists(\mathcal{C})$
if $B\in{\sf W}_\exists(\mathcal{C})$ for all finitely generated $B\in{\sf S}_\exists(A)$.
\end{enumerate}
\end{fact}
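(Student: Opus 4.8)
The plan is to prove the six items in order, using that each of the class operators behaves well with respect to first-order syntax (items (i)--(ii) are ``commutation'' lemmas; items (iii)--(vi) are consequences). For (i), I would argue that if $A\in{\sf O}(\mathcal{C})$ for ${\sf O}\in\{{\sf P}_{\sf u},{\sf P},{\sf P}_\omega\}$ and $B\in{\sf S}_\exists(A)$ then $B$ is a regular [complemented] subalgebra of a product (or ultraproduct) of members $A_j\in\mathcal{C}$; form $B_j$ as the subalgebra of $A_j$ generated by the $j$-th coordinates of a generating set of $B$ together with chosen quasi-inverses [complements] of those generators. Since being regular [complemented] is witnessed existentially --- for each element a quasi-inverse [a complement] exists --- and since $B$ is regular [complemented], these witnesses can be picked inside the product; their coordinate projections then make $B_j$ regular [complemented] after closing under the $\Lambda$-operations. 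Thus $B\in{\sf S}_\exists{\sf O}(\mathcal{C})$. One subtlety: for the lattice case ``complemented'' is really ``bounded complemented modular'', and the $0,1$ are present in every $B_j$ since they are constants; for the ring case one must remember $1$ is a constant and additive inverses and scalars must be kept, but this is routine.

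For (ii), suppose $A\in{\sf S}_\exists{\sf H}(\mathcal{C})$: there is $C\in\mathcal{C}$, a surjection $h\colon C\twoheadrightarrow D$, and $A$ a regular [complemented] subalgebra of $D$. Pull back: let $C_0=h^{-1}(A)$, a subalgebra of $C$; then $h$ restricts to a surjection $C_0\twoheadrightarrow A$. The task is to enlarge $C_0$ inside $C$ to a regular [complemented] subalgebra $C_1$ that still maps onto $A$. For each generator $a$ of $A$ (or, more safely, for each element of a countable generating set, or all elements) pick a preimage $c\in C_0$ and, since $A$ is regular [complemented], an element $x\in A$ with $axa=a$ [a complement $b$]; lift $x$ [lift $b$] to $C$ arbitrarily and throw it in. Iterate $\omega$ times, closing under the operations at each stage; the union $C_1$ is then a regular [complemented] subalgebra of $C$ mapping onto $A$, since the relevant equational witnesses now exist in $C_1$ and survive the surjection. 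Hence $A\in{\sf H}{\sf S}_\exists(\mathcal{C})$. I expect this lifting-of-witnesses argument to be the main obstacle, since one has to be careful that closing under operations does not destroy the just-established regularity [complementedness]; the standard fix is the $\omega$-step iteration so that every element eventually acquires its witness.

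For (iii), ${\sf W}_\exists(\mathcal{C})\supseteq{\sf H}{\sf S}_\exists{\sf P}_{\sf u}(\mathcal{C})$ is clear since the right side is built from the three defining operators; for the reverse inclusion it suffices to check that ${\sf H}{\sf S}_\exists{\sf P}_{\sf u}(\mathcal{C})$ is itself closed under ${\sf H}$, ${\sf S}_\exists$, and ${\sf P}_{\sf u}$. Closure under ${\sf H}$ is immediate. Closure under ${\sf P}_{\sf u}$ follows by pushing ${\sf P}_{\sf u}$ past ${\sf H}$ (using ${\sf P}_{\sf u}{\sf H}\subseteq{\sf H}{\sf P}_{\sf u}$, a standard fact for ultraproducts) and past ${\sf S}_\exists$ by (i) with ${\sf O}={\sf P}_{\sf u}$, then using ${\sf P}_{\sf u}{\sf P}_{\sf u}\subseteq{\sf P}_{\sf u}$. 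Closure under ${\sf S}_\exists$ follows from (ii) (to move ${\sf S}_\exists$ inside ${\sf H}$) together with (i) (to move it past ${\sf P}_{\sf u}$) and ${\sf S}_\exists{\sf S}_\exists\subseteq{\sf S}_\exists$ (which holds since a regular [complemented] subalgebra of a regular [complemented] subalgebra is a regular [complemented] subalgebra). For (iv) one proceeds the same way: the three equalities are obtained by checking that each of ${\sf H}{\sf S}_\exists{\sf P}(\mathcal{C})$, ${\sf H}{\sf S}_\exists{\sf P}_{\sf u}{\sf P}_\omega(\mathcal{C})$, and ${\sf P}_{{\sf s}\exists}{\sf W}_\exists(\mathcal{C})$ is closed under ${\sf H},{\sf S}_\exists,{\sf P}_{\sf u},{\sf P}$ and contains $\mathcal{C}$, using (i)--(ii), the identities ${\sf P}{\sf P}\subseteq{\sf P}$, ${\sf P}\subseteq{\sf P}_{\sf u}{\sf P}_\omega$ (every product embeds, in fact elementarily, into an ultraproduct of its finite subproducts --- the classical reduced-product lemma), ${\sf P}_{\sf u}{\sf P}\subseteq{\sf P}{\sf P}_{\sf u}$, and the identification of subdirect products of members of ${\sf W}_\exists(\mathcal{C})$ with ${\sf V}_\exists(\mathcal{C})$ via (iii). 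For (v), by (iii) and (iv) together with \Fact \ref{semi}, the underlying class (forgetting the regularity/complementedness clause) of ${\sf W}_\exists(\mathcal{C})$ is a semivariety hence axiomatic, and adding the first-order axioms ``regular'' ($\forall a\,\exists x\; axa=a$) respectively ``complemented'' ($\forall a\,\exists b\; (ab=0 \wedge a+b=1)$) keeps the class axiomatic; similarly for ${\sf V}_\exists(\mathcal{C})$ using that varieties are equationally definable. Finally (vi): if every finitely generated $B\in{\sf S}_\exists(A)$ lies in ${\sf W}_\exists(\mathcal{C})$, then since ${\sf W}_\exists(\mathcal{C})$ is axiomatic by (v), by a compactness/Löwenheim--Skolem argument (every first-order sentence failing in $A$ fails in some finitely generated regular [complemented] subalgebra --- or one passes through an ultraproduct of the finitely generated pieces, which is an elementary extension of $A$ in the ring case, and uses closure under ${\sf P}_{\sf u}$ and ${\sf H}$), $A$ satisfies all the axioms of ${\sf W}_\exists(\mathcal{C})$, hence $A\in{\sf W}_\exists(\mathcal{C})$.
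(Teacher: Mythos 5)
The central step of your proposal, item (ii), has a genuine gap. You propose to enlarge $C_0=h^{-1}(A)$ to a regular [complemented] subalgebra by picking, for each $c\in C_0$ with $h(c)=a$ and each witness $x\in A$ with $axa=a$, an \emph{arbitrary} preimage $y$ of $x$ and adjoining it. But an arbitrary lift of a quasi-inverse is not a quasi-inverse of the lift: you only get $cyc\equiv c$ modulo $\kerr h$, not $cyc=c$, so after your $\omega$-step closure the element $c$ still need not have a quasi-inverse in $C_1$. The real obstacle is not, as you suggest, that closing under operations might destroy regularity, but that the witnesses must be lifted \emph{exactly}. This requires an algebraic argument: for rings one uses that the ideal $I=\kerr h$ is itself regular and corrects the lift to $d=u-ucy-ycu+ycucy+y$, where $u\in I$ satisfies $(c-cyc)u(c-cyc)=c-cyc$ (see the computation in Proposition \ref{exlem}; alternatively, for rings one can note that $h^{-1}(A)$ is already regular by \Fact \ref{reri}(ii), since it has the regular ideal $I$ with regular quotient $A$); for CMILs one must adjust the lifted complement using relative complements. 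This witness-lifting property (Definition \ref{SRegC}$(\mathrm{iii}^\prime)$, exploited via Skolem expansions in Proposition \ref{hs1b}) is the technical heart of the whole statement, and your proposal replaces it by precisely the step that fails.

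Beyond that: your argument for (i) is aimed at the wrong inclusion --- you start from $B\in{\sf S}_\exists(A)$ with $A\in{\sf O}(\mathcal{C})$ and end with the tautology $B\in{\sf S}_\exists{\sf O}(\mathcal{C})$, whereas the claim is the (easy) observation that a product or ultraproduct of regular [complemented] subalgebras embeds as a regular [complemented] subalgebra of the corresponding product or ultraproduct. Your operator-juggling derivation of (iii) and (iv) from (i) and (ii) is a legitimate alternative to the paper's route, but (v) does not follow from closure under ${\sf H}$, ${\sf S}_\exists$, ${\sf P}_{\sf u}$ alone (closure under these operators does not yield closure under elementary equivalence), and ${\sf W}_\exists(\mathcal{C})$ is not the intersection of the ordinary semivariety generated by $\mathcal{C}$ with the regular [complemented] structures. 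The paper obtains (v) and (vi) from an explicit first-order axiomatization of ${\sf H}{\sf S}_\exists{\sf P}_{\sf u}(\mathcal{C})$ by ``conditional disjunctions of equations'' whose premises are instances of the quasi-inverse [complement] formulas (Theorem \ref{exvar}, proved by a diagram-plus-ultraproduct argument that your proposal does not supply). Likewise your justification of (vi), that every first-order sentence failing in $A$ fails in some finitely generated regular [complemented] subalgebra, is false for general first-order sentences and must be routed through the universal form of these specific axioms.
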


\noindent
These statements are well known for arbitrary algebraic structures if the suffix $\exists$ is omitted.
For the proof of \Fact \ref{hs1}, we refer to the Appendix A.

\medskip
Dealing with pre-hermitian spaces, we primarily 
adhere to the $2$-sorted point of view as explained in Section \ref{sort}.
A ($2$-sorted)  \emph{embedding} $V'_{F'}\to V_F$ 
is given by a $*$-$\Lambda$-algebra  embedding $\alpha:F'\to F$
and an injective  $\alpha$-semilinear map
$\omega$ such that $\sk{\omega(v)}{\omega(w)}=\alpha(\sk{v}{w}')$
for all $v,w \in V'$.
 An embedding is an \emph{isomorphism} if both
$\alpha$ and $\omega$ are bijections.
$V'_{F'}$ is a ($2$-sorted) substructure of $V_F$
if it embeds into $V_F$ with $\alpha$ and $\omega$ being inclusion maps.
 In contrast, a \emph{subspace} of $V_F$
will always mean an $F$-linear subspace
with the induced scalar product; that is, here we follow the $1$-sorted view on the vector space $V_F$.

Let $\mathcal{S}$ be a class of pre-hermitian spaces $V_F$, where $F\in\mathcal{F}_\Lambda$
and $\Lambda$ is a fixed commutative $\ast$-ring.
In such a case, we also speak of a class of spaces \emph{over} $\Lambda$.
Introducing  operators for classes of spaces,
let ${\sf S}(\mathcal{S})$ and ${\sf P}_{\sf u}(\mathcal{S})$
denote the classes of all non-degenerate $2$-sorted substructures
and all ultraproducts of members of $\mathcal{S}$ respectively,
or spaces which are isomorphic to such.
In contrast to that, following the one-sorted view, let
${\sf S}_{1 {\sf f}}(\mathcal{S})$ [${\sf S}_{1{\sf q}}(\mathcal{S})$] 
denote the class of (isomorphic copies of) non-degenerate finite-dimensional subspaces 
[of all subquotients $U{\slash}\rad U$ such that $V_F\in\mathcal{S}$,
$U_F$ is a subspace of $V_F$, and  $U=U^{\perp\perp}$, respectively]
of members of $\mathcal{S}$. 
The next statement follows from \Facts \ref{du} and \ref{sq}.

\begin{lemma}\lab{L:S}
For any class $\mathcal{S}$ of spaces over $\Lambda$,
${\sf S}_{1 {\sf f}}(\mathcal{S})\subseteq{\sf S}_{1{\sf q}}(\mathcal{S})$ and 
${\sf S}_{1 {\sf f}}{\sf S}_{1{\sf q}}(\mathcal{S})={\sf S}_{1 {\sf f}}(\mathcal{S})$.
\end{lemma}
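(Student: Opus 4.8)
The plan is to unfold the two definitions and reduce everything to \Fact \ref{sq}, together with the description of $\mathbb{O}(V_F)$ recorded just before \Fact \ref{du}; since all the classes in play are closed under isomorphism of sesquilinear spaces, I shall argue up to such isomorphism throughout.

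For ${\sf S}_{1 {\sf f}}(\mathcal{S})\subseteq{\sf S}_{1{\sf q}}(\mathcal{S})$ I would start from a non-degenerate finite-dimensional subspace $W_F$ of some $V_F\in\mathcal{S}$. Then $W\in\mathbb{O}(V_F)$, so $\rad W=W\cap W^\perp=0$ and $W=W^{\perp\perp}$ is closed, and taking $U:=W$ exhibits $W_F=(U{\slash}\rad U)_F$ as a subquotient of the kind allowed in ${\sf S}_{1{\sf q}}$. Running the same computation with $U:=V$ (where $\rad V=0$ and $V^{\perp\perp}=V$ by non-degeneracy) shows $\mathcal{S}\subseteq{\sf S}_{1{\sf q}}(\mathcal{S})$, hence ${\sf S}_{1 {\sf f}}(\mathcal{S})\subseteq{\sf S}_{1 {\sf f}}{\sf S}_{1{\sf q}}(\mathcal{S})$ by the evident monotonicity of ${\sf S}_{1{\sf f}}$; this disposes of one half of the asserted equality.

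For the reverse inclusion ${\sf S}_{1 {\sf f}}{\sf S}_{1{\sf q}}(\mathcal{S})\subseteq{\sf S}_{1 {\sf f}}(\mathcal{S})$ I would take a non-degenerate finite-dimensional subspace $W_F$ of a subquotient $(U{\slash}\rad U)_F$ with $V_F\in\mathcal{S}$ and $U_F$ a subspace of $V_F$ (the hypothesis $U=U^{\perp\perp}$ will not even be needed), fix a complement $W_0$ of $\rad U$ in $U$, and invoke \Fact \ref{sq}: the map $w\mapsto w+\rad U$ is an isomorphism of sesquilinear spaces $(W_0)_F\to(U{\slash}\rad U)_F$. Pulling $W$ back along it gives a subspace $W_1$ of $W_0$, finite-dimensional since $W$ is, with $(W_1)_F$ isomorphic, as a sesquilinear space, to $W_F$ and therefore non-degenerate. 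Because $W_0$ carries the form induced from $V_F$ and restriction of the form is transitive, the scalar product $W_1$ inherits as a subspace of $V_F$ is precisely the one transported from $W_F$; so $(W_1)_F$ is a non-degenerate finite-dimensional subspace of $V_F\in\mathcal{S}$, and $W_F\cong (W_1)_F\in{\sf S}_{1 {\sf f}}(\mathcal{S})$.

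The argument is mostly bookkeeping, and I expect the only real point of care to be the last one: making sure the finite-dimensional subspace manufactured inside $V_F$ is non-degenerate \emph{for the form it inherits from} $V_F$, not merely for some abstractly isomorphic form. This is exactly what \Fact \ref{sq} delivers, since it identifies $(W_0)_F$ with the subquotient as sesquilinear spaces and not just as vector spaces; \Fact \ref{du} is used only indirectly, through the properties of $\mathbb{O}(V_F)$ quoted in the first step. No infinite-dimensional machinery is required.
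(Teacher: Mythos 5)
Your proof is correct and is essentially the argument the paper intends: it merely states that the lemma ``follows from \Facts \ref{du} and \ref{sq}'', and your unfolding --- identifying a non-degenerate finite-dimensional subspace with a member of $\mathbb{O}(V_F)$, hence a closed subspace with zero radical, for the first inclusion, and pulling a non-degenerate finite-dimensional subspace of $U{\slash}\rad U$ back along the isomorphism of \Fact \ref{sq} onto a complement of $\rad U$ for the second --- is exactly that argument made explicit. Your observation that the hypothesis $U=U^{\perp\perp}$ is not needed for the reverse inclusion, and that the only delicate point is non-degeneracy with respect to the form \emph{inherited from} $V_F$, is accurate.
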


\noindent
Let also ${\sf I}_{\sf s}(\mathcal{S})$ denote the class of spaces which arise from $\mathcal{S}$ by scaling
and observe that ${\sf I}_{\sf s}{\sf O}(\mathcal{S})\subseteq{\sf O}{\sf I}_{\sf s}(\mathcal{S})$
for any of the  class operators introduced, above.
Call $\mathcal{S}$ a \emph{universal class},
if it is closed under ${\sf P}_{\sf u}$, $\sf S$, and ${\sf I}_{\sf s}$.
Observe that ${\sf S}{\sf P}_{\sf u}{\sf I}_{\sf s}(\mathcal{S})$ is the smallest
universal class containing a class $\mathcal{S}$.
Call $\mathcal{S}$ a \emph{semivariety} if it is closed under ${\sf P}_{\sf u}$ and ${\sf S}_{1 {\sf f}}$.
Of course, any universal class is a semivariety,
and the smallest semivariety containing a class $\mathcal{S}$ is 
contained in ${\sf SP}_{\sf u}(\mathcal{S})$.

\section{Reduction to finite dimension}
Importance of representations for the universal algebraic theory of CMILs and regular $\ast$-rings
derives from the following

\begin{theorem}\lab{findim}
Let $V_F$ be a pre-hermitian space
and let $L\in\mathrm{MIL}$ $[R\in\mathcal{A}_\Lambda
]$
have a faithful representation within $V_F$.
Then $L\in{\sf W}\bigl(\mathbb{L}(U_F)\mid U\in\mathbb{O}(V_F)\bigr)$
$[R\in{\sf W}\bigl(\LEnd(U_F)\mid U\in\mathbb{O}(V_F)\bigr)$, respectively$]$.
If $L\in\mathrm{CMIL}$ $[R\in\mathcal{R}_{\Lambda}]$,
then $L\in{\sf W}_\exists\bigl(\mathbb{L}(U_F)\mid
U\in\mathbb{O}(V_F)\bigr)$
$[R\in{\sf W}_\exists\bigl(\LEnd(U_F)\mid U\in\mathbb{O}(V_F)\bigr)$, respectively$]$.
\end{theorem}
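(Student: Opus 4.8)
The plan is to reduce, by one ultraproduct construction treated in parallel for rings and lattices, the infinite-dimensional situation to the finite-dimensional orthogonal summands $U\in\mathbb{O}(V_F)$. If $\dim V_F<\omega$ there is nothing to do: then $V\in\mathbb{O}(V_F)$ and $R\le\LEnd(V_F)$ [resp.\ $L\le\mathbb{L}(V_F)$] already lies in the generating class. So assume $\dim V_F\geq\omega$, and, for the ring statements, that $R\le\LEnd(V_F)$ with $\varepsilon$ the inclusion. Put $I=\mathbb{O}(V_F)$, directed by \Fact \ref{du}, and fix an ultrafilter $\mathcal{U}$ on $I$ refining the filter generated by the cones $\{W\in I\mid W\supseteq U\}$, $U\in I$ (these have the finite intersection property by \Fact \ref{du}). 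By Lemma \ref{ultra0} the ultraproduct of the tautological faithful representations of $\LEnd(U_F)$ on $U_F$, $U\in I$, is a faithful representation of $\mathcal{L}:=\prod_{U\in I}\LEnd(U_F)\slash\mathcal{U}$ on a pre-hermitian space $\hat V_{\hat F}$, and $\mathcal{L}\in{\sf P}_{\sf u}(\LEnd(U_F)\mid U\in\mathbb{O}(V_F))$. The diagonal map $\iota\colon V\to\hat V$, $v\mapsto[\pi_U(v)\mid U\in I]$, is well defined and is a $2$-sorted embedding of $V_F$ into $\hat V_{\hat F}$, because each $v$, and each scalar $\sk{v}{w}$, is recovered on the cone of $U$'s containing $v$ (and $w$). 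Moreover, for $\varphi\in\LEnd(V_F)$ the element $[\varphi]:=[\pi_U\varphi\varepsilon_U\mid U\in I]$ of $\mathcal{L}$ sends $\iota(v)$ to $\iota(\varphi(v))$ for all $v\in V$, and $[\varphi]^\ast=[\varphi^\ast]$ by the adjunction rules for $\pi_U$, $\varepsilon_U$ recorded with \Facts \ref{pu} and \ref{endreg}.

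It follows that $B':=\{t\in\mathcal{L}\mid t\cdot\iota(V)\subseteq\iota(V)\ \text{and}\ t^\ast\cdot\iota(V)\subseteq\iota(V)\}$ is a $\ast$-$\Lambda$-subalgebra of $\mathcal{L}$, and restriction to $\iota(V)$ gives a homomorphism $\Theta\colon B'\to\LEnd(V_F)$ of $\ast$-$\Lambda$-algebras: composition commutes with restriction to an invariant subspace, and non-degeneracy of $V_F$ forces the restriction of $t^\ast$ to be the adjoint of that of $t$. Since $\Theta([\varphi])=\varphi$ for every $\varphi\in R$, we have $R\subseteq\im\Theta$, so $\Theta$ restricts to a surjection $\Theta^{-1}(R)\twoheadrightarrow R$ with $\Theta^{-1}(R)\in{\sf S}(\mathcal{L})$, and hence $R\in{\sf H}{\sf S}{\sf P}_{\sf u}(\LEnd(U_F)\mid U\in\mathbb{O}(V_F))\subseteq{\sf W}(\LEnd(U_F)\mid U\in\mathbb{O}(V_F))$. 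This settles the first ring statement. For the regular refinement I would use \Fact \ref{hs1}(vi) to reduce to a finitely generated regular $R_0\le R$; replacing $\hat V_{\hat F}$ by an $\omega$-saturated ultrapower (still an ultraproduct of the $\LEnd(U_F)$, by \Fact \ref{sat}), one can then realize quasi-inverses of the lifts $[\varphi]$ of elements of $R_0$ \emph{inside} $B'$, so as to generate a \emph{regular} $\ast$-$\Lambda$-subalgebra $C\le\mathcal{L}$ with $\Theta(C)=R_0$, whence $R_0\in{\sf H}{\sf S}_\exists{\sf P}_{\sf u}(\LEnd(U_F)\mid U\in\mathbb{O}(V_F))={\sf W}_\exists(\LEnd(U_F)\mid U\in\mathbb{O}(V_F))$ by \Fact \ref{hs1}(iii). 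Checking finite satisfiability of the relevant types — which uses regularity of both $R_0$ and the $\LEnd(U_F)$ (\Fact \ref{db}(i)) — is the technical heart of the regular case.

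For the CMIL statement there is a cleaner route avoiding ultraproducts. Identify $L$ with a Galois sublattice of $\mathbb{L}(V_F)$ and extend it by \Fact \ref{atex} to a Galois sublattice $\hat L$ of $\mathbb{L}(V_F)$ which is again a CMIL and has $\hat{L}_{\sf f}=\mathbb{L}(V_F)_{\sf f}$. By \Fact \ref{gal}, $\hat L$ is the directed union of its subalgebras $[0,u]\cup[u',1]$ with $\dim u<\omega$ and $u\oplus u'=1$; since the involution of $\hat L$ is the restriction of $\perp$, one has $u'=u^\perp$ and $u\cap u^\perp=0$, so $u\in\mathbb{O}(V_F)$, and by \Fact \ref{og}(i) the complemented sublattice $[0,u]\cup[u^\perp,V]$ embeds as a Galois sublattice into $\mathbb{L}(W_F)$ for some $W\in\mathbb{O}(V_F)$, whence it lies in ${\sf W}_\exists(\mathbb{L}(U_F)\mid U\in\mathbb{O}(V_F))$. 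Any finitely generated complemented Galois sublattice of $\hat L$ already sits inside one such $[0,u]\cup[u^\perp,V]$, so by \Fact \ref{hs1}(vi) and closure of $\exists$-semivarieties under ${\sf S}_\exists$ we conclude $\hat L$, and with it its complemented subalgebra $L$, lies in ${\sf W}_\exists(\mathbb{L}(U_F)\mid U\in\mathbb{O}(V_F))$.

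For the remaining (general MIL) statement one runs the ultraproduct construction of the first paragraph with the tautological representations of $\mathbb{L}(U_F)$ on $U_F$ in place of those of $\LEnd(U_F)$: this yields $\mathcal{L}_{\mathrm{lat}}:=\prod_{U\in I}\mathbb{L}(U_F)\slash\mathcal{U}$ acting faithfully on $\hat V_{\hat F}$ via some $\hat\varepsilon$, the same embedding $\iota$, and a map $\Psi\colon\mathcal{L}_{\mathrm{lat}}\to\mathbb{L}(V_F)$, $\Psi(b)=\iota^{-1}(\hat\varepsilon(b))$, which preserves meets and the top element and sends $[a\cap U\mid U\in I]$ to $a$ for every closed subspace $a$. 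The main obstacle of the whole proof is that $\hat\varepsilon(b)\cap\iota(V)$ is \emph{not} automatically well behaved under joins and orthogonals, so to obtain a genuine Galois-lattice homomorphism onto $L$ one must restrict to a Galois sublattice of $\mathcal{L}_{\mathrm{lat}}$ on which $+$ and $\perp$ are transported faithfully; this is where closedness of the members of $L$ (Lemma \ref{MOL}(i)) together with saturation of the ultraproduct (\Fact \ref{sat}) enters — or, equivalently, one passes to the orthogeometry $\mathbb{G}(V_F)$ via \Fact \ref{ogrep} and invokes the corresponding result of \cite{he4}. Granting this, the target is a Galois sublattice of $\mathcal{L}_{\mathrm{lat}}$ mapping onto $L$, so $L\in{\sf H}{\sf S}{\sf P}_{\sf u}(\mathbb{L}(U_F)\mid U\in\mathbb{O}(V_F))\subseteq{\sf W}(\mathbb{L}(U_F)\mid U\in\mathbb{O}(V_F))$, as required.
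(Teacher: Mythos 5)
Your first paragraph does establish the non-regular ring statement correctly, and by a route genuinely different from the paper's (a direct ultraproduct over the directed set $\mathbb{O}(V_F)$ with the diagonal embedding $\iota$ and the restriction map $\Theta$, versus the paper's single $\omega$-saturated ultrapower of $({_R}V_F;\hat{J}(V_F))$ and the relation $a\sim r$ tested against projections in $J_0$). But the main content of the theorem is the ${\sf W}_\exists$ refinements, and there the proposal has genuine gaps. The most concrete one is in your CMIL paragraph: \Fact \ref{gal} says that $\hat{L}_{\sf f}$ --- not $\hat{L}$ --- is the directed union of the subalgebras $[0,u]\cup[u^\perp,1]$ with $\dim u<\omega$. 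When $\dim V_F$ is infinite, $\hat{L}$ (and already $L$ itself) contains closed subspaces of infinite dimension and infinite codimension, so a finitely generated complemented Galois sublattice of $\hat{L}$ need not sit inside any $[0,u]\cup[u^\perp,1]$: the Galois sublattice generated by a single $a$ with $\dim a=\dim a^\perp$ infinite already escapes all of them. So your appeal to \Fact \ref{hs1}(vi) collapses. The passage from $\hat{L}_{\sf f}\in{\sf S}_\exists{\sf P}_{\sf u}(\mathbb{L}(U_F)\mid U\in\mathbb{O}(V_F))$ to $\hat{L}\in{\sf W}_\exists(\hat{L}_{\sf f})$ is exactly the substantive step the paper imports from the proof of \cite[Theorem 16.3]{he4}; it cannot be skipped.

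The regular ring refinement and the general MIL case are likewise asserted rather than proved, and in each case what is deferred is the actual heart of the argument. For rings, "realize quasi-inverses of the lifts $[\varphi]$ inside $B'$" does not close up as stated: since $\varphi\mapsto[\varphi]$ is not multiplicative (because $\varepsilon_U\pi_U\neq\id_V$), the subalgebra generated by the lifts and their quasi-inverses contains elements that are not of the form $[\psi]$, so one must produce, for an \emph{arbitrary} $t\in B'$ with $\Theta(t)=r$, a quasi-inverse $s$ of $t$ satisfying the infinitely many constraints $s\,\iota(v)=\iota(r'v)$ and $s^\ast\iota(v)=\iota(r'^\ast v)$; verifying that this type is finitely realized (and setting up saturation over the parameter set $\iota(V)$, which lives in the ultraproduct rather than in a base structure) is precisely the analogue of the paper's Claims 1--4 --- in particular Claim 4, where regularity of $\kerr g$ is extracted via \cite[2.4]{ber4} and \Fact \ref{reri}(ii) --- and none of it is carried out. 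Similarly, for general MILs you "grant" the restriction to a Galois sublattice of $\mathcal{L}_{\mathrm{lat}}$ on which joins and orthogonals are transported faithfully, which is again the whole difficulty. As written, only the statement $R\in{\sf W}(\LEnd(U_F)\mid U\in\mathbb{O}(V_F))$ for $R\in\mathcal{A}_\Lambda$ is actually proved.
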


\begin{proof}
We may assume that $\dim V_F\geqslant\omega$.
In view of \Fact \ref{atex},  $L$ embeds into  an atomic
subalgebra $M$ of $\mathbb{L}(V_F)$ such that $M_{\sf f}=\mathbb{L}(V_F)_{\sf f}$
and $M$ may be chosen a CMIL if $L$ is such.
\Fact \ref{og}(i)--(ii) yields that
$M_{\sf f}$ is
a CMIL and the directed union
of its subalgebras $[0,U]\cup[U^\perp,V]$, $U\in\mathbb{O}(V_F)$, each of which is in ${\sf S}_\exists(W_F)$
for some $W\in \mathbb{O}(V_F)$.
Since any directed union of algebraic structures $A_i$ 
embeds into an ultraproduct of the $A_i$
(cf  \cite[Theorem 1.2.12(1)]{gorb}), one gets
\[
M_{\sf f}\in{\sf S}_\exists{\sf P_u}\bigl(\mathbb{L}(U_F)\mid U\in\mathbb{O}(V_F)\bigr).
\]
Finally, the proof of \cite[Theorem 16.3]{he4} yields $M\in{\sf W}(M_{\sf f})$
and $M\in{\sf W}_\exists(M_{\sf f})$ if $M$ is complemented.
The claim about $L$ follows, immediately.

Dealing with an algebra $R\in\mathcal{A}_\Lambda$, we first show that
 $\hat{J}(V_F)\in{\sf W}_\exists\bigl(\LEnd(U_F)\mid U\in\mathbb{O}(V_F)\bigr)$.
By \Fact \ref{hs1}(vi), it suffices to prove this inclusion for finitely generated algebras
$B\in{\sf S}_\exists\bigl(\hat{J}(V_F)\bigr)$.
By \Fact \ref{endreg}(iv), we may assume that $B$ is of the form
$\{\varepsilon_U\varphi\pi_U +\lambda\id_V\mid\varphi\in\LEnd(V_F),\ \lambda\in F\}$
for some $U\in\mathbb{O}(V_F)$.
Thus $B\cong\LEnd(U_F)\times F$
and the latter embeds into 
$\LEnd(U_F)\times \LEnd((U^\perp\cap W)_F)$
which in turn into 
$\LEnd(W_F)$ for $U\subset W\in\mathbb{O}(V_F)$, cf. \Facts \ref{du} and \ref{db}.

In view of \Fact \ref{atex2}, we may assume
that $R$ is a subalgebra of $\LEnd(V_F)$ containing $A=\hat{J}(V_F)$.
Let $J=J(V_F)$ and let $J_0$ denote the set of projections in $J$.
By \Fact \ref{sat}, there is an ultrapower $(_{\hat{R}}\hat{V}_{\hat{F}};\hat{A})$
of $(_RV_F;A)$ which is $\omega$-saturated over $(_RV_F;A)$.
We may assume that $R$ is a subalgebra of $\hat{R}$ and $\hat{A}$ is an ultrapower of $A$;
in particular, $\hat{A}\in{\sf W}_\exists\bigl(\LEnd(U_F)\mid U\in\mathbb{O}(V_F)\bigr)$.
For $a\in\hat{A}$ and $r\in R$, we put
\[
a\sim r,\quad\text{if}\quad 
ae=re\ \text{and}\ a^\ast e=r^\ast e\ \text{for all}\ e\in J_0.
\]

\setcounter{claim}{0}
\begin{claim}\lab{c-1}
For any $a\in\hat{A}$ and any $r$, $s\in R$, $a\sim r$ and $a\sim s$ imply $r=s$.
\end{claim}

\begin{scproof}
For any $U\in\mathbb{O}(V_F)$, we have $\pi_U\in J_0$, whence $r\pi_U=a\pi_U=s\pi_U$.
Considering $r$ and $s$ as endomorphisms of $V_F$, we get that
they coincide on any $U\in\mathbb{O}(V_F)$, whence they coincide on $V_F$ by \Fact \ref{du}.
\end{scproof}

\begin{claim}\lab{c-2}
$S=\{a \in \hat{A}\mid a\sim r\ \text{for some}\ r\in R\}$ is a subalgebra of $\hat{A}$ and the map
\[
g\colon\hat{A}\to R,\quad g\colon a\mapsto r,\ \text{where}\ a\sim r
\]
is a homomorphism.
\end{claim}

\begin{scproof}
It follows from Claim \ref{c-1} that $g$ is well-defined.
Let $a$, $b\in\hat{A}$ and $r$, $s\in R$ be such that $a\sim r$ and $b\sim s$.
Then, obviously, $a+b\sim r+s$, $\lambda a\sim\lambda r$ for any $\lambda\in\Lambda$,
and $a^\ast\sim r^\ast$. Let $e\in J_0$, then $be\in J$.
By \Fact \ref{endreg}(iv), there is $f\in J_0$ such that $fbe=be$.
Therefore, we get $abe=afbe=rfbe=rbe=rse$, whence $ab\sim rs$.

Obviously, $0_{\hat{V}}$, $\id_{\hat{V}}\in\hat{A}$.
For any $U\in\mathbb{O}(V_F)$ we have $\pi_U\in J_0$.
Therefore, $0_{\hat{V}}\pi_U=0_U$ and $\id_{\hat{V}}\pi_U=\pi_U$ imply in view of \Fact \ref{du}
that $0_{\hat{V}}\sim 0_R$ and $\id_{\hat{V}}\sim 1_R$.
\end{scproof}

\begin{claim}\lab{c-3}
The homomorphism $g$ is surjective.
\end{claim}

\begin{scproof}
Surjectivity of $g$ is shown via the saturation property.
Given $r\in R$, consider a finite set $E\subseteq J_0$.
According to \Fact \ref{endreg}(iv), there is $e\in J_0$ such that $ef=f$ for all $f\in E$
and $er^\ast f=r^\ast f$ for all $f\in E$.
Take $a=re$ and observe that $af=ref=rf$ and $a^\ast f =er^\ast f=r^\ast f$ for all $f\in E$.
Thus the set of formulas
\[
\Sigma(x)=\bigl\{[xe=re]\ \&\ [x^\ast e=r^\ast e]\mid e\in J_0\bigr\}
\]
with a free variable $x$ of type $A$ is finitely realized in $(_RV_F;A)$.
As $(_{\hat{R}}\hat{V}_{\hat{F}};\hat{A})$ is $\omega$-saturated over $(_RV_F;A)$,
we get that there is $a\in\hat{A}$ with $a\sim r$.
\end{scproof}

\begin{claim}\lab{c-4}
If $R$ is regular, then $S$ is also regular.
\end{claim}

\begin{scproof}
In view of \Fact \ref{reri}(ii), it suffices to prove that $\kerr g=\{a\in S\mid a\sim 0\}$ is regular.
Observe that $a\sim 0$ means that
$ae=0=a^\ast e$ for any $e\in J_0$, equivalently $(1-e)a=a=a(1-e)$.
Again, let $E\subseteq J_0$ be finite.
By \Fact \ref{endreg}(iv), there is $e\in J_0$ such that $ef=f$ for any $f\in E$.
The ring $A$ is regular by \Facts \ref{endreg}(i) and \ref{atex2}, whence $\hat{A}$ is also regular.
Therefore, the ring $(1-e)\hat{A}(1-e)$ is regular by \cite[2.4]{ber4}. Thus
there is $b\in\hat{A}$ such that $aba=a$ and $(1-e)b=b=b(1-e)$; in particular, $be=0=eb$
whence $b^\ast e=0$.
This implies that $bf=bef=0$ and $b^\ast f=b^\ast ef=0$ for all $f\in E$.
Therefore, the set of formulas
\[
\Sigma(x)=\{axa=a\}\cup\bigl\{[xe=0]\ \&\ [x^\ast e=0]\mid e\in J_0\bigr\}
\]
with a variable $x$ of type $A$ is finitely realized in $(_{\hat{R}}\hat{V}_{\hat{F}};\hat{A})$.
Thus $\Sigma(x)$ is realized in $(_{\hat{R}}\hat{V}_{\hat{F}};\hat{A})$,
and we obtain $b\in\hat{A}$ such that $aba=a$ and $b\sim 0$; that is, $b\in\kerr g$.
\end{scproof}

\noindent
The desired statements concerning $\ast$-$\Lambda$-algebras follow from Claims \ref{c-2}-\ref{c-4}
and Theorem \ref{exvar}(iii).
\end{proof}

\begin{remark}
The statements of Theorem \ref{findim} concerning $\ast$-$\Lambda$-algebras
were proved in case of representability in inner product spaces in \cite[Theorem 16]{PartI}.
Requiring semivariety generation only, a more direct approach is possible.
For $R\in\mathcal{A}_\Lambda$,
one chooses in the proof of \cite[Theorem 16]{PartI} $I=\mathbb{O}(V_F)$.
By \Fact \ref{du}, any finite-dimensional subspace of $V_F$ is contained in some $U\in I$.
Moreover, with the induced scalar product, $U_F$ is a pre-hermitian space.
A similar approach works for MILs.
\end{remark}

\xc{
\begin{remark}
Here, we provide an alternative proof for the statements of Theorem \ref{findim}
concerning MILs which does not refer to orthogeometries.
We may assume that the representation $\varepsilon$ of a MIL $L$ is the identity map.
Let $I$ consist of all finite-dimensional subspaces of $V_F$.
For $U\in I$, we set $U^+=\{W\in I\mid U\subseteq W\}$.
Notice that $U_1^+\cap U_2^+=(U_1+U_2)^+$.
Thus there is some ultrafilter $\mathcal{U}$ on $I$ such that $U^+\in\mathcal{U}$ for all $U\in I$.
Let $L_U=\mathbb{L}(U_F{\slash}\rad U)$ with involution $X\mapsto X^\perp\cap U$
and observe that $S_U\in L_U$ iff $S_U\in\mathbb{L}(V_F)$ and $\rad U\subset S_U\subseteq U$.
Moreover, with $U=W\oplus\rad U$ one has by \Fact \ref{sq} that
$U_F{\slash}\rad U$ is isomorphic to $W_F\in\mathbb{O}(V_F)$.
Form the direct product $M=\prod_{U\in I}L_U$
and the ultraproduct $\hat{M}=\prod_{U\in I}L_U\slash\mathcal{U}$.
The elements of $M$ and $\hat{M}$ will be denoted as $S=(S_U\mid U\in I)$ and $[S]$ respectively.
Moreover, $S^\perp=(S_U^\perp\cap U\mid U\in I)$.

We relate $M$ with $L$.
Given $S\in M$, $X\in L$, $U_0$, $U_1\in I$, and $J\in\mathcal{U}$,
we write
\[
[U_0,U_1,J]\colon S\rightarrow X,\quad\text{if}\ J\subseteq U_0^+\cap U_1^+\ 
\text{and}\ U_0\subseteq S_U\subseteq U_1^\perp\ \text{for all}\ U\in J.
\]
We put also
\begin{align*}
X\rightarrow U\quad&\text{if for all}\ U_0,\ U_1\in I\ \text{with}\ U_0\subseteq X\subseteq U_1^\perp,\\
&\text{there is}\ J\in\mathcal{U}\ \text{such that}\ [U_0,U_1,J]\colon S\rightarrow X.
\end{align*}

\setcounter{claim}{0}
\begin{claim}\lab{c-5}
For any $S\in M$, there is at most one $X\in L$ with $S\rightarrow X$.
\end{claim}

\begin{scproof}
Let $S\rightarrow X$ and $S\rightarrow Y$ for some $X$, $Y\in L$.
Fix a vector $v\in X$ and put $U_0=vF$, $U_1=\{0\}$.
By the hypothesis, there is $J\in\mathcal{U}$ such that $[U_0,U_1,J]\colon S\rightarrow X$.
Now, consider any $U_3\in I$ with $Y\subseteq U_3^\perp$.
Again, there if $K\in\mathcal{U}$ such that $[U_1,U_3,K]\colon S\rightarrow Y$.
As $J\cap K\in\mathcal{U}$, there is $U\in J\cap K$.
Then $v\in S_U\subseteq U_3^\perp$.
Obviously, $Y^\perp=\Sigma\{W\in I\mid W\subseteq Y^\perp\}$.
By Lemma \ref{MOL}(i), $Y$ is closed, whence
$Y=\bigcap\{W^\perp\mid W\in I,\ Y\subseteq W^\perp\}$.
Therefore, $v\in Y$ and thus $X\subseteq Y$. Symmetrically, we obtain that $Y\subseteq X$.
\end{scproof}

\begin{claim}\lab{c-6}
For $S\in M$ and $X\in L$, if $S\rightarrow X$ then $S^\perp\rightarrow X^\perp$.
\end{claim}

\begin{scproof}
Let $U_0$, $U_1 \in I$ be such that $U_0\subseteq X^\perp\subseteq U_1^\perp$.
Then $U_1\subseteq X\subseteq U_0^\perp$.
By assumption, there is $J\in\mathcal{U}$ such that $[U_1,U_0,J]\colon S\rightarrow X$.
Let $U\in J$; in particular, we have $U_0\subseteq U$.
This means that $U_1\subseteq S_U\subseteq U_0^\perp$, whence
$U_0\subseteq S_U^\perp\cap U\subseteq U_1^\perp$ and thus
and $[U_0,U_1,J]\colon S^\perp\rightarrow X^\perp$.
\end{scproof}

\begin{claim}\lab{c-7}
If $S\rightarrow X$ and $T\rightarrow Y$ then $S+T\rightarrow X+Y$
for $S$, $T\in M$ and $X$, $Y\in L$.
\end{claim}

\begin{scproof}
Let $U_0$, $U_1\in I$ be such that $U_0\subseteq X+Y\subseteq U_1^\perp$.
Let $\dim U_0=n<\omega$ and let $U_0$ be spanned by vectors $v_1$, \ldots, $v_n$.
For any $i\in\{1,\ldots,n\}$, we have $v_i=x_i+y_i$ with $x_i\in X$ and $y_i\in Y$.
Let $U_2$ be the subspace of $V_F$ spanned by vectors $x_1$, \ldots, $x_n$, and let
$U_3$ be the subspace of $V_F$ spanned by vectors $y_1$, \ldots, $y_n$;
in particular, $U_2$, $U_3\in I$, $U_0\subseteq U_2+U_3$, $U_2\subseteq X\subseteq U_1^\perp$,
and $U_3\subseteq Y\subseteq U_1^\perp$.
Thus we have by our hypothesis that there are $J$, $K\in\mathcal{U}$ such that
$[U_2,U_1,J]\colon S\rightarrow X$ and $[U_3,U_1,K]\colon T\rightarrow Y$.
This means that $J\cap K\in\mathcal{U}$ and for any $U\in J\cap K$, one has
$U_0\subseteq U_2+U_4\subseteq S_U +T_U\subseteq U_1^\perp$.
This proves that $[U_0,U_1,J\cap K]\colon S+T\rightarrow X+Y$.
\end{scproof}

\noindent
According to \Fact \ref{og}(ii), $\mathbb{L}(U_F)$ is a CMIL for any finite-dimensional subspace $U_F$ of $V_F$,
whence $M$ is a CMIL.
From Claims \ref{c-5}-\ref{c-7} and Lemma \ref{L:1}(ii), we get the following

\begin{claim}\lab{c-8}
The set $N=\{S\in M\mid S\rightarrow X\ \text{for some}\ X\in L\}$
is a subalgebra of $M$ and the map
\[
g\colon N\to L;\quad g\colon S\mapsto X,\ \text{if}\ S\rightarrow X
\]
is a well defined homomorphism.
\end{claim}

\begin{claim}\lab{c-9}
The homomorphism $g$ is surjective.
\end{claim}

\begin{scproof}
Given $X\in L$, let $S\in M$ be such that
$S_U=X\cap U+U\cap U^\perp$ for all $U\in I$.
Suppose now that $U_0$, $U_1\in I$ are such that $U_0\subseteq X\subseteq U_1^\perp$
and put $J=U_0^+\cap U_1^+$.
If $U\in J$, then $U_0$, $U_1\subseteq U$; in particular,
$U^\perp\subseteq U_1^\perp$. This implies that
$U_0\subseteq X\cap U\subseteq S_U=X\cap U+U\cap U^\perp\subseteq U_1^\perp$,
whence $[U_0,U_1,J]\colon S\rightarrow X$ and $g(S)=X$.
\end{scproof}

\noindent
We put $\hat{N}=\{[S]\mid S\in N\}$.

\begin{claim}\lab{c-10}
$\hat{N}$ is a subalgebra of $\hat{M}$ and the map 
\[
\hat{g}\colon\hat{N}\to L,\quad f\colon[S]\mapsto X,\ \text{if}\ S\rightarrow X
\]
is a well defined surjective homomorphism.
\end{claim}

\begin{scproof}
In view of Claims \ref{c-8}-\ref{c-9}, it suffices to show that $[T]=[S]$ and $S\rightarrow X$ imply $T\rightarrow X$.
Indeed, let $K=\{U\in I\mid S_U=T_U\}$; then $K\in\mathcal{U}$.
If $J\in\mathcal{U}$ is such that $[U_0,U_1,J]\colon S\rightarrow X$, then obviously,
$[U_0,U_1,J\cap K]\colon T\rightarrow X$.
\end{scproof}

\begin{claim}\lab{c-12}
If $S\rightarrow V$ for some $S\in M$, then $S=1_M$.
\end{claim}

\begin{scproof}
Let $U\in I$ be arbitrary and let $u\in U$ be also arbitrary.
As $S\rightarrow V$ and $uF\subseteq V$, there is $J\in\mathcal{U}$
such that $J\subseteq uF^+$ and $[uF,\{0\},J]\colon S\rightarrow V$.
As $uF\subseteq U$, we get that $U\in J$ and thus $u\in uF\subseteq S_U$.
As $u$ was an arbitrary vector, we conclude that $S_U=U$.
As $U\in I$ was arbitrary, $S=1_M$.
\end{scproof}

\begin{claim}\lab{c-11}
If $L$ is complemented, then $\hat{N}$ is complemented.
\end{claim}

\begin{scproof}
As $\hat{N}$ is a homomorphic image of $N$, it suffices to prove that $N$ is complemented.
Let $S\in M$. Then $S\rightarrow X$ for some $X\in L$.
Let $X\oplus Y=V$ in $L$.
By Claim \ref{c-9}, there is $T\in N$ such that $T\rightarrow Y$.
According to Claim \ref{c-7}, $S+T\rightarrow X+Y=V$, whence $S+T=1_M$ by Claim \ref{c-12}.
According to Claim \ref{c-6}, $S^\perp\rightarrow X^\perp$ and $T^\perp\rightarrow Y^\perp$,
whence $(ST)^\perp=S^\perp+T^\perp\rightarrow X^\perp+Y^\perp=(X\cap Y)^\perp=V$
and $(ST)^\perp=1_M$ Claim \ref{c-12}.
Therefore, $ST=0_M$ and $T$ is a complement of $S$.
\end{scproof}

\noindent
All desired statements of Theorem \ref{findim} follow from Claims \ref{c-10}-\ref{c-11}.
\end{remark}
}

\section{($\exists$-)semivarieties of representable structures}

We denote by $\mathcal{L}(\mathcal{S})$ [$\mathcal{R}(\mathcal{S})$, respectively
the class of all CMILs [all $R\in\mathcal{R}_{\Lambda}$ respectively] having
a faithful representation within some member of $\mathcal{S}$
(we also say that these structures are \emph{representable} within $\mathcal{S}$).
We consider here conditions on $\mathcal{S}$ which ensure that classes $\mathcal{L}(\mathcal{S})$ and
$\mathcal{R}(\mathcal{S})$ are $\exists$-(semi)varieties.
Observe that
\[
\mathcal{L}\bigl({\sf I}_{\sf s}(\mathcal{S})\bigr)=\mathcal{L}(\mathcal{S})\quad\text{and}\quad
\mathcal{R}\bigl({\sf I}_{\sf s}(\mathcal{S})\bigr)=\mathcal{R}(\mathcal{S}).
\]

\begin{proposition}\lab{malcev}
Let $\mathcal{S}$ be a $[$recursively$]$ axiomatized class of pre-hermitian spaces over a $[$recursive$]$
commutative $\ast$-ring $\Lambda$.
Then $\mathcal{L}(\mathcal{S})$ and $\mathcal{R}(\mathcal{S})$ are $[$recursively$]$ axiomatizable.
\end{proposition}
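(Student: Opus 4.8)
The plan is to present $\mathcal{R}(\mathcal{S})$ and $\mathcal{L}(\mathcal{S})$ as classes of \emph{reducts} of suitable axiomatizable classes of multi-sorted structures, and then to apply the classical criterion for such reduct classes to be elementary; this is the Mal'cev-style method of Section \ref{sort}. Fix recursive axiom sets for $\Lambda$ and for $\mathcal{S}$. As explained there, a faithful representation of a regular $\ast$-$\Lambda$-algebra $R$ within a pre-hermitian space $V_F$ is the same as a $3$-sorted structure $_RV_F$ (sorts $R$, $V$, $F$) satisfying a list $\Sigma$ of first-order sentences: $R$ is a regular $\ast$-$\Lambda$-algebra, $F\in\mathcal{F}_\Lambda$, the left $R$- and right $F$-module actions together with the sesquilinearity and adjointness laws $\sk{rx}{y}=\sk{x}{r^\ast y}$ hold (the latter forcing each $\varepsilon(r)\colon v\mapsto rv$ to have the adjoint $\varepsilon(r^\ast)$, so that $\varepsilon(r)\in\LEnd(V_F)$ and $\varepsilon$ is a $\ast$-$\Lambda$-algebra homomorphism), $V_F$ is pre-hermitian and satisfies the axioms of $\mathcal{S}$, and the faithfulness implication $\bigl(\forall v\; rv=0\bigr)\to r=0$ holds. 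Every clause is first-order, and $\Sigma$ is recursive whenever $\Lambda$ and $\mathcal{S}$ are; moreover $\mathcal{R}(\mathcal{S})$ is precisely the class of $R$-sort reducts of models of $\Sigma$. Likewise a faithful representation of a CMIL $L$ within $V_F$ is a $3$-sorted structure $(L,V,F;\varepsilon)$ with $\varepsilon\subseteq L\times V$ encoding a Galois-lattice embedding of a CMIL into $\mathbb{L}(V_F)$ (in the manner of \cite{mal,mal1,Sch}), subject to a recursive first-order theory $\Sigma_L$, and $\mathcal{L}(\mathcal{S})$ is the class of $L$-sort reducts of models of $\Sigma_L$. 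We shall also use that $\mathcal{R}_\Lambda$ and $\mathrm{CMIL}$ are themselves (recursively, over recursive $\Lambda$) first-order axiomatizable.

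Next I would check two closure properties of $\mathcal{R}(\mathcal{S})$, the arguments for $\mathcal{L}(\mathcal{S})$ being verbatim. For closure under ${\sf P}_{\sf u}$: given witnessing representations $_{R_i}(V_i)_{F_i}\models\Sigma$ and an ultrafilter $\mathcal{U}$, their ultraproduct as $3$-sorted structures again models $\Sigma$ by \Fact \ref{los}; its $V$-$F$-component lies in $\mathcal{S}$ since an axiomatizable class of spaces is closed under ${\sf P}_{\sf u}$, and its $R$-reduct is $\prod_iR_i\slash\mathcal{U}$. This is essentially Lemma \ref{ultra}(i)--(ii). For closure under ultraroots, the key point is that representability passes to subalgebras: the composite of a $\ast$-$\Lambda$-algebra embedding (resp.\ of a Galois-sublattice embedding) with a faithful representation is again a faithful representation into the \emph{same} space. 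Hence, if an ultrapower $R^I\slash\mathcal{U}$ lies in $\mathcal{R}(\mathcal{S})$, say with a faithful representation into $V_F\in\mathcal{S}$, then precomposing with the (elementary) diagonal embedding $R\to R^I\slash\mathcal{U}$ gives a faithful representation of $R$ within $V_F$; and $R\in\mathcal{R}_\Lambda$, because $R$ embeds elementarily into $R^I\slash\mathcal{U}\in\mathcal{R}_\Lambda$ and $\mathcal{R}_\Lambda$ is axiomatizable. Thus $R\in\mathcal{R}(\mathcal{S})$.

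A class of structures closed under isomorphisms, ultraproducts and ultraroots is the class of all models of its own first-order theory -- a standard consequence of \L o\'{s}'s theorem, compactness, and the Keisler--Shelah isomorphism theorem, cf.\ \cite{CK}, \cite{hodges}. Therefore $\mathcal{R}(\mathcal{S})$ and $\mathcal{L}(\mathcal{S})$ are axiomatizable. For the recursive refinement, note that a sentence in the language of $\ast$-$\Lambda$-algebras holds throughout $\mathcal{R}(\mathcal{S})$ if and only if it holds in every model of $\Sigma$, i.e.\ iff it is deducible from $\Sigma$; since $\Sigma$ is recursive, $\Th\bigl(\mathcal{R}(\mathcal{S})\bigr)$ is recursively enumerable, and a recursively enumerable theory is logically equivalent to a recursive one (Craig's trick). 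So $\mathcal{R}(\mathcal{S})$ is recursively axiomatizable, and the same argument via $\Sigma_L$ settles $\mathcal{L}(\mathcal{S})$.

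The one step I expect to require genuine care is the very first one: writing out in detail that ``$_RV_F$ is a faithful representation'' and ``$(L,V,F;\varepsilon)$ is a faithful representation'' are first-order -- and recursively so over recursive $\Lambda$ and $\mathcal{S}$ -- which is a routine but somewhat tedious verification (the bimodule, sesquilinearity, adjointness and faithfulness clauses; and, in the lattice case, that the relation $\varepsilon$ encodes a faithful join-, involution- and $0$-preserving map $L\to\mathbb{L}(V_F)$). Once this is done, everything else is standard model theory together with the already-proved Lemma \ref{ultra} and \Fact \ref{los}; no further obstacle arises.
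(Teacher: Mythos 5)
Your proposal is correct, and it shares with the paper the same overall frame (Mal'cev's method of viewing representations as recursively axiomatized multi-sorted structures, then G\"odel completeness plus Craig's trick for the recursive refinement), but the mechanism by which you establish that the one-sorted reduct class is elementary is genuinely different. The paper takes $\Sigma_r$ to be the set of \emph{universal} sentences in the algebra (resp.\ Galois-lattice) signature that are consequences of the three-sorted theory $\Gamma_r$; since the class of \emph{all} representable $\ast$-$\Lambda$-algebras [CMILs] is closed under subalgebras and ultraproducts, a compactness/diagram argument in the style of {\L}o\'s--Tarski shows $\Sigma_r$ axiomatizes exactly that class, and $\mathcal{R}(\mathcal{S})$ [$\mathcal{L}(\mathcal{S})$] is then cut out by adding the single $\forall\exists$ regularity [complementation] axiom. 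You instead verify closure of $\mathcal{R}(\mathcal{S})$ itself under ultraproducts and ultraroots and invoke the Keisler--Shelah characterization of elementary classes. Both routes are sound; yours avoids having to pass through the larger ${\sf S}$-closed class of all representables (note $\mathcal{R}(\mathcal{S})$ itself is not ${\sf S}$-closed), at the cost of the heavier Keisler--Shelah theorem and of losing the syntactic normal form. The paper's version buys exactly that normal form --- universal sentences plus one $\forall\exists$ axiom --- which is what ties this proposition to the (semi)variety machinery of Section 9 and \Fact \ref{semi}. Your closing concern about the first step is also resolved in the paper: the first-order axiomatizability of the two kinds of $3$-sorted structures is exactly the observation already used in the proof of Lemma \ref{ultra}, so nothing further is needed there.
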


\begin{proof}
Let $\Gamma_r$ denote the set of first order axioms
defining representations $_RV_F$ with $R \in \mc{R}_\Lambda$ and $V_F\in\mathcal{S}$ 
and let $\Sigma_r$ denote the set of all universal sentences in the signature
of $\ast$-$\Lambda$-algebras which are consequences of $\Gamma_r$.
Then $\Sigma_r$ defines the class of all $\ast$-$\Lambda$-algebras representable in $\mathcal{S}$.
Adding to $\Sigma_r$ the $\forall\exists$-axiom of regularity defines
the subclass $\mathcal{R}(\mathcal{S})$.
If $\Lambda$ is recursive and $\mathcal{S}$ is recursively axiomatizable, then $\Gamma_r$ is recursive.
By G\"{o}del's Completeness Theorem, $\Sigma_r$ is recursively enumerable. 
By Craig's trick \cite[Exercise 6.1.3]{hodges}, $\Sigma_r$ is also recursive.

Similarly, taking $\Gamma_l$ to be the set of first order axioms
defining representations of CMILs within spaces from $\mathcal{S}$,
and denoting by $\Sigma_l$ the set of all universal sentences in the signature
of CMILs which are consequences of $\Gamma_l$, we get that
$\Sigma_l$ defines the class $\mathcal{L}(\mathcal{S})$ of all CMILs representable in $\mathcal{S}$.
Moreover, if $\Gamma_l$ is recursive, then $\Sigma_l$ is also recursive.
We also refer to \cite{mal,Sch}.
\end{proof}

\noindent
A \emph{tensorial embedding} of a pre-hermitian space $V_F$ into another one $W_K$
is given by a $\ast$-$\Lambda$-algebra embedding $\alpha\colon F\to K$ and
an injective $\alpha$-semilinear map $\varepsilon\colon V_F\to W_K$
such that $W_K$ is spanned by $\im\varepsilon$ as a $K$-vector space and
$\sk{\varepsilon(v)}{\varepsilon(w)}=\alpha\bigl(\sk{v}{w}\bigr)$ for all $v$, $w\in V$;
in particular, $\varepsilon$ is an isomorphism of $V_F$ onto a two-sorted substructure of $W_K$.
A \emph{joint tensorial extension} of spaces ${V_i}_{F_i}$, $i\in\{0,1\}$,
is given by a pre-hermitian space $W_F=U_0\oplus^\perp U_1$
and tensorial embedding of ${V_i}_{F_i}$ into ${U_i}_F$ for $i\in\{0,1\}$.

\begin{lemma}\lab{jeck}
Let $F$, $F_0$, $F_1\in\mathcal{A}_\Lambda$, let $V_F$ be a pre-hermitian space,
and let ${V_0}_{F_0}$ and ${V_1}_{F_1}$ be finite-dimensional pre-hermitian spaces.
\begin{enumerate}
\item
If $\alpha_i$ and $\varepsilon_i$ define a tensorial embedding of ${V_i}_{F_i}$ into $V_F$, $i<2$,
then $\LEnd({V_i}_{F_i})$ embeds into $\LEnd(V_F)$ and $\mathbb{L}({V_i}_{F_i})$ embeds into $\mathbb{L}(V_F)$.
\item
If $V_F$ is a joint tensorial extension of ${V_0}_{F_0}$ and ${V_1}_{F_1}$,
then $\LEnd({V_0}_{F_0})\times\LEnd({V_1}_{F_1})$ embeds into $\LEnd(V_F)$
and $\mathbb{L}({V_0}_{F_0})\times\mathbb{L}({V_1}_{F_1})$ embeds into $\mathbb{L}(V_F)$.
\end{enumerate}
\end{lemma}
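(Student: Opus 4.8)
The plan is to deduce part~(ii) from part~(i) and to prove part~(i) by ``extension of scalars along $\alpha_i$''. Fix $i\in\{0,1\}$. A tensorial embedding of ${V_i}_{F_i}$ into $V_F$ requires $V_F$ to be $F$-spanned by $\varepsilon_i(V_i)$, and since $\dim V_i<\omega$ this forces $\dim_F V_F<\omega$; hence $\LEnd(V_F)=\End(V_F)$ and $\mathbb{L}(V_F)$ is a MIL by \Facts \ref{db}(i) and \ref{og}(iv), and likewise for ${V_i}_{F_i}$. The basic observation is that $\varepsilon_i$ carries any $F_i$-basis $b_1,\dots,b_n$ of $V_i$ to an $F$-basis of $V_F$: it spans by definition of a tensorial embedding, and it is $F$-independent because, with $c_1,\dots,c_n$ a dual basis of the $b_j$ in ${V_i}_{F_i}$, pairing $0=\sum_j\varepsilon_i(b_j)\lambda_j$ with $\varepsilon_i(c_k)$ on the right gives $\lambda_k^\ast=\sum_j\lambda_j^\ast\sk{\varepsilon_i(b_j)}{\varepsilon_i(c_k)}=\sum_j\lambda_j^\ast\alpha_i(\delta_{jk})=0$. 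In particular $\dim_F V_F=\dim_{F_i}V_i$, and an adapted-basis computation shows $X^\sharp\cap\varepsilon_i(V_i)=\varepsilon_i(X)$ for every $F_i$-subspace $X$ of $V_i$, where $X^\sharp$ denotes the $F$-span of $\varepsilon_i(X)$ in $V_F$.

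\emph{The ring part of~(i).} Send $\varphi\in\LEnd({V_i}_{F_i})$ to the unique $F$-endomorphism $\tilde\varphi$ of $V_F$ with $\tilde\varphi\circ\varepsilon_i=\varepsilon_i\circ\varphi$; this is well defined because $\varepsilon_i(V_i)$ spans $V_F$, and in the basis $\varepsilon_i(b_j)$ the matrix of $\tilde\varphi$ is $\alpha_i$ applied entrywise to that of $\varphi$. From the defining identity, $\varphi\mapsto\tilde\varphi$ is an injective ring homomorphism preserving $1$, and it preserves the $\Lambda$-action since $\alpha_i(\zeta 1_{F_i})=\zeta 1_F$. It preserves the involution: adjoints exist in both spaces by \Fact \ref{db}(i), and for $v,v'\in V_i$ one computes $\sk{\tilde\varphi(\varepsilon_i(v))}{\varepsilon_i(v')}=\alpha_i\bigl(\sk{\varphi(v)}{v'}_i\bigr)=\alpha_i\bigl(\sk{v}{\varphi^\ast(v')}_i\bigr)=\sk{\varepsilon_i(v)}{\widetilde{\varphi^\ast}(\varepsilon_i(v'))}$, which extends by sesquilinearity over the spanning set $\varepsilon_i(V_i)$ to $\tilde\varphi^{\,\ast}=\widetilde{\varphi^\ast}$. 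Thus $\LEnd({V_i}_{F_i})$ embeds into $\LEnd(V_F)$ as a $\ast$-$\Lambda$-algebra.

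\emph{The lattice part of~(i).} The map $\sharp\colon\mathbb{L}({V_i}_{F_i})\to\mathbb{L}(V_F)$, $X\mapsto X^\sharp$, sends $0$ to $0$, obviously preserves joins, and is injective by the identity $X^\sharp\cap\varepsilon_i(V_i)=\varepsilon_i(X)$. It preserves the involution: adapting an $F_i$-basis to $X$ gives $\dim_F X^\sharp=\dim_{F_i}X$, so in the finite-dimensional non-degenerate spaces $V_F$, $V_i$ (where $\perp$ reverses dimension) one has $\dim_F(X^\sharp)^\perp=\dim_F V_F-\dim_F X^\sharp=\dim_{F_i}V_i-\dim_{F_i}X=\dim_{F_i}X^{\perp_i}=\dim_F(X^{\perp_i})^\sharp$, while $(X^{\perp_i})^\sharp\subseteq(X^\sharp)^\perp$ follows from $\sk{\varepsilon_i(c)}{\varepsilon_i(x)}=\alpha_i(\sk{c}{x}_i)=0$ for $c\in X^{\perp_i}$, $x\in X$; hence the two subspaces coincide. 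By Lemma \ref{L:1}(i), $\sharp$ is an embedding of MILs. I expect this dimension-count argument for preservation of $\perp$ to be the only step that is not routine bookkeeping with semilinearity and the identity $\sk{\varepsilon_i(\cdot)}{\varepsilon_i(\cdot)}=\alpha_i\sk{\cdot}{\cdot}$.

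\emph{Part~(ii).} Write the joint tensorial extension as $V_F=U_0\oplus^\perp U_1$ with tensorial embeddings $\varepsilon_i\colon{V_i}_{F_i}\hookrightarrow{U_i}_F$. Each $U_i$ is non-degenerate (a vector of $U_i$ orthogonal to $U_i$ is orthogonal to both $U_0$ and $U_1$, hence to $V_F$, hence zero), finite-dimensional, and satisfies $U_1=U_0^\perp$, so $U_0\in\mathbb{O}(V_F)$. Applying part~(i) inside each ${U_i}_F$ gives embeddings $\LEnd({V_i}_{F_i})\hookrightarrow\LEnd({U_i}_F)$ and $\mathbb{L}({V_i}_{F_i})\hookrightarrow\mathbb{L}({U_i}_F)$. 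Now $\LEnd({U_0}_F)\times\LEnd({U_1}_F)$ embeds into $\LEnd(V_F)$ by \Fact \ref{db}(iii), and $\mathbb{L}({U_0}_F)\times\mathbb{L}({U_1}_F)$ embeds into $\mathbb{L}(V_F)$ via $(Y_0,Y_1)\mapsto Y_0\oplus Y_1$: the image is closed under meet and join because $U_0\cap U_1=0$, contains $0$ and $V$, and is closed under $\perp$ because $(Y_0\oplus Y_1)^\perp=(Y_0^{\perp_{U_0}}\oplus U_1)\cap(U_0\oplus Y_1^{\perp_{U_1}})=Y_0^{\perp_{U_0}}\oplus Y_1^{\perp_{U_1}}$, which simultaneously exhibits preservation of the involution. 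Composing the displayed embeddings yields both assertions of part~(ii).
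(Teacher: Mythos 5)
Your proof is correct, and for the ring embedding in (i) it coincides with the paper's: push a dual pair of bases through $\varepsilon_i$, define $\xi_i(\varphi)$ by $\xi_i(\varphi)\circ\varepsilon_i=\varepsilon_i\circ\varphi$, and check the adjoint identity on the spanning set. Where you genuinely diverge is in the lattice statements. For the subspace-lattice embedding in (i) the paper does not argue on subspaces at all: it transports the ring embedding through the regular-ring machinery, citing \Facts \ref{reri}(i), \ref{neu}(iv) and \ref{endreg}(ii)--(iii) (principal right ideals of $\LEnd$ correspond to subspaces, and an injective homomorphism of regular rings induces an embedding of MILs). Similarly in (ii) the paper obtains the lattice claim from the isomorphism $\mathbb{L}(R_0)\times\mathbb{L}(R_1)\cong\mathbb{L}(R_0\times R_1)$ of \Fact \ref{neu}(iii) together with Proposition \ref{Lrep}, rather than writing down $(Y_0,Y_1)\mapsto Y_0\oplus Y_1$. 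Your direct route --- $X\mapsto X^\sharp$ with injectivity from $X^\sharp\cap\varepsilon_i(V_i)=\varepsilon_i(X)$ and preservation of $\perp$ by the inclusion $(X^{\perp_i})^\sharp\subseteq(X^\sharp)^\perp$ plus a dimension count in the non-degenerate finite-dimensional setting, and likewise the explicit computation $(Y_0\oplus Y_1)^\perp=Y_0^{\perp_{U_0}}\oplus Y_1^{\perp_{U_1}}$ --- is self-contained at the level of subspace lattices and does not presuppose the correspondence between $\mathbb{L}(V_F)$ and $\mathbb{L}(\LEnd(V_F))$; the paper's route is shorter given that this correspondence is already established and keeps the lattice assertion formally parallel to the ring one. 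Both are sound; your appeal to Lemma \ref{L:1}(i) to reduce the homomorphism check to joins, $0$ and the involution is exactly the intended use of that lemma, and your reduction of (ii) to (i) via $U_0\in\mathbb{O}(V_F)$ and \Fact \ref{db}(iii) matches the paper's construction of the product embedding.
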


\begin{proof}
(i) In view of \Fact \ref{db}(i),
${V_i}_{F_i}$ has a dual pair $\{v_1,\ldots,v_n\}$, $\{w_1,\ldots,w_n\}$ of bases;
applying $\varepsilon_i$, one obtains such a pair for $V_F$.
Indeed, $V_F$ is obviously spanned by both, $\{\varepsilon_i(v_1),\ldots,\varepsilon_i(v_n)\}$
and $\{\varepsilon_i(w_1),\ldots,\varepsilon_i(w_n)\}$.
Suppose that $\Sigma_{j=1}^n\varepsilon_i(v_j)\lambda_j=0$ for some $\lambda_1$, \ldots, $\lambda_n\in F$.
Then for any $k\in\{1,\ldots,n\}$, one gets
\[
0=\sk{0}{w_k}=\Sk{\Sigma_{j=1}^n\varepsilon_i(v_j)\lambda_j}{\varepsilon_i(w_k)}=
\Sigma_{j=1}^n\alpha_i\bigl(\sk{v_j}{w_k}\bigr)\lambda_j=\lambda_k,
\]
whence $\{\varepsilon_i(v_1),\ldots,\varepsilon_i(v_n)\}$ is a basis of $V_F$. Similarly,
$\{\varepsilon_i(w_1),\ldots,\varepsilon_i(w_n)\}$ is a basis of $V_F$.

For $\varphi\in\LEnd({V_i}_{F_i})$, let $\xi_i(\varphi)$ be the $F$-linear map on $V$ defined by
$\xi_i(\varphi)\colon\varepsilon_i(v_j)\mapsto\varepsilon_i\bigl(\varphi(v_j)\bigr)$ for all $j\in\{1,\ldots,n\}$.
Clearly, $\xi_i$ is a $\Lambda$-algebra embedding of $\LEnd({V_i}_{F_i})$ into $\LEnd(V_F)$.
Moreover, for any $j$, $k\in\{1,\ldots,n\}$, one has
\begin{align*}
&\Sk{\varepsilon_i(v_j)}{\xi_i(\varphi^\ast)\varepsilon_i(v_k)}=
\Sk{\varepsilon_i(v_j)}{\varepsilon_i\varphi^\ast(v_k)}=
\alpha\bigl(\sk{v_j}{\varphi^\ast(v_k)}\bigr)=\alpha\bigl(\sk{\varphi(v_j)}{v_k}\bigr)=\\
&=\Sk{\varepsilon_i\varphi(v_j)}{\varepsilon_i(v_k)}=
\Sk{\xi_i(\varphi)\varepsilon_i(v_j)}{\varepsilon_i(v_k)},
\end{align*}
whence $\xi_i(\varphi^\ast)=\xi_i(\varphi)^\ast$ by \Fact \ref{db}(ii).
For the claim about polarity lattices, apply \Facts \ref{reri}(i), \ref{neu}(iv), and \ref{endreg}(ii)-(iii).

(ii) As $V_F=U_0\oplus^\perp U_1$, by (i), there are $\ast$-$\Lambda$-algebra embeddings
\[
\xi_i\colon\LEnd({V_i}_{F_i})\to\LEnd({U_i}_F),\quad i\in\{0,1\}.
\]
Thus there is a unique embedding
\[
\xi\colon\LEnd({V_0}_{F_0})\times\LEnd({V_1}_{F_1})\to\LEnd(V_F)
\]
such that $\xi(\varphi_0,\varphi_1)|_{U_i}=\xi_i(\varphi_i)$ for $i\in\{0,1\}$.
By \Facts \ref{neu}(iii), \ref{db}(i), and \ref{Lrep}(ii),
\[
\mathbb{L}(V_{0F_0})\times\mathbb{L}(V_{1F_1})\cong
\mathbb{L}\bigl(\LEnd(V_{0F0})\bigr)\times\mathbb{L}\bigl(\LEnd(V_{1F_1})\bigr)\cong
\mathbb{L}\bigl(\LEnd(V_{0F_0})\times\LEnd(V_{1F_1})\bigr).
\]
By (i) and Proposition \ref{Lrep}(i),
the latter admits a faithful representation in $V_F$.
\end{proof}

\begin{theorem}\lab{flo}
Let $\mathcal{S}$ be a class of pre-hermitian spaces over $\Lambda$. Then
\begin{enumerate}
\item
$\mathcal{L}\bigl({\sf S}_{1{\sf q}}{\sf P}_{\sf u}(\mathcal{S})\bigr)=
\mathcal{L}\bigl({\sf S}{\sf P}_{\sf u}{\sf I}_{\sf s}(\mathcal{S})\bigr)=
{\sf W}_{\exists}\bigl(\mathcal{L}(\mathcal{S})\bigr)=
{\sf W}_{\exists}\bigl(\mathbb{L}(V_{F})\mid V_F\in{\sf S}_{1 {\sf f}}(\mathcal{S})\bigr)$;
\item
$\mathcal{R}\bigr({\sf S}_{1{\sf q}}{\sf P}_{\sf u}(\mathcal{S})\bigr)=
\mathcal{R}\bigl({\sf S}{\sf P}_{\sf u}{\sf I}_{\sf s}(\mathcal{S})\bigr)=
{\sf W}_\exists\bigl(\mathcal{R}(\mathcal{S})\bigr)=
{\sf W}_\exists\bigl(\LEnd(V_{F})\mid V_F\in{\sf S}_{1 {\sf f}}(\mathcal{S})\bigr)$.
\end{enumerate}
In particular, if the class $\mathcal{S}$ is a semivariety then the classes
$\mathcal{L}(\mathcal{S})=\mathcal{L}\bigl({\sf S}{\sf P}_{\sf u}{\sf I}_{\sf s}(\mathcal{S})\bigr)$
and $\mathcal{R}(\mathcal{S})=\mathcal{R}\bigl({\sf S}{ P}_{\sf u}{\sf I}_{\sf s}(\mathcal{S})\bigr)$
are $\exists$-semivarieties generated by their
strictly simple finite-dimensional or artinian members, respectively.
\end{theorem}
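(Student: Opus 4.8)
The plan is to run the lattice chain (i) and the ring chain (ii) in parallel, proving that each of the four classes in (i) coincides with ${\sf W}_\exists(\mathcal{G})$, where $\mathcal{G}=\{\mathbb{L}(V_F)\mid V_F\in{\sf S}_{1{\sf f}}(\mathcal{S})\}$; the ring chain is obtained by the same arguments, reading $\LEnd$ for $\mathbb{L}$, the regular algebra $\hat{J}(W_K)$ (cf.\ \Fact \ref{atex2}) for $\mathbb{L}(W_K)_{\sf f}$, \Facts \ref{endreg}, \ref{atex2}, \ref{P:iso} for \Facts \ref{gal}, \ref{og}, \ref{atrepc}, and using the ring halves of Theorems \ref{findim}, \ref{homlat} and of Lemma \ref{ultra}. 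Two bookkeeping facts about the space operators are used and are routine: (a) a subquotient of a subquotient is a subquotient, so ${\sf S}_{1{\sf q}}{\sf S}_{1{\sf q}}\subseteq{\sf S}_{1{\sf q}}$ (and ${\sf S}_{1{\sf f}}{\sf S}_{1{\sf q}}={\sf S}_{1{\sf f}}$, Lemma \ref{L:S}); (b) closedness of a subspace and the radical are preserved under ultraproducts, so ${\sf P}_{\sf u}{\sf S}_{1{\sf q}}\subseteq{\sf S}_{1{\sf q}}{\sf P}_{\sf u}$; hence ${\sf S}_{1{\sf q}}{\sf P}_{\sf u}$ is idempotent, and the least universal class ${\sf S}{\sf P}_{\sf u}{\sf I}_{\sf s}(\mathcal{S})$ over $\mathcal{S}$ is closed under ${\sf S}_{1{\sf q}}$ as well.

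First I would establish ${\sf W}_\exists(\mathcal{G})\subseteq\mathcal{L}(\mathcal{T})$ for $\mathcal{T}={\sf S}_{1{\sf q}}{\sf P}_{\sf u}(\mathcal{S})$ and for $\mathcal{T}={\sf S}{\sf P}_{\sf u}{\sf I}_{\sf s}(\mathcal{S})$. For each such $\mathcal{T}$ one checks that $\mathcal{L}(\mathcal{T})$ is an $\exists$-semivariety: closure under ${\sf S}_\exists$ is the restriction of a faithful representation to a complemented [regular] subalgebra; closure under ${\sf P}_{\sf u}$ is Lemma \ref{ultra}(i),(ii) together with ${\sf P}_{\sf u}(\mathcal{T})\subseteq\mathcal{T}$; closure under ${\sf H}$ follows from Theorem \ref{homlat}, since a homomorphic image is faithfully represented in a subquotient $U/\rad U$ of an ultrapower of a member of $\mathcal{T}$, which by \Fact \ref{sq} is a non-degenerate two-sorted substructure of that ultrapower, hence — using (a), (b) for the first $\mathcal{T}$, and closure under ${\sf S}$ for the second — again lies in $\mathcal{T}$. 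Since each $\mathbb{L}(V_F)$ with $V_F\in{\sf S}_{1{\sf f}}(\mathcal{S})$ carries the identity faithful representation in $V_F$, a two-sorted substructure of a member of $\mathcal{S}$, we get $\mathcal{G}\subseteq\mathcal{L}(\mathcal{T})$ and the two inclusions follow. One also records ${\sf W}_\exists(\mathcal{G})\subseteq{\sf W}_\exists(\mathcal{L}(\mathcal{S}))$: for $V_F\in{\sf S}_{1{\sf f}}(\mathcal{S})$ a finite-dimensional non-degenerate subspace of $W_K\in\mathcal{S}$, the Galois sublattice $[0,V_F]\cup[V_F^\perp,W]\cong\mathbb{L}(V_F)\times\textbf{2}$ of $\mathbb{L}(W_K)_{\sf f}\in\mathcal{L}(\mathcal{S})$ (\Facts \ref{gal}, \ref{og}) has $\mathbb{L}(V_F)$ as a homomorphic image; the ring analogue uses \Fact \ref{endreg}(iv) and $\hat{J}(W_K)\in\mathcal{R}(\mathcal{S})$.

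For the reverse inclusions, let a CMIL $L$ have a faithful representation in $V_F$; Theorem \ref{findim} gives $L\in{\sf W}_\exists(\mathbb{L}(U_F)\mid U\in\mathbb{O}(V_F))$, with each $U\in{\sf S}_{1{\sf f}}(V_F)$. If $V_F\in{\sf S}_{1{\sf q}}{\sf P}_{\sf u}(\mathcal{S})$, then $U\in{\sf S}_{1{\sf f}}{\sf P}_{\sf u}(\mathcal{S})$ by Lemma \ref{L:S}, and by Lemma \ref{ultra}(iii) $U$ is an ultraproduct of members of ${\sf S}_{1{\sf f}}(\mathcal{S})$ with $\mathbb{L}(U_F)$ the corresponding ultraproduct of members of $\mathcal{G}$, so $\mathbb{L}(U_F)\in{\sf P}_{\sf u}(\mathcal{G})$ and $L\in{\sf W}_\exists(\mathcal{G})$; this gives $\mathcal{L}({\sf S}_{1{\sf q}}{\sf P}_{\sf u}(\mathcal{S}))\subseteq{\sf W}_\exists(\mathcal{G})$, hence $\mathcal{L}(\mathcal{S})\subseteq{\sf W}_\exists(\mathcal{G})$ and, with the preceding paragraph, ${\sf W}_\exists(\mathcal{L}(\mathcal{S}))={\sf W}_\exists(\mathcal{G})$. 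If $V_F\in{\sf S}{\sf P}_{\sf u}{\sf I}_{\sf s}(\mathcal{S})$, then $U_F$ may sit over a smaller scalar field, but Lemma \ref{jeck}(i) embeds it tensorially into a finite-dimensional non-degenerate subspace over the larger field, so $\mathbb{L}(U_F)$ Galois-embeds into a lattice lying in ${\sf P}_{\sf u}(\mathcal{G})$ by the case just done (scaling not affecting $\mathbb{L}$, and $\mathbb{L}$ commuting with ultraproducts of finite-dimensional spaces, Lemma \ref{ultra}(iii)); thus $\mathbb{L}(U_F)\in{\sf S}_\exists{\sf P}_{\sf u}(\mathcal{G})$ and again $L\in{\sf W}_\exists(\mathcal{G})$, so $\mathcal{L}({\sf S}{\sf P}_{\sf u}{\sf I}_{\sf s}(\mathcal{S}))\subseteq{\sf W}_\exists(\mathcal{G})$. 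Hence all four classes in (i) equal ${\sf W}_\exists(\mathcal{G})$; (ii) is entirely parallel, using $\hat{J}(V_F)\in\mathcal{R}(\mathcal{S})$ for $V_F\in\mathcal{S}$ and, in place of the interval decomposition, the embedding $\LEnd(U_F)\times F\hookrightarrow\hat{J}(W_F)$ for $U\subset W\in\mathbb{O}(V_F)$ (\Facts \ref{endreg}(iv), \ref{atex2}).

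For the concluding statement, let $\mathcal{S}$ be a semivariety, so ${\sf S}_{1{\sf f}}(\mathcal{S})\subseteq\mathcal{S}$ and ${\sf P}_{\sf u}(\mathcal{S})\subseteq\mathcal{S}$; then $\mathcal{G}\subseteq\mathcal{L}(\mathcal{S})$ at once, and it remains to show that $\mathcal{L}(\mathcal{S})$ is an $\exists$-semivariety, for then $\mathcal{L}(\mathcal{S})={\sf W}_\exists(\mathcal{G})=\mathcal{L}({\sf S}{\sf P}_{\sf u}{\sf I}_{\sf s}(\mathcal{S}))$ by (i), and the ``generated by its strictly simple finite-dimensional [artinian] members'' clause follows because ${\sf W}_\exists(\mathcal{G})$ is generated by $\mathcal{G}$ while, by \Fact \ref{atrepc} [resp.\ \Fact \ref{P:iso}], these members of dimension $>2$ are exactly the $\mathbb{L}(V_F)$ [resp.\ $\LEnd(V_F)$] with $\dim V_F<\omega$, the finitely many low-dimensional cases being disposed of directly. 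Closure of $\mathcal{L}(\mathcal{S})$ under ${\sf S}_\exists$ and ${\sf P}_{\sf u}$ is as above. The main obstacle is closure under ${\sf H}$: by Theorem \ref{homlat} a homomorphic image is faithfully represented in a subquotient $W=(U/\rad U)_{\hat{F}}$ of an ultrapower $\hat{V}_{\hat{F}}\in{\sf P}_{\sf u}(\mathcal{S})\subseteq\mathcal{S}$, and although $W$ itself need not lie in $\mathcal{S}$, it is by \Fact \ref{du} the directed union of its finite-dimensional orthogonal summands, all lying in ${\sf S}_{1{\sf f}}(\mathcal{S})\subseteq\mathcal{S}$. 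The plan is: (1) $\mathbb{L}(W)_{\sf f}\in\mathcal{L}(\mathcal{S})$, since by \Fact \ref{og}(i) it is the directed union of the $[0,X]\cup[X^\perp,W]$, each embedding into $\mathbb{L}(Y)$ for a larger summand $Y\in\mathcal{S}$ and hence lying in $\mathcal{L}(\mathcal{S})$, and a directed union of members of $\mathcal{L}(\mathcal{S})$ embeds into an ultraproduct of them (\cite[Theorem 1.2.12(1)]{gorb}), which by Lemma \ref{ultra}(i) and ${\sf P}_{\sf u}(\mathcal{S})\subseteq\mathcal{S}$ is again in $\mathcal{L}(\mathcal{S})$; (2) the image of the homomorphic image is a Galois sublattice of $\mathbb{L}(W)$ whose finite part is a complemented Galois sublattice of $\mathbb{L}(W)_{\sf f}$, so one passes to its atomic hull $M$ (\Fact \ref{atex}), with $M_{\sf f}=\mathbb{L}(W)_{\sf f}\in\mathcal{L}(\mathcal{S})$, and uses the proof of \cite[Theorem 16.3]{he4} to bring $M$, hence the homomorphic image, back into $\mathcal{L}(\mathcal{S})$. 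The delicate point I expect to cost the most effort is step (2) — upgrading ``$M_{\sf f}\in\mathcal{L}(\mathcal{S})$'' to ``$M\in\mathcal{L}(\mathcal{S})$'', i.e.\ showing that a strictly subdirectly irreducible atomic CMIL whose finite part is faithfully representable in $\mathcal{S}$ is itself so representable; this calls for a careful directed-union-of-representations argument, splitting off at each finite stage $X$ the $\textbf{2}$-factor coming from $X^\perp$ and absorbing it via \Fact \ref{og}(i), and using \Fact \ref{hs1}(vi) to reduce to finitely generated complemented subalgebras. The ring case is the exact analogue, with \Fact \ref{og}(i), \Fact \ref{atex}, \cite[Theorem 16.3]{he4}, and $\mathbb{L}(W)_{\sf f}$ replaced by \Fact \ref{endreg}(iv), \Fact \ref{atex2}, \cite[Theorem 16]{PartI}, and $\hat{J}(W)$.
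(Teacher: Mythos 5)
Your treatment of the four displayed equalities in (i) and (ii) is essentially the paper's own argument: a cycle of inclusions through ${\sf W}_\exists\bigl(\mathbb{L}(V_F)\mid V_F\in{\sf S}_{1\sf f}(\mathcal{S})\bigr)$ driven by Theorems \ref{findim} and \ref{homlat} and Lemmas \ref{ultra}, \ref{jeck}, \ref{L:S}. The one organizational difference is that you prove the inclusions ${\sf W}_\exists(\mathcal{G})\subseteq\mathcal{L}(\mathcal{T})$ by showing each $\mathcal{L}(\mathcal{T})$ is itself an $\exists$-semivariety, which forces you to establish the commutation facts (a) and (b); the paper avoids them by writing ${\sf W}_\exists={\sf H}{\sf S}_\exists{\sf P}_{\sf u}$ and applying each operator exactly once before closing the cycle. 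Your facts (a) and (b) are true, but (a) is not quite routine: one must verify that a closed subspace of $U{\slash}\rad U$ pulls back to a closed subspace $Y$ of $V$ with the same subquotient, i.e. that $(Y^\perp\cap U)^\perp\cap U=Y$ forces $Y^{\perp\perp}=Y$; this should be written out.

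The genuine gap is step (2) of your plan for the ``in particular'' clause. You need to pass from ``$M_{\sf f}\in\mathcal{L}(\mathcal{S})$'' to ``$M\in\mathcal{L}(\mathcal{S})$'', but the tools you cite do not deliver this. The proof of \cite[Theorem 16.3]{he4} yields $M\in{\sf W}_\exists(M_{\sf f})$, hence only $M\in{\sf W}_\exists\bigl(\mathcal{L}(\mathcal{S})\bigr)$ --- which suffices for the displayed equalities but not for membership in $\mathcal{L}(\mathcal{S})$ unless $\mathcal{L}(\mathcal{S})$ is already known to be closed under ${\sf W}_\exists$, which is precisely what you are trying to prove; for the same reason \Fact \ref{hs1}(vi) cannot be invoked here, since it reduces membership in a class of the form ${\sf W}_\exists(\mathcal{C})$, not in $\mathcal{L}(\mathcal{S})$, to finitely generated subalgebras. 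Nor does the direct route work as sketched: given a faithful representation $\eta$ of $M_{\sf f}$ in some $V''\in\mathcal{S}$, the atomwise extension $a\mapsto\sum\{\eta(p)\mid p\le a,\ p \text{ an atom}\}$ need not preserve the involution. Already for $a=1$ one would need $\sum_p\eta(p)=\eta(0)^\perp=V''$, which fails whenever the images of the atoms do not span $V''$ --- and nothing in the hypothesis ``$M_{\sf f}\in\mathcal{L}(\mathcal{S})$'' guarantees a representation with spanning atom images (e.g. represent $\mathbb{L}(V_F)_{\sf f}$ in $V\oplus^\perp V'$ by $X\mapsto X$ on the finite-dimensional part). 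So the inclusion $\mathcal{L}\bigl({\sf S}_{1\sf q}(\mathcal{S})\bigr)\subseteq\mathcal{L}(\mathcal{S})$ for a semivariety $\mathcal{S}$, which is what the ``in particular'' clause amounts to after (i), is not established by your argument; you need either a genuinely new extension-of-representations lemma (with control over the span and the induced form on it) or a different derivation of this clause from the displayed equalities.
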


\begin{proof}
The proofs of (i) and (ii) follow the same lines.
We prove (ii).

The fact that ${\sf S}_\exists{\sf P}_{\sf u}\bigl(\mathcal{R}(\mathcal{S})\bigr)\subseteq
\mathcal{R}\bigl({\sf P}_{\sf u}(\mathcal{S})\bigr)$ follows immediately from Lemma \ref{ultra}(iii).
Then ${\sf W}_\exists\bigl(\mathcal{R}(\mathcal{S})\bigr)\subseteq
\mathcal{R}\bigl({\sf S}_{1{\sf q}}{\sf P}_{\sf u}(\mathcal{S})\bigr)$ by Theorem \ref{homlat}.
By Theorem \ref{findim},
$\mathcal{R}\bigl({\sf S}_{1{\sf q}}{\sf P}_{\sf u}(\mathcal{S})\bigr)\subseteq
{\sf W}_\exists\bigl(\LEnd(V_F)\mid V_F\in{\sf S}_{1 {\sf f}}{\sf S}_{1{\sf q}}{\sf P}_{\sf u}(\mathcal{S})\bigr)$.
By Lemmas \ref{ultra}(iii) and \ref{L:S}, for any
$V_F\in{\sf S}_{1 {\sf f}}{\sf S}_{1{\sf q}}{\sf P}_{\sf u}(\mathcal{S})={\sf S}_{1 {\sf f}}{\sf P}_{\sf u}(\mathcal{S})$,
we have $V_F\in{\sf P}_{\sf u}{\sf S}_{1 {\sf f}}(\mathcal{S})$
and $\LEnd(V_F)\in{\sf P}_{\sf u}\bigl(\LEnd(W_K)\mid W_K\in{\sf S}_{1 {\sf f}}(\mathcal{S})\bigr)$.
It follows that
\[
{\sf W}_\exists\bigl(\mathcal{R}(\mathcal{S})\bigr)\subseteq
\mathcal{R}\bigl({\sf S}_{1{\sf q}}{\sf P}_{\sf u}(\mathcal{S})\bigr)\subseteq 
{\sf W}_\exists\bigl(\LEnd(W_K)\mid W_K\in{\sf S}_{1 {\sf f}}(\mathcal{S})\bigr)\subseteq
{\sf W}_\exists\bigl(\mathcal{R}(\mathcal{S})\bigr).
\]
Now, consider $R\in\mathcal{R}\bigl({\sf S}{\sf P}_{\sf u}(\mathcal{S})\bigr)$;
that is, $R$ is represented in a $2$-sorted substructure $W_K$ of some $V_F\in{\sf P}_{\sf u}(\mathcal{S})$.
By Theorem \ref{findim}, we have
$R\in{\sf W}_\exists\bigl(\LEnd(U_K)\mid U_K\in{\sf S}_{1 {\sf f}}(W_K)\bigr)$.
Let $U^\prime_F$ denote the $F$-subspace of $V_F$ spanned by $U$.
By Lemma \ref{jeck}(i), $\LEnd(U_K)\in {\sf S}_\exists\bigl(\LEnd(U^\prime_F)\bigr)$.
Thus, $R\in{\sf W}_\exists\bigl(\mathcal{R}(\mathcal{S})\bigr)$.
Hence
\[
\mathcal{R}\bigl({\sf S}{\sf P}_{\sf u}{\sf I}_{\sf s}(\mathcal{S})\bigr)\subseteq
\mathcal{R}\bigl({\sf I}_{\sf s}{\sf S}{\sf P}_{\sf u}(\mathcal{S})\bigr)
=\mathcal{R}\bigl({\sf S}{\sf P}_{\sf u}(\mathcal{S})\bigr)\subseteq
{\sf W}_\exists\bigl(\mathcal{R}(\mathcal{S})\bigr)=\mathcal{R}\bigl({\sf S}_{1{\sf q}}{\sf P}_{\sf u}(\mathcal{S})\bigr).
\]
The containment $\mathcal{R}\bigl({\sf S}_{1{\sf q}}{\sf P}_{\sf u}(\mathcal{S})\bigr)\subseteq
\mathcal{R}\bigl({\sf S}{\sf P}_{\sf u}{\sf I}_{\sf s}(\mathcal{S})\bigr)$ is trivial by Lemma \ref{L:S}.
\end{proof}

\noindent
More closure properties on $\mathcal{S}$ are needed if one intends to get a one-to-one correspondence
between classes of spaces and classes of structures in Theorem \ref{flo}.

\begin{definition}\lab{spread}
Let $V_F$, $W_K$ be pre-hermitian spaces over $\Lambda$,
$\dim V_F<\omega$, and let $\mathcal{S}$ be a class of pre-hermitian spaces over $\Lambda$.
\begin{enumerate}
\item
The sesquilinear space
$V_F$ is an $L$-\emph{spread} of $W_K$ if $\dim V_F>2$
and $\mathbb{L}(V_F)\in\mathcal{L}(W_K)$.
The class $\mathcal{S}$ is $L$-\emph{spread closed}, if it contains
all $L$-spreads of its members.
\item
The sesquilinear space
$V_F$ is an $R$-\emph{spread} of $W_K$ if $\LEnd(V_F)\in\mathcal{R}(W_K)$.
The class $\mathcal{S}$ is $R$-\emph{spread closed},
if it contains all $R$-spreads of its members.
\item
An $R$-[$L$-]spread closed universal class or a semivariety $\mathcal{S}$ is \emph{small},
if $\mathcal{S}$ coincides with the smallest $R$-[$L$-]spread closed universal class or a semivariety
which contains all members of $\mathcal{S}$ of dimension $n<\omega$ [of dimension $2<n<\omega$, respectively].
\end{enumerate}
\end{definition}

\xc{\noindent
The following statement follows directly from Definition \ref{spread}(i)-(ii).

\begin{lemma}\lab{LR-sp}
Let $\mathcal{S}$ be a class of pre-hermitian spaces over $\Lambda$.
\begin{enumerate}
\item
If $\mathcal{S}$ is $L$-spread closed and $V_F\in{\sf S}_{1 {\sf f}}(\mathcal{S})$ with $\dim V_F>2$, then
$V_F\in\mathcal{S}$.
\item
If $\mathcal{S}$ is $R$-spread closed, then ${\sf S}_{1 {\sf f}}(\mathcal{S})\subseteq\mathcal{S}$.
\end{enumerate}
\end{lemma}}

\begin{example}
Consider the class $\mathcal{S}$ of all anisotropic hermitian spaces,
where $F\in{\sf SP}_{\sf u}(\mathbb{Q})$;
in particular, $F\models\forall x\, [x^2\neq 2]$ and $\mathcal{S}$ is a universal class which does not contain $K^3_K$ with the canonical scalar product, where $K=\mathbb{Q}(\sqrt{2})$.
Nonetheless, $K^{3\times 3}$ and whence $\mathbb{L}(K^{3\times 3})$
is representable within $\mathbb{Q}^6_{\mathbb{Q}}\in\mathcal{S}$ by
\[
a+b\sqrt{2}\mapsto a\left(
\begin{matrix}
1&0\\
0&1
\end{matrix}
\right)+b\left(
\begin{matrix}
1&1\\
1&-1
\end{matrix}
\right);\quad\text{where}\ a,b\in\mathbb{Q},
\] 
which yields a $\ast$-ring embedding of $K$ into $\mathbb{Q}\ma{2}$
thus giving rise to an embedding of $K^{3\times 3}$ into $(\mathbb{Q}\ma{2})\ma{3}$.
In the sense of Definition \ref{spread},
$K^3_K$ is an $L$-spread and an $R$-spread of $\mathbb{Q}^6_{\mathbb{Q}}$.
\end{example}

\begin{theorem}\lab{11}
\hfill
\begin{enumerate}
\item
For any $\exists$-semivariety $\mathcal{V}$ of
Arguesian CMILs generated by its strictly simple members of finite dimension at least $3$,
there is a small $L$-spread closed semivariety $[$universal class$]$ $\mathcal{S}$
of pre-hermitian spaces over $\mathbb{Z}$ such that $\mathcal{V}=\mathcal{L}(\mathcal{S})$.
Moreover, the class of members of $\mathcal{S}$ of dimension at least $3$ is unique.
\item
For any $\exists$-semivariety $\mathcal{V}\subseteq\mathcal{R}_{\Lambda}$
generated by its strictly simple artinian members,
there is a small $R$-spread closed semivariety $[$universal class$]$ $\mathcal{S}$
of pre-hermitian spaces over $\Lambda$ such that $\mathcal{V}=\mathcal{R}(\mathcal{S})$.
Moreover, such a class $\mathcal{S}$ is unique.
\end{enumerate}
The class $\mathcal{S}$ above is anisotropic, if $\mathcal{V}$ consists of MOLs or $\mathcal{V}\subseteq\mathcal{R}^\ast_{\Lambda}$.
\end{theorem}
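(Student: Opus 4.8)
The plan is to treat parts (i) and (ii) by a single scheme; I describe it for regular $\ast$-rings (ii), the CMIL case being identical after replacing $\LEnd$ by $\mathbb{L}$, $R$-spreads by $L$-spreads, \Fact \ref{P:iso} by \Fact \ref{atrepc}, $\Lambda$ by $\mathbb{Z}$, and restricting throughout to dimension $>2$. Given $\mathcal{V}$, put
\[
\mathcal{S}_0=\{V_F\mid F\in\mathcal{F}_\Lambda,\ \dim V_F<\omega,\ \LEnd(V_F)\in\mathcal{V}\},
\]
and let $\mathcal{S}$ be the least $R$-spread closed semivariety [universal class] containing $\mathcal{S}_0$ (which exists as the intersection of all such classes). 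By \Fact \ref{P:iso} the strictly simple artinian members of $\mathcal{V}$ are, up to isomorphism, precisely the $\LEnd(V_F)$ with $V_F\in\mathcal{S}_0$; each of these has its tautological faithful representation in the corresponding $V_F\in\mathcal{S}_0\subseteq\mathcal{S}$, hence lies in $\mathcal{R}(\mathcal{S})$. As $\mathcal{S}$ is in particular a semivariety, $\mathcal{R}(\mathcal{S})$ is an $\exists$-semivariety by Theorem \ref{flo}; since $\mathcal{V}$ is generated as an $\exists$-semivariety by those algebras, $\mathcal{V}\subseteq\mathcal{R}(\mathcal{S})$.

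For the reverse inclusion, Theorem \ref{flo}(ii) gives $\mathcal{R}(\mathcal{S})={\sf W}_\exists(\LEnd(V_F)\mid V_F\in{\sf S}_{1 {\sf f}}(\mathcal{S}))$ with ${\sf S}_{1 {\sf f}}(\mathcal{S})\subseteq\mathcal{S}$, so it is enough to show that every finite-dimensional member of $\mathcal{S}$ lies in $\mathcal{S}_0$. I would obtain this from $\mathcal{S}\subseteq\mathcal{S}^\circ$, where $\mathcal{S}^\circ$ is the class of all $V_F$ whose non-degenerate finite-dimensional subspaces all lie in $\mathcal{S}_0$; by minimality of $\mathcal{S}$ this reduces to checking that $\mathcal{S}^\circ$ is an $R$-spread closed semivariety [universal class] containing $\mathcal{S}_0$. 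Namely: $\mathcal{S}_0\subseteq\mathcal{S}^\circ$ because for $W_K\in\mathcal{S}_0$ and a non-degenerate subspace $U\leq W_K$ the algebra $\LEnd(U_K)$ is a homomorphic image of the regular $\ast$-subalgebra $\LEnd(U_K)\times\LEnd(U_K^\perp)$ of $\LEnd(W_K)\in\mathcal{V}$ (\Fact \ref{db}(iii)), so $\LEnd(U_K)\in{\sf H}{\sf S}_\exists(\mathcal{V})\subseteq\mathcal{V}$; closure under ${\sf P}_{\sf u}$ because by Lemma \ref{ultra}(iii) a non-degenerate $n$-dimensional subspace of an ultraproduct of members of $\mathcal{S}^\circ$ is an ultraproduct of non-degenerate $n$-dimensional subspaces of those members, with $\LEnd$ commuting with this ultraproduct; closure under scaling because $\LEnd$ is a similitude invariant (\Fact \ref{symm2}) and scaling leaves orthogonality and non-degeneracy unchanged; closure under $R$-spreads because an $R$-spread $V_F$ of $W_K\in\mathcal{S}^\circ$ has $\LEnd(V_F)$ faithfully represented in $W_K$, whence $\LEnd(V_F)\in{\sf W}_\exists(\LEnd(U_K)\mid U\in\mathbb{O}(W_K))\subseteq\mathcal{V}$ by Theorem \ref{findim} (each $U\in\mathbb{O}(W_K)$ being a non-degenerate finite-dimensional subspace of $W_K\in\mathcal{S}^\circ$); and, in the universal-class version, closure under ${\sf S}$ because a non-degenerate finite-dimensional $F'$-subspace $U_{F'}$ of a two-sorted substructure $V'_{F'}\leq V_F$ spans a finite-dimensional $F$-subspace $(UF)_F$ which is still non-degenerate (the Gram matrix of an $F'$-basis of $U_{F'}$ remains invertible over $F$), the inclusion being a tensorial embedding, so $\LEnd(U_{F'})$ embeds into $\LEnd((UF)_F)\in\mathcal{V}$ by Lemma \ref{jeck}(i), giving $\LEnd(U_{F'})\in{\sf S}_\exists(\mathcal{V})\subseteq\mathcal{V}$. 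This proves $\mathcal{R}(\mathcal{S})=\mathcal{V}$; moreover the finite-dimensional members of $\mathcal{S}$ then coincide with $\mathcal{S}_0$, so $\mathcal{S}$ is small in the sense of Definition \ref{spread}(iii).

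For uniqueness, let $\mathcal{S}'$ be any small $R$-spread closed semivariety [universal class] with $\mathcal{R}(\mathcal{S}')=\mathcal{V}$. A finite-dimensional $V_F\in\mathcal{S}'$ has $\LEnd(V_F)\in\mathcal{R}(\mathcal{S}')=\mathcal{V}$, so $V_F\in\mathcal{S}_0$; conversely, for $V_F\in\mathcal{S}_0$ the algebra $\LEnd(V_F)\in\mathcal{V}=\mathcal{R}(\mathcal{S}')$ is faithfully representable in some $W_K\in\mathcal{S}'$, i.e. $V_F$ is an $R$-spread of $W_K$, hence $V_F\in\mathcal{S}'$. Thus $\mathcal{S}'$ and $\mathcal{S}$ have the same finite-dimensional members, and smallness forces $\mathcal{S}'=\mathcal{S}$. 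In the lattice case the same computation pins down only the members of dimension $>2$, since $\mathbb{L}(V_F)$ with $\dim V_F\leq 2$ is not among the strictly simple Arguesian CMILs of dimension $\geq 3$ --- hence the weaker uniqueness assertion in (i). Finally, for the anisotropic addendum, if $\mathcal{V}\subseteq\mathcal{R}^\ast_\Lambda$ then every $\LEnd(V_F)\in\mathcal{V}$ is $\ast$-regular, so each $V_F\in\mathcal{S}_0$ is anisotropic by \Fact \ref{db}(ii), and likewise when $\mathcal{V}$ consists of MOLs by \Fact \ref{atrepc}; the class of all anisotropic pre-hermitian spaces over $\Lambda$ is closed under ${\sf S}$, ${\sf P}_{\sf u}$, ${\sf I}_{\sf s}$ (anisotropy being expressed by a universal sentence, cf. \Fact \ref{symm}) and is $R$-spread [$L$-spread] closed, because a regular algebra with a faithful representation in an anisotropic space is $\ast$-regular, so an $R$-spread of an anisotropic space is anisotropic by \Fact \ref{db}(ii) (for $L$-spreads use Lemma \ref{MOL}(ii) with \Fact \ref{atrepc}); hence $\mathcal{S}$ is anisotropic.

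The main obstacle is the inclusion $\mathcal{S}\subseteq\mathcal{S}^\circ$, i.e. verifying that none of the operations generating $\mathcal{S}$ from $\mathcal{S}_0$ produces a finite-dimensional space outside $\mathcal{S}_0$; within this, the genuinely delicate case is the ${\sf S}$-closure of $\mathcal{S}^\circ$, where a subspace finite-dimensional over a subfield may span a higher-dimensional subspace over the ambient field, so one must recognise the situation as a tensorial embedding (Lemma \ref{jeck}) in order to transport $\LEnd$-membership back into $\mathcal{V}$ --- exactly the phenomenon isolated by the $R$- and $L$-spread closure conditions and illustrated by the example preceding the theorem.
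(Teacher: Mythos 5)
Your proof is correct and follows essentially the same route as the paper's: the same generating class $\mathcal{S}_0=\mathcal{S}_{\mathcal{V}}$ via \Fact \ref{P:iso}, and the same key lemmas (\Fact \ref{db}(iii) for subspaces, Lemma \ref{ultra}(iii) for ultraproducts, Theorem \ref{findim} for spreads, Lemma \ref{jeck}(i) for two-sorted substructures) to show that the generating operators produce no new finite-dimensional spaces. The only difference is cosmetic: the paper runs a transfinite induction over the stages $\mathcal{G}_\alpha$ of the closure, whereas you verify directly that the invariant class $\mathcal{S}^\circ$ is closed under all the operators and invoke minimality --- a clean repackaging of the same argument.
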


\begin{remark}
If  $\mathcal{V}$ consists of MOLs (context of (i))
or $\mathcal{V} \subseteq  \mathcal{R}_\Lambda^\ast $
(context of (ii))
 it suffices to require that $\mathcal{V}$
is generated by its  simple members which are 
of finite dimension  respectively artinian and that, in context of (i),  $\mathcal{V}$ is not $2$-distributive.
Then, in context of  (i), $\mathcal{V}$ contains all MOLs of dimension $2$.
\end{remark}

\begin{proof}
(i)
Given an $\exists$-semivariety $\mathcal{V}\subseteq \mc{R}_\Lambda$ with all required properties,
let $\mathcal{K}_{\mathcal{V}}$ denote the class of strictly simple
artinian  members of $\mathcal{V}$.
By \Fact \ref{P:iso}, for any $R\in\mathcal{K}_{\mathcal{V}}$,
there is a pre-hermitian space $V_F$ over $\Lambda$ such that
 $R\cong\LEnd(V_F)$.
By $\mathcal{S}_{\mathcal{V}}$, we denote the class of spaces $V_F$ over $\Lambda$ 
such that $\LEnd(V_F)\in\mathcal{K}_{\mathcal{V}}$.

We put $\mathcal{G}_0={\sf S}_{1 {\sf f}}(\mathcal{S}_{\mathcal{V}})$.
For any ordinal $\alpha$, let $\mathcal{G}_{\alpha+1}$ be the union of
two classes: ${\sf P}_{\sf u}(\mathcal{G}_\alpha)$ and the class of all
$V_F\in{\sf S}_{1 {\sf f}}(V^\prime_F)$,
where $V^\prime_F$ is an $R$-spread of some $W_K\in\mathcal{G}_\alpha$.
Let also $\mathcal{G}_\alpha=\bigcup_{\beta<\alpha}\mathcal{G}_\beta$,
if $\alpha$ is a limit ordinal.

\setcounter{claim}{0}
\begin{claim}\lab{c-13}
${\sf S}_{1 {\sf f}}(\mathcal{G}_\alpha)\subseteq\mathcal{G}_\alpha$ and $\LEnd(V_F)\in\mathcal{V}$
for any $\alpha$ and $V_F\in\mathcal{G}_\alpha$ with $\dim V_F<\omega$.
\end{claim}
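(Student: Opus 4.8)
The plan is to prove Claim \ref{c-13} by transfinite induction on $\alpha$, proving the two assertions simultaneously. Both are genuinely about the behaviour of the chain $\mathcal{G}_\alpha$ under the two operations used in its construction, namely $\mathsf{P}_{\mathsf u}$ and passage to finite-dimensional subspaces of $R$-spreads, so I expect the induction step to split cleanly along those two cases. The base case $\alpha = 0$ is immediate: $\mathcal{G}_0 = \mathsf{S}_{1\mathsf f}(\mathcal{S}_{\mathcal V})$, so $\mathsf{S}_{1\mathsf f}(\mathcal{G}_0) = \mathsf{S}_{1\mathsf f}\mathsf{S}_{1\mathsf f}(\mathcal{S}_{\mathcal V}) = \mathsf{S}_{1\mathsf f}(\mathcal{S}_{\mathcal V}) = \mathcal{G}_0$ by Lemma \ref{L:S} (idempotency of $\mathsf{S}_{1\mathsf f}$), and for $V_F \in \mathcal{G}_0$ with $\dim V_F < \omega$ we have $V_F \in \mathsf{S}_{1\mathsf f}(W_K)$ for some $W_K$ with $\LEnd(W_K) \in \mathcal{K}_{\mathcal V} \subseteq \mathcal{V}$; then $\LEnd(V_F)$ embeds into $\LEnd(W_K)$ via a regular $\ast$-subalgebra construction (Lemma \ref{jeck}(i), or more directly Fact \ref{db}(iii) when $V_F$ is a genuine finite-dimensional orthogonal summand; in general one uses Fact \ref{du} to enlarge $V_F$ to some $U \in \mathbb{O}(W_K)$ and then Fact \ref{db}(iii)), so $\LEnd(V_F) \in \mathsf{S}_\exists(\mathcal{V}) \subseteq \mathcal{V}$ since $\LEnd(V_F)$ is regular.

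For the successor step, assume the claim holds for $\alpha$. A space in $\mathcal{G}_{\alpha+1}$ is either in $\mathsf{P}_{\mathsf u}(\mathcal{G}_\alpha)$ or lies in $\mathsf{S}_{1\mathsf f}(V'_F)$ for some $R$-spread $V'_F$ of a member $W_K \in \mathcal{G}_\alpha$. For the first type: if $V_F \in \mathsf{S}_{1\mathsf f}\mathsf{P}_{\mathsf u}(\mathcal{G}_\alpha)$, then by Lemma \ref{ultra}(iii) together with the inductive hypothesis $\mathsf{S}_{1\mathsf f}(\mathcal{G}_\alpha) \subseteq \mathcal{G}_\alpha$, the space $V_F$ is an ultraproduct of finite-dimensional subspaces of members of $\mathcal{G}_\alpha$, hence $V_F \in \mathsf{P}_{\mathsf u}\mathsf{S}_{1\mathsf f}(\mathcal{G}_\alpha) \subseteq \mathsf{P}_{\mathsf u}(\mathcal{G}_\alpha) \subseteq \mathcal{G}_{\alpha+1}$; and when $\dim V_F < \omega$, Lemma \ref{ultra}(iii) gives $\LEnd(V_F) \in \mathsf{P}_{\mathsf u}(\LEnd(U_i))$ where the $U_i$ are finite-dimensional subspaces of members of $\mathcal{G}_\alpha$, so $\LEnd(U_i) \in \mathcal{V}$ by the inductive hypothesis and $\LEnd(V_F) \in \mathsf{P}_{\mathsf u}(\mathcal{V}) \subseteq \mathcal{V}$. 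For the second type: $\mathsf{S}_{1\mathsf f}\mathsf{S}_{1\mathsf f}(V'_F) = \mathsf{S}_{1\mathsf f}(V'_F)$ by Lemma \ref{L:S}, so iterated finite-dimensional subspaces stay in $\mathcal{G}_{\alpha+1}$; and if $V_F \in \mathsf{S}_{1\mathsf f}(V'_F)$ with $\dim V_F < \omega$, then as above $\LEnd(V_F) \in \mathsf{S}_\exists(\LEnd(V'_F))$, while $\LEnd(V'_F) \in \mathcal{R}(W_K)$ by definition of $R$-spread — and here I would invoke that $\mathcal{R}(W_K) \subseteq \mathsf{W}_\exists(\LEnd(U_F) \mid U \in \mathbb{O}(W_K))$ via Theorem \ref{findim}, together with the inductive hypothesis applied to the finite-dimensional $U \in \mathbb{O}(W_K)$ (which lie in $\mathsf{S}_{1\mathsf f}(\mathcal{G}_\alpha) \subseteq \mathcal{G}_\alpha$), to conclude $\LEnd(V'_F) \in \mathsf{W}_\exists(\mathcal{V}) = \mathcal{V}$, whence $\LEnd(V_F) \in \mathcal{V}$.

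The limit step is routine: $\mathcal{G}_\alpha = \bigcup_{\beta<\alpha}\mathcal{G}_\beta$, and $\mathsf{S}_{1\mathsf f}$ commutes with directed unions (a finite-dimensional subspace of a member of the union already lies inside a member of some $\mathcal{G}_\beta$), so $\mathsf{S}_{1\mathsf f}(\mathcal{G}_\alpha) \subseteq \bigcup_\beta \mathsf{S}_{1\mathsf f}(\mathcal{G}_\beta) \subseteq \bigcup_\beta \mathcal{G}_\beta = \mathcal{G}_\alpha$; and if $V_F \in \mathcal{G}_\alpha$ with $\dim V_F < \omega$ then $V_F \in \mathcal{G}_\beta$ for some $\beta < \alpha$, so $\LEnd(V_F) \in \mathcal{V}$ by the inductive hypothesis at $\beta$.

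I expect the main obstacle to be the $R$-spread case of the successor step: one must be careful that the definition of $R$-spread ($\LEnd(V'_F) \in \mathcal{R}(W_K)$) only guarantees a \emph{faithful representation} of $\LEnd(V'_F)$ in $W_K$, not that $\LEnd(V'_F)$ is literally $\LEnd$ of a subspace of $W_K$, so the reduction to finite dimensions (Theorem \ref{findim}) applied to $W_K \in \mathcal{G}_\alpha$ is essential, and one needs the inductive hypothesis in the strong form "$\LEnd(U_F) \in \mathcal{V}$ for all finite-dimensional $U_F$ reachable from $\mathcal{G}_\alpha$" rather than merely "finite-dimensional members of $\mathcal{G}_\alpha$". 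The bookkeeping with the class operators $\mathsf{S}_{1\mathsf f}$, $\mathsf{P}_{\mathsf u}$, $\mathsf{S}_\exists$, $\mathsf{W}_\exists$ and their commutation lemmas (Lemma \ref{L:S}, Lemma \ref{ultra}(iii), Fact \ref{hs1}) is the part requiring the most care, but none of it is deep.
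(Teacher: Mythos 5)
Your proof is correct and follows essentially the same route as the paper: transfinite induction with the two halves of the claim proved simultaneously, Lemma \ref{ultra}(iii) for the ultraproduct case, and Theorem \ref{findim} applied to $W_K$ (with the strong form of the inductive hypothesis on its finite-dimensional subspaces) for the $R$-spread case. The only nitpick is that passing from $V_F\in{\sf S}_{1\sf f}(W_K)$ to $\LEnd(V_F)$ uses ${\sf H}{\sf S}_\exists(\LEnd(W_K))$ via \Fact\ \ref{db}(iii) (not ${\sf S}_\exists$ alone, and not Lemma \ref{jeck}(i), which requires the image to span), but since $\mathcal{V}$ is closed under ${\sf H}$ this does not affect the argument.
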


\begin{scproof}
We argue by induction on $\alpha$.
For $\alpha=0$, the first claim follows from the definition of $\mathcal{G}_0$. 
Moreover, if $U_F\in{\sf S}_{1 {\sf f}}(V_F)$ and $\LEnd(V_F)\in\mathcal{V}$
then $\LEnd(U_F)\in{\sf H}{\sf S}_\exists(\LEnd(V_F))\subseteq\mathcal{V}$ by \Fact \ref{db}(iii).
The limit step is trivial.
In the step from $\alpha$ to $\alpha+1$,
we assume first that $V_F$ is isomorphic to an ultraproduct of spaces
${V_i}_{F_i}\in\mathcal{G}_\alpha$, $i\in I$.
If $U_F\in{\sf S}_{1 {\sf f}}(V_F)$ and $n=\dim U_F$ then, by Lemma \ref{ultra}(iii),
$U_F$ is isomorphic to an ultraproduct of some $U_{iF_i}\in{\sf S}_{1 {\sf f}}(V_{iF_i})$ with $\dim U_{iF_i}=n$,
$i\in J$, for some $J\subseteq I$.
By the inductive hypothesis, $U_{iF_i}\in\mathcal{G}_\alpha$
and $\LEnd(U_{iF_i})\in\mathcal{V}$. Thus $U_F\in\mathcal{G}_{\alpha+1}$
and $\LEnd(U_F)\in\mathcal{V}$ by Lemma \ref{ultra}(iii).
 
Now, let $V^\prime_F$ be an $R$-spread of $W_K\in\mathcal{G}_\alpha$ and let $V_F\in{\sf S}_{1 {\sf f}}(W_K)$.
If $U_F\in{\sf S}_{1 {\sf f}}(V_F)$ then $U_F\in{\sf S}_{1 {\sf f}}(V^\prime_F)$,
whence $U_F\in\mathcal{G}_{\alpha+1}$ by definition.
By Theorem \ref{findim} and the inductive hypothesis,
\[
\LEnd(V^\prime_F)\in{\sf W}_\exists\bigl(\LEnd(W^\prime_K)\mid W^\prime_K\in\mathbb{O}(W_K)\bigr)
\subseteq\mathcal{V}.
\]
By \Fact \ref{og}(i),
$\LEnd(U_F)\in{\sf H}{\sf S}_\exists(\LEnd(V^\prime_F))\subseteq\mathcal{V}$.
\end{scproof}

It follows that the $R$-spread closed semivariety $\mathbb{K}(\mathcal{V})$ of pre-hermitian spaces
over $\Lambda$ generated by $\mathcal{S}_{\mathcal{V}}$
is the union of the classes $\mathcal{G}_\alpha$, where $\alpha$ ranges over all ordinals.
Thus in view of the assumption $\mathcal{V}={\sf W}_{\exists}(\mathcal{K}_{\mathcal{V}})$
and Claim \ref{c-13}, one gets by Theorem \ref{flo}(ii)
\[
\mathcal{V}\subseteq\mathcal{R}\bigl(\mathbb{K}(\mathcal{V})\bigr)=
{\sf W}_\exists\bigl(\LEnd(V_F)\mid
V_F\in\mathbb{K}(\mathcal{V}),\,\dim V_F<\omega\bigr)\subseteq\mathcal{V}.
\]
To prove uniqueness, let $\mathcal{S}$ and $\mathcal{S}^\prime$ be small $R$-spread closed
semivarieties of pre-hermitian spaces over $\Lambda$ such that
$\mathcal{R}(\mathcal{S})=\mathcal{V}=\mathcal{R}(\mathcal{S}^\prime)$.
For any $V_F\in\mathcal{S}$ with $\dim V_F<\omega$, we have
$\LEnd(V_F)\in\mathcal{R}(\mathcal{S})=\mathcal{R}(\mathcal{S}^\prime)$,
whence $V_F$ is an $R$-spread of $\mathcal{S}^\prime$ and $V_F\in\mathcal{S}^\prime$.
Similarly, interchanging the roles of $\mathcal{S}$ and $\mathcal{S}^\prime$,
we get that $\mathcal{S}$ and $\mathcal{S}^\prime$ have the same
artinian members.

To deal with the case of universal classes, one includes into the union
$\mathcal{G}_\alpha$ a third class, namely ${\sf S}(\mathcal{G}_\alpha)$.
Claim \ref{c-13} and its proof remain valid, only the case of the third class remains to be considered.
Indeed, assume that $V_F\in\mathcal{G}_{\alpha+1}$ is a $2$-sorted substructure of $W_K\in\mathcal{G}_\alpha$
and let $U_F\in{\sf S}_{1 {\sf f}}(V_F)$. Then $U_F\in{\sf S}(W_K)$ and $U_F\in\mathcal{G}_{\alpha+1}$ by definition.
Moreover, $U_F$ is a $2$-sorted substructure of the $K$-subspace $U^\prime_K$ of $W_K$ spanned by $U$.
In particular, $U^\prime_K\in{\sf S}_{1 {\sf f}}(W_K)$ and the inductive
hypothesis yields $U^\prime_K\in\mathcal{G}_\alpha$ and $\LEnd(U^\prime_K)\in\mathcal{V}$.
As $\LEnd(U_F)$ embeds into $\LEnd(U^\prime_K)$ by Lemma \ref{jeck}(i),
it follows that $\LEnd(U_F)\in\mathcal{V}$.

(i) The proof follows the same lines as the one of (ii) replacing
$\Lambda$ by $\mb{Z}$,
 \Fact \ref{P:iso} by \Fact \ref{atrepc}. \Fact \ref{db}(iii) by
\Fact \ref{og}(i), and Theorem \ref{flo}(ii) by Theorem \ref{flo}(i).
For  $L\in \mathcal{K}_{\mathcal{V}}$ one has to require 
$3 \leq \dim L < \omega$.
\end{proof}
\noindent
For results of the same type as Theorem \ref{11}, see also \cite[Theorems 4.4-5.4]{HS2}.

\section{$\exists$-varieties and representations}\lab{var}

We first consider a condition on $\mathcal{S}$ under which the class of representables is an $\exists$-variety.
Then we review the approach of Micol \cite{Flo} to capture $\exists$-varieties via the concept of generalized representation.

A semivariety $\mathcal{S}$ of pre-hermitian spaces over $\Lambda$ is a \emph{variety} if
for any finite-dimensional ${V_0}_{F_0}$, ${V_1}_{F_1}\in\mathcal{S}$,
there is a joint tensorial extension $V_F\in\mathcal{S}$.

\begin{proposition}\lab{varsp}
If $\mathcal{S}$ is a variety of pre-hermitian spaces over $\Lambda$,
then $\mathcal{L}(\mathcal{S})$ and $\mathcal{R}(\mathcal{S})$ are $\exists$-varieties.
\end{proposition}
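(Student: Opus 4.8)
The plan is to show that $\mathcal{L}(\mathcal{S})$ and $\mathcal{R}(\mathcal{S})$ are closed under $\mathsf{H}$, $\mathsf{S}_\exists$, $\mathsf{P}_{\sf u}$, and $\mathsf{P}$; closure under the first three is already available from Theorem~\ref{flo} (since a variety of spaces is in particular a semivariety), so the new content is closure under arbitrary direct products. I will treat the ring case, the lattice case being parallel via Lemma~\ref{jeck}(ii) and \Fact~\ref{neu}(iii). Let $R_j\in\mathcal{R}(\mathcal{S})$ for $j\in J$, with faithful representations $_{R_j}(V_j)_{F_j}$, and set $R=\prod_{j\in J}R_j$. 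The goal is to build a faithful representation of $R$ within some member of $\mathcal{S}$.

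First I would reduce to finitely many factors combined with an ultraproduct step. For each finite subset $E\subseteq J$, the product $\prod_{j\in E}R_j$ should be handled by iterating the joint tensorial extension hypothesis: given finite-dimensional spaces, Lemma~\ref{jeck}(ii) embeds $\LEnd(V_0)\times\LEnd(V_1)$ into $\LEnd(V)$ for a joint tensorial extension $V\in\mathcal{S}$, and by \Fact~\ref{findim} each $R_j$ lies in $\mathsf{W}_\exists$ of the $\LEnd(U)$ for $U\in\mathbb{O}(V_j)$; so one wants to push the finite product into $\mathsf{W}_\exists$ of endomorphism algebras of finite-dimensional spaces obtained as joint tensorial extensions. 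By Theorem~\ref{flo}(ii), $\mathcal{R}(\mathcal{S})=\mathsf{W}_\exists(\mathcal{R}(\mathcal{S}))$ is closed under $\mathsf{H}\mathsf{S}_\exists\mathsf{P}_{\sf u}$, and the variety condition is exactly what is needed so that $\mathsf{S}_{1{\sf f}}(\mathcal{S})$ is closed under finite products up to the spread/embedding maneuver of Lemma~\ref{jeck}. Concretely, I expect to show $\prod_{j\in E}R_j\in\mathsf{W}_\exists\bigl(\LEnd(W)\mid W\in\mathsf{S}_{1{\sf f}}(\mathcal{S})\bigr)=\mathcal{R}(\mathcal{S})$ for every finite $E$.

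Then the infinite product is recovered from the finite subproducts by an ultraproduct argument: $R=\prod_{j\in J}R_j$ embeds, as a regular $\ast$-$\Lambda$-algebra, into an ultraproduct of the finite subproducts $\prod_{j\in E}R_j$ (index the $E$ by the directed set of finite subsets and use an ultrafilter containing all the up-sets $E^+$, exactly as in the proof of Theorem~\ref{findim} where directed unions embed into ultraproducts; here one instead uses that $r\mapsto (r{\restriction}E)_E$ realizes $R$ inside $\prod_E\bigl(\prod_{j\in E}R_j\bigr)$ and that equality in $R$ is an equality on cofinally many $E$). Since each finite subproduct is in $\mathcal{R}(\mathcal{S})$ and $\mathcal{R}(\mathcal{S})$ is closed under $\mathsf{P}_{\sf u}$ and $\mathsf{S}_\exists$ (the embedded copy of $R$ is regular), we conclude $R\in\mathcal{R}(\mathcal{S})$. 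The same scheme, using Lemma~\ref{jeck}(ii) for lattices and the fact that complemented subalgebras are allowed in $\mathsf{S}_\exists$, gives the CMIL statement; when $\mathcal{S}$ is anisotropic the joint tensorial extensions are again anisotropic, so no new issue arises.

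The main obstacle I anticipate is the first reduction: making precise that the variety condition on $\mathcal{S}$ propagates from pairs of finite-dimensional spaces to the endomorphism algebras of \emph{all} finite-dimensional members simultaneously, i.e. that $\mathsf{S}_{1{\sf f}}(\mathcal{S})$ together with joint tensorial extensions is rich enough that $\mathsf{W}_\exists\bigl(\LEnd(W)\mid W\in\mathsf{S}_{1{\sf f}}(\mathcal{S})\bigr)$ is closed under finite direct products. This requires combining \Fact~\ref{findim} (to drop each factor to finite dimension), Lemma~\ref{jeck}(ii) (to realize the finite product of endomorphism algebras inside one $\LEnd$ of a joint tensorial extension), and the definition of a variety of spaces (to keep that extension inside $\mathcal{S}$), and then invoking that $\mathcal{R}(\mathcal{S})$ is closed under $\mathsf{H}\mathsf{S}_\exists\mathsf{P}_{\sf u}$ from Theorem~\ref{flo}(ii); the bookkeeping of which finite-dimensional subspace sits in which factor, and of the semilinear identifications from Lemma~\ref{jeck}, is where the care is needed, but it is routine given the cited results.
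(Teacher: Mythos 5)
Your argument is correct and is essentially the paper's own proof: the paper simply cites Fact \ref{hs1}(iv) (the operator identity ${\sf V}_\exists(\mathcal{C})={\sf H}{\sf S}_\exists{\sf P}_{\sf u}{\sf P}_\omega(\mathcal{C})$) together with Theorem \ref{flo} and Lemma \ref{jeck}(ii), which is exactly the reduction you carry out — arbitrary products reduce to finite products of the finite-dimensional generators, and those are absorbed into $\mathcal{R}(\mathcal{S})$ (resp.\ $\mathcal{L}(\mathcal{S})$) via joint tensorial extensions. Your explicit embedding of $\prod_{j\in J}R_j$ into an ultraproduct of its finite subproducts, indexed over the directed set of finite $E\subseteq J$, is just a hand re-derivation of the ${\sf P}\subseteq{\sf S}_\exists{\sf P}_{\sf u}{\sf P}_\omega$ portion of Fact \ref{hs1}(iv), so no new ingredient is required.
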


\begin{proof}
In view of Proposition \ref{hs1}(iv) and Theorem \ref{flo}, it suffices to notice that
for any finite-dimensional spaces ${V_0}_{F_0}$, ${V_1}_{F_1}\in\mathcal{S}$, the structures
$\LEnd({V_0}_{F_0})\times\LEnd({V_1}_{F_1})$ and $\mathbb{L}({V_0}_{F_0})\times\mathbb{L}({V_1}_{F_1})$
have a faithful representation within some member of $\mathcal{S}$ by Lemma \ref{jeck}(ii).
\end{proof}

\noindent
Classes $\mathcal{L}(\mathcal{S})$ of CMILs having a faithful
representation within some member of a class $\mathcal{S}$ of orthogeometries
have been considered in \cite{he4}.
The closure properties of Theorem \ref{flo}(i) hold also in this case
with ${\sf S}(\mathcal{S})$ denoting formation of non-degenerate subgeometries of members of $\mathcal{S}$,
${\sf S}_{1 {\sf f}}(\mathcal{S})$ and ${\sf S}_{1{\sf q}}(\mathcal{S})$~--- formation
of non-degenerate finite-dimensional subspaces and of subquotients
$U{\slash}\rad U$, where $U_F\in\mathcal{S}$ and $U=U^{\perp\perp}$.
In addition, one has the class ${\sf U}(\mathcal{S})$ of all disjoint orthogonal unions of members of $\mathcal{S}$
and thus ${\sf P}\bigl(\mathcal{L}(\mathcal{S})\bigr)\subseteq\mathcal{L}\bigl({\sf U}(\mathcal{S})\bigr)$,
cf. \cite[Theorem 2.2]{he4}.
Moreover, mimicking the concept of an $L$-spread and the proof of Theorem \ref{11}, one obtains

\begin{theorem}
For any $\exists$-variety $\mathcal{V}$ of CMILs generated by its
finite-dimensional  members,
there is a small $L$-spread and ${\sf U}$-closed semivariety $[$universal class$]$ $\mathcal{S}$ of orthogeometries
such that $\mathcal{V}=\mathcal{L}(\mathcal{S})$. Moreover, such a class $\mathcal{S}$ is unique.
\end{theorem}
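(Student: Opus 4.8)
The plan is to transcribe the proof of Theorem~\ref{11}(i), with ``(small $L$-spread closed) semivariety [universal class] of pre-hermitian spaces over $\mathbb{Z}$'' replaced by ``(small $L$-spread and ${\sf U}$-closed) semivariety [universal class] of orthogeometries'', the operator ${\sf I}_{\sf s}$ dropped (there is no scaling of orthogeometries), the orthogeometry version of Theorem~\ref{flo}(i) used in place of Theorem~\ref{flo}(i), and -- this being the one genuinely new ingredient -- the operator ${\sf U}$ of disjoint orthogonal unions inserted into the transfinite construction so as to capture the closure of the $\exists$-variety $\mathcal{V}$ under ${\sf P}$, via the inclusion ${\sf P}\bigl(\mathcal{L}(\mathcal{S})\bigr)\subseteq\mathcal{L}\bigl({\sf U}(\mathcal{S})\bigr)$. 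Throughout, an \emph{$L$-spread} of an orthogeometry $G$ means a finite-dimensional orthogeometry $G'$ with $\dim G'>2$ and $\mathbb{L}(G')\in\mathcal{L}(G)$, and ``$L$-spread closed'', ``small'' are the evident analogues of Definition~\ref{spread}.

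First I would reduce to a generating class of strictly simple objects. Let $\mathcal{K}_{\mathcal{V}}$ be the class of strictly simple members of $\mathcal{V}$ of finite dimension $\geq 3$; the first task is $\mathcal{V}={\sf V}_\exists(\mathcal{K}_{\mathcal{V}})$. A finite-dimensional CMIL has, as a lattice, a decomposition into directly indecomposable factors, each a finite-length complemented modular lattice and hence simple; the involution permutes these factors, so up to the induced pairing the CMIL is a product of strictly simple CMILs and of ``swap'' factors $M\times M^{\mathrm{op}}$, $M$ strictly simple, all of which should be placed inside ${\sf V}_\exists(\mathcal{K}_{\mathcal{V}})$ once the low-dimensional pieces are dealt with. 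By \Fact \ref{arg2} (using \Fact \ref{og}(ii)), for $L\in\mathcal{K}_{\mathcal{V}}$ the orthogeometry $\mathbb{G}(L)$ is irreducible, finite-dimensional and satisfies $\mathbb{L}(\mathbb{G}(L))\cong L$; put $\mathcal{S}_{\mathcal{V}}=\{\mathbb{G}(L)\mid L\in\mathcal{K}_{\mathcal{V}}\}$. Now run the transfinite construction as in the proof of Theorem~\ref{11}(i): set $\mathcal{G}_0={\sf S}_{1{\sf f}}(\mathcal{S}_{\mathcal{V}})$; for a successor ordinal let $\mathcal{G}_{\alpha+1}$ be the union of ${\sf P}_{\sf u}(\mathcal{G}_\alpha)$, of ${\sf U}(\mathcal{G}_\alpha)$, of all ${\sf S}_{1{\sf f}}(G')$ with $G'$ an $L$-spread of a member of $\mathcal{G}_\alpha$, and -- in the universal-class variant -- of ${\sf S}(\mathcal{G}_\alpha)$ (non-degenerate subgeometries); at a limit ordinal take the union of the preceding $\mathcal{G}_\beta$.

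Then I would prove by transfinite induction on $\alpha$ the analogue of Claim~\ref{c-13}: ${\sf S}_{1{\sf f}}(\mathcal{G}_\alpha)\subseteq\mathcal{G}_\alpha$ and $\mathbb{L}(G)\in\mathcal{V}$ for every finite-dimensional $G\in\mathcal{G}_\alpha$. The cases of ${\sf P}_{\sf u}$, of $L$-spreads, and of ${\sf S}$ go verbatim as there, using Lemma~\ref{ultra}(iii), Theorem~\ref{findim}, \Fact \ref{og}(i) and Lemma~\ref{jeck}(i). The only new case is ${\sf U}$: if $G=\bigsqcup_{i\in I}G_i$ with $G_i\in\mathcal{G}_\alpha$ and $U$ is a finite-dimensional non-degenerate subgeometry of $G$, then $\mathbb{L}(G)\cong\prod_{i\in I}\mathbb{L}(G_i)$ and, $U$ having finite length, $U=\bigsqcup_{i\in F}U_i$ for finite $F\subseteq I$ and finite-dimensional non-degenerate subgeometries $U_i$ of $G_i$; by the inductive hypothesis $U_i\in\mathcal{G}_\alpha$ and $\mathbb{L}(U_i)\in\mathcal{V}$, hence $U\in{\sf U}(\mathcal{G}_\alpha)\subseteq\mathcal{G}_{\alpha+1}$ and $\mathbb{L}(U)\cong\prod_{i\in F}\mathbb{L}(U_i)\in{\sf P}(\mathcal{V})=\mathcal{V}$, the last step because $\mathcal{V}$ is an $\exists$-variety. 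Therefore $\mathcal{S}:=\bigcup_\alpha\mathcal{G}_\alpha$ is the smallest $L$-spread and ${\sf U}$-closed semivariety [universal class] of orthogeometries containing $\mathcal{S}_{\mathcal{V}}$, hence small, and $\mathbb{L}(G)\in\mathcal{V}$ for every finite-dimensional $G\in\mathcal{S}$. The orthogeometry version of Theorem~\ref{flo}(i), together with ${\sf P}(\mathcal{L}(\mathcal{S}))\subseteq\mathcal{L}({\sf U}(\mathcal{S}))=\mathcal{L}(\mathcal{S})$ and $\mathcal{V}={\sf V}_\exists(\mathcal{K}_{\mathcal{V}})$, then yields
\[
\mathcal{V}\subseteq\mathcal{L}(\mathcal{S})={\sf V}_\exists\bigl(\mathbb{L}(G)\mid G\in\mathcal{S},\ \dim G<\omega\bigr)\subseteq\mathcal{V},
\]
so $\mathcal{V}=\mathcal{L}(\mathcal{S})$. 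Uniqueness is verbatim as in Theorem~\ref{11}(i): if $\mathcal{S},\mathcal{S}'$ are small $L$-spread and ${\sf U}$-closed semivarieties of orthogeometries with $\mathcal{L}(\mathcal{S})=\mathcal{V}=\mathcal{L}(\mathcal{S}')$, then every finite-dimensional $G\in\mathcal{S}$ with $\dim G>2$ satisfies $\mathbb{L}(G)\in\mathcal{L}(\mathcal{S}')$, so $G$ is an $L$-spread of $\mathcal{S}'$ and $G\in\mathcal{S}'$; by symmetry and smallness $\mathcal{S}=\mathcal{S}'$.

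The main obstacle is the very first step, the reduction $\mathcal{V}={\sf V}_\exists(\mathcal{K}_{\mathcal{V}})$: one has to exploit the structure of finite-dimensional CMILs (indecomposable lattice factors are simple; the involution permutes them; the swapped-pair and the low-dimensional factors must be shown to lie in the $\exists$-variety generated by the strictly simple members of dimension $\geq 3$), and -- as the remark following Theorem~\ref{11} signals -- the degenerate situations (a $2$-distributive $\mathcal{V}$, or $\mathcal{V}$ consisting of MOLs) may require the side hypotheses noted there. Everything downstream of that reduction is routine bookkeeping, identical to the proof of Theorem~\ref{11}(i) apart from the single extra operator ${\sf U}$.
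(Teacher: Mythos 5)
Your overall architecture is the one the paper intends: the paper offers no written proof of this theorem beyond the instruction to mimic Theorem \ref{11}(i) with the notion of $L$-spread transplanted to orthogeometries, and your insertion of the operator ${\sf U}$ into the transfinite construction, together with the verification of the new ${\sf U}$-case of the analogue of Claim 1 (a finite-dimensional non-degenerate subspace of a disjoint orthogonal union splits as a finite disjoint orthogonal union of finite-dimensional non-degenerate subspaces of the components, and its subspace lattice is the corresponding finite product, which lies in $\mathcal{V}$ by ${\sf P}_\omega$-closure) is exactly the extra ingredient needed to upgrade ${\sf W}_\exists$ to ${\sf V}_\exists$ via ${\sf P}\bigl(\mathcal{L}(\mathcal{S})\bigr)\subseteq\mathcal{L}\bigl({\sf U}(\mathcal{S})\bigr)$.

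The genuine gap is the very first step, which you flag but do not close, and which in fact should not be attempted: the reduction $\mathcal{V}={\sf V}_\exists(\mathcal{K}_{\mathcal{V}})$ with $\mathcal{K}_{\mathcal{V}}$ the strictly simple members of finite dimension at least $3$. The hypothesis here is deliberately weaker than in Theorem \ref{11}(i) and Proposition \ref{flonik}(ii): $\mathcal{V}$ is only assumed generated by its finite-dimensional members, and a finite-dimensional CMIL is a product of Galois-simple factors that need not be strictly simple --- the involution may interchange two lattice-direct factors, producing a factor $M\times M^{\mathrm{op}}$ with the swap involution, which is simple but not strictly simple as a Galois lattice. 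There is no reason such a factor should lie in ${\sf V}_\exists$ of the strictly simple members of $\mathcal{V}$; for instance, if $M=L(V_F)$ with $F$ a division ring admitting no anti-automorphism, then $M$ carries no involution at all, yet $M\times M^{\mathrm{op}}$ is a perfectly good finite-dimensional CMIL. The point of passing to orthogeometries is precisely that this reduction is unnecessary: by \Fact \ref{arg2}, \emph{every} finite-dimensional CMIL $L$ (of any dimension, strictly simple or not) satisfies $L\cong\mathbb{L}(\mathbb{G}(L))$ for the finite-dimensional orthogeometry $\mathbb{G}(L)$ --- the swap factors correspond to reducible orthogeometries with two totally isotropic components paired by $\perp$. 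So take $\mathcal{S}_{\mathcal{V}}=\{\mathbb{G}(L)\mid L\in\mathcal{V},\ \dim L<\omega\}$ and run your construction from there; the hypothesis then yields $\mathcal{V}={\sf V}_\exists(\mathcal{K}_{\mathcal{V}})$ for free. Relatedly, you should drop the condition $\dim G'>2$ from the orthogeometry notion of $L$-spread: it is needed in Definition \ref{spread}(i) only because low-dimensional subspace lattices do not determine the coordinatizing space, whereas $\mathbb{G}(\mathbb{L}(G))\cong G$ for every finite-dimensional orthogeometry; keeping the restriction would leave your uniqueness argument proving only uniqueness of the members of dimension greater than $2$, while the theorem asserts uniqueness of $\mathcal{S}$ outright.
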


\noindent
The objective of Micol \cite{Flo} was to derive results for $\ast$-regular rings, analogous to those above.
Of course, representation requires some structure of the type of sesquilinear spaces.
Apparently, in general there is no axiomatic class of such spaces which would serve for
representing direct products of representable structures.
Micol solved this problem by introducing the concept of a \emph{generalized representation}.
This concept was transferred to MOLs by Niemann \cite{Nik}.

A $g$-\emph{representation} of $A\in\mathrm{CMIL}$ $[A\in\mathcal{R}_{\Lambda}]$
within a class $\mathcal{S}$ of pre-hermitian spaces
is a family $\{\varepsilon_i\mid i\in I\}$ of representations $\varepsilon_i$ of $A$
in $V_{iF_i}\in\mathcal{S}$, $i\in I$.
It is \emph{faithful} if $\bigcap_{i\in I}\kerr\varepsilon_i$ is trivial.
Let $\mathcal{L}_{\sf g}(\mathcal{S})$ [$\mathcal{R}_{\sf g}(\mathcal{S})$]
denote the class of all $A\in\mathrm{CMIL}$ $[A\in\mathcal{R}_{\Lambda}]$
having a faithful $g$-representation within $\mathcal{S}$;
equivalently, the class of structures $A$ having a subdirect decomposition
into factors $\varepsilon_i(A)$, $i\in I$, which have a faithful representation within $\mathcal{S}$.

Call an artinian algebra $R\in\mathcal{R}_{\Lambda}$ \emph{strictly artinian} if $I=I^\ast$ for any ideal $I$ of $R$.
By the Wedderburn-Artin Theorem, this is equivalent to the fact that $R$ is isomorphic to
a direct product of strictly simple factors (cf. \cite[\S 3.4]{lamb}).
Similarly, call a finite-dimensional CMIL $L$ \emph{strictly finite-dimensional} if $\theta=\theta^\prime$ for any lattice
congruence $\theta$ of $L$.
By \cite[Theorem IV.7.10]{birk}), this is equivalent to the fact that
$L$ is a direct product of strictly simple factors.

\begin{proposition}\lab{flonik}
The following statements are true.
\begin{enumerate}
\item
For any semivariety $\mathcal{S}$ of pre-hermitian spaces, the class
$\mathcal{L}_{\sf g}(\mathcal{S})={\sf P}_{{\sf s}\exists}\bigl(\mathcal{L}(\mathcal{S})\bigr)$
$[\mathcal{R}_{\sf g}(\mathcal{S})={\sf P}_{{\sf s}\exists}\bigl(\mathcal{R}(\mathcal{S})\bigr)]$
is an $\exists$-variety generated by its strictly simple finite-dimensional $[$artinian$]$ members,
which are of the form $\mathbb{L}(V_F)$ $[\LEnd(V_F)]$ with $V_F\in\mathcal{S}$, $\dim V_F<\omega$.
\item
For any $\exists$-variety $\mathcal{V}\subseteq\mathrm{CMIL}$ $[\mathcal{V}\subseteq\mathcal{R}_{\Lambda}]$
which is generated by its strictly finite-dimensional at least $3$ $[$artinian$]$ members,
there is a semivariety $\mathcal{S}$ of pre-hermitian spaces such that
$\mathcal{V}=\mathcal{L}_{\sf g}(\mathcal{S})$ $[\mathcal{V}=\mathcal{R}_{\sf g}(\mathcal{S})]$.
\item
$A\in\mathcal{L}_{\sf g}(\mathcal{S})$ $[A\in\mathcal{R}_{\sf g}(\mathcal{S})]$
if and only if $A$ has an atomic extension $\hat{A}$ which is a subdirect product
of atomic strictly subdirectly irreducible structures $A_i$ such that
$(A_i)_{\sf f}\cong\mathbb{L}({V_i}_{F_i})_{\sf f}$
$[$the minimal ideal of $A_i$ is isomorphic to $J({V_i}_{F_i})]$ with ${V_i}_{F_i}\in\mathcal{S}$.
\end{enumerate}
\end{proposition}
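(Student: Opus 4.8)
The plan is to treat part~(i) as a matter of unwinding the definitions, to prove part~(iii) by an explicit construction of the atomic extension, and to derive part~(ii) from part~(i) together with Theorem~\ref{11}.

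For part~(i), the first step is to note that $\mathcal{L}_{\sf g}(\mathcal{S})={\sf P}_{{\sf s}\exists}\bigl(\mathcal{L}(\mathcal{S})\bigr)$ and $\mathcal{R}_{\sf g}(\mathcal{S})={\sf P}_{{\sf s}\exists}\bigl(\mathcal{R}(\mathcal{S})\bigr)$. Indeed, a faithful $g$-representation $\{\varepsilon_i\mid i\in I\}$ of $A$ is exactly a subdirect embedding $A\hookrightarrow\prod_i\varepsilon_i(A)$ each of whose factors $\varepsilon_i(A)$, being a homomorphic image of $A$, is again a CMIL, respectively a regular $\ast$-$\Lambda$-algebra by \Fact \ref{reri}(vi), and is faithfully represented in a member of $\mathcal{S}$; conversely, any complemented [regular] subdirect product of members of $\mathcal{L}(\mathcal{S})$ [$\mathcal{R}(\mathcal{S})$] carries the evident faithful $g$-representation. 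Now let $\mathcal{S}$ be a semivariety. By Theorem~\ref{flo} and ${\sf S}_{1{\sf f}}(\mathcal{S})\subseteq\mathcal{S}$ one has $\mathcal{L}(\mathcal{S})={\sf W}_\exists\bigl(\mathbb{L}(V_F)\mid V_F\in\mathcal{S},\ \dim V_F<\omega\bigr)$, and likewise $\mathcal{R}(\mathcal{S})={\sf W}_\exists\bigl(\LEnd(V_F)\mid V_F\in\mathcal{S},\ \dim V_F<\omega\bigr)$; combining this with \Fact \ref{hs1}(iv) gives
\[
\mathcal{L}_{\sf g}(\mathcal{S})={\sf P}_{{\sf s}\exists}{\sf W}_\exists\bigl(\mathbb{L}(V_F)\mid V_F\in\mathcal{S},\ \dim V_F<\omega\bigr)={\sf V}_\exists\bigl(\mathbb{L}(V_F)\mid V_F\in\mathcal{S},\ \dim V_F<\omega\bigr),
\]
an $\exists$-variety, generated by structures which are strictly simple by \Fact \ref{atrepc} (every finite-dimensional subspace lattice is simple as a lattice, the cases $\dim\le2$ being seen directly); the ring case is identical, using \Fact \ref{P:iso}.

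For part~(iii), let $A\in\mathcal{L}_{\sf g}(\mathcal{S})$ and fix a subdirect decomposition $A\hookrightarrow\prod_i B_i$ with faithful representations $\beta_i\colon B_i\to\mathbb{L}(V_{iF_i})$, $V_{iF_i}\in\mathcal{S}$. By \Fact \ref{atex} each Galois sublattice $\beta_i(B_i)$ of $\mathbb{L}(V_{iF_i})$ extends to an atomic, strictly subdirectly irreducible CMIL $\hat B_i\subseteq\mathbb{L}(V_{iF_i})$ with $(\hat B_i)_{\sf f}=\mathbb{L}(V_{iF_i})_{\sf f}$; in the ring case \Fact \ref{atex2} does the same, with $J(V_{iF_i})$ the minimal ideal of $\hat B_i$ and $\hat B_i$ regular. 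Writing $\varepsilon_i\colon A\to B_i$ for the projections, I set
\[
\hat A=\bigl\{(x_i)_i\in\textstyle\prod_i\hat B_i\ \bigm|\ \exists a\in A\colon\ x_i=\beta_i(\varepsilon_i(a))\ \text{for all but finitely many}\ i\bigr\}.
\]
Since the exceptional sets are finite, the required properties are checked coordinatewise: $\hat A$ is a subalgebra of $\prod_i\hat B_i$ containing the copy $a\mapsto(\beta_i(\varepsilon_i(a)))_i$ of $A$; it is complemented, respectively regular, because $A$ and the $\hat B_i$ are (take complements or quasi-inverses from $A$ off the exceptional set and from the $\hat B_i$ on it); it is atomic (given $0<(x_i)_i$ choose $x_j>0$ and an atom $p\le x_j$ of $\hat B_j$; the element of $\hat A$ supported by $p$ in coordinate $j$ is an atom of $\hat A$ below $(x_i)_i$); and it projects onto each $\hat B_j$, so it is a subdirect product of the $\hat B_i$. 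This $\hat A$ is the required atomic extension. Conversely, if $A$ embeds in an atomic extension $\hat A\le\prod_i A_i$ with the $A_i$ atomic strictly subdirectly irreducible and $(A_i)_{\sf f}\cong\mathbb{L}(V_{iF_i})_{\sf f}$, $V_{iF_i}\in\mathcal{S}$, then each $A_i$ is an Arguesian CMIL with an atom: for $\dim A_i>2$ Theorem~\ref{atrep} yields a faithful representation of $A_i$ in a space similar to $V_{iF_i}$, and for $\dim A_i\le2$ one has $A_i=(A_i)_{\sf f}\cong\mathbb{L}(V_{iF_i})$ directly; in either case $A_i\in\mathcal{L}(\mathcal{S})$ (using $\mathcal{L}({\sf I}_{\sf s}(\mathcal{S}))=\mathcal{L}(\mathcal{S})$). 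Restricting these representations along $A\hookrightarrow\hat A\hookrightarrow\prod_i A_i$ and composing with the projections gives a faithful $g$-representation of $A$. The ring case is analogous, invoking Theorem~\ref{atrep2}, \Fact \ref{prim}, and \Fact \ref{atex2} in place of their lattice counterparts.

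For part~(ii), let $\mathcal{V}$ be as hypothesized. Every generator of $\mathcal{V}$ is strictly finite-dimensional, respectively strictly artinian, hence by the cited theorems of Birkhoff, respectively Wedderburn--Artin, a direct product of strictly simple factors, each a homomorphic image of the generator and so in $\mathcal{V}$; thus $\mathcal{V}={\sf V}_\exists(\mathcal{W})$, where $\mathcal{W}:={\sf W}_\exists(\mathcal{K})$ and $\mathcal{K}$ is the class of strictly simple [artinian] members of $\mathcal{V}$ (of dimension at least $3$ in the lattice case, the hypothesis on $\mathcal{V}$ guaranteeing these suffice). By \Fact \ref{hs1}(iv), $\mathcal{V}={\sf P}_{{\sf s}\exists}(\mathcal{W})$, and $\mathcal{W}$ is an $\exists$-semivariety generated by its strictly simple [artinian] members, so Theorem~\ref{11} produces a semivariety $\mathcal{S}$ of pre-hermitian spaces over $\Lambda$ with $\mathcal{W}=\mathcal{L}(\mathcal{S})$, respectively $\mathcal{W}=\mathcal{R}(\mathcal{S})$. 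Then part~(i) gives $\mathcal{L}_{\sf g}(\mathcal{S})={\sf P}_{{\sf s}\exists}\bigl(\mathcal{L}(\mathcal{S})\bigr)={\sf P}_{{\sf s}\exists}(\mathcal{W})=\mathcal{V}$, and similarly in the ring case. I expect the genuine work to be concentrated in the forward direction of part~(iii) --- exhibiting an atomic extension of $A$ that simultaneously embeds subdirectly into the product of the atomized factors from \Facts \ref{atex} and \ref{atex2}, inherits the complemented, respectively regular, structure, and is itself atomic --- while the only delicate point in part~(ii) is the dimension bookkeeping ensuring that passing to the strictly simple factors of the generators of $\mathcal{V}$ does not introduce low-dimensional structures outside the range of the representation theorems.
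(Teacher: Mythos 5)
Your proof is correct and follows essentially the same route as the paper, whose own proof simply combines \Fact \ref{hs1}(iii)--(iv), \Facts \ref{atrepc} and \ref{P:iso}, Theorems \ref{flo} and \ref{11}, and \Facts \ref{atex}, \ref{atex2} together with Theorems \ref{atrep} and \ref{atrep2}, exactly the ingredients you use. The only point where you go beyond the paper's citation list is the explicit finite-support construction of $\hat{A}$ in part (iii); the full product $\prod_i\hat{B}_i$ (which is complemented, respectively regular, and atomic, and contains the subdirectly embedded copy of $A$) would serve equally well there.
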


\begin{proof}
Statement (i) follows from \Facts \ref{hs1}(iii)-(iv), \ref{atrepc}, \ref{P:iso}, and Theorem \ref{flo}.
Statement (ii) follows from \Facts \ref{hs1}(iv), \ref{atrepc}, \ref{P:iso}, and Theorem \ref{11}.
Finally, statement (iii) follows from \Facts \ref{atex}, \ref{atex2} and Theorems \ref{atrep}, \ref{atrep2}.
\end{proof}

\noindent
For $\ast$-regular rings, the result of Proposition \ref{flonik} is in essence due to Micol \cite{Flo}.
To prove that $g$-representability is preserved under homomorphic images,
she axiomatized families of inner product spaces as $3$-sorted structures, where
the third sort mimics the index set $I$.
Again, a saturation property is needed for the proof and regularity is crucial.
The fact that the $\exists$-variety of $g$-representable structures is generated by its artinian members
was shown by her reducing to countable subdirectly irreducible structures $R$,
deriving countably based representation spaces (and forming $2$-sorted subspaces),
and using the approach of Tyukavkin \cite{tyu} with respect to a countable orthogonal basis.
Conversely, a substantial part of Theorem \ref{flo} follows from Proposition \ref{flonik}.

\begin{appendix}
\section{Existence semivarieties}

We characterize $\exists$-(semi)varieties
contained in $\mathrm{CMIL}$ or in $\mathcal{R}_{\Lambda}$
as model classes, proving at the same time the
operator identities of \Fact  \ref{hs1}.
With no additional effort, this can be done
to include other classes of algebraic structures.

Given a set $\Sigma$ of first order axioms,
by $\Mod\Sigma$ we denote the model class
$\{A\mid A\models\Sigma\}$ of $\Sigma$.
By $\Th\mathcal{C}$ [$\Th_L\mathcal{C}$], we denote the set
of sentences [from the fragment $L$] of first order language which are valid in $\mathcal{C}$.
As usual, let $\overline{x}$ denote a sequence of variables of length being given by context.

\begin{definition}\lab{RegC}
A class $\mathcal{C}_0$
of algebraic structures of the same similarity type is
\emph{regular} if there is a (possibly empty) set $\Psi_0$ of
conjunctions $\alpha(\overline{x},y)$ of atomic formulas
$\bigl($i.e. formulas of the form $\bigwedge_{i=1}^k s_i(\overline{x},y)=t_i(\overline{x},y)\bigr)$
and a class $\mathcal{S}$ such that
\begin{enumerate}
\item
$\mathcal{C}_0=\mathcal{S}\cap\Mod\{\forall\overline{x}\exists y\ \alpha(\overline{x},y)\mid
\alpha(\overline{x},y)\in\Psi_0\}$;
\item
$\mathcal{S}$ is closed under ${\sf S}$ and $\mathcal{C}_0$ is closed under ${\sf H}$ and ${\sf P}$;
\item
For any structures $A$, $B\in\mathcal{C}_0$, for any
surjective homomorphism $\varphi\colon A\to B$, for any formula $\alpha(\overline{x},y)\in\Psi_0$,
and for any $\overline{a}$, $b\in B$ such that $B\models\alpha(\overline{a},b)$,
there are $\overline{c}$, $d\in A$ such that $\varphi(\overline{c})=\overline{a}$, $\varphi(d)=b$,
and $A\models\alpha(\overline{c},d)$.
\end{enumerate}
\end{definition}

\noindent
Without loss of generality, one may consider also the case when $\alpha(\overline{x},y)$ is of the form
$\bigwedge_{i=1}^kp_i\bigl(t_1(\overline{x},y),\ldots,t_m(\overline{x},y)\bigr)$, where
$p_i$ is a relation symbol of arity $m$ or the symbol $=$ with $m=2$.

From Definition \ref{RegC}(ii) it follows immediately that any regular class is closed under ${\sf P}_{\sf u}$.
In the sequel, we shall fix a regular class $\mathcal{C}_0$
and write for any $\mathcal{C}\subseteq\mathcal{C}_0$:
\[
{\sf S}_\exists(\mathcal{C})=\mathcal{C}_0\cap{\sf S}(\mathcal{C})\quad
\text{and}\quad{\sf P}_{{\sf s}\exists}(\mathcal{C})=\mathcal{C}_0\cap{\sf P}_{\sf s}(\mathcal{C}).
\]
Let $\mathcal{C}_0$ be a regular class.
A \emph{Skolem expansion} $A^\ast$ of $A\in\mathcal{C}_0$ adds for each $\alpha(\overline{x},y)\in\Psi_0$
an operation $f_\alpha$ on $A$ such that
$A\models\alpha(\overline{a},f_\alpha(\overline{a}))$ for all $\overline{a}\in A$.

\begin{definition}\lab{SRegC}
A class $\mathcal{C}_0$ is \emph{strongly regular} if it is regular and
\begin{enumerate} 
\item[$(\mathrm{iii}^\prime)$]
For any structures $A$, $B\in\mathcal{C}_0$, for any
surjective homomorphism $\varphi\colon A\to B$, for any formula $\alpha(\overline{x},y)\in\Psi_0$,
for any $\overline{a}$, $b\in B$ such that $B\models\alpha(\overline{a},b)$,
and for any $\overline{c}\in A$ such that $\varphi(\overline{c})=\overline{a}$
there is $d\in A$ such that $\varphi(d)=b$ and $A\models\alpha(\overline{c},d)$.
\end{enumerate}
\end{definition}

\begin{remark}\lab{SRC}
It is obvious that if a class $\mathcal{C}_0$ satisfies $(\mathrm{iii}^\prime)$ of Definition \ref{SRegC},
then $\mathcal{C}_0$ satisfies (iii) of Definition \ref{RegC}.
For any strongly regular class $\mathcal{C}_0$, for any $A$, $B\in \mathcal{C}_0$,
and for any surjective homomorphism $\varphi\colon A\to B$,
if $B^\ast$ is a Skolem expansion of $B$, then there is a Skolem expansion
$A^\ast$ of $A$ such that $\varphi\colon A^\ast\to B^\ast$ is a homomorphism.
Clearly, $\mathcal{C}_0$ is strongly regular if it
satisfies (i)-(ii) of Definition \ref{RegC} and for any $\alpha\in\Psi_0$ and for any
$\overline{a}\in A\in\mathcal{C}_0$, there is unique $b$ such that $\alpha(\overline{a},b)$.
This applies, in particular, to completely regular [inverse] semigroups.
\end{remark}

\noindent
In what follows, when we speak of a [strongly] regular class $\mathcal{C}$,
we always assume that the set of formulas $\Psi_0$ and
the classes $\mathcal{C}_0$ and $\mathcal{S}$ are given according to Definition \ref{RegC}
[Definition \ref{SRegC}, respectively].

\begin{proposition}\lab{exlem}
For any variety $\mathcal{V}$ with a $\ast$-ring reduct, the class of structures $A\in\mathcal{V}$ having
$\ast$-regular reducts forms a strongly regular class.
In particular, the class $\mathcal{R}^\ast_{\Lambda}$ of all $\ast$-regular
$\ast$-$\Lambda$-algebras is strongly regular.
\end{proposition}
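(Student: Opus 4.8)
The plan is to supply the data $(\Psi_0,\mathcal{S})$ required by Definition~\ref{RegC} and then to invoke the sufficient condition for strong regularity recorded in Remark~\ref{SRC}. Let $\mathcal{V}$ be a variety with a $\ast$-ring reduct and let $\mathcal{C}_0$ be the class of those $A\in\mathcal{V}$ whose $\ast$-ring reduct is $\ast$-regular. Take $\Psi_0=\{\alpha(x,y)\}$ consisting of the single formula
\[
\alpha(x,y):\quad xyx=x\ \wedge\ yxy=y\ \wedge\ (xy)^\ast=xy\ \wedge\ (yx)^\ast=yx,
\]
a conjunction of atomic formulas in the signature of $\mathcal{V}$ (here $\overline{x}$ is the single variable $x$); for an element $a$ of a $\ast$-ring, an element $b$ with $\alpha(a,b)$ is precisely a \emph{Moore--Penrose inverse} of $a$. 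Take $\mathcal{S}=\mathcal{V}\cap\Mod\{\forall r\,(r^\ast r=0\to r=0)\}$, i.e.\ the class of members of $\mathcal{V}$ with proper $\ast$-ring reduct.

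I would then verify the clauses of Definition~\ref{RegC}. For~(i), that $\mathcal{C}_0=\mathcal{S}\cap\Mod\{\forall x\,\exists y\,\alpha(x,y)\}$: if $A\in\mathcal{C}_0$ its reduct is $\ast$-regular, hence proper, so $A\in\mathcal{S}$, and since every element of a $\ast$-regular ring has a Moore--Penrose inverse (well known; cf.~\cite{ber4}) one has $A\models\forall x\exists y\,\alpha$; conversely, if $A\in\mathcal{S}$ and every $a$ admits some $b$ with $\alpha(a,b)$, then $aba=a$ shows the reduct regular, hence, being also proper, $\ast$-regular, so $A\in\mathcal{C}_0$. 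For~(ii): $\mathcal{V}$, being a variety, is closed under ${\sf S}$, ${\sf H}$, ${\sf P}$; properness is a universal sentence and is thus inherited by substructures, so $\mathcal{S}$ is closed under ${\sf S}$; and, passing to $\ast$-ring reducts, homomorphic images and direct products of $\ast$-regular rings are again $\ast$-regular by \Fact~\ref{reri} (the $\Lambda$-action is irrelevant there), so $\mathcal{C}_0$ is closed under ${\sf H}$ and ${\sf P}$. This verifies clauses~(i) and~(ii) of Definition~\ref{RegC}.

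It then remains, by Remark~\ref{SRC}, to check that for $a$ in (the $\ast$-ring reduct of) any member of $\mathcal{C}_0$ there is a \emph{unique} $b$ with $\alpha(a,b)$: existence is part of~(i), and uniqueness is the classical uniqueness of the Moore--Penrose inverse, valid in any $\ast$-ring. Concretely, from $\alpha(a,b)$ and $\alpha(a,b')$ one gets $(ab)(ab')=(aba)b'=ab'$; applying $\ast$ and using $(ab)^\ast=ab$, $(ab')^\ast=ab'$ yields $ab'=(ab')(ab)=(ab'a)b=ab$, and dually $ba=b'a$, whence $b=bab=b(ab)=b(ab')=(ba)b'=(b'a)b'=b'ab'=b'$. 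Thus $\mathcal{C}_0$ is strongly regular, which is the first assertion; the \emph{in particular} then follows by taking $\mathcal{V}=\mathcal{A}_\Lambda$ (a variety with $\ast$-ring reduct), for which $\mathcal{C}_0=\mathcal{R}^\ast_\Lambda$. The whole argument is essentially bookkeeping; the only substantive inputs are the existence of Moore--Penrose inverses in $\ast$-regular rings and the closure of $\ast$-regularity under ${\sf H}$ (i.e.\ \Fact~\ref{reri}), and the mildly delicate point to get right is that the chosen $\alpha$ genuinely pins $\ast$-regularity down on both sides of clause~(i).
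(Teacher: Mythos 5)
Your proof is correct, but it takes a genuinely different route through the one substantive clause. The paper chooses $\Psi_0=\{xyx=x\}$ (the quasi-inverse premise), for which witnesses are not unique, and must therefore verify condition $(\mathrm{iii}^\prime)$ by an explicit lifting computation in the style of Goodearl--Menal--Moncasi: given $aba=a$ in $B$ and a preimage $c$ of $a$, one uses regularity of the kernel ideal $I=\kerr\varphi$ to correct an arbitrary preimage $y$ of $b$ to $d=u-ucy-ycu+ycucy+y$ with $cdc=c$ and $d-y\in I$. You instead take $\alpha(x,y)$ to be the Moore--Penrose conditions, prove the classical uniqueness of the Moore--Penrose inverse in any $\ast$-ring, and then get $(\mathrm{iii}^\prime)$ for free from the uniqueness criterion of Remark \ref{SRC} --- exactly the mechanism the paper itself invokes for completely regular and inverse semigroups. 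Both choices of $(\Psi_0,\mathcal{S})$ legitimately witness strong regularity (the definition only asks for the existence of some such data, and the operator ${\sf S}_\exists$ depends only on $\mathcal{C}_0$), so the Proposition as stated follows either way; the resulting fragments $L_p$, $L_e$ of conditional identities differ in their premises, but Theorem \ref{exvar} applies to each. What the paper's choice buys is uniformity: the premise $xyx=x$ is the one used for plain regular rings and is parallel to the complemented-lattice case, where witnesses are likewise non-unique and the uniqueness shortcut is unavailable. What your choice buys is economy: the lifting computation disappears, at the price of an argument that is special to the $\ast$-regular setting (it is precisely properness that forces uniqueness of the Moore--Penrose inverse). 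Your verifications of clauses (i) and (ii) are sound; one could even note that $\forall x\exists y\,\alpha(x,y)$ alone already implies properness (from $a=(ab)^\ast a=b^\ast a^\ast a$), so your $\mathcal{S}$ does slightly more than is needed, which is harmless.
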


\begin{proof}
Let $\Psi_0=\{xyx=y\}$ and let
$\mathcal{S}=\mathcal{V}\cap\Mod(\forall x\ xx^\ast=0\rightarrow x=0)$.
Then $\mathcal{C}_0$ defined as in Definition \ref{RegC}(i)
consists of the $\ast$-regular members of $\mathcal{V}$.
Closure of $\mathcal{C}_0$ under ${\sf H}$ and ${\sf P}$ follows from the fact
that $\ast$-regularity can be defined by the sentence:
\[
\forall x\exists y\ (y=y^2=y^\ast)\ \&\ (\exists u\ x=uy)\ \&\ (\exists u\ y=ux).
\]
The proof of ($\mathrm{iii}^\prime$) essentially goes as in \cite[Lemma 1.4]{gmm},
cf. \cite[Lemma 9]{HS}.
Indeed, the two-sided ideal $I=\kerr\varphi$ is regular.
Let $c\in A$ be such that $a=\varphi(c)$, and let $aba=a$ in $B$.
There is $y\in A$ such that $\varphi(y)=b$.
Then $c-cyc\in I$. Since $I$ is regular, there is $u\in I$ such that
$(c-cyc)u(c-cyc)=c-cyc$. It follows from the latter that
$cuc-cycuc-cucyc+cycucyc+cyc=c$. Taking $d=u-ucy-ycu+ycucy+y$, we get
$cdc=cuc-cucyc-cycuc+cycucyc+cyc=c$ and
$d-y=u-ucy-ycu+ycucy\in I$, whence $\varphi(d)=b$.
\end{proof}

\noindent
Further examples of strongly regular classes are the class of all
regular [complemented] members of any variety
having ring [bounded modular lattice, respectively] reducts, see \cite[Lemma 9]{HS}.
The latter can be easily modified to the class of all relatively complemented lattices;
here $\alpha(x_1,x_2,x_3,y)$ is given by
$y\bigl((x_1+x_2)x_3 +x_1x_2\bigr)=x_1x_2\ \&\ y+\bigl((x_1+x_2)x_3 +x_1x_2\bigr)=x_1+x_2$.

\bigskip
We consider fragments of the first order language associated with a given
regular class $\mathcal{C}_0$. Let $L_u$ consist of all
quantifier free formulas; up to equivalence,
we may assume that $L_u$ consists of conjunctions of formulas
$\bigwedge_{i=1}^n\beta_i\ \rightarrow\ \bigvee_{j=1}^m\gamma_j$,
where $\beta_i$, $\gamma_j$ are atomic formulas and $n$, $m\geqslant 0$.
The set $L_q\subseteq L_u$ of all \emph{quasi-identities} is defined by $m=1$.
The set $L_p$ consists of all formulas of the form
\[
\bigwedge_{i=1}^n\alpha_i(\overline{x}_i,y_i)
\ \rightarrow\ \bigvee_{j=1}^m\gamma_j,
\]
where $n\geqslant 0$, $m\geqslant 1$,
and $\alpha_i(\overline{x}_i,y_i)\in\Psi_0$.
Then $L_e\subseteq L_p$ is defined by $m=1$;
its members are called \emph{conditional identities},
while those of $L_p$ are \emph{conditional disjunctions of equations}.
As usual, validity of a formula means validity of its universal closure.
We write $\Th_x$ instead of $\Th_{L_x}$.

\begin{theorem}\lab{exvar}
Let $\mathcal{C}_0$ be a regular class and let $\mathcal{C}\subseteq\mathcal{C}_0$.
Then the following  equalities  and definabilities
 are granted.
\begin{enumerate}
\item
$\mathcal{C}_0\cap\Mod\Th_{u}\mathcal{C}={\sf S}_\exists{\sf P}_{\sf u}(\mathcal{C})$.
In particular, $\mathcal{C}$ is definable by universal sentences relatively to $\mathcal{C}_0$
if and only if it is closed under ${\sf S}_\exists$ and ${\sf P}_{\sf u}$.
\item
$\mathcal{C}_0\cap\Mod\Th_{q}\mathcal{C}={\sf S}_\exists{\sf P}_{\sf u}{\sf P}_\omega(\mathcal{C})=
{\sf S}_\exists{\sf P}{\sf P}_{\sf u}(\mathcal{C})$.
In particular, $\mathcal{C}$ is definable by quasi-identities relatively to $\mathcal{C}_0$
if and only if it is closed under ${\sf S}_\exists$, ${\sf P}_{\sf u}$, and ${\sf P}_\omega$
$[$under ${\sf S}_\exists$, ${\sf P}_{\sf u}$, and ${\sf P}$, respectively$]$.
\item
$\mathcal{C}_0\cap\Mod\Th_{p}\mathcal{C}={\sf H}{\sf S}_\exists{\sf P}_{\sf u}(\mathcal{C})$.
In particular, $\mathcal{C}$ is definable by conditional disjunctions of equations relatively to $\mathcal{C}_0$
if and only if it is closed under ${\sf H}$, ${\sf S}_\exists$, and ${\sf P}_{\sf u}$.
\item
$\mathcal{C}_0\cap\Mod\Th_{e}\mathcal{C}={\sf H}{\sf S}_\exists{\sf P}_{\sf u}{\sf P}_\omega(\mathcal{C})=
{\sf H}{\sf S}_\exists{\sf P}{\sf P}_{\sf u}(\mathcal{C})$.
In particular, $\mathcal{C}$ is definable by conditional identities relatively to $\mathcal{C}_0$
if and only if it is closed under ${\sf H}$, ${\sf S}_\exists$, ${\sf P}_{\sf u}$, and ${\sf P}_\omega$
$[$under ${\sf H}$, ${\sf S}_\exists$, ${\sf P}$, and ${\sf P}_{\sf u}$, respectively$]$.
\end{enumerate}
\end{theorem}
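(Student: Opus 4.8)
The plan is to prove, for each $x\in\{u,q,p,e\}$, the displayed equality $\mathcal{C}_0\cap\Mod\Th_x\mathcal{C}={\sf O}_x(\mathcal{C})$, where ${\sf O}_u={\sf S}_\exists{\sf P}_{\sf u}$, ${\sf O}_q={\sf S}_\exists{\sf P}{\sf P}_{\sf u}$, ${\sf O}_p={\sf H}{\sf S}_\exists{\sf P}_{\sf u}$ and ${\sf O}_e={\sf H}{\sf S}_\exists{\sf P}{\sf P}_{\sf u}$, by splitting it into a soundness inclusion ${\sf O}_x(\mathcal{C})\subseteq\mathcal{C}_0\cap\Mod\Th_x\mathcal{C}$ and a completeness inclusion in the opposite direction; the relative-definability corollaries and the operator identities of \Fact \ref{hs1} then drop out. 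The soundness inclusion is routine: membership in $\mathcal{C}_0$ is built into ${\sf S}_\exists$ and is inherited under ${\sf P}_{\sf u}$ and ${\sf P}$ (regular classes being closed under these), while the fragments are preserved by their operators in the familiar way --- $L_u$ under ${\sf S}$ and ${\sf P}_{\sf u}$, $L_q$ additionally under ${\sf P}$, and $L_p$, $L_e$ additionally under surjective homomorphisms. The only point using the hypotheses on $\mathcal{C}_0$ is the preservation of a sentence $\bigwedge_i\alpha_i(\overline{x}_i,y_i)\to\bigvee_j\gamma_j$ of $L_p$ or $L_e$ under a surjective $\varphi\colon A\to B$: an instance of the antecedent realized in $B$ is pulled back into $A$ by Definition \ref{RegC}(iii) applied to each $\alpha_i\in\Psi_0$, after which the consequent --- a disjunction of atomic formulas --- passes forward along $\varphi$.

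For the completeness inclusion I would pass to Skolem expansions. As $\mathcal{C}_0$ is regular, $A$ has a Skolem expansion $A^\ast$ in the signature $\tau^\ast$ obtained by adjoining, for each $\alpha\in\Psi_0$, an operation $f_\alpha$ with $\alpha(\overline{a},f_\alpha(\overline{a}))$ holding for all $\overline{a}$; let $\mathcal{C}^\ast$ be the class of all Skolem expansions of members of $\mathcal{C}$, which all lie in the universal axiomatic class $U=\Mod\{\forall\overline{x}\,\alpha(\overline{x},f_\alpha(\overline{x}))\mid\alpha\in\Psi_0\}$. Eliminating $f_\alpha$-subterms from a $\tau^\ast$-sentence --- by naming each such subterm with a fresh variable and adjoining its defining $\alpha$ as a hypothesis --- carries a universal $\tau^\ast$-sentence to one in $L_u$, a quasi-identity to one in $L_q$, a positive universal sentence to one in $L_p$, and an identity to one in $L_e$, and this passage is reversible modulo $U$; hence $A\models\Th_x\mathcal{C}$ if and only if $A^\ast$ satisfies the corresponding restricted theory of $\mathcal{C}^\ast$. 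Applying to $\mathcal{C}^\ast$, relative to $U$, the appropriate class-form characterization (\L o\'{s}--Tarski for $x=u$, Mal'cev's quasivariety theorem for $x=q$, the semivariety theorem for $x=p$, Birkhoff's theorem for $x=e$, each recalled below in its version relative to a universal axiomatic class) places $A^\ast$ in ${\sf SP}_{\sf u}(\mathcal{C}^\ast)$, ${\sf SPP}_{\sf u}(\mathcal{C}^\ast)$, ${\sf HSP}_{\sf u}(\mathcal{C}^\ast)$, ${\sf HSP}(\mathcal{C}^\ast)$ respectively. Taking $\tau$-reducts finishes the argument: a reduct of an ultraproduct (product) of $\mathcal{C}^\ast$-members is an ultraproduct (product) of $\mathcal{C}$-members; a reduct of a substructure of such, being closed under the witnessed $\alpha$'s, lies in $\mathcal{C}_0$; and a reduct of a homomorphic image is a homomorphic image of the reduct. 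Hence $A\in{\sf O}_x(\mathcal{C})$, the listed alternative forms (${\sf S}_\exists{\sf P}_{\sf u}{\sf P}_\omega$, ${\sf H}{\sf S}_\exists{\sf P}_{\sf u}{\sf P}_\omega$) following from the standard commutation lemmas for these operators together with the fact that a direct product embeds into an ultraproduct of its finite subproducts.

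The class-form characterizations used above, in their versions relativized to the universal axiomatic class $U$, follow by the usual diagram-and-ultraproduct method; it is enough to run the case $x=u$ explicitly. Given $A^\ast\in U$ satisfying the universal theory of $\mathcal{C}^\ast$, direct the finite subsets of the atomic diagram of $A^\ast$ by inclusion; each such finite piece, a conjunction $\delta_i$ of atomic and negated-atomic formulas, is realized in some $C_i^\ast\in\mathcal{C}^\ast$ --- otherwise $\forall\overline{x}\,\neg\delta_i(\overline{x})$ would be a universal sentence true in $\mathcal{C}^\ast$ yet false in $A^\ast$ --- and choosing an ultrafilter on the index set containing all cofinal up-sets and invoking \Fact \ref{los} embeds $A^\ast$ into the corresponding ultraproduct of the $C_i^\ast$. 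The cases $x=q,p,e$ run along the same lines, using the positive part of the atomic diagram, one adjoined negated atom when a non-fact must be kept apart, and the constructions underlying the semivariety and Birkhoff theorems to secure the homomorphic-image operator.

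The step I expect to be the main obstacle is making the Skolem-elimination correspondence exact in each of the four fragments: one must verify that eliminating an $f_\alpha$-subterm from a quasi-identity again yields a single-conclusion, purely atomic-hypothesis sentence, that it sends a positive universal sentence to one whose hypotheses lie in $\Psi_0$ and whose conclusion is still a disjunction of atomic formulas, and that it sends an identity to a conditional identity, with the matching bookkeeping on $\tau$-reducts --- it is precisely these bookkeeping facts that pin down the exact operator closure in each clause. A related delicate point, already visible in the soundness half, is that a surjective homomorphism in general lifts only \emph{some} preimage of a witnessed antecedent $\alpha_i(\overline{a}_i,b_i)$ (Definition \ref{RegC}(iii)), so antecedents with overlapping argument tuples cannot be lifted independently; reconciling them forces one either to invoke the sharper condition $(\mathrm{iii}^\prime)$ of Definition \ref{SRegC} --- available for strongly regular $\mathcal{C}_0$, hence for $\mathcal{R}^\ast_{\Lambda}$ by Proposition \ref{exlem} --- or to carry the Skolem operations through every homomorphism involved, which is exactly what Remark \ref{SRC} provides.
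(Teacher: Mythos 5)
Your route---Skolemize, invoke the classical preservation theorems for $\mathcal{C}^\ast$ relative to $U$, then take reducts---is genuinely different from the paper's, which never introduces choice functions: it replaces the positive atomic diagram by the set $\Delta^0(A)$ of formulas $\alpha\bigl(t_1(\bar x),\ldots,t_n(\bar x),x_a\bigr)$, $\alpha\in\Psi_0$, recording \emph{relationally} every way each element $a$ of $A$ acts as a witness, runs compactness on $\Delta^0(A)\cup\Delta^-(A)$ to get an ultraproduct $B$ of members of $\mathcal{C}$ with an evaluation $\bar b$ realizing this diagram, observes that the subalgebra generated by $\bar b$ lies in $\mathcal{C}_0$ precisely because the $\Delta^0$-formulas supply witnesses inside it, and maps it onto $A$ by $t(\bar b)\mapsto t(\bar a)$.

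The gap in your version is the assertion that Skolem elimination is ``reversible modulo $U$'', on which the transfer $A\models\Th_x\mathcal{C}\Leftrightarrow A^\ast\models\Th_{x^\ast}\mathcal{C}^\ast$ rests. In a model of $U$ one only gets $\mathrm{elim}(\sigma^\ast)\Rightarrow\sigma^\ast$: the eliminated sentence quantifies over \emph{all} witness assignments, including incoherent ones in which two syntactically distinct Skolem subterms whose arguments happen to take the same value receive different witnesses, whereas $\sigma^\ast$ only constrains the one functional choice. Concretely, with $\Psi_0=\{xyx=x\}$ the quasi-identity $x_1=x_2\rightarrow f(x_1)=f(x_2)$ holds in every Skolem expansion of every structure, yet its elimination $x_1=x_2\;\&\;x_1y_1x_1=x_1\;\&\;x_2y_2x_2=x_2\rightarrow y_1=y_2$ asserts uniqueness of quasi-inverses and already fails in $M_2(F)$. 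So you cannot conclude $\mathrm{elim}(\sigma^\ast)\in\Th_x\mathcal{C}$, hence not $A\models\mathrm{elim}(\sigma^\ast)$, hence not $A^\ast\models\sigma^\ast$, and the classical theorems have nothing to apply to. For the clauses involving ${\sf P}$ this can be repaired by first closing $\mathcal{C}$ under products (an incoherent tuple over $C$ is a coordinate projection of a coherent one over $C\times C$, since a Skolem function on a product need not be a product of Skolem functions), but no such repair exists for clauses (i) and (iii); there, moreover, even the intermediate conclusion $A^\ast\in{\sf S}{\sf P}_{\sf u}(\mathcal{C}^\ast)$ is doubtful, since a substructure or ultraproduct in the expanded signature must respect one fixed global choice of witnesses, which is strictly stronger than ${\sf S}_\exists$ and ${\sf P}_{\sf u}$ followed by choosing witnesses afterwards. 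Your closing worry about lifting antecedents through homomorphisms points at a real phenomenon, but the theorem is stated for merely regular $\mathcal{C}_0$; the paper confines Skolem expansions (Remark \ref{SRC}) to the strongly regular setting of Proposition \ref{hs1b}, and Theorem \ref{exvar} itself is proved without them.
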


\noindent
Classes as in (iii) and (iv) will be called $\exists$-\emph{semivarieties}
and $\exists$-\emph{varieties}, respectively.
If $\Psi_0$ is empty, one has \emph{semivarieties} and \emph{varieties}.
By ${\sf W}_\exists(\mathcal{C})$ $[$by ${\sf V}_\exists(\mathcal{C})$,
${\sf W}(\mathcal{C})$, ${\sf V}(\mathcal{C})$, respectively$]$,
we denote the smallest $\exists$-semivariety $[\exists$-variety, semivariety, variety, respectively$]$
containing $\mathcal{C}$, cf. Theorem \ref{exvar}(iii)-(iv).

Of course, the statements of Theorem \ref{exvar} are well known results in the case of empty $\Psi_0$.
Proofs of (i) and (ii) are included since they can be seen as a preparation for proofs of (iii)-(iv);
the latter are our primary interest.

\begin{proof}
Inclusion in the model class is well known and easy to verify in any of the cases (i)-(iv) using Definition \ref{RegC}.
In particular in cases (iii)-(iv), inclusion ${\sf H}(\mathcal{C})\subseteq\Mod\Th_{x}\mathcal{C}$
follows directly from Definition \ref{RegC}(iii).

The proof of the reverse inclusion relies on adapting the
method of diagrams. Given a structure $A$, let $a\mapsto x_a$
be a bijection onto a set of variables and let
$\bar x=(x_a\mid a\in A)$. We consider quantifier free
formulas $\chi(\bar x)$ in these variables; evaluations
$\bar x$ in a structure $B$ are given as $\bar b=(b_a\mid a\in A)\in B^A$,
and we write $B\models\chi(\bar b)$ if $\chi(\bar x)$
is valid under evaluation $\bar b$.
For a set $\Phi=\Phi(\bar x)$ of formulas, $B\models\Phi(\bar b)$ if
$B\models\chi(\bar b)$ for all $\chi(\bar x)\in\Phi$.
Let $At$ denote the set of atomic formulas and let
\begin{align*}
\Delta^+(A)=&\{\chi(\bar x)\in At\mid A\models\chi(\bar a)\};\\
\Delta^-(A)=&\{\neg\chi(\bar x)\mid\chi(\bar x)\in At,\ A\not\models\chi(\bar a)\};\\
\Delta^0(A)=&\Bigl\{\alpha\bigl(t_1(\bar x),\ldots,
t_n(\bar x),x_a\bigr)\mid\\
&t_1,\ldots,t_n\ \text{are terms},\ \alpha(x_1,\ldots,x_n,y)\in\Psi_0,\ 
A\models\alpha\bigl(t_1(\bar a),\ldots,t_n(\bar a),a\bigr)\Bigr\};
\end{align*}
\begin{align*}
&\Delta_u(A)=\Delta_q(A)=\Delta^+(A)\cup\Delta^-(A);\\
&\Delta_p(A)=\Delta_e(A)=\Delta^0(A)\cup\Delta^-(A).
\end{align*}
For $x\in\{u,q,p,e\}$ and a finite subset $\Phi$ of $\Delta_x(A)$, let
$\Phi^-=\Phi\cap\Delta^-(A)$, $\Phi^+=\Phi{\setminus}\Phi^-$, and let $\Phi^\dagger$ denote the formula
\[
\bigwedge_{\phi\in\Phi^+}{\phi}\ \rightarrow\ \bigvee_{\neg\chi\in\Phi^-}{\chi};
\]
while for $\neg\chi\in\Phi^-$, let $\Phi^\dagger_\chi$ denote the quasi-identity
\[
\bigwedge_{\phi\in\Phi^+}{\phi}\ \rightarrow\ \chi.
\]
Thus for any finite $\Phi\subseteq\Delta_u(A)$ and for $\chi\in\Phi^-$,
we have $\Phi^\dagger\in L_u$ and $\Phi^\dagger_\chi\in L_q$,
while for any finite $\Phi\subseteq\Delta_p(A)$ and for $\chi\in\Phi^-$,
we have $\Phi^\dagger\in L_p$ and $\Phi^\dagger_\chi\in L_e$.
Observe that $A\not\models\Phi^\dagger$ and $A\not\models\Phi^\dagger_\chi$ in any case
(verified by substituting $x_a$ with $a$).
Let $A\in\mathcal{C}_0\cap\Mod\Th_x\mathcal{C}$.
We have to obtain $A$ from $\mathcal{C}$ by means of operators.

First, we consider the case $x\in\{u,p\}$.
Let $\Phi\subseteq\Delta_x(A)$ be finite.
As $A\not\models\Phi^\dagger$, we have that $\Phi^\dagger\notin\Th_x\mathcal{C}$.
Thus there are a structure $B_\Phi\in\mathcal{C}$ and $\bar b_{\Phi}=(b_{\Phi a}\mid a\in A)\in B_\Phi^A$
such that $B_\Phi\not\models\Phi^\dagger(\bar b_\Phi)$, i.e. $B_\Phi\models\Phi(\bar b_{\Phi})$.

As in the proof of the Compactness Theorem,
let $I$ be the set of all finite subsets of
$\Delta_x(A)$ and let $\mathcal{U}$ be one of he the  ultrafilters containing all
sets $\{\Psi\in I\mid\Psi\supseteq\Phi\}$, where $\Phi\in I$.
Let $B=\prod_{\Phi\in I}B_\Phi\slash\mathcal{U}$,
$b_a=(b_{\Phi a}\mid\Phi\in I)\slash\mathcal{U}$ and $\bar b=(b_a\mid a\in A)$.
By (the quantifier free part of) the {\L}o\'{s} Theorem, we have
$B\models\Delta_x(A)(\bar b)$.
Moreover, $B\in{\sf P}_{\sf u}(\mathcal{C})\subseteq\mathcal{C}_0$.

Let $C$ be the subalgebra of $B$ generated by the set $\{b_a\mid a\in A\}$.
We claim that $C\in\mathcal{C}_0$, i.e. $C\in{\sf S}_\exists(B)$.
Indeed, let $\alpha(x_1,\ldots,x_n,y)\in\Psi_0$ and let $c_1$, \ldots, $c_n\in C$.
As $C$ is generated by the set $\{b_a\mid a\in A\}$, there are terms $t_1(\bar x)$, \ldots, $t_n(\bar x)$
such that $c_i=t_i(\bar b)$ for all $i\in\{1,\ldots,n\}$.
Since $A\in\mathcal{C}_0$, by Definition \ref{RegC}(i) there is $a\in A$ such that
\[
A\models\alpha\bigl(t_1(\bar a),\ldots,t_n(\bar a),a\bigr).
\]
Therefore,
\[
\alpha\bigl(t_1(\bar x),\ldots,t_n(\bar x),x_a\bigr)\in\Delta^+(A)\cap\Delta^0(A).
\]
Since $B \models\Delta_x(A)(\bar b)$, we conclude that
$B\models\alpha\bigl(t_1(\bar b),\ldots,t_n(\bar b),b_a\bigr)$.
This implies that $C\models\alpha(c_1,\ldots,c_n,b_a)$.
On the other hand, $B\in{\sf P}_{\sf u}(\mathcal{C})\subseteq\mathcal{C}_0\subseteq\mathcal{S}$,
as $\mathcal{C}_0$ is closed under ${\sf P}_{\sf u}$ by Definition \ref{RegC}(ii).
Therefore, $C\in{\sf S}(B)\subseteq{\sf S}(\mathcal{S})\subseteq\mathcal{S}$ again by Definition \ref{RegC}(ii).
This implies by Definition \ref{RegC}(i) that $C\in\mathcal{C}_0$ which is our desired conclusion.
Furthermore, the map
\[
\varphi\colon C\to A;\quad t(\bar b )\mapsto t(\bar a)
\]
is well-defined (since $B\models \Delta^-(A)(\bar b)$),
a homomorphism (in view of term composition),
and surjective (since $\varphi(b_a)=a$).
Moreover, in case $x=u$, $\varphi$ is an isomorphism,
as $B\models\Delta^+(A)(\bar b)$. This proves (i) and (iii).

Let $x\in\{q,e\}$.
Given a finite subset $\Phi\subseteq\Delta_x(A)$ and $\neg\chi\in\Phi^-$,
one has $A\not\models\Phi^\dagger_\chi$, whence
$\Phi^\dagger_\chi\notin\Th_x\mathcal{C}$.
Thus there are a structure $B_{\Phi,\chi}\in\mathcal{C}$
and $\bar b_{\Phi\chi}=(b_{\Phi\chi a}\mid a\in A)\in B_{\Phi,\chi}^A$ such that
\[
B_{\Phi,\chi}\models\Phi^+(\bar b_{\Phi\chi})\quad\text{and}\quad
B_{\Phi,\chi}\models\neg\chi(\bar b_{\Phi\chi}).
\]
Taking $B_\Phi=\prod_{\neg\chi\in\Phi^-}B_{\Phi,\chi}\in{\sf P}_\omega(\mathcal{C})$
and $b_{\Phi a}=(b_{\Phi\chi a}\mid\neg\chi\in\Phi^-)$,
we get that $B_\Phi\models\Phi(\bar b_{\Phi})$.
As above, let $B=\prod_{\Phi \in I}B_\Phi\slash\mathcal{U}$,
$b_a=(b_{\Phi a}\mid\Phi\in I)\slash\mathcal{U}$, so that $B\models\Delta_x(A)(\bar b)$.
Let $C$ be again the subalgebra of $B$ generated by the set $\{b_a\mid a\in A\}$.
We get as above that $C\in{\sf S}_\exists{\sf P}_{\sf u}{\sf P}_\omega(\mathcal{C})$.
Thus $A\in{\sf H}(C)$ for $x=e$
and $A\cong C$ for $x=q$ follow exactly as above.

It remains to show that $A\in{\sf H}{\sf S}_\exists{\sf P}{\sf P}_{\sf u}(\mathcal{C})$ if $x=e$ and
$A\in{\sf S}_\exists{\sf P}{\sf P}_{\sf u}(\mathcal{C})$ if $x=q$.
Here, we fix $\neg\chi\in\Delta(A)^-$ and consider the set
$I_\chi=\{\Phi\in I\mid\neg\chi\in\Phi^-\}$. Then there is a non-principal ultrafilter
$\mathcal{U}_\chi$ on $I$ which contains all sets $\{\Psi\in I_\chi\mid\Psi\supseteq\Phi\}$ with $\Phi\in I_\chi$.
Take
\[
B_\chi=\prod_{\Phi\in I_\chi}B_{\Phi,\chi}\slash\mathcal{U}_\chi;\quad
b_{\chi a}=(b_{\Phi\chi a}\mid\Phi\in I_\chi)\slash\mathcal{U}_\chi;\quad
\bar b_\chi=(b_{\chi a}\mid a\in A),
\]
so that $B_\chi\models\neg\chi(\bar b_\chi)$ and $B_\chi\models\Delta^+(A)(\bar b_\chi)$
if $x=q$, $B_\chi\models\Delta^0(A)(\bar b_\chi)$ if $x=e$. Then
\[
B^\prime=\prod_{\neg\chi\in\Delta^-(A)}B_\chi\in{\sf P}{\sf P}_{\sf u}(\mathcal{C});\quad
B^\prime\models\Delta_x(A)(\bar b^\prime),\ \text{where}\ b^\prime_a=(b_{\chi a}\mid\chi\in\Delta^-(A)).
\]
Let $C^\prime$ be the subalgebra of $B^\prime$ generated by the set $\{b^\prime_a\mid a\in A\}$.
As above, $C^\prime\in{\sf S}_\exists(B^\prime)$ and
$A\in{\sf H}(C^\prime)$ (if $x=e$) or
$A\cong C^\prime$ (if $x=q$) via the map $\varphi^\prime\bigl(t(\bar b^\prime)\bigr)=t(\bar a)$.
The proof is now complete.
\end{proof}

\noindent
The following recaptures \cite[Proposition 10]{HS}.
For convenience, we include proofs.

\begin{proposition}\lab{hs1b}
Let $\mathcal{C}_0$ be a strongly regular class and let $\mathcal{C}\subseteq\mathcal{C}_0$.
\begin{enumerate}
\item ${\sf S}_\exists{\sf H}(\mathcal{C})\subseteq{\sf H}{\sf S}_\exists(\mathcal{C})$;
\item
${\sf V}_\exists(\mathcal{C})={\sf H}{\sf S}_\exists{\sf P}(\mathcal{C})$;
\item
If all members of $\mathcal{C}_0$ have a distributive congruence lattice,
then $A\in{\sf W}_\exists(\mathcal{C})$ for any subdirectly irreducible structure $A\in{\sf V}_\exists(\mathcal{C})$.
\end{enumerate}
\end{proposition}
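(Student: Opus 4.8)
The plan is to isolate a single lemma on homomorphic preimages, deduce (i) at once, obtain (ii) by combining (i) with Theorem \ref{exvar}, and finally prove (iii) as the $\exists$-analogue of J\'onsson's Lemma.

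\emph{Key Lemma.} If $E,D\in\mathcal{C}_0$, $\varphi\colon E\to D$ is a surjective homomorphism, and $B\leqslant D$ with $B\in\mathcal{C}_0$, then $\varphi^{-1}(B)\in\mathcal{C}_0$. To see this, note that $\varphi^{-1}(B)$ is a substructure of $E\in\mathcal{S}$, hence lies in $\mathcal{S}$; and given $\alpha(\overline x,y)\in\Psi_0$ and $\overline c\in\varphi^{-1}(B)$, pick $b\in B$ with $B\models\alpha(\varphi(\overline c),b)$ (possible as $B\in\mathcal{C}_0$), so that $D\models\alpha(\varphi(\overline c),b)$, and apply $(\mathrm{iii}^\prime)$ of Definition \ref{SRegC} to $\varphi$, the tuple $\overline c$ lying above $\varphi(\overline c)$, and the witness $b$. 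This yields $d\in E$ with $\varphi(d)=b\in B$ and $E\models\alpha(\overline c,d)$; thus $d\in\varphi^{-1}(B)$ and $\varphi^{-1}(B)\models\alpha(\overline c,d)$, so $\varphi^{-1}(B)\in\mathcal{S}\cap\Mod\{\forall\overline x\exists y\,\alpha\mid\alpha\in\Psi_0\}=\mathcal{C}_0$. This is the only step using strong regularity rather than mere regularity.

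Claim (i) is then immediate: if $A\in{\sf S}_\exists{\sf H}(\mathcal{C})$, write $A\leqslant D$ with $D=\varphi(E)$, $E\in\mathcal{C}$, $\varphi$ surjective, and $A\in\mathcal{C}_0$; here $D\in{\sf H}(\mathcal{C})\subseteq\mathcal{C}_0$ and $E\in\mathcal{C}_0$, so the Key Lemma gives $\varphi^{-1}(A)\in\mathcal{C}_0$, i.e.\ $\varphi^{-1}(A)\in{\sf S}_\exists(\mathcal{C})$, and $\varphi$ maps it onto $A$, whence $A\in{\sf H}{\sf S}_\exists(\mathcal{C})$. For (ii), the inclusion ${\sf H}{\sf S}_\exists{\sf P}(\mathcal{C})\subseteq{\sf V}_\exists(\mathcal{C})$ is trivial. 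Conversely, by Theorem \ref{exvar}(iv), ${\sf V}_\exists(\mathcal{C})={\sf H}{\sf S}_\exists{\sf P}{\sf P}_{\sf u}(\mathcal{C})$; since an ultraproduct is a quotient of a product, ${\sf P}_{\sf u}(\mathcal{C})\subseteq{\sf H}{\sf P}(\mathcal{C})$ and hence ${\sf P}{\sf P}_{\sf u}(\mathcal{C})\subseteq{\sf H}{\sf P}(\mathcal{C})$, so ${\sf V}_\exists(\mathcal{C})\subseteq{\sf H}{\sf S}_\exists{\sf H}{\sf P}(\mathcal{C})$; now (i) gives ${\sf S}_\exists{\sf H}\subseteq{\sf H}{\sf S}_\exists$, and absorbing the outer ${\sf H}$'s yields ${\sf V}_\exists(\mathcal{C})\subseteq{\sf H}{\sf S}_\exists{\sf P}(\mathcal{C})$.

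For (iii), let $A$ be subdirectly irreducible with $A\in{\sf V}_\exists(\mathcal{C})$; we may assume $A$ nontrivial. By (ii) we may write $A= B/\theta$ with $B\leqslant\prod_{i\in I}C_i$, $C_i\in\mathcal{C}$, and $B\in\mathcal{C}_0$, so $\mathrm{Con}(B)$ is distributive by hypothesis. Since $A$ is subdirectly irreducible, $\theta$ is meet-irreducible in $\mathrm{Con}(B)$, hence meet-prime (distributivity). Writing $\theta_i$ for the kernel of $B\to C_i$ and $\theta_J=\bigcap_{i\in J}\theta_i$, one has $\theta_I=0_B\leqslant\theta$, $\theta_\emptyset=1_B\not\leqslant\theta$, and $\theta_{J_1\cup J_2}=\theta_{J_1}\wedge\theta_{J_2}$, so meet-primeness of $\theta$ makes $\mathcal{J}=\{J\subseteq I\mid\theta_J\not\leqslant\theta\}$ a proper ideal of subsets of $I$. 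Pick an ultrafilter $\mathcal{U}$ on $I$ disjoint from $\mathcal{J}$; then $\theta_U\leqslant\theta$ for all $U\in\mathcal{U}$, so the restriction $\psi$ to $B$ of the ultrapower congruence on $\prod_iC_i$, namely $\psi=\bigcup_{U\in\mathcal{U}}\theta_U$, satisfies $\psi\leqslant\theta$. Therefore $B/\psi$ maps onto $B/\theta=A$, embeds into $\prod_iC_i/\mathcal{U}\in{\sf P}_{\sf u}(\mathcal{C})$, and lies in $\mathcal{C}_0$ as a homomorphic image of $B$; so $B/\psi\in{\sf S}_\exists{\sf P}_{\sf u}(\mathcal{C})$ and $A\in{\sf H}{\sf S}_\exists{\sf P}_{\sf u}(\mathcal{C})={\sf W}_\exists(\mathcal{C})$ by Theorem \ref{exvar}(iii).

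The main obstacle is the Key Lemma and, with it, the essential use of $(\mathrm{iii}^\prime)$ rather than $(\mathrm{iii})$: the natural candidate $\varphi^{-1}(B)$ is determined before one knows it belongs to $\mathcal{C}_0$, so Definition \ref{RegC}(iii), whose hypotheses range over members of $\mathcal{C}_0$, is not directly available, whereas $(\mathrm{iii}^\prime)$ permits lifting witnesses along $\varphi$ with the preimage tuple $\overline c$ prescribed in advance. The remaining work is routine manipulation of class operators together with the classical J\'onsson argument, the only adaptation being to check at each stage that the intermediate quotient $B/\psi$ stays in $\mathcal{C}_0$, so that ${\sf S}_\exists$ may be applied.
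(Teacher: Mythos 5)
Your proof is correct, and its overall architecture matches the paper's: (ii) is handled by exactly the same operator manipulation from Theorem \ref{exvar}(iv), and (iii) is the J\'onsson argument with the one extra observation that the intermediate structure is a homomorphic image of $B$ and hence stays in $\mathcal{C}_0$. The only real divergence is in (i): the paper proves it via Skolem expansions --- it expands $C\leqslant B$ and $B$ by Skolem functions for the formulas in $\Psi_0$, uses Remark \ref{SRC} (which rests on $(\mathrm{iii}^\prime)$) to lift the expansion of $B$ along $\varphi$ to an expansion $A^\ast$ of $A$, and then invokes ${\sf S}{\sf H}\subseteq{\sf H}{\sf S}$ for the expanded total algebras before forgetting the extra operations. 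Your Key Lemma --- that $\varphi^{-1}(B)\in\mathcal{C}_0$ for a surjection $\varphi\colon E\to D$ between members of $\mathcal{C}_0$ and a subalgebra $B\leqslant D$ with $B\in\mathcal{C}_0$ --- is what the paper's argument amounts to once the Skolem formalism is unwound (the subalgebra $D^\ast\leqslant A^\ast$ there is precisely the preimage of $C^\ast$), but your version is more self-contained: it applies $(\mathrm{iii}^\prime)$ directly to lift witnesses into the preimage, avoiding the detour through expansions, and it correctly identifies why $(\mathrm{iii}^\prime)$ rather than the weaker (iii) is needed (the preimage tuple $\overline{c}$ must be prescribed in advance). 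Your expanded treatment of (iii) is also sound: the ideal $\mathcal{J}$, the ultrafilter disjoint from it, and the directed union $\psi=\bigcup_{U\in\mathcal{U}}\theta_U$ reproduce the standard J\'onsson proof, and closure of $\mathcal{C}_0$ under ${\sf H}$ gives $B/\psi\in{\sf S}_\exists{\sf P}_{\sf u}(\mathcal{C})$ exactly as the paper's appeal to Definition \ref{RegC}(ii) does.
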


\begin{proof}
(i) Let structures $A$, $B$ and $C$ be such that $A\in\mathcal{C}$, $C\in{\sf S}_\exists(B)$, and let
$\varphi\colon A\to B$ be a surjective homomorphism.
Then $B$, $C\in\mathcal{C}_0$ by Definition \ref{RegC}(ii).
Choose a Skolem expansion $C^\ast$ of $C$ and extend it to a Skolem expansion $B^\ast$ of $B$.
According to Remark \ref{SRC}, there is a Skolem expansion $A^\ast$ of $A$ such that
$\varphi\colon A^\ast\rightarrow B^\ast$ is a homomorphism. Then
$C^\ast\in{\sf S}(B^\ast)\subseteq{\sf S}{\sf H}(A^\ast)\subseteq{\sf H}{\sf S}(A^\ast)$,
whence $C^\ast\in{\sf H}(D^\ast)$ for some $D^\ast\in{\sf S}(A^\ast)$ and $C\in{\sf H}(D)$
with $D\in{\sf S}_\exists(A)$.

(ii) According to Theorem \ref{exvar}(iv),
${\sf V}_\exists(\mathcal{C})={\sf H}{\sf S}_\exists{\sf P}{\sf P}_{\sf u}(\mathcal{C})$.
Straightforward inclusions ${\sf P}_{\sf u}(\mathcal{C})\subseteq{\sf H}{\sf P}(\mathcal{C})$
and ${\sf P}{\sf H}(\mathcal{C})\subseteq{\sf H}{\sf P}(\mathcal{C})$ together with (i) imply:
\[
{\sf V}_\exists(\mathcal{C})\subseteq{\sf H}{\sf S}_\exists{\sf P}{\sf H}{\sf P}(\mathcal{C})\subseteq
{\sf H}{\sf S}_\exists{\sf H}{\sf P}(\mathcal{C})\subseteq{\sf H}{\sf S}_\exists{\sf P}(\mathcal{C}).
\]
The reverse inclusion is obvious.
 
(iii) Let $A\in{\sf V}_\exists(\mathcal{C})$ be subdirectly irreducible.
Then by (ii), there is $B\in{\sf S}_\exists{\sf P}(\mathcal{C})$ such that $A\in{\sf H}(B)$.
By J\'{o}nsson's Lemma, there is $C\in{\sf S}{\sf P}_{\sf u}(\mathcal{C})$ such that
$A\in{\sf H}(C)$ and $C\in{\sf H}(B)$.
The latter inclusion implies by Definition \ref{RegC}(ii) that $C\in\mathcal{C}_0$,
whence $C\in{\sf S}_\exists{\sf P}_{\sf u}(\mathcal{C})$.
\end{proof}

\end{appendix}


\begin{thebibliography}{99}

\bibitem{ara}
\emph{P. Ara}, \emph{P. Menal},
On regular rings with involution,
Arch. Math. \textbf{42} (1984), 126--130.

\xc{
\bibitem{baer}
\emph{R. Baer},
Linear Algebra and Projective Geometry,
Academic Press, New York, 1952.
}

\bibitem{beidar} 
\emph{K.\,I. Beidar}, \emph{W.\,S. Martindale}, \emph{A.\,V. Mikhalev},
Rings with generalized identities,
Monographs and Textbooks in Pure and Applied Mathematics \textbf{196},
Marcel Dekker Inc., New York, 1996.



\bibitem{beidar2}
\emph{K.\,I. Beidar}, \emph{L. M\'{a}rki}, \emph{R. Mlitz}, \emph{R. Wiegandt},
Primitive involution rings,
Acta Math.\ Hungar. \textbf{109}, no. 4 (2005), 357--368.


\bibitem{ber}
\emph{S.\,K. Berberian},
Baer $\ast$-Rings,
Die Grundlehren der Mathematischen Wissenschaften in Einzeldarstellungen \textbf{195},
Springer-Verlag, Berlin--Heidelberg--New York, 1972.

\bibitem{ber4}
\emph{S.\,K. Berberian},
Baer Rings and Baer $\ast$-Rings,
Austin, 1988;\\
http://www.ma.utexas.edu/mp\_arc/c/03/03-181.pdf

\bibitem{birk}
\emph{G. Birkhoff},
Lattice Theory, Third ed.,
AMS, Providence, RI, 1967.

\bibitem{BvN}
\emph{G. Birkhoff}, \emph{J. von Neumann},
The logic of quantum mechanics,
Ann.\ Math. \textbf{37} (1936), 823--843.

\bibitem{br} \emph{G. Bruns}, \emph{M.\S. Roddy}
A finitely generated modular ortholattice.
Canad. Math. Bull. 35 (1992), no. 1, 29--33. 


\bibitem{bs}
\emph{S. Burris}, \emph{H.\,P. Sankappanavar},
A Course in Universal Algebra,
Springer, Berlin, 1981;
www.math.uwaterloo.ca/$\sim$snburris/htdocs/ualg.html


\bibitem{CK}
\emph{C.\,C. Chang}, \emph{H.\,J. Keisler},
Model Theory,
Third ed., Amsterdam, 1990.

\bibitem{CD}
\emph{P. Crawley}, \emph{R.\,P. Dilworth},
Algebraic Theory of Lattices,
Prentice-Hall, Englewood, 1973.

\bibitem{FF}
\emph{C.-A. Faure}, \emph{A. Fr\"{o}licher},
Modern Projective Geometry,
Dordrecht, 2000.



\bibitem{fry}
\emph{K.D.~Fryer} \em[h{I.~Halperin} 
Coordinates in Geometry, 
Trans. Royal Soc. Canada III (3) \textbf{48} (1954), 11--26


\bibitem{frink}
\emph{O. Frink},
Complemented modular lattices and projective spaces of infinite dimension,
Trans.\ Amer.\ Math.\ Soc. \textbf{60} (1946), 452--467.

\bibitem{Luca}
\emph{L. Giudici},
Dintorni del Teorema di Coordinatizatione di von-Neumann, Ph.D. thesis, Univ. di Milano, 1995.

\xc{
\bibitem{luca2}
\emph{L. Giudici},
http://www.nohay.net/mat}



\bibitem{good}
\emph{K.\,R. Goodearl},
Von Neumann Regular Rings,
Krieger, Malabar, 1991.


\bibitem{gmm}
\emph{K.\,R. Goodearl}, \emph{P. Menal}, and \emph{J. Moncasi},
Free and residually Artinian regular rings,
J.\ Algebra \textbf{156} (1993), 407--432.

\bibitem{gorb}
\emph{V.\,A. Gorbunov},
Algebraic Theory of Quasivarieties,
Plenum, New York, 1998.

\bibitem{gr}
\emph{G. Gr\"atzer},
Lattice Theory: Foundation. Birk\"auser, Basel, 2011.


\bibitem{gro}
\emph{H. Gross},
Quadratic Forms in Infinite Dimensional Vector spaces,
Birkh\"auser, Basel, 1979.


\bibitem{gls} \emph{H. Gross}, \emph{Z.  Lomeckky},
\emph{R. Schuppli},
Lattice problems originating in quadratic space theory.
Algebra Universalis \textbf{20:3} (1985), 267--291. 


\bibitem{he4}
\emph{C. Herrmann},
Complemented modular lattices with involution and orthogonal geometry,
Algebra Universalis \textbf{61} (2009), 339--364.

\bibitem{heu}
\emph{C. Herrmann},
On the equational theory of projection lattices of finite von Neumann factors,
J.\ Symb.\ Logic \textbf{75} (2010), 1102--1110.



\bibitem{gb2} \emph{C.~Herrmann}, \emph{R.~Moresi},
  \emph{R.~Schuppli}, 
\emph{M.Wild}, Classification subspaces. Chapter IV pp.55--170
 in: Orthogonal Geometry
in Infinite Dimensional Vector Spaces,   H.A.~Keller, U.M.~K\"unzi,
M.~Wild eds., Bayreuther Mathematische Schriften \textbf{53} (1998).



\bibitem{proat}
\emph{C. Herrmann}, \emph{M.\,S. Roddy},
Proatomic modular ortholattices. Representation and equational theory,
Note di Matematica e Fisica \textbf{10} (1999), 55--88.
http://www.mathematik.tu-darmstadt.de/$\sim$herrmann/proat.ps

\bibitem{hr}
\emph{C. Herrmann}, \emph{M.\,S. Roddy},
On geometric representations of modular ortholattices,
Algebra Universalis \textbf{71} (2014), 285--297.


\bibitem{hr2}
\emph{C. Herrmann}, \emph{M.\,S. Roddy},
On varieties of modular ortholattices that are generated by their finite dimensional members,
Algebra Universalis \textbf{72} (2014), 349--357.


\bibitem{HS}
\emph{C. Herrmann}, \emph{M. Semenova},
Existence varieties of regular rings and complemented modular lattices,
J.\ Algebra \textbf{314} (2007), 235--251.


\bibitem{HS2}
\emph{C. Herrmann}, \emph{M. Semenova},
Generators of existence varieties of regular rings and complemented Arguesian lattices,
Cent.\ Eur.\ J.\ Math. \textbf{8} (2010), 827--839.

\bibitem{PartI}
\emph{C. Herrmann}, \emph{M. Semenova},
Rings of quotients of finite $AW^\ast$-algebras: Representation and algebraic approximation,
Algebra and Logic \textbf{53} (2014), 298--322.

\bibitem{herstein}
\emph{I.\,N. Herstein},
Rings with involution,
University of Chicago Press, Chicago, 1976.

\bibitem{hodges}
\emph{W. Hodges},
Model Theory,
Encyclopedia of Mathematics and Its Applications \textbf{42},
Cambridge University Press, 1993.

\bibitem{jac}
\emph{N. Jacobson},
Structure of Rings,
AMS Col.\ Publ.\ XXXVII, Providence, RI, 1956.

\xc{
\bibitem{Jensen}
\emph{C.\,U. Jensen}, \emph{H. Lenzing},
Model-theoretic algebra with particular emphasis on fields, rings, modules,
Algebra, Logic and Applications \textbf{2}, Gordon and Breach Science Publishers, New York, 1989.}

\bibitem{jo2}
\emph{B. J\'{o}nsson},
Modular lattices and Desargues' Theorem,
Math.\ Scand. \textbf{2} (1954), 295--314.


\bibitem{ks}
\emph{J. Kadourek}, \emph{M.\,B. Szendrei},
On existence varieties of $E$-solid semigroups,
Semigroup Forum \textbf{59} (1999), 470--521.


\bibitem{kap2}
\emph{I. Kaplansky},
Fields and rings,
University of Chicago Press, Chicago, 1972.

\xc{
\bibitem{Kap}
\emph{I. Kaplansky},
Any orthocomplemented complete modular lattice is a continuous geometry,
Ann.\ Math. \textbf{61} (1955), 524--541.
}
\bibitem{Kel}
\emph{H.\,A. Keller},
On the lattice of all closed subspaces of a Hermitian space,
Pacific J.\ Math. \textbf{89} (1980), 105--110.
 


\bibitem{lamb}
\emph{J. Lambek},
Lectures on Rings and Modules,
Blaisdell, Waltham--Toronto--London, 1966.



\bibitem{mackey}
\emph{G.\,W. Mackey},
The mathematical foundations of quantum mechanics: A lecture-note volume.
W.\,A. Benjamin Inc., New York-Amsterdam, 1963.



\bibitem{mae}
\emph{F. Maeda},
Kontinuierliche Geometrien,
Die Grundlehren der Mathematischen Wissenschaften in Einzeldarstellungen \textbf{45},
Springer, Berlin, 1958.

\xc{
\bibitem{maes}
\emph{F. Maeda},
On a ring whose principal right ideals form a lattice,
J.\ Sci.\ Hiroshima Univ.\ Ser.\ A \textbf{24} (1960), 509--525.}

\xc{
\bibitem{MM}
\emph{M. Makkai}, \emph{G. McNulty},
Universal Horn axiom systems for lattices of submodules,
Algebra Universalis \textbf{7} (1977), 25--31.}


\bibitem{mal1}
\emph{A.\,I. Mal'cev},
Model correspondences,
Izv.\ Akad.\ Nauk SSSR., Ser.\ Mat. \textbf{23} (1959), 313–-336 (Russian).


\bibitem{mal}
\emph{A.\,I. Mal'cev},
A correspondence between rings and groups,
in: The Metamathematics of Algebraic Systems, Amsterdam, 1971.

\bibitem{mal2}
\emph{A.\,I. Mal'cev},
Algebraic Systems,
Die Grundlehren der Mathematischen Wissenschaften in Einzeldarstellungen \textbf{192}, Springer, Berlin, 1973.

\bibitem{Flo}
\emph{F. Micol},
On Representability of $\ast$-Regular Rings and Modular Ortholattices,
Ph.D. thesis, Technische Universit\"{a}t Darmstadt, 2003;\\
http://elib.tu-darmstadt.de/diss/000303/diss.pdf

\bibitem{Halp}
\emph{J. von Neumann},
Continuous Geometry,
Princeton University Press, Princeton, New Jersey, 1960.

\bibitem{Nik}
\emph{N. Niemann},
On representability of $\ast$-regular rings in endomorphism rings of vector spaces, PhD Thesis,
Technische Universit\"{a}t Darmstadt, 2007.

\bibitem{rowen}
\emph{L. Rowen},
Ring Theory I, Academic Press, Boston, 1988.



\bibitem{Sch}
\emph{B. Schein},
Relation algebras and function semigroups,
Semigroup Forum \textbf{1} (1970), 1--62.



\bibitem{sko}
\emph{L.\,A. Skornyakov},
Complemented Modular Lattices and Regular Rings,
Oliver\ \&\ Boyd, Edinburgh, 1964.


\bibitem{tyu}
\emph{D.\,V. Tyukavkin},
Regular rings with involution,
Vestnik Moskovskogo Universiteta. Matematika \textbf{39} (1984), 29--32 (Russian).

\bibitem{fred} 
\emph{F.~Wehrung}, Congruences of lattices and ideals of rings,R
Chapter 8, pp.297--335 in:  Lattice Theory:
Special Topics and Applications, G.~Gr\"atzer and  
F.~Wehrung eds., Birk\"auser, Cham 2014
\end{thebibliography}
\end{document}